\documentclass[a4paper]{article}
\usepackage{amsfonts,amsthm,latexsym,amsmath,mathrsfs,amscd,geometry}
\geometry{margin=1in}
\usepackage{amssymb}
\usepackage{dsfont}
\usepackage{latexsym}
\usepackage{appendix}
\usepackage{enumerate}
\usepackage{color}
\usepackage{upgreek}
\numberwithin{equation}{section}
\newtheorem{theorem}{Theorem}[section]
\newtheorem{lemma}[theorem]{Lemma}
\newtheorem{proposition}[theorem]{Proposition}
\newtheorem{corollary}[theorem]{Corollary}

\newtheorem{remark}[theorem]{Remark}

\makeatletter%使用罗马数字

\newcommand{\Rmnum}[1]{\expandafter\@slowromancap\romannumeral #1@}
\makeatother

\allowdisplaybreaks

\usepackage[colorlinks,linkcolor=blue]{hyperref}

\begin{document}
\title{The vanishing diffusion limit for an Oldroyd-B model in $\mathbb{R}^2_+$}
\author{Yinghui Wang\footnote{School of Mathematics, South China University of Technology, Guangzhou 510641, China. E-mail: yhwangmath@163.com},\,\,\, Huanyao Wen
\footnote{Corresponding author. School of Mathematics, South China University of Technology, Guangzhou 510641, China. Email: mahywen@scut.edu.cn.}
}
\date{}
\maketitle
\begin{abstract}
We consider the initial-boundary value problem for an incompressible Oldroyd-B model with stress diffusion in two-dimensional upper half plane which describes the motion of viscoelastic polymeric fluids. From the physical point of view, the diffusive coefficient is several orders of magnitude smaller than other parameters in the model, and is usually assumed to be zero. However, the link between the diffusive model and the standard one (zero diffusion) via vanishing diffusion limit is still unknown from the mathematical point of view, in particular for the problem with boundary. Some numerical results \cite{Chupin} suggest that this should be true. In this work, we provide a rigorous justification for the vanishing diffusion in $L^\infty$-norm.
\end{abstract}
\bigbreak \textbf{{\bf Key Words}:}  An Oldroyd-B model; vanishing diffusion limit; boundary layers.
\bigbreak  {\textbf{AMS Subject Classification 2020:} 35Q35, 76A10, 76D10.

\tableofcontents

\section{Introduction}

The Oldroyd-B model is a widely used macroscopic model describing the motion of viscoelastic fluids. A typical feature for the viscoelastic fluid is the presence of an extra stress tensor which satisfies a nonlinear evolution equation. In the context of some physical considerations such as the shear and vorticity banding phenomena (\cite{Bhave2,Cates_2006,Dhont_2008,El-Kareh, Liu,Malek_etal_2018}), an additional term describing the polymeric stress diffusion is sometimes present in the equation of the extra stress tensor (diffusive model). However, it is known that this diffusion coefficient is significantly smaller than other effects (\cite{Bhave1}), and thus this quantity is usually neglected in some early works on the classical Oldroyd-B model (non-diffusive model) (\cite{Oldroyd_1958}). Moreover, from the mathematical point of view, the non-diffusive model (the stress tensor satisfying hyperbolic type equations) and the diffusive model (the stress tensor satisfying parabolic type equations) are significantly different. Therefore, a natural question is how to justify the vanishing diffusion limit. The link between the non-diffusive model and the diffusive model has been investigated from the numerical point of view for the steady case by Chupin, and Martin (\cite{Chupin}) via considering the vanishing diffusion process in the diffusive model. For the non-steady case, the limit process has been investigated for the 3-D Cauchy problem in a recent work \cite{Huang_Wang_Wen_Zi_2022} by Huang, the first author, the second author, and Zi.
Our aim in the present work is to provide a rigorous justification of the process in the spatial domain $\mathbb{R}^2_+$. To cover more cases, we consider a generalized incompressible Oldroyd-B model which can be derived via a macroscopic closure of a micro-macro model describing incompressible dilute polymeric fluids, i.e., the incompressible Navier--Stokes/Fokker--Planck system in a dumbbell Hookean setting, see \cite{Barrett_Lu_Suli_2017, Bris}, namely,
\begin{equation}\label{OB_incom}
    \begin{cases}
    u_t+u\cdot\nabla u +\nabla p -\mu \Delta u
    =\mathrm{div} \big(\mathbb{T}-(kL\eta+\mathfrak{z}\eta^{2})\mathbb{I}\big),\\
    \eta_t+ u\cdot\nabla \eta =\varepsilon\Delta \eta,\\
    \mathbb{T}_t+u\cdot \nabla \mathbb{T}-(\nabla u
    \mathbb{T}+\mathbb{T}\nabla^\top u)
    =\varepsilon\Delta \mathbb{T}+\frac{kA_0}{2\lambda}\eta\mathbb{I}-\frac{A_0}{2\lambda}\mathbb{T},\\
    \mathrm{div} u = 0,
    \end{cases}
    \end{equation}
    in $\mathbb{R}_+^2\times \mathbb{R}_+$ with $\mathbb{R}_+^2:= \{(x,y)\in \mathbb{R}^2| y>0\}$, where $u:=(u_1,u_2)(x,y,t) $ denotes the velocity field of fluid, and $p:=p(x,y,t)$ is the pressure function of the fluid. The scalar function $\eta:=\eta(x,y,t)\geq 0$ is  the  so-called polymer number density, and the extra stress tensor $\mathbb{T}: = (\mathbb{T}_{ij})(x,y,t)\in \mathbb{R}^{2\times 2}, 1\leq i,j\leq 2$
    is a nonnegative definite symmetric matrix. $\mathbb{I}$ is a $2\times2$ unit matrix for the case of two spatial dimensions. $\mu>0$ is the viscosity of the fluid, and  $\varepsilon>0$ corresponding to a spatial diffusion of the polymeric stresses is called the center-of-mass coefficient. The other physical parameters $k, \lambda, L, \mathfrak{z}, A_0$ are all positive numbers. In this paper, we equip \eqref{OB_main} with the  initial-boundary value conditions:
    \begin{align}
        &(u,\eta,\mathbb{T})(x,y,0) = (u_0,\eta_0,\mathbb{T}_0)(x,y),\label{OB_main_initial_origin} \\
        u(x,0&,t) = 0,~\partial_y\eta(x,0,t) = 0,~\partial_y\mathbb{T}(x,0,t) = 0.\label{OB_main_boundary_origin}
    \end{align}

% More specifically, we aim to study the vanishing diffusion limit $\varepsilon\to 0$. The boundary layer effect is the main challenge for the problem.

  Let us give a brief overview of some relevant works on the model. Here again, as Bhave, Armstrong, and Brown (\cite{Bhave1}) show that the center-of-mass coefficient is significantly smaller than other effects, thus $\varepsilon$ is usually not included in some early works of the model. As a matter of fact, the polymer number density $\eta$ is also assumed to be a positive constant, see the following model:
 \begin{equation}\label{OB_incom_classical}
        \begin{cases}
        u_t+u\cdot\nabla u +\nabla P^\prime -\mu \Delta u
        =\mathrm{div}  \tau,\\
       \tau_t+u\cdot \nabla \tau-Q(\nabla u,\tau)+ \gamma \tau
         = k\eta (\nabla u + \nabla^\top u),\\
        \mathrm{div} u = 0,
        \end{cases}
        \end{equation}
    which was first proposed by Oldroyd in  \cite{Oldroyd_1950, Oldroyd_1958}, see also \cite{Bird_etal_1,Bird_etal_2,Morrison_2001}. Here $Q(\nabla u,\tau)=\Omega\tau-\tau\Omega+b(Du\,\tau+\tau\,Du)$ for a parameter $b\in[-1,1]$, where $Du$ and $\Omega$ are respectively the symmetric and skew-symmetric parts of the velocity gradient. In fact, the model (\ref{OB_incom_classical}) with $b=1$ and $\tau=\mathbb{T}-k\eta \mathbb{I}$ is corresponding to (\ref{OB_incom}) for positive constant $\eta$. Some important works on the study of the non-diffusive model (\ref{OB_incom_classical}) have been achieved in the past few year. For the  Dirichlet problem in a bounded domain of $\mathbb{R}^3$, the global-in-time wellposedness with small initial data was obtained by Guillop\'e, and Saut (\cite{Guillope_Saut}) in $H^s$ setting , see the relaxation of a small coupling constant by Molinet, and Talhouk (\cite{Molinet_Talhouk_2004}). Fern\'andez-Cara, Guill\'en, and  Ortega (\cite{Fernandez_Guillen_Ortega}) extended the results in $L^p$ setting. For the problem in 3-D exterior domains, the global wellposedness of the small strong solution was obtained with small coupling constant by
    Hieber, Naito, and Shibata (\cite{Hieber_Naito_Shibata}), see  \cite{Fang_Hieber_Zi_2013}  by Fang, Hieber, and Zi for arbitrary coupling constant. Lions and Masmoudi (\cite{Lions_Masmoudi_2000}) obtained a global existence result for weak solutions of the corotational Oldroyd model with arbitrarily large initial data. Recently, Di Iorio, Marcati, and Spirito (\cite{DiIorio_Pierangelo_Spirito_ARMA}) investigated the splash singularities of the two-dimensional free boundary problem. Chemin, and Masmoudi (\cite{Chemin_Masmoudi_2001}) obtained the global small solutions in  critical Besov spaces together with some blow-up criteria. And the result in \cite{Chemin_Masmoudi_2001} was improved by Lei, Masmoudi, and Zhou (\cite{Lei_Masmoudi_Zhou_2010}), Sun and Zhang (\cite{Sun_Zhang_2011}) and by Zi, Fang, and Zhang (\cite{Zi_Fang_Zhang_2014}). Lei (\cite{Lei_2006}) investigated the connection between the compressible model and the incompressible model via the incompressible limit in periodic domains, see \cite{Fang_Zi_2014} by Fang, and Zi for the Cauchy problem in critical Besov spaces. Very recently, Hieber, the second author, and Zi  (\cite{Hieber_Wen_Zi_2019})
   investigated the long time behavior of the solutions, and proved that the decay rates are the same as those for the heat equation, see a recent work by Huang, the first author, the second author, and Zi (\cite{Huang_Wang_Wen_Zi_2022}) for some extensions to the diffusive model with zero viscosity. Zi (\cite{Zi_2021}) investigated the vanishing viscosity limit in analytic settings. For another macroscopic model for viscoelastic fluids based on the Lagrangian particle dynamics, we refer to the seminal work by Lin, Liu, and Zhang (\cite{Lin_Liu_Zhang_2005}) and other interesting works on wellposedness theory (\cite{Cai_2019, Chen_Zhang_2006, Hu_Lin_2016, Hu_Lin_Liu_2018, Lei_2010, Lei_Liu_Zhou_2008, Lin_2012, Lin_Zhang_2008, Zhang_Fang_2012}

 In view of physical and numerical background (\cite{Chupin, Chupin2, Malek_etal_2018, Rajagopal_2000, Ziegler_1987}), the diffusive model has been investigated extensively in recent years. Some interesting mathematical results of the diffusive model (\ref{OB_incom}) with constant polymer number density $\eta$ are achieved, i.e.,
   \begin{equation}\label{OB_incom sim}
    \begin{cases}
    u_t+u\cdot\nabla u +\nabla P^\prime -\mu \Delta u
    =\mathrm{div} \tau,\\
    \tau_t+u\cdot \nabla \tau-Q(\nabla u,\tau)+ \gamma \tau
         =\varepsilon\Delta \tau+ k\eta (\nabla u + \nabla^\top u),\\
    \mathrm{div} u = 0,
    \end{cases}
    \end{equation} where $\eta=const.>0$. For fixed $\varepsilon>0$ and $\mu>0$, the existence of global-in-time weak solutions was obtained by Barrett, and Boyaval (\cite{Barrett_Boyaval_2011}). In the case that $\varepsilon> 0$ is fixed and $\mu=0$, the global existence and uniqueness of regular solutions to the Cauchy problem for (\ref{OB_incom sim}) was obtained by Elgindi, and Rousset (\cite{Elgindi_Rousset_2015}) in  two dimensions for arbitrarily large initial data when $Q(\nabla u,\tau)=0$. In addition, the authors also obtained a global wellposedness result for $Q(\nabla u,\tau)\not=0$ provided that the initial data are small. Later on, Elgindi, and Liu (\cite{Elgindi_Liu_2015}) extended the result in \cite{Elgindi_Rousset_2015} to the three-dimensional case with small initial data in $H^3(\mathbb{R}^3)$-norm. See \cite{Constantin_Wu_Zhao_Zhu_2020} by Constantin, Wu, Zhao, and Zhu for the fractional dissipation case.
    The global wellposedness and optimal time-decay estimates of the solutions to the Cauchy problem for (\ref{OB_incom sim}) with vanishing viscosity (the case $\mu\geq 0$ and fixed $\varepsilon>0$) or vanishing diffusion (the case $\varepsilon\geq 0$ and fixed $\mu>0$)  in three dimensions were obtained by Huang, the first author, the second author, and Zi (\cite{Huang_Wang_Wen_Zi_2022}). When the polymer number density $\eta$ is varied and the associated diffusion term $\varepsilon\Delta \eta$ is ignored, Constantin and Kliegl (\cite{Constantin_Kliegl_2012}) obtained the global existence and uniqueness of strong solutions to the Cauchy problem for (\ref{OB_incom}) in two dimensions with fixed center-of-mass coefficient $\varepsilon>0$ and arbitrarily large initial data. La (\cite{La}) provided a rigorous derivation of the model (\ref{OB_incom}) with fixed viscosity $\mu>0$ and fixed center-of-mass coefficient $\varepsilon>0$ in two dimensions as a macroscopic closure of the micro-macro model. For compressible case, refer for instance to \cite{Barrett_Lu_Suli_2017, Liu_Lu_Wen_2021, Lu-Zhang, Wang-Wen} and references therein.

    The main results in \cite{Huang_Wang_Wen_Zi_2022} imply that the vanishing diffusion limit is justified in the whole space. However, for the boundary-value problem, the link between the non-diffusive model and the diffusive model is still unknown due to the boundary layer effect. Inspired by Chupin-Martin's numerical work \cite{Chupin} on the vanishing diffusion limit as $\varepsilon\rightarrow0$, we provide a rigorous justification for the initial-boundary value problem \eqref{OB_main}-\eqref{OB_main_boundary} in $\mathbb{R}^2_+$.

 \subsection{Reformulation of the problem}
    To simplify the following computations, it is convenient to reformulate the system \eqref{OB_incom} in a way as in \cite{Liu_Lu_Wen_2021}. More specifically, denoting $\tau_{ij}=\mathbb{T}_{ij}-k\eta \mathbb{I}_{ij}$, %we can reformulate as follows.
  then \eqref{OB_incom}, (\ref{OB_main_initial_origin}), and (\ref{OB_main_boundary_origin}) can be reformulated as
    \begin{equation}\label{OB_main}
        \begin{cases}
        u_t+u\cdot\nabla u +\nabla \tilde{p} -\mu \Delta u
        =\mathrm{div} \tau,\\
        \eta_t+ u\cdot\nabla \eta =\varepsilon\Delta \eta,\\
        \tau_t+u\cdot \nabla \tau-(\nabla u
        \tau+\tau \nabla^\top u) + \gamma \tau
        - \varepsilon\Delta \tau = k\eta (\nabla u + \nabla^\top u),\\
        \mathrm{div} u = 0,
        \end{cases}
    \end{equation}
    and
    \begin{align}
        &(u,\eta,\tau)(x,y,0) = (u_0,\eta_0,\tau_0)(x,y),\label{OB_main_initial} \\[2mm]
        u(x,0&,t) = 0,~\partial_y\eta(x,0,t) = 0,~\partial_y\tau(x,0,t) = 0,\label{OB_main_boundary}
    \end{align} for $(x,y)\in\mathbb{R}^2_+$ and $t>0$, where $ \gamma:= \frac{A_0}{2\lambda}>0$ and $\tilde{p}:= p +k(L-1)\eta+\mathfrak{z}\eta^2$.

    Formally, letting  $\varepsilon\to 0$ in \eqref{OB_main}, one can obtain the non-diffusive Oldroyd-B model:
    \begin{equation}\label{OB_limit_I}
        \begin{cases}
        u^{I,0}_t+u^{I,0}\cdot\nabla u^{I,0} +\nabla \tilde{p}^{I,0} -\mu \Delta u^{I,0}
        =\mathrm{div} \tau^{I,0},\\[2mm]
        \eta^{I,0}_t+ u^{I,0}\cdot\nabla \eta^{I,0} =0,\\[2mm]
        \tau^{I,0}_t+u^{I,0}\cdot \nabla \tau^{I,0}-\mathcal{Q}(\nabla u^{I,0}, \tau^{I,0} ) + \gamma \tau^{I,0}
         - \mathcal{B}(\eta^{I,0}, \nabla u^{I,0})=0,\\[2mm]
        \mathrm{div} u^{I,0} = 0,
        \end{cases}
    \end{equation}
    equipped with the initial-boundary value conditions:
    \begin{align}
        (u^{I,0},\eta^{I,0},\tau^{I,0})(x,y,0) ={}& (u_0,\eta_0,\tau_0)(x,y),\label{OB_limit_I_initial_boundary}
       \\ u^{I,0}(x,0,t) ={}& 0,\label{OB_limit_I_initial_boundary1}
    \end{align}
    where
    \begin{eqnarray*}
        \mathcal{Q}(\nabla u^{I,0}, \tau^{I,0} )  = \nabla u^{I,0}\tau^{I,0} + \tau^{I,0}\nabla^\top u^{I,0}, ~\mathcal{B}(\eta^{I,0}, \nabla u^{I,0}) = k\eta^{I,0}(\nabla u^{I,0} + \nabla^\top u^{I,0}).
    \end{eqnarray*}

    \subsection{Notation}
    In this paper, some standard notations are used. Let $C$ denote a generic positive constant which may depend on the initial data and some other known constants but independent of the variable parameter $\varepsilon$. When the dependence needs to be pointed out, we will   equip the constant with a corresponding subscript such as  $C_{\mu}$. Some other notations are stated as below: %We also use $B_i,D_i,$($i=0,1,2\cdots$) to denote the specific constants which are necessary to clearify the proofs.
    \begin{itemize}
    \item $A \lesssim B$ $\Leftrightarrow$ $A \leq CB$.

    \item The notation $\langle \cdot,\cdot\rangle$ means the $L^2$ inner product over $\mathbb{R}^2_+$.

    \item {For a scalar-valued function $\eta$, two vector-valued functions $u,v$ and a matrix-valued function $\mathbb{T}$, we use the following notations:
    \begin{align*}
        (\nabla \eta)_{i}& := \partial_i \eta,~(\nabla u)_{ij} := \partial_j u_i,~(\nabla \mathbb{T})_{ijk} := \partial_k \mathbb{T}_{ij},~(\nabla^2 u)_{ijk}:= \partial_j\partial_k u_i,\\
        &(u\otimes v)_{ij}:= u_iv_j,~(u\otimes \mathbb{T})_{ijk}:=u_i\mathbb{T}_{jk},~(\mathbb{T}\otimes u )_{ijk}:= \mathbb{T}_{ij}u_k.
    \end{align*}}

    \item $\langle \cdot \rangle := \sqrt{1+|\cdot|^2}.$

    \item  $L^p_{xy}$ and $H^s_{xy}$ denote the  usual Lebesuge and  Sobolev space over $\mathbb{R}_+^2: = \{(x,y)|(x,y)\in \mathbb{R}\times \mathbb{R}_+\}$ with corresponding norms $\|\cdot\|_{L^p_{xy}}$ and $\|\cdot\|_{H^s_{xy}}$, respectively.

    \item The following anisotropic Sobolev space is denoted as
     \begin{eqnarray*}
         H_x^mH_y^\ell:= \left\{f \in L^2(\mathbb{R}\times\mathbb{R}_+)\Big| \sum_{0\leq i \leq m, 0\leq j \leq \ell } \|\partial_x^{i}\partial_y^{j} f(x,y)\|_{L^2_{xy}}  < \infty \right\}
     \end{eqnarray*}
     with norm  $\|\cdot\|_{H^m_xH^\ell_y}$.

    \item $z = y/\sqrt{\varepsilon}$ for $\varepsilon>0$. The notations $L^p_{xz}$, $H^s_{xz}$, and  $ H^m_xH^\ell_z $ denote that their components
        are functions of $(x,z)$.

    \item $\|(u,v)\|_{X}^2:=\|u\|_{X}^2+\|v\|_{X}^2$ for Banach space $X$. The norm of  $L^q(0,T;X)$($1\leq p \leq \infty$)  is denoted by  $\|\cdot\|_{L^q_T X}$.

    \item Let $\varphi $  be a smooth function defined on $[0,+\infty)$ satisfying
     \begin{eqnarray}\label{eq_phi}
         \varphi(0) = 1,~~\varphi^\prime(0)=0, ~~~\varphi(z) = 0 \text{ for } z>1.
    \end{eqnarray}

    \end{itemize}

     \subsection{Main results}
   The main result of this paper is to investigate the non-diffusive limit of the problem \eqref{OB_main}-\eqref{OB_main_boundary}. For this purpose, we need the following wellposedness result of the limiting system \eqref{OB_limit_I}-\eqref{OB_limit_I_initial_boundary1}.
    \begin{proposition}\label{local_wellposedness_0}
        Assume that $(u_0,\eta_0,\tau_0) \in H^{14}_{xy}$ with $\mathrm{div}u_0 = 0$ and $\tau_0$ symmetric, and that the following compatibility conditions
        \begin{eqnarray}\label{eq_compatibility_u}
            \partial_t^{i}u^{I,0}(0)|_{y = 0} = 0,~~~0\leq i \leq 6,
        \end{eqnarray}
       hold, where $\partial_t^{i}u^{I,0}(0)$ is the $i$-th time derivative of $u^{I,0}$ at $\{t = 0\}$ which can be connected to the initial value $(u_0,\eta_0,\tau_0)$ by means of the system \eqref{OB_limit_I}. Then there exists a positive time $T_0$ such that \eqref{OB_limit_I}-\eqref{OB_limit_I_initial_boundary1} has a unique
        solution $(u^{I,0},\eta^{I,0},\tau^{I,0})$ on $[0,T_0]$ satisfying $\mathrm{div} u^{I,0} = 0$ and
        \begin{equation*}
            \begin{split}
                 & \partial_t^\ell u^{I,0} \in L^\infty(0,T_0; H^{14-2\ell}_{xy})\cap L^2(0,T_0; H^{15-2\ell}_{xy}),~~\ell=0,1,\cdots,{7}, \\
                 &\eta^{I,0}, \tau^{I,0}\in L^\infty(0,T_0; H^{14}_{xy}),~~\partial_t^j( \eta^{I,0},  \tau^{I,0})\in L^\infty(0,T_0; H^{15-2j}_{xy}),~~j =1,\cdots,{7}.\\
            \end{split}
        \end{equation*}
    \end{proposition}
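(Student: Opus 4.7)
I would prove Proposition \ref{local_wellposedness_0} by the classical linearization--iteration scheme for quasilinear parabolic--hyperbolic systems, tailored to the fact that the boundary $\{y=0\}$ is characteristic for the transport equations of $\eta^{I,0}$ and $\tau^{I,0}$: the no-slip condition $u^{I,0}|_{y=0}=0$ forces $u^{I,0}\cdot\nu=0$ on $\partial\mathbb{R}^2_+$, so no boundary data needs to be prescribed for $\eta^{I,0}$ or $\tau^{I,0}$. Starting from $(u^0,\eta^0,\tau^0):=(u_0,\eta_0,\tau_0)$, I would define the iterate $(u^{n+1},\eta^{n+1},\tau^{n+1})$ as the solution of the linearized system
\begin{align*}
&u^{n+1}_t+u^n\cdot\nabla u^{n+1}+\nabla\tilde p^{n+1}-\mu\Delta u^{n+1}=\mathrm{div}\,\tau^n,\qquad\mathrm{div}\,u^{n+1}=0,\\
&\eta^{n+1}_t+u^n\cdot\nabla\eta^{n+1}=0,\\
&\tau^{n+1}_t+u^n\cdot\nabla\tau^{n+1}-\mathcal{Q}(\nabla u^n,\tau^{n+1})+\gamma\tau^{n+1}=\mathcal{B}(\eta^{n+1},\nabla u^n),
\end{align*}
with $u^{n+1}|_{y=0}=0$ and initial data $(u_0,\eta_0,\tau_0)$. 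The first block is a linear non-stationary Stokes problem with divergence-free drift $u^n$ and source $\mathrm{div}\,\tau^n\in H^{13}_{xy}$, and the last two are linear transport equations whose wellposedness in $H^{14}_{xy}$ is classical given the characteristic-boundary structure.

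I would then derive uniform-in-$n$ bounds on some interval $[0,T_0]$. Applying $\partial^\alpha$ with $|\alpha|\le 14$ to the $u$-equation, testing with $\partial^\alpha u^{n+1}$, and handling the nonlinear terms by Moser/Kato--Ponce commutator estimates yields
\[
\|u^{n+1}\|_{L^\infty_{T_0}H^{14}_{xy}}^2+\mu\|u^{n+1}\|_{L^2_{T_0}H^{15}_{xy}}^2\lesssim\|u_0\|_{H^{14}_{xy}}^2+T_0\,P(M),
\]
where $M$ bounds the iterates and $P$ is polynomial. The same procedure applied to $\eta^{n+1}$ and $\tau^{n+1}$ gives $L^\infty_{T_0}H^{14}_{xy}$ bounds; crucially the principal transport piece $\langle u^n\cdot\nabla\partial^\alpha\tau^{n+1},\partial^\alpha\tau^{n+1}\rangle$ integrates by parts to zero because $\mathrm{div}\,u^n=0$ and $u^n$ vanishes on $\{y=0\}$, so no boundary contribution appears. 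Time-regularity is then controlled inductively: for $\partial_t^\ell u^{n+1}$ one uses the parabolic equation to trade two spatial derivatives per time derivative, and the compatibility conditions \eqref{eq_compatibility_u} are exactly what is needed to guarantee $\partial_t^i u^{n+1}|_{y=0}(t=0)=0$ for $0\le i\le 6$, which is required for the parabolic regularity theory to propagate through order $\partial_t^7 u^{n+1}$; the matching scaling $H^{15-2j}_{xy}$ for $\partial_t^j(\eta^{n+1},\tau^{n+1})$ is forced because their equations contain $\partial_t^{j-1}\nabla u^{n+1}$, which itself costs two spatial derivatives per time derivative.

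With the uniform bounds in hand, I would show the iterates form a Cauchy sequence in $L^\infty_{T_0}L^2_{xy}$ by writing the difference equations and running the same energy argument at the base regularity; shrinking $T_0$ if necessary yields a contraction, and the limit $(u^{I,0},\eta^{I,0},\tau^{I,0})$ inherits the uniform bounds by weak-$\ast$ compactness and solves \eqref{OB_limit_I}--\eqref{OB_limit_I_initial_boundary1}, with uniqueness proved by the same difference-energy argument. The main technical obstacle is the apparent half-derivative loss in the $\tau$-equation: at $|\alpha|=14$ the top-order term $\partial^\alpha(\nabla u^n\cdot\tau^{n+1})$ contains $\nabla\partial^\alpha u^n\cdot\tau^{n+1}$, which nominally requires $u^n\in H^{15}_{xy}$ although $u^n$ is only bounded in $L^\infty_t H^{14}_{xy}$. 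This gap is closed by paying for the extra derivative with time integrability: since $u^n\in L^2_{T_0}H^{15}_{xy}$, Cauchy--Schwarz gives $\int_0^{T_0}\|u^n\|_{H^{15}_{xy}}\,dt\lesssim T_0^{1/2}\|u^n\|_{L^2_{T_0}H^{15}_{xy}}$, so the Grönwall coefficient in the $\tau$-estimate is $o(1)$ on a short interval and the whole scheme closes.
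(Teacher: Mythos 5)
Your proposal is essentially correct and follows the same high-level strategy the paper itself invokes: iteration against a linearized Stokes/transport system plus energy estimates, with the compatibility conditions \eqref{eq_compatibility_u} driving the time-regularity bootstrap. The paper's own ``proof'' is Remark \ref{rem_u_eta_tau_I1}, which simply points to Fang--Hieber--Zi \cite{Fang_Hieber_Zi_2013} for the base $H^2$ wellposedness and to Evans, Chapter 7, for an inductive regularity lift; your version carries out the analogous iteration directly at the target regularity $H^{14}$, which is a legitimate (and somewhat more self-contained) variant of the same scheme. Your identification of the top-order term $\nabla\partial^\alpha u^n\cdot\tau^{n+1}$ as the apparent derivative loss, and your resolution of it by paying for the extra spatial derivative with the $L^2_t H^{15}_{xy}$ parabolic smoothing of $u^n$ via Cauchy--Schwarz in time, is correct and is indeed the key point that closes the a priori estimates at top order.

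The one place the sketch is imprecise is the phrase ``applying $\partial^\alpha$ with $|\alpha|\le 14$ to the $u$-equation, testing with $\partial^\alpha u^{n+1}$.'' When $\alpha$ contains normal ($\partial_y$) derivatives, $\partial^\alpha u^{n+1}$ no longer vanishes on $\{y=0\}$, so the integration by parts in the Laplacian and pressure terms produces boundary integrals that are not controlled by the stated quantities. The standard (and the Evans-style) resolution is to restrict the direct testing argument to tangential ($\partial_x$) and time derivatives, for which the Dirichlet condition is preserved, and then to recover normal regularity from the equation via the elliptic Stokes estimate. Your last paragraph's parabolic ``trade two spatial derivatives per time derivative'' argument is exactly this mechanism, so the full proposal is coherent; but as worded, the ``apply all $\partial^\alpha$'' statement is wrong and should be restricted to tangential and time derivatives from the start. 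The same caveat applies when you apply normal derivatives to the $\eta$- and $\tau$-transport equations and then claim the transport term integrates by parts cleanly: that cancellation is exact only for tangential and time derivatives; for mixed orders one uses the equation (or commutes and absorbs commutator terms), which is standard but should be stated.
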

    \begin{remark}\label{rem_u_eta_tau_I1}
        The proof of Proposition \ref{local_wellposedness_0} is standard. In \cite{Fang_Hieber_Zi_2013}, the authors proved the local well-posedness of \eqref{OB_limit_I}-\eqref{OB_limit_I_initial_boundary1} with $\eta = 0$ and $H^2$ initial data in 3-D exterior domains. After a
        slight modification, one can prove a similar result for \eqref{OB_limit_I}-\eqref{OB_limit_I_initial_boundary1} in $\mathbb{R}_+^2$. Moreover, the higher regularity can be proved by induction (cf. Chapter 7 of \cite{Evans_2010}).
    \end{remark}

    \medskip

    Now we are in a position to state our main result.

    \begin{theorem}\label{theorem_limit}
         In addition to the conditions of Proposition \ref{local_wellposedness_0},   we assume that $(u_0, \eta_0, \tau_0)$ satisfies the  additional strong compatibility conditions \eqref{eq_compatibility_eta_tau} and \eqref{compatibility_cond_u_I_2}.
         Then there exists a positive time $T_\star\leq T_{0}$ (which is defined in \eqref{def_T_star}) independent of $\varepsilon$ such that the problem \eqref{OB_main}-\eqref{OB_main_boundary} has a solution $(u^\varepsilon,\eta^\varepsilon,\tau^\varepsilon) \in C([0,T_{\star}];H^2_{xy})$
          with $\partial_x(u^\varepsilon,\eta^\varepsilon,\tau^\varepsilon) \in C([0,T_{\star}];H^2_{xy})$
         on $[0,T_\star]$,
         satisfying
        \begin{align}
            &\|u^\varepsilon(x,y,t) - u^{I,0}(x,y,t)    \|_{L^\infty_{T_\star}L^\infty_{xy}} \leq C\varepsilon,\label{eq_u_convergence_rate}\\
            &\|\eta^\varepsilon(x,y,t)   - \eta^{I,0}(x,y,t)   \|_{L^\infty_{T_\star}L^\infty_{xy}} \leq C\varepsilon^\frac{1}{2},\label{eq_eta_convergence_rate}\\
            &\|\tau^\varepsilon(x,y,t) - \tau^{I,0}(x,y,t) \|_{L^\infty_{T_\star}L^\infty_{xy}} \leq C\varepsilon^\frac{1}{2},\label{eq_tau_convergence_rate}
        \end{align}
        and
        \begin{align}
                &\|\partial_yu^\varepsilon(x,y,t) -\partial_y u^{I,0}(x,y,t)  \|_{L^\infty_{T_\star}L^\infty_{xy}}\leq   C\varepsilon^{\frac{1}{2}},  \label{eq_u_y_convergence_rate}\\
            &\|\partial_y\eta^\varepsilon(x,y,t)   - \partial_y\eta^{I,0}(x,y,t) - \partial_z\eta^{B,1}(x,\frac{y}{\sqrt{\varepsilon}},t) \|_{L^\infty_{T_\star}L^\infty_{xy}} \leq C\varepsilon^{\frac{1}{2}},\label{eq_eta_y_convergence_rate}\\
            &\|\partial_y\tau^\varepsilon(x,y,t) - \partial_y\tau^{I,0}(x,y,t)- \partial_z\tau^{B,1}(x,\frac{y}{\sqrt{\varepsilon}},t)\|_{L^\infty_{T_\star}L^\infty_{xy}} \leq C\varepsilon^{\frac{1}{2}},\label{eq_tau_y_convergence_rate}
        \end{align}
        where the positive constant $C$ is independent of $\varepsilon$,
         $\eta^{B,1}$ is a solution of \eqref{eq_eta_B1},
        and $\tau^{B,1}$ is  a solution of \eqref{eq_tau_B1_22},  \eqref{eq_tau_B1_12}, and \eqref{eq_tau_B1_11}. The well-posedness of $ \eta^{B,1},\tau^{B,1}$ are stated
        in Lemma \ref{lemma_eta_tau_b_1}.

    \end{theorem}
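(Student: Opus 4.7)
My plan is to perform a weakly nonlinear boundary-layer analysis in the spirit of a Prandtl-type corrector construction. The starting observation is that $u^{I,0}$ already satisfies the no-slip condition $u^{I,0}|_{y=0}=0$, so no leading-order boundary layer is needed for the velocity, whereas the Neumann conditions $\partial_y\eta^\varepsilon|_{y=0}=\partial_y\tau^\varepsilon|_{y=0}=0$ are generically violated by the limit profiles $\eta^{I,0},\tau^{I,0}$, and this non-compatibility must be repaired by a boundary layer of characteristic thickness $\sqrt{\varepsilon}$. I therefore postulate the multi-scale ansatz
\begin{align*}
u^{\mathrm{app}} &= u^{I,0} + \varepsilon\, u^{I,1}+ \cdots,\\
\eta^{\mathrm{app}} &= \eta^{I,0} + \sqrt{\varepsilon}\,\varphi(y/\sqrt{\varepsilon})\,\eta^{B,1}(x,y/\sqrt{\varepsilon},t) + \varepsilon\,\eta^{I,1} + \cdots,\\
\tau^{\mathrm{app}} &= \tau^{I,0} + \sqrt{\varepsilon}\,\varphi(y/\sqrt{\varepsilon})\,\tau^{B,1}(x,y/\sqrt{\varepsilon},t) + \varepsilon\,\tau^{I,1} + \cdots,
\end{align*}
carried to high enough order (through $\varepsilon^{3/2}$) that the residual obtained after substitution in \eqref{OB_main} is formally of size $\varepsilon^{3/2}$. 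Here $\varphi$ is the cutoff from \eqref{eq_phi}, which localizes the corrector near $y=0$.

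Substituting this ansatz, collecting like powers of $\sqrt{\varepsilon}$, and separating the fast scale $z=y/\sqrt{\varepsilon}$ from the slow scale $y$ yields the profile equations. At order $\sqrt{\varepsilon}$ in the $\eta$- and $\tau$-equations, $\varepsilon\Delta$ acting on the boundary-layer term produces $\partial_z^2$, so $\eta^{B,1}$ and $\tau^{B,1}$ satisfy one-dimensional-in-$z$ parabolic problems whose coefficients are traces of $u^{I,0}$ and its derivatives at $y=0$, driven by the inhomogeneous Neumann data $-\partial_y\eta^{I,0}|_{y=0}$ and $-\partial_y\tau^{I,0}|_{y=0}$ chosen to enforce the correct boundary condition. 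These are precisely the problems \eqref{eq_eta_B1} and \eqref{eq_tau_B1_22}--\eqref{eq_tau_B1_11} cited in the statement, and their solvability is provided by Lemma \ref{lemma_eta_tau_b_1}. The interior correctors $(u^{I,1},\eta^{I,1},\tau^{I,1})$ are then obtained by solving linearized versions of \eqref{OB_limit_I} about $(u^{I,0},\eta^{I,0},\tau^{I,0})$, with source terms absorbing the interior residuals from the Taylor expansion of the $\varepsilon\Delta$ terms together with the inner-outer matching contributions.

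Next I set $(u^R,\eta^R,\tau^R):=(u^\varepsilon,\eta^\varepsilon,\tau^\varepsilon)-(u^{\mathrm{app}},\eta^{\mathrm{app}},\tau^{\mathrm{app}})$ and write down the quasilinear system it satisfies, which has the same structural form as \eqref{OB_main} driven by an explicit source $\mathcal{R}$. Careful bookkeeping is required because each $\partial_y$ falling on a boundary-layer term costs a factor $1/\sqrt{\varepsilon}$, partially compensated by $\int_0^\infty(\cdot)\,dy=\sqrt{\varepsilon}\int_0^\infty(\cdot)\,dz$; the upshot is that $\mathcal{R}=O(\varepsilon^{3/2})$ in $L^2_tH^2_{xy}$. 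A uniform-in-$\varepsilon$ $H^2_{xy}$ energy estimate for $(u^R,\eta^R,\tau^R)$ on a short interval $[0,T_\star]$ is then closed using the Navier viscosity $\mu\Delta u^R$, the (degenerate) diffusion $\varepsilon\Delta(\eta^R,\tau^R)$, the Dirichlet/Neumann boundary conditions for the remainder (inherited from both the exact and the approximate solutions), the a priori $L^\infty$-bounds on $(u^{I,0},\eta^{I,0},\tau^{I,0})$ from Proposition \ref{local_wellposedness_0} and on the boundary-layer profiles, and the Sobolev embedding $H^2_{xy}\hookrightarrow L^\infty_{xy}$ in two dimensions. A Gronwall argument yields $\|(u^R,\eta^R,\tau^R)\|_{L^\infty_{T_\star}H^2_{xy}}\lesssim \varepsilon^{3/2}$, and combining this with the explicit form of $\eta^{\mathrm{app}},\tau^{\mathrm{app}}$ and their $\partial_y$-derivatives produces \eqref{eq_u_convergence_rate}--\eqref{eq_tau_y_convergence_rate} after Sobolev embedding.

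The main obstacle I anticipate is closing the energy estimate uniformly in $\varepsilon$. The term $\nabla u^R\,\tau+\tau\,\nabla^\top u^R$ in the equation for $\tau^R$ threatens a loss of one derivative on $u$; because the principal dissipation $\varepsilon\Delta\tau$ degenerates as $\varepsilon\to 0$, this loss cannot be absorbed by the diffusion acting on $\tau^R$. One must instead exploit the structural cancellation between $\mathrm{div}\,\tau^R$ in the momentum equation and $\nabla u^R\,\tau+\tau\,\nabla^\top u^R$ in the stress equation via a symmetric $\tau$-weighted energy (of Lions--Masmoudi and Constantin--Kliegl type), so that the derivative-losing term cancels against a matching contribution from the velocity balance. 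A secondary difficulty is that each higher interior corrector demands additional regularity of the data (hence the $H^{14}_{xy}$ assumption in Proposition \ref{local_wellposedness_0}) and a matching of the boundary layer to the outer profile at the corner $(y,t)=(0,0)$, which is precisely the role of the strong compatibility conditions \eqref{eq_compatibility_eta_tau} and \eqref{compatibility_cond_u_I_2} invoked in the theorem.
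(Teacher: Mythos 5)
Your overall framework—a Prandtl-type boundary-layer ansatz plus an energy estimate for the remainder—is indeed the paper's strategy, but the closure step you propose would fail, and that failure is the heart of the problem.

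The gap is in the claim that $\mathcal{R}=O(\varepsilon^{3/2})$ in $L^2_tH^2_{xy}$ and that the remainder can then be bounded in $L^\infty_{T_\star}H^2_{xy}$ by $\varepsilon^{3/2}$, after which isotropic Sobolev embedding $H^2_{xy}\hookrightarrow L^\infty_{xy}$ gives the rates. Neither assertion holds. Every $\partial_y$ that lands on a boundary-layer factor $f(x,y/\sqrt{\varepsilon},t)$ costs $\varepsilon^{-1/2}$, and the $L^2_y$ change of variables only refunds $\varepsilon^{1/4}$ (Lemma \ref{lemma_hou_wang}), so $\|\partial_y^m f(\cdot,y/\sqrt{\varepsilon},\cdot)\|_{L^2_y}=\varepsilon^{\frac14-\frac m2}\|\partial_z^m f\|_{L^2_z}$. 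Consequently the residual is \emph{not} uniformly small in $H^2_{xy}$, and neither are the coefficients of the remainder equation; a full $H^2_{xy}$ energy estimate cannot be closed with a rate uniform in $\varepsilon$. This is exactly why the paper works in the anisotropic scale $H^1_xL^2_y$ for the source terms (Lemma \ref{lemma_source_term_1} gives $\|(\mathfrak{F},\mathfrak{G},\mathfrak{J})\|_{L^\infty_TH^1_xL^2_y}\le C\varepsilon$, not $H^2_{xy}$), builds a ladder of estimates in $L^2$, $\partial_t$, $\nabla$, $\partial_x$, and $\nabla\partial_x$ (Lemmas \ref{lemma_UHT_infty_L2}--\ref{lemma_UHT_x_H_2}, in which the $H^2_{xy}$ control of $(\tilde H^\varepsilon,\tilde\Theta^\varepsilon)$ is only $O(\varepsilon^{1/4})$, see \eqref{eq_HT_H_2_tilde}), and finally uses the \emph{anisotropic} $L^\infty$ inequality \eqref{eq_u_infty}, which weights $\partial_x$ and $\partial_y$ differently, to convert these mixed bounds into the sharp $L^\infty$ convergence rates. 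Replacing \eqref{eq_u_infty} with $H^2\hookrightarrow L^\infty$ would lose the crucial half power of $\varepsilon$.

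Two secondary points. First, the difficulty you flag about $\nabla u\,\tau+\tau\nabla^\top u$ losing a derivative is not what blocks the argument here: the velocity viscosity $\mu>0$ is fixed (only $\varepsilon\to0$), so $\mu\|\nabla U^\varepsilon\|_{L^2_TL^2}^2$ absorbs that loss directly without any Lions--Masmoudi / Constantin--Kliegl $\tau$-weighted energy. Second, the paper shows that the outer profiles at every odd half-power vanish (Lemma \ref{lemma_u_I_1} gives $(u^{I,1},\eta^{I,1},\tau^{I,1})=0$) and must push the expansion to $u^{B,4}$, $u_2^{B,5}$ and $(u^{I,3},\eta^{I,3},\tau^{I,3})$ to keep the remainder's boundary data homogeneous and to reach the rates for $\partial_y$ of the solution; truncating at $\varepsilon^{3/2}$ as you suggest would leave boundary residuals that obstruct \eqref{eq_eta_y_convergence_rate}--\eqref{eq_tau_y_convergence_rate}.
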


    \begin{remark}\label{rem_weak_boundary_l}
       Theorem \ref{theorem_limit} suggests that the boundary layer effect does not happen to the solution itself but to the $``\partial_y"$ of the solution as $\varepsilon\rightarrow0$. %This means tha $(\eta,\tau)$ enjoys the so called weak boundary layer effect.
    \end{remark}

    \begin{remark}
        The strong compatibility conditions \eqref{eq_compatibility_eta_tau} is used to prove the wellposedness of the higher order boundary layer profiles which is crucial to deduce the $L^\infty$  convergence estimates \eqref{eq_u_convergence_rate}-\eqref{eq_tau_convergence_rate}, see Section \ref{subsection_regularity} for details. In addition, to justify the weak boundary layer effects,  we need to involve the higher order profiles $(u^{I,2},\eta^{I,2},\tau^{I,2})$ (which satisfies \eqref{OB_outer_second}) in the approximate solution.
       And, \eqref{compatibility_cond_u_I_2} is the compatibility conditions for getting the higher regularities of $(u^{I,2},\eta^{I,2},\tau^{I,2})$.
    \end{remark}

  \begin{remark}
  For the convenience of presenting our method and technique, we state the main result in 2D. The 3D case will be reported in our forthcoming paper \cite{WangYH-Wen}.
  \end{remark}

    \begin{remark}
  It is also interesting to study the case that $\eta^{\varepsilon}$ and $\tau^{\varepsilon}$ equipped with the Dirichlet boundary conditions. This will be considered in the near future.
  \end{remark}

    \bigskip

   We will remove the superscript $\varepsilon$ of $(u^\varepsilon,\eta^\varepsilon,\tau^\varepsilon)$ for brevity throughout the rest of the paper when it does not cause any confusion.

    \subsection*{Main ideas}
    Let us sketch the main ideas in the present paper. To provide a rigorous justification from (\ref{OB_main}), (\ref{OB_main_initial}), and (\ref{OB_main_boundary}) to (\ref{OB_limit_I}), (\ref{OB_limit_I_initial_boundary}), and (\ref{OB_limit_I_initial_boundary1}) as $\varepsilon\rightarrow0$, the main challenge is the possible gap between $(\eta,\tau)$ and $(\eta^{I,0}, \tau^{I,0})$ at the boundary, compared with the Cauchy problem considered in \cite{Huang_Wang_Wen_Zi_2022}.
    Our main idea is to find some correctors to cancel the possible gap by employing the asymptotic matched expansion method (refer for instance to Chapter 4 in \cite{Holmes_2013}, see also \cite{Hou_Wang_2019}). More specifically, the solution of problem \eqref{OB_main}-\eqref{OB_main_boundary} is decomposed as below:
    \begin{align}
         u(x,y,t) ={}&  u^{I,0}(x,y,t) +  \varepsilon u^{I,2}(x,y,t) +\varepsilon \left(u_1^{B,2}(x,\frac{y}{\sqrt{\varepsilon}},t),0\right)^\top +\varepsilon^{\frac{3}{2}} u^{I,3}(x,y,t) +  \varepsilon^{\frac{3}{2}} u^{B,3}(x,\frac{y}{\sqrt{\varepsilon}},t) \notag\\
        &+ \varepsilon^{2} u^{B,4}(x,\frac{y}{\sqrt{\varepsilon}},t) + \varepsilon^{\frac{5}{2}}\left(0,u^{B,5}_2(x,\frac{y}{\sqrt{\varepsilon}},t)\right)^\top + \sqrt{\varepsilon}R^{u}(x,y),\notag\\
         \eta(x,y,t)  ={}& \eta^{I,0}(x,y,t) +\sqrt{\varepsilon} \eta^{B,1}(x,\frac{y}{\sqrt{\varepsilon}},t) + \varepsilon \eta^{I,2}(x,y,t)+ \varepsilon \eta^{B,2}(x,\frac{y}{\sqrt{\varepsilon}},t)  + \varepsilon^{\frac{3}{2}} \eta^{B,3}(x,\frac{y}{\sqrt{\varepsilon}},t)\notag\\
         & + \sqrt{\varepsilon} R^{\eta}(x,y),\notag\\
         \tau(x,y,t)  ={}& \tau^{I,0}(x,y,t) +\sqrt{\varepsilon} \tau^{B,1}(x,\frac{y}{\sqrt{\varepsilon}},t) + \varepsilon \tau^{I,2}(x,y,t)+ \varepsilon \tau^{B,2}(x,\frac{y}{\sqrt{\varepsilon}},t)  + \varepsilon^{\frac{3}{2}} \tau^{B,3}(x,\frac{y}{\sqrt{\varepsilon}},t)\notag\\
         & +\sqrt{\varepsilon}  R^{\tau}(x,y),\notag
    \end{align} where $(u^{I,0},\eta^{I,0},\tau^{I,0})$ is the solution to the initial-boundary value problem \eqref{OB_limit_I}-\eqref{OB_limit_I_initial_boundary1}, and the other quantities on the right-hand side except ($R^{u}, R^{\eta}, R^{\tau}$) are given via asymptotic matched expansion, see Section \ref{section_approx}. Indeed, the profiles $(u^{I,j},\eta^{I,j},\tau^{I,j})$ and $(u^{B,j},\eta^{B,j},\tau^{B,j})$ satisfy some linear equations which can be solved in sequence combined with the boundary condition \eqref{OB_main_boundary}. In addition, those profiles enjoy good regularities and decay properties in space, see Section \ref{subsection_regularity}. Then, the problem is converted to estimate the remainder terms $R^{u}(x,y),R^{\eta}(x,y)$ and $R^{\tau}(x,y),$ which relies on several tricky estimates including some delicate anisotropic estimates, see Section \ref{section_theorem_limit} for the details.
   Moreover, the higher order profiles $(u^{I,j},\eta^{I,j},\tau^{I,j})$ ($j\geq 2$) and $(u^{B,j},\eta^{B,j},\tau^{B,j})$ ($j\geq 3$) we introduce are very crucial to get some uniform estimates for high-order derivatives of the solutions such as \eqref{eq_eta_y_convergence_rate} and \eqref{eq_tau_y_convergence_rate}.

   \medskip

   The rest of the paper is organized as follows. In Section \ref{section_prelim}, we present some known inequalities which will be used in the proof of the main theorem. In Section \ref{section_approx}, we obtain some equations of boundary layer profiles and some other related quantities via asymptotic analysis together with wellposedness theorems. The details of the asymptotic analysis is given in Appendix \ref{section_derivation_eqs}. In addition, a system of the error terms is constructed. In Section \ref{section_theorem_limit}, we give a proof of Theorem \ref{theorem_limit}.

\section{Some inequalities}\label{section_prelim}
 In this section, we recall some useful inequalities which will be used later.
\begin{lemma}\label{lemma_inequality}
    Assume that $f,g,h \in H^1_{xy}$ and $u\in H^2_{xy}$. Then the following inequalities:
    \begin{eqnarray}\label{ieq_Lady}
        \|f\|_{L^4_{xy}}  \lesssim \|f\|_{L^2_{xy}}^{\frac{1}{2}}\|\nabla f\|_{L^2_{xy}}^{\frac{1}{2}},
    \end{eqnarray}
    \begin{equation}\label{eq_fgh}
        \int_0^{+\infty}\int_{-\infty}^{+\infty}|fgh|\mathrm{d}x\mathrm{d}y\leq C\|f\|_{L^2_{xy}}\|g\|_{L^2_{xy}}^{\frac{1}{2}}\|\partial_xg\|_{L^2_{xy}}^{\frac{1}{2}}
        \|h\|_{L^2_{xy}}^{\frac{1}{2}}\|\partial_yh\|_{L^2_{xy}}^{\frac{1}{2}},
       \end{equation}
       and
    \begin{equation}\label{eq_u_infty}
            \|u(x,y)\|_{L^\infty_{xy}}\lesssim \|\partial_xu\|_{L_{xy}^2}^{\frac{1}{2}}\|\partial_yu\|_{L_{xy}^2}^{\frac{1}{2}} + \|u\|_{L_{xy}^2}^{\frac{1}{2}}\|\partial_x\partial_yu\|_{L_{xy}^2}^{\frac{1}{2}},
        \end{equation}
        hold.
\end{lemma}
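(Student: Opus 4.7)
The three bounds are classical anisotropic Sobolev-type inequalities, and the common engine is the one-dimensional trace identity. For any sufficiently regular $v$ on $\mathbb{R}_+^2$ that decays at infinity, write
\begin{equation*}
v^2(x,y) = -2\int_y^{\infty} v(x,s)\,\partial_s v(x,s)\,\mathrm{d}s
\quad\text{and}\quad
v^2(x,y) = -2\int_x^{\infty} v(t,y)\,\partial_t v(t,y)\,\mathrm{d}t,
\end{equation*}
which by Cauchy--Schwarz give the pointwise bounds
\begin{equation*}
v^2(x,y) \leq 2\|v(x,\cdot)\|_{L^2_y}\|\partial_y v(x,\cdot)\|_{L^2_y},
\qquad
v^2(x,y) \leq 2\|v(\cdot,y)\|_{L^2_x}\|\partial_x v(\cdot,y)\|_{L^2_x}.
\end{equation*}
I would establish all three inequalities for smooth compactly supported functions first and then conclude by density.

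For \eqref{ieq_Lady}, the plan is to write $\int\!\!\int f^4\,\mathrm{d}x\mathrm{d}y = \int\!\!\int f^2\cdot f^2\,\mathrm{d}x\mathrm{d}y$, apply the first pointwise estimate above (taking the sup in $x$) to one factor of $f^2$ and integrate the other in $x$, obtaining $\int f^4\,\mathrm{d}x \leq 2\|f(\cdot,y)\|_{L^2_x}^3 \|\partial_x f(\cdot,y)\|_{L^2_x}$. Then integrate in $y$ using Cauchy--Schwarz together with $\|f\|_{L^\infty_y L^2_x}^2 \lesssim \|f\|_{L^2_{xy}}\|\partial_y f\|_{L^2_{xy}}$, yielding $\|f\|_{L^4_{xy}}^4 \lesssim \|f\|_{L^2_{xy}}^2(\|\partial_x f\|^2 + \|\partial_y f\|^2)$ up to the desired Ladyzhenskaya form.

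For \eqref{eq_fgh}, I would handle the two directions asymmetrically so the derivative falls only on one factor per variable. First, for fixed $x$, Hölder in $y$ gives
\begin{equation*}
\int_0^{\infty}|fgh|\,\mathrm{d}y \leq \|h(x,\cdot)\|_{L^\infty_y}\|f(x,\cdot)\|_{L^2_y}\|g(x,\cdot)\|_{L^2_y},
\end{equation*}
and the trace identity yields $\|h(x,\cdot)\|_{L^\infty_y}^2 \leq 2\|h(x,\cdot)\|_{L^2_y}\|\partial_y h(x,\cdot)\|_{L^2_y}$. Next, integrate in $x$, apply Cauchy--Schwarz isolating $\|f\|_{L^2_{xy}}$, and bound $\|g\|_{L^\infty_x L^2_y}^2 \leq 2\|g\|_{L^2_{xy}}\|\partial_x g\|_{L^2_{xy}}$ via the same device in the $x$ variable. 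Recombining the powers gives exactly the claimed product of four $L^2$-halves.

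For \eqref{eq_u_infty}, the plan is to use a two-variable fundamental theorem: assuming $u$ decays at infinity,
\begin{equation*}
u^2(x,y) = \int_x^{\infty}\!\!\int_y^{\infty}\partial_s\partial_t(u^2)\,\mathrm{d}s\,\mathrm{d}t = 2\int_x^{\infty}\!\!\int_y^{\infty}(\partial_s u\,\partial_t u + u\,\partial_s\partial_t u)\,\mathrm{d}s\,\mathrm{d}t,
\end{equation*}
and Cauchy--Schwarz gives $u^2 \leq 2\|\partial_x u\|_{L^2_{xy}}\|\partial_y u\|_{L^2_{xy}} + 2\|u\|_{L^2_{xy}}\|\partial_x\partial_y u\|_{L^2_{xy}}$, after which $\sqrt{a+b}\leq\sqrt{a}+\sqrt{b}$ yields the stated form. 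The only non-routine step is justifying the decay at infinity for $u \in H^2(\mathbb{R}_+^2)$; this is standard by approximating $u$ by Schwartz functions on the closed half-plane and passing to the limit, so no real obstacle arises. The entire lemma is a bookkeeping exercise in one-dimensional embeddings combined with the half-space boundary.
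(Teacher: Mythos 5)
Your proposal is correct. For \eqref{eq_u_infty} your argument is essentially identical to the paper's: the two-variable fundamental-theorem identity $u^2(x,y)=\int_x^{\infty}\int_y^{\infty}\partial_s\partial_t(u^2)\,\mathrm{d}t\,\mathrm{d}s$, then H\"older/Cauchy--Schwarz, then density. The difference is in \eqref{ieq_Lady} and \eqref{eq_fgh}: the paper does not prove these at all, it simply cites \eqref{ieq_Lady} as the well-known Ladyzhenskaya inequality and refers to Lemma~1 of Cao--Wu \cite{Cao_Wu_2011} for \eqref{eq_fgh}, whereas you give self-contained derivations using the same one-dimensional trace identities (one in $x$ for one factor, one in $y$ for another) that drive \eqref{eq_u_infty}. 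This makes the lemma fully self-contained, at the cost of a bit more length; the underlying device is identical to the Cao--Wu argument, so nothing new is gained structurally, but it is cleaner to see all three come from a single engine. One small wording slip: when proving \eqref{ieq_Lady} you say you apply ``the first pointwise estimate (taking the sup in $x$),'' but the conclusion $\int f^4\,\mathrm{d}x \leq 2\|f(\cdot,y)\|_{L^2_x}^3\|\partial_x f(\cdot,y)\|_{L^2_x}$ uses the \emph{second} pointwise estimate, $\sup_x f^2(x,y)\le 2\|f(\cdot,y)\|_{L^2_x}\|\partial_x f(\cdot,y)\|_{L^2_x}$; the mathematics is fine, just relabel which identity you invoke. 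Also note that the chain yields $\|f\|_{L^4}^4\lesssim\|f\|_{L^2}^2\|\partial_xf\|_{L^2}\|\partial_yf\|_{L^2}$, which is in fact slightly sharper than the stated $\|f\|_{L^2}^2\|\nabla f\|_{L^2}^2$; passing from the former to the latter is just $ab\le\frac12(a^2+b^2)$, so the statement follows.
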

    \begin{proof}
      \eqref{ieq_Lady} is the well-known Ladyzhenskaya's inequality.
        For the proof of \eqref{eq_fgh}, one can refer for instance to Lemma 1 in \cite{Cao_Wu_2011}. Here we sketch the proof of \eqref{eq_u_infty}.
         For $u\in C^\infty( \mathbb{R}^2_+  ) \cap H^2(\mathbb{R}^2_+)$, it holds that
         \begin{eqnarray*}
             |u(x,y)|^2 = \int_y^{+\infty}\int_x^{+\infty}\partial_x\partial_y |u|^2(x,y) \mathrm{d}x \mathrm{d} y =2 \int_y^{+\infty}\int_x^{+\infty}(\partial_xu\cdot\partial_yu + u\cdot \partial_x\partial_yu ) (x,y)\mathrm{d}x \mathrm{d} y.
         \end{eqnarray*}
        Then, using H\"older's inequality, we have
        \begin{eqnarray*}
            \begin{aligned}
                \|u(x,y)\|_{L^\infty_{xy}}^2 \leq {}&  2 \int_0^{+\infty}\int_{-\infty}^{\infty}(|\partial_xu||\partial_yu| + |u| | \partial_x\partial_yu| ) (x,y)\mathrm{d}x \mathrm{d} y\\
                \leq {}&2\left( \|\partial_xu\|_{L^2_{xy}} \|\partial_yu\|_{L^2_{xy}} + \| u\|_{L^2_{xy}} \|\partial_x\partial_yu\|_{L^2_{xy}}  \right),
            \end{aligned}
        \end{eqnarray*}
        which, combined with a density argument, yields  \eqref{eq_u_infty}.
    \end{proof}
    \begin{lemma}[Page 271 in~\cite{Hou_Wang_2019}]\label{lemma_hou_wang}
        Denote $z = y/\sqrt{\varepsilon}$. Then,
        \begin{eqnarray*}
            \left\| \partial^m_y f\left( x, \frac{y}{\sqrt{\varepsilon}}, t\right) \right\|_{H^\ell_xL^2_y}
            = \varepsilon^{\frac{1}{4}-\frac{m}{2}}\|\partial^m_z f (x,z,t)\|_{H^\ell_xL^2_z},
        \end{eqnarray*} and
        \begin{equation*}
             \|  g ( x, 0, t)   \|_{H^\ell_x }^2
            \leq C \sum_{i =0}^\ell\int_{-\infty}^{\infty}  \|\partial^i_x  g (x,z,t)\|_{H^1_z}^2 \mathrm{d}x \leq C\| g(x,z,t)\|_{H^\ell_xH^1_z}^2,
        \end{equation*} hold for any $f(\cdot,\cdot,t) \in H^\ell_xH^m_z$ and $g(\cdot,\cdot,t)\in H^\ell_xH^1_z$ with $\ell,m\in \mathbb{N}$ and fixed $t>0$.

    \end{lemma}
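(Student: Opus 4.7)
The plan is to handle the two assertions separately since they are independent. The first is a scaling identity and the second is a trace inequality, both of which should reduce to elementary calculations once the change of variables $z=y/\sqrt{\varepsilon}$ is set up carefully.

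For the first identity, I would start from the chain rule. Writing $F(x,y,t):=f(x,y/\sqrt{\varepsilon},t)$, a direct computation gives $\partial_y^m F(x,y,t) = \varepsilon^{-m/2}(\partial_z^m f)(x,y/\sqrt{\varepsilon},t)$. Then for any fixed multi-index in $x$, I would apply the substitution $z=y/\sqrt{\varepsilon}$, $\mathrm{d}y = \sqrt{\varepsilon}\,\mathrm{d}z$ to the $L^2_y$ integral, which produces the factor $\varepsilon^{1/4}$ in front of $\|\partial_z^m f\|_{L^2_z}$. Combining these gives $\varepsilon^{-m/2+1/4}$. Since $x$-derivatives and $x$-integration commute with the scaling in $y$, taking the $H^\ell_x$-norm on both sides and using Fubini/Tonelli transports the same constant. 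No estimate is needed, only an identity.

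For the second (trace) inequality, I would fix $(x,t)$ and argue pointwise in $x$ first. Using the fundamental theorem of calculus on $\mathbb{R}_+$, write $|g(x,0,t)|^2 = -\int_0^\infty \partial_z|g(x,z,t)|^2\,\mathrm{d}z = -2\int_0^\infty g\,\partial_z g\,\mathrm{d}z$, whose absolute value is controlled via Cauchy--Schwarz (or $2ab\le a^2+b^2$) by $\|g(x,\cdot,t)\|_{H^1_z}^2$. Applying this inequality to $\partial_x^i g$ for $0\leq i\leq \ell$ and then integrating in $x$, I obtain $\|\partial_x^i g(x,0,t)\|_{L^2_x}^2 \leq \int_{-\infty}^\infty \|\partial_x^i g(x,\cdot,t)\|_{H^1_z}^2\,\mathrm{d}x$. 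Summing in $i$ yields both inequalities simultaneously, with the right-hand side equal by definition to $\|g\|_{H^\ell_xH^1_z}^2$.

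The only potential pitfall I anticipate is a density/regularity step, namely justifying the pointwise manipulations for merely $H^\ell_xH^1_z$-functions; this is dispatched by approximating $g$ by smooth functions compactly supported in $\overline{\mathbb{R}_+}$ and passing to the limit, exactly as in the proof of the standard trace theorem. Since the lemma is quoted from Hou--Wang and neither step involves a genuinely delicate estimate, I do not expect any real obstacle; the proof is essentially a change of variables plus a one-dimensional trace inequality applied in $z$ uniformly in $x$.
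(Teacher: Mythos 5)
Your proof is correct, and both steps (the chain rule plus the scaling substitution $\mathrm{d}y=\sqrt{\varepsilon}\,\mathrm{d}z$ for the exact identity, and the one-dimensional trace bound $|g(x,0,t)|^2 \le 2\|g(x,\cdot,t)\|_{L^2_z}\|\partial_z g(x,\cdot,t)\|_{L^2_z}$ applied to $\partial_x^i g$ and integrated in $x$) are exactly the standard argument for this type of statement. The paper does not reproduce a proof — it cites the lemma directly from Hou and Wang — so there is nothing further to compare; your account is a complete and accurate reconstruction.
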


\section{Construction of an approximate solution}\label{section_approx}
In this section, we present the equations of outer and inner layer profiles via asymptotic analysis whose derivations will be given in Appendix \ref{section_derivation_eqs}. With the boundary layer profiles, we can construct an approximate solution to the problem \eqref{OB_main}-\eqref{OB_main_boundary} which is useful in the proof of Theorem \ref{theorem_limit}.

 \subsection{Asymptotic analysis}\label{subsection_asym}
 In this subsection, we derive the equations of the outer and inner layer profiles by using the asymptotic matched expansion method (see e.g. \cite{Holmes_2013, Hou_Wang_2019}). Formally we introduce the following  Prandtl type boundary layer expansion:
 \begin{equation}\label{eq_approx}
    \begin{split}
        &u(x,y,t) = \sum_{j=0}^{\infty}\varepsilon^{\frac{1}{2}j}\Big(u^{I,j}(x,y,t) + u^{B,j}(x,z,t)\Big),~p(x,y,t) = \sum_{j=0}^{\infty}\varepsilon^{\frac{1}{2}j}\Big(p^{I,j}(x,y,t)+ p^{B,j}(x,z,t)\Big),\\
       &\eta(x,y,t) = \sum_{j=0}^{\infty}\varepsilon^{\frac{1}{2}j}\Big(\eta^{I,j}(x,y,t) + \eta^{B,j}(x,z,t)\Big),~\tau(x,y,t) = \sum_{j=0}^{\infty}\varepsilon^{\frac{1}{2}j}\Big(\tau^{I,j}(x,y,t) + \tau^{B,j}(x,z,t)\Big),
    \end{split}
\end{equation} where $z = y/\sqrt{\varepsilon}$.

We assume that
\begin{equation*}
    u^{B,j}(x,z,t)\to 0,~p^{B,j}(x,z,t)\to 0,~\eta^{B,j}(x,z,t)\to 0,~\tau^{B,j}(x,z,t)\to 0,~
\end{equation*}
   fast enough as $z\to +\infty$. Substituting \eqref{eq_approx} to \eqref{OB_limit_I}, \eqref{OB_limit_I_initial_boundary}, and (\ref{OB_limit_I_initial_boundary1}), and applying the matched asymptotic method, we can deduce the equations of outer and inner layer profiles in sequence, see Appendix \ref{section_derivation_eqs} for the details.

\subsubsection{The zeroth and the first order outer profiles}
    The leading order outer layer profile $(u^{I,0},\eta^{I,0},\tau^{I,0})(x,y,t)$ satisfies  \eqref{OB_limit_I}-\eqref{OB_limit_I_initial_boundary1}.
   The first order outer layer profile $(u^{I,1},\eta^{I,1},\tau^{I,1})(x,y,t)$ vanishes identically, see Lemma \ref{lemma_u_I_1}.

\subsubsection{The zeroth and the first order inner profiles}
    Due to the boundary condition \eqref{OB_main_boundary} and the analysis in Appendix \ref{section_derivation_eqs}, we find that the zeroth order and the first order inner layer profiles satisfy
    \begin{equation*}
    \begin{split}
    u^{B,0}=0,\,\,\,\,\tilde{p}^{B,0}=0,\,\,\,\eta^{B,0}=0,\,\,\,\,\tau^{B,0}=0,\,\,\,u^{B,1}=0,
    \end{split}
    \end{equation*}
     see  Lemma \ref{lemma_eq_u_eta_tau_B_0} and Corollary \ref{corollary_u} for the details.

    Next, from Lemma \ref{lemma_u_eta_tau_B_111}, the first order profile $\eta^{B,1}$ fulfills
    \begin{equation}\label{eq_eta_B1}
    \begin{cases}
        \partial_t \eta^{B,1} -\partial_z^2\eta^{B,1} = 0,\\[2mm]
        \eta^{B,1}(x,z,0) = 0,\,\,\partial_z\eta^{B,1}(x,0,t) = -\partial_y\eta^{I,0}(x,0,t).
    \end{cases}
 \end{equation}
 Furthermore, the equations for
        $\tau^{B,1} = \left(\begin{matrix}
            \tau^{B,1}_{11} & \tau^{B,1}_{12}\\[2mm]
            \tau^{B,1}_{12} & \tau^{B,1}_{22}
        \end{matrix}\right)$ are given as follows:
\begin{equation}\label{eq_tau_B1_22}
    \begin{cases}
        \partial_t \tau^{B,1}_{22} + \gamma\tau^{B,1}_{22} - \partial_z^2\tau^{B,1}_{22} = 0,\\[2mm]
        \tau^{B,1}_{22}(x,z,0) = 0,~\partial_z\tau^{B,1}_{22}(x,0,t) = -\partial_y\tau^{I,0}_{22}(x,0,t),
    \end{cases}
\end{equation}
\begin{equation}\label{eq_tau_B1_12}
    \begin{cases}
        \partial_t \tau^{B,1}_{12} + \gamma\tau^{B,1}_{12} - \partial_z^2\tau^{B,1}_{12} +  \frac{1}{\mu}(\tau_{22}^{I,0}(x,0,t) + k\eta^{I,0}(x,0,t))\tau^{B,1}_{12}    \\
         ~~= \partial_yu_1^{I,0}(x,0,t)\tau_{22}^{B,1} + k\eta^{B,1}\partial_y u_1^{I,0}(x,0,t),\\[2mm]
        \tau^{B,1}_{12}(x,z,0) = 0,~\partial_z\tau^{B,1}_{12}(x,0,t) = -\partial_y\tau^{I,0}_{12}(x,0,t),
    \end{cases}
\end{equation}
and
\begin{equation}\label{eq_tau_B1_11}
    \begin{cases}
        \partial_t \tau^{B,1}_{11}   + \gamma\tau^{B,1}_{11} - \partial_z^2\tau^{B,1}_{11}
        = 2\partial_yu_1^{I,0}(x,0,t)\tau_{12}^{B,1}-2\frac{1}{\mu}\tau_{12}^{I,0}(x,0,t)\tau^{B,1}_{12},\\[2mm]
        \tau^{B,1}_{11}(x,z,0) = 0,~\partial_z\tau^{B,1}_{11}(x,0,t) = -\partial_y\tau^{I,0}_{11}(x,0,t).
    \end{cases}
\end{equation}
Finally, from \eqref{eq_u_B2}, it holds that $$\tilde{p}^{B,1} = \tau_{22}^{B,1}.$$
\subsubsection{The second order inner profiles}
From    Lemma \ref{lemma_u_eta_tau_B_2222}, we find that
\begin{eqnarray}\label{eq_u_B_2_exp}
    u_1^{B,2} =\frac{1}{\mu} \int_z^{+\infty}\tau_{12}^{B,1}\mathrm{d} z,~~u_2^{B,2} = 0.
\end{eqnarray}
Next, $\eta^{B,2}$ solves
\begin{equation}\label{eq_eta_B2}
    \begin{cases}
        \partial_t \eta^{B,2} -\partial_z^2\eta^{B,2} =- u_1^{B,2}\partial_x\eta^{I,0}(x,0,t) -
              \frac{1}{2}z^2\partial_y^2u_2^{I,0}(x,0,t)\partial_z\eta^{B,1}\\
        ~~~~~~~~~~~~~~~~~~~~~~~~ -z\partial_yu_{1}^{I,0}(x,0,t)\partial_x\eta^{B,1},\\[2mm]
        \eta^{B,2}(x,z,0) = 0,~\partial_z\eta^{B,2}(x,0,t) =  0.
    \end{cases}
\end{equation}
The last component of $\tau^{B,2}$ satisfies
\begin{equation}\label{eq_tau_B2_22}
    \begin{cases}
        \partial_t \tau^{B,2}_{22} + \gamma \tau^{B,2}_{22} -\partial_z^2\tau^{B,2}_{22}   = \mathfrak{h}^{B,2}_{22},\\[2mm]
        \tau^{B,2}_{22}(x,z,0) = 0,~\partial_z\tau^{B,2}_{22}(x,0,t)=0,
    \end{cases}
\end{equation}
where
\begin{eqnarray*}
    \begin{aligned}
        \mathfrak{h}^{B,2}_{22} = &-u^{B,2}_1\partial_x\tau^{I,0}_{22}(x,0,t)+2 \tau^{I,0}_{22}(x,0,t)\partial_x u_1^{B,2}
        +2k\eta^{I,0}(x,0,t)\partial_xu_1^{B,2} \\
        &-\frac{1}{2}z^2\partial_y^2 u_2^{I,0}(x,0,t)\partial_z\tau^{B,1}_{22}- z\partial_y u_1^{I,0}(x,0,t)\partial_x\tau_{22}^{B,1} +2z\partial_y^2u_2^{I,0}(x,0,t)\tau^{B,1}_{22}\\
        & + 2kz\partial_y^2u_2^{I,0}(x,0,t)\eta^{B,1}.
    \end{aligned}
\end{eqnarray*}
The middle component of $\tau^{B,2}$ satisfies
\begin{equation}\label{eq_tau_B2_12}
    \begin{cases}
        \partial_t \tau^{B,2}_{12} + \gamma \tau^{B,2}_{12} -\partial_z^2\tau^{B,2}_{12} +[\frac{1}{\mu}\tau_{22}^{I,0}(x,0,t) +\frac{k}{\mu}\eta^{I,0}(x,0,t)]\tau^{B,2}_{12}   = \mathfrak{h}^{B,2}_{12},\\[2mm]
        \tau^{B,2}_{12}(x,z,0) = 0,~\partial_z\tau^{B,2}_{22}(x,0,t)=0,
    \end{cases}
\end{equation}
where
    \begin{align*}
        \mathfrak{h}^{B,2}_{12} = &  \partial_y u^{I,0}_1(x,0,t)\tau^{B,2}_{22}- u^{B,2}_1\partial_x\tau^{I,0}_{12}(x,0,t)  + \tau_{22}^{B,1} \partial_zu_1^{B,2}   +k\eta^{B,2}\partial_yu_1^{I,0}(x,0,t)
     +k  \eta^{B,1} \partial_z u_1^{B,2}\\
    & -\frac{1}{2}z^2\partial_y^2 u_2^{I,0}(x,0,t)\partial_z\tau^{B,1}_{12} - z\partial_y u_1^{I,0}(x,0,t)\partial_x\tau_{12}^{B,1} + z\partial_y^2u_1^{I,0}(x,0,t)\tau^{B,1}_{22}
    + z\partial_y\tau^{I,0}_{22}(x,0,t)\partial_zu^{B,2}_1 \\
     &
     +kz\partial_y\eta^{I,0}(x,0,t)\partial_zu^{B,2}_1 + kz\partial_y^2u^{I,0}_1(x,0,t)\eta^{B,1}+\frac{1}{\mu}[\tau_{22}^{I,0}(x,0,t) +k\eta^{I,0}(x,0,t)]\\
    & \times\int_{z}^{+\infty}(\partial_x\tau_{11}^{B,1} - \partial_x\tau_{22}^{B,1})\mathrm{d}z.
    \end{align*}
The first component of $\tau^{B,2}$ satisfies
\begin{equation}\label{eq_tau_B2_11}
    \begin{cases}
        \partial_t \tau^{B,2}_{11} + \gamma \tau^{B,2}_{11} -\partial_z^2\tau^{B,2}_{11}
        = \mathfrak{h}^{B,2}_{11},\\[2mm]
        \tau^{B,2}_{11}(x,z,0) = 0,~\partial_z\tau^{B,2}_{11}(x,0,t) =   0,
    \end{cases}
\end{equation}
where
    \begin{align*}
        \mathfrak{h}^{B,2}_{11} =  & -  \left[\frac{2}{\mu}\tau^{I,0}_{12}(x,0,t)-2 \partial_yu_1^{I,0}(x,0,t)\right]\tau^{B,2}_{12} +  \frac{2}{\mu}\tau^{I,0}_{12}(x,0,t)\int_{z}^{+\infty}(\partial_x\tau_{11}^{B,1} - \partial_x\tau_{22}^{B,1})\mathrm{d}z \\
        &-u^{B,2}_1\partial_x\tau^{I,0}_{11}(x,0,t)-\frac{1}{2}z^2\partial_y^2 u_2^{I,0}(x,0,t)\partial_z\tau^{B,1}_{11} -z\partial_y u_1^{I,0}(x,0,t)\partial_x\tau_{11}^{B,1} \\
        &+2\Big[
        \tau^{I,0}_{11}(x,0,t)\partial_xu^{B,2}_1    + \tau_{12}^{B,1} \partial_zu^{B,2}_1 + k\eta^{I,0}(x,0,t)\partial_x u^{B,2}_1 +z\partial_x\partial_yu^{I,0}_1(x,0,t)\tau^{B,1}_{11}  \\
        &
        +z\partial_y^2u^{I,0}_1(x,0,t)\tau^{B,1}_{12}+z\partial_y\tau^{I,0}_{12}(x,0,t)\partial_zu^{B,2}_1+ kz\partial_x\partial_yu^{I,0}_1(x,0,t)\eta^{B,1} \Big].
    \end{align*}
    Finally, from   \eqref{eq_u_B3_1}, we find that
\begin{equation}\label{eq_u_B3_p_B2}
        \tilde{p}^{B,2} = -\mu \partial_x u_1^{B,2} -\int_z^{+\infty}\partial_x\tau_{12}^{B,1}\mathrm{d}z + \tau_{22}^{B,2}.
\end{equation}
\subsubsection{The second and the third order outer profiles}
The second and the third order outer layer profiles satisfy the following linearized Oldroyd-B type system:
 \begin{equation}\label{OB_outer_second}
    \begin{cases}
    \partial_tu^{I,2} +u^{I,0}\cdot\nabla u^{I,2}   +u^{I,2}\cdot\nabla u^{I,0} +\nabla \tilde{p}^{I,2} -\mu \Delta u^{I,2}
    -\mathrm{div} \tau^{I,2} = 0,\\[2mm]
    \partial_t\eta^{I,2}+ u^{I,0}\cdot\nabla \eta^{I,2} + u^{I,2}\cdot\nabla \eta^{I,0} = \Delta \eta^{I,0},\\[2mm]
    \partial_t\tau^{I,2}+u^{I,0}\cdot \nabla \tau^{I,2} +u^{I,2}\cdot \nabla \tau^{I,0}+ \gamma\tau^{I,2}-\mathcal{Q}(\nabla u^{I,0}, \tau^{I,2})  \\
    ~~~-\mathcal{Q}(\nabla u^{I,2}, \tau^{I,0}) - \mathcal{B} (\eta^{I,2},\nabla u^{I,0}) - \mathcal{B} (\eta^{I,0},\nabla u^{I,2})=\Delta \tau^{I,0},\\[2mm]
    \mathrm{div} u^{I,2} = 0,\\[2mm]
    (u^{I,2},\eta^{I,2},\tau^{I,2})(x,y,0) = 0,~~u^{I,2}(x,0,t) = -u^{B,2}(x,0,t),\\
    \end{cases}
\end{equation}
where $\displaystyle u_1^{B,2}(x,0,t) = \frac{1}{\mu} \int_0^{+\infty}\tau_{12}^{B,1}\mathrm{d} z$  and $u_2^{B,2}(x,0,t) = 0,$ and
\begin{equation}\label{OB_outer_third}
    \begin{cases}
        \partial_tu^{I,3} +u^{I,0}\cdot\nabla u^{I,3}   +u^{I,3}\cdot\nabla u^{I,0} +\nabla \tilde{p}^{I,3} -\mu \Delta u^{I,3}
        -\mathrm{div} \tau^{I,3} = 0,\\[2mm]
        \partial_t\eta^{I,3}+ u^{I,0}\cdot\nabla \eta^{I,3} + u^{I,3}\cdot\nabla \eta^{I,0} =0,\\[2mm]
        \partial_t\tau^{I,3}+u^{I,0}\cdot \nabla \tau^{I,3} +u^{I,3}\cdot \nabla \tau^{I,0}+ \gamma\tau^{I,3}-\mathcal{Q}(\nabla u^{I,0}, \tau^{I,3})  \\
        ~~~-\mathcal{Q}(\nabla u^{I,3}, \tau^{I,0}) - \mathcal{B} (\eta^{I,3},\nabla u^{I,0}) - \mathcal{B} (\eta^{I,0},\nabla u^{I,3})=0,\\[2mm]
        \mathrm{div} u^{I,3} = 0,\\[2mm]
        (u^{I,3},\eta^{I,3},\tau^{I,3})(x,y,0) = 0,~~u^{I,3}(x,0,t) = -u^{B,3}(x,0,t),
        \end{cases}
    \end{equation}
    where $u^{B,3}(x,0,t)$ is determined by \eqref{eq_u_B_3333}:
    \begin{eqnarray}
        \begin{cases}
         u_1^{B,3}(x,0,t) =  \frac{1}{\mu}\int_0^{+\infty}\tau_{12}^{B,2}(x,z,t)\mathrm{d}z -\frac{1}{\mu}\int_0^{+\infty}\int_{z}^{+\infty}(\partial_x\tau_{11}^{B,1} - \partial_x\tau_{22}^{B,1})(x,\xi,t)\mathrm{d}\xi\mathrm{d}z,\\[3mm]
        u_2^{B,3}(x,0,t) = \int_0^{+\infty} \partial_x u_1^{B,2}(x,z,t)\mathrm{d}z.
    \end{cases}
    \end{eqnarray}
  \subsubsection{The third order inner profiles}
  From Lemma \ref{lemma_u_eta_tau_B_3333}, the third order inner profiles $u^{B,3}_1$ and $u^{B,3}_2$ are determined by
  \begin{eqnarray}\label{eq_u_B_3}
    \begin{cases}
    u_1^{B,3} =  \frac{1}{\mu}\int_z^{+\infty}\tau_{12}^{B,2}(x,z,t) \mathrm{d}z -\frac{1}{\mu}\int_{z}^{+\infty}\int_{z}^{+\infty}(\partial_x\tau_{11}^{B,1} - \partial_x\tau_{22}^{B,1})(x,\xi,t)\mathrm{d}\xi\mathrm{d}z,\\[3mm]
    u_2^{B,3} = \int_z^{+\infty} \partial_x u_1^{B,2}\mathrm{d}z.
\end{cases}
\end{eqnarray}
  And $\eta^{B,3}$ solves
  \begin{equation}\label{eq_eta_B3}
    \begin{cases}
        \partial_t \eta^{B,3} -\partial_z^2\eta^{B,3}  = \mathfrak{h}_{\eta}^{B,3},\\[2mm]
        \eta^{B,3}(x,z,0) = 0,~\partial_z\eta^{B,3}(x,0,t) = -\partial_y \eta^{I,2}(x,0,t),
    \end{cases}
\end{equation}
where
    \begin{align*}
        \mathfrak{h}_{\eta}^{B,3} = &- u_1^{B,3}\partial_x\eta^{I,0}(x,0,t) - u_2^{B,3}\partial_y\eta^{I,0}(x,0,t)
        + \partial_x^2\eta^{B,1}- [u_1^{I,2}(x,0,t) + u_1^{B,2}]\partial_x \eta^{B,1}  \\
        & - [u_2^{I,3}(x,0,t)+ u_2^{B,3}]\partial_z \eta^{B,1}
        - \frac{1}{6}z^3\partial_y^3u_2^{I,0}(x,0,t)\partial_z\eta^{B,1} -\frac{1}{2}z^2\partial_y^2u_{1}^{I,0}(x,0,t)\partial_x\eta^{B,1}\\
        ~& -\frac{1}{2}z^2\partial_y^2u_{2}^{I,0}(x,0,t)\partial_z\eta^{B,2} - z\partial_y\partial_x \eta^{I,0}(x,0,t) u_1^{B,2}
        -z\partial_y u_1^{I,0}(x,0,t)\partial_x\eta^{B,2}\\
        ~&  -z\partial_y u_2^{I,2}(x,0,t)\partial_z\eta^{B,1}.
    \end{align*}

For $\tau_{22}^{B,3}$, we have
\begin{equation}\label{eq_tau_B3_22}
    \begin{cases}
        \partial_t \tau^{B,3}_{22} + \gamma \tau^{B,3}_{22} -\partial_z^2\tau^{B,3}_{22}   = \mathfrak{h}^{B,3}_{22},\\[2mm]
        \tau^{B,3}_{22}(x,z,0) = 0,~\partial_z\tau^{B,3}_{22}(x,0,t)= -\partial_y \tau^{I,2}_{22}(x,0,t),
    \end{cases}
\end{equation}
where $\mathfrak{h}^{B,3}_{22}$ is given in \eqref{eq_tau_B3_22_h}.

For $\tau_{12}^{B,3}$, we have
\begin{equation}\label{eq_tau_B3_12}
    \begin{cases}
        \partial_t \tau^{B,3}_{12} + \gamma \tau^{B,3}_{12} -\partial_z^2\tau^{B,3}_{12} + \frac{1}{\mu}[\tau_{22}^{I,0}(x,0,t) + k\eta^{I,0}(x,0,t)]\tau_{12}^{B,3}  = \mathfrak{h}^{B,3}_{12},\\[2mm]
        \tau^{B,3}_{12}(x,z,0) = 0,~\partial_z\tau^{B,3}_{12}(x,0,t)= -\partial_y \tau^{I,2}_{12}(x,0,t),
    \end{cases}
\end{equation}
where $\mathfrak{h}^{B,3}_{12}$ is given in \eqref{eq_tau_B3_12_h}.

For $\tau_{11}^{B,3}$, we have
\begin{equation}\label{eq_tau_B3_11}
    \begin{cases}
        \partial_t \tau^{B,3}_{11} + \gamma \tau^{B,3}_{11} -\partial_z^2\tau^{B,3}_{11} + \frac{2}{\mu} \tau_{12}^{I,0}(x,0,t)  \tau_{11}^{B,3}  = \mathfrak{h}^{B,3}_{11},\\[2mm]
        \tau^{B,3}_{11}(x,z,0) = 0,~\partial_z\tau^{B,3}_{11}(x,0,t)= -\partial_y \tau^{I,2}_{11}(x,0,t),
    \end{cases}
\end{equation}
where $\mathfrak{h}^{B,3}_{11}$ is given in \eqref{eq_tau_B3_11_h}.

Finally, from \eqref{eq_u_B4}, we know that
\begin{eqnarray}\label{eq_u_B4_2}
    \begin{cases}
        {u_1^{B,4} =  \frac{1}{\mu}\int_z^{+\infty}\tau_{12}^{B,3}\mathrm{d}z  - \frac{1}{\mu}\int_z^{+\infty} \int_\xi^{+\infty}\left(\mu\partial_x^2u_1^{B,2} + \partial_x\tau_{11}^{B,2}  -\partial_t u_1^{B,2} -\partial_x\tilde{p}^{B,2} \right)(x,\zeta,t)\mathrm{d}\zeta\mathrm{d}\xi,}\\[3mm]
          u_2^{B,4} = \int_{z}^{+\infty} \partial_x u_1^{B,3}\mathrm{d}z, ~~u_2^{B,5} = \int_{z}^{+\infty} \partial_x u_1^{B,4}\mathrm{d}z,\\[2mm]
           \tilde{p}^{B,3} = \tau_{22}^{B,3} - \mu \partial_x u_1^{B,3} - \int_z^{+\infty} \partial_x \tau_{12}^{B,2}\mathrm{d} z.
    \end{cases}
\end{eqnarray}

\subsection{Regularity of outer and inner layer profiles}\label{subsection_regularity}
  To prove Theorem \ref{theorem_limit}, we need some regularity results on the outer and inner layer profiles.
   To begin with, we introduce the following Neumann problem for a linear parabolic equation which will be used to analyze the well-posedness of the inner layer profiles, i.e.,
    \begin{equation}\label{boundary_prototype}
        \begin{cases}
            \partial_t\theta (x,z,t)-\partial_z^2\theta(x,z,t) + \gamma \theta(x,z,t) + a(x,t)\theta(x,z,t)  = r(x,z,t),\\
            \theta(x,z,0) =0,\\[2mm]
             \partial_z\theta(x,0,t) = 0,
        \end{cases}
    \end{equation}
    where $\gamma$ is a nonnegative constant. Then, we have the following well-posedness results.
    \begin{proposition}\label{prop_boundary_prototype}
        For any given $T\in(0,\infty)$ and the integer $m \geq 2$, assume that $r(x,z,t)$ and $a(x,t)$ satisfy
        \begin{equation*}
            \langle z\rangle^\ell\partial_t^i r\in L^2(0,T;H^{m-2i}_xL^2_z),~~~i=0,1,\cdots,[\frac{m}{2}],
        \end{equation*}
         and
        \begin{equation*}
            \partial_t^j a\in L^2(0,T;H^{m-2j+1}_x),~~~j=0,1,\cdots,[\frac{m+1}{2}],
        \end{equation*}
        where $\ell\in \mathbb{N}$ and $\left[\frac{m}{2}\right]$ is the integer part of $\frac{m}{2}$, and that
        $r$ satisfies the compatibility conditions up to order $[\frac{m}{2}]-1$ for the initial-boundary value problem \eqref{boundary_prototype}, i.e., $\partial_t^kr\Big|_{t=0}=0$ for $k=0,1,\cdots,[\frac{m}{2}]-1$, then
         there exists a unique solution $\theta(x,z,t)$ of \eqref{boundary_prototype} on $[0,T]$ such that
        \begin{equation*}
            \begin{split}
                &   \langle z\rangle^\ell \partial_t^i\theta \in L^{\infty}(0,T;H^{m-2i}_xH^1_z)\cap L^2(0,T;H_x^{m-2i}H_z^2),~~~i=0,1,\cdots,[\frac{m}{2}],
            \end{split}
        \end{equation*}
        and
        \begin{eqnarray*}
            \langle z\rangle^\ell \partial_t^{[\frac{m}{2}]+1}\theta  \in  L^2(0,T; H^{m-2[\frac{m}{2}]}_xL^2_{z}).
        \end{eqnarray*}
    \end{proposition}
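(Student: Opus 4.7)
The plan is to construct the solution by a Galerkin approximation in the $z$-variable and to close the regularity via a hierarchy of weighted energy estimates indexed by the order $i$ of time differentiation and the number $|\alpha|\le m-2i$ of tangential $x$-derivatives; uniqueness and existence in the asserted classes follow once these a priori estimates are in hand. Since only $\partial_z^2$ appears in the spatial principal part, the natural approximation is by the Neumann eigenfunctions of $-\partial_z^2$ on $\mathbb{R}_+$ (equivalently, the Neumann heat semigroup in $z$), viewing the term $a(x,t)\theta$ as a bounded perturbation parametrized by $(x,t)$.

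For the basic a priori bound at level $(\alpha,i)$, I would apply $\partial_x^\alpha\partial_t^i$ to \eqref{boundary_prototype} and test against $\langle z\rangle^{2\ell}\partial_x^\alpha\partial_t^i\theta$. The Neumann condition $\partial_z\theta(x,0,t)=0$ is inherited by $\partial_x^\alpha\partial_t^i\theta$, so the boundary term from integration by parts in $z$ vanishes; the weight produces only a first-order commutator $\sim \langle z\rangle^{2\ell-2}z\,\partial_x^\alpha\partial_t^i\theta\cdot\partial_z\partial_x^\alpha\partial_t^i\theta$ bounded by Cauchy--Schwarz; the term $\gamma\theta$ is dissipative. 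The reaction term $a\theta$ is expanded by Leibniz into products $\partial_x^{\alpha_1}\partial_t^{i_1}a\cdot\partial_x^{\alpha-\alpha_1}\partial_t^{i-i_1}\theta$, and the hypothesis $\partial_t^j a\in L^2_TH^{m-2j+1}_x$ allows one factor to be placed in $L^\infty_x$ via $H^s_x\hookrightarrow L^\infty_x$ for $s>1/2$ while the other stays in $L^2_{xz}$, so Gronwall closes the estimate. To upgrade to the $L^2_TH^2_z$ bound, I would test the same differentiated equation against $-\langle z\rangle^{2\ell}\partial_z^2\partial_x^\alpha\partial_t^i\theta$; the boundary contribution at $z=0$ again vanishes, and one integration by parts in $z$ produces $\tfrac{d}{dt}\|\langle z\rangle^\ell\partial_z\partial_x^\alpha\partial_t^i\theta\|_{L^2_{xz}}^2$ together with the coercive term $\|\langle z\rangle^\ell\partial_z^2\partial_x^\alpha\partial_t^i\theta\|_{L^2_{xz}}^2$.

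Compatibility at each level is verified inductively from \eqref{boundary_prototype} and $\theta|_{t=0}=0$: one has
$$\partial_t^k\theta\big|_{t=0}=\partial_t^{k-1}\bigl(\partial_z^2\theta-\gamma\theta-a\theta+r\bigr)\big|_{t=0},$$
which, using the induction hypothesis $\partial_t^j\theta|_{t=0}=0$ for $j<k$, reduces to a linear combination of $\partial_t^j r|_{t=0}$ with $j<k$; the assumption $\partial_t^k r|_{t=0}=0$ for $k\le [\tfrac{m}{2}]-1$ therefore delivers $\partial_t^i\theta|_{t=0}=0$ for all $i\le[\tfrac{m}{2}]$, which is precisely what is needed to justify the energy identity at each level. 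The bound on $\partial_t^{[\frac{m}{2}]+1}\theta$ is then obtained simply by isolating this quantity from the equation,
$$\partial_t^{[\frac{m}{2}]+1}\theta=\partial_z^2\partial_t^{[\frac{m}{2}]}\theta-\gamma\partial_t^{[\frac{m}{2}]}\theta-\partial_t^{[\frac{m}{2}]}(a\theta)+\partial_t^{[\frac{m}{2}]}r,$$
and estimating each term in $L^2_TH^{m-2[\frac{m}{2}]}_xL^2_z$ using the bounds already established together with the hypothesis on $r$.

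The main technical obstacle is the bookkeeping of commutators: at each level $(\alpha,i)$ one must verify that the splitting of $\partial_x^{\alpha_1}\partial_t^{i_1}a\cdot\partial_x^{\alpha-\alpha_1}\partial_t^{i-i_1}\theta$ into an $L^\infty_xL^2_{xz}$ pairing is compatible with the regularity budget on $a$ and with the already-controlled norms of $\theta$, while simultaneously carrying the weight $\langle z\rangle^\ell$ consistently through every $z$-integration by parts. Once these a priori estimates are in place, the Galerkin approximations inherit them uniformly, weak limits yield a solution in the stated classes, and uniqueness follows from the basic identity applied to the difference of two solutions.
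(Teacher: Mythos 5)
The paper does not actually prove Proposition~\ref{prop_boundary_prototype}: the remark following it only cites \cite{Ladyzenskaja_etal_1968} for the base well-posedness and \cite{Hou_Wang_2019} for the weighted higher-order regularity. Your sketch follows the standard route those references implement -- commute $\partial_x^\alpha\partial_t^i$ through the equation, test against the weighted differentiated quantity (and against $-\langle z\rangle^{2\ell}\partial_z^2$ of it for the $L^2_TH^2_z$ bound), control the commutators coming from $\langle z\rangle^\ell$, verify inductively that $\partial_t^i\theta|_{t=0}=0$ for $i\le[m/2]$ from $\partial_t^k r|_{t=0}=0$ for $k\le[m/2]-1$, and isolate $\partial_t^{[m/2]+1}\theta$ from the equation. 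The $L^2_T$-in-time control of $a$ is indeed sufficient for Gronwall on a finite interval, and the ``$+1$'' in the $x$-regularity hypothesis on $a$ is what makes the one-dimensional embedding $H^s_x\hookrightarrow L^\infty_x$ available in the Leibniz splitting of $a\theta$. So the substance of the proposal is correct and consistent with the statement and the references.

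One piece of the setup should be reworded: the operator $-\partial_z^2$ with a Neumann condition on the unbounded half-line $\mathbb{R}_+$ has purely continuous spectrum and hence no $L^2$ eigenbasis, so a Galerkin scheme expanding in ``Neumann eigenfunctions of $-\partial_z^2$ on $\mathbb{R}_+$'' is not literally available. Your parenthetical alternative -- representing the solution via the Neumann heat semigroup and bootstrapping, or first truncating to a bounded interval $(0,N)$ where an eigenbasis does exist, running the Galerkin/energy scheme with $N$-uniform weighted bounds, and then passing $N\to\infty$ -- is the standard repair. With that change, the argument goes through and yields the stated regularity classes.
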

    \begin{remark}
        The well-posedness of problem \eqref{boundary_prototype} is standard, one can refer for instance to \cite{Ladyzenskaja_etal_1968}. The regularity stated in  Proposition \ref{prop_boundary_prototype} can be referred to \cite{Hou_Wang_2019} and the references therein.
    \end{remark}

    \bigskip

    Next, in order to analyze the well-posedness of the higher-order outer layer profiles, we introduce the following initial-boundary value problem:
    \begin{equation}\label{Outer_prototype}
        \begin{cases}
            \partial_t \bar{u} + \bar{v}\cdot\nabla \bar{u} + \bar{u}\cdot\nabla \bar{v} + \nabla \bar{\Pi} - \mu \Delta \bar{u} - \mathrm{div} \bar{\tau} = \mathfrak{f},\\[2mm]
            \partial_t \bar{\eta} + \bar{v}\cdot\nabla \bar{\eta} + \bar{u}\cdot\nabla \bar{\xi}  = \mathfrak{g}, \\[2mm]
            \partial_t\bar{\tau}+ \bar{v}\cdot \nabla \bar{\tau} +\bar{u}\cdot \nabla \bar{\sigma}+ \gamma\bar{\tau}- {\mathcal{Q}}(\nabla \bar{v}, \bar{\tau})  \\
        ~~~-\mathcal{Q}(\nabla \bar{u}, \bar{\sigma}) - \mathcal{B} (\bar{\eta},\nabla \bar{v}) - \mathcal{B} (\bar{\xi},\nabla \bar{u})= \mathfrak{j},\\[2mm]
        \mathrm{div} \bar{u} = 0,\\[2mm]
        (\bar{u},\bar{\eta},\bar{\tau})(x,y,0) =(\bar{u}_0,\bar{\eta}_0,\bar{\tau}_0)(x,y)  ,\, \bar{u}(x,0,t) = 0,
        \end{cases}
    \end{equation}
    whose well-posedness is stated as follows.
    \begin{proposition}\label{prop_outer_prototype}
   For any given $T \in(0, \infty)$ and  $m\in \mathbb{N}_+$, assume that
            $$\mathrm{div} \bar{v} = 0,~\bar{v}(x,0,t) = 0,~ (\bar{u}_0,\bar{\eta}_0,\bar{\tau}_0) \in H^{2m+1}_{xy},$$  and
        \begin{align*}
            & \partial_{t}^i \bar{v},~\partial_{t}^i \bar{\xi},~\partial_{t}^i \bar{\sigma} \in L^\infty_TH^{2m+1-2i}_{xy} \cap L^2_TH^{2m+2-2i}_{xy},~i = 0,1,2,\cdots,m,\\
            &\partial_t^{m}\mathfrak{f} \in  L^2_TL^{2}_{xy},~\partial_t^{j}\mathfrak{f}\in  L^\infty_TH^{2m-1-2j}_{xy} \cap L^2_TH^{2m-2j}_{xy},~j =  0,1,2,\cdots,m-1,\\
            &\partial_t^{i}\mathfrak{g},~\partial_t^{i}\mathfrak{j} \in  L^\infty_TH^{2m-2i}_{xy} \cap L^2_TH^{2m + 1 -2i}_{xy},~i =  0,1,2,\cdots,m,
        \end{align*}
        and further that $\bar{v},\bar{\xi},\bar{\sigma},\mathfrak{f},\mathfrak{g},$ and $\mathfrak{j}$ satisfy   the compatibility conditions up to order ${m}$  for the initial-boundary problem \eqref{Outer_prototype},  i.e.,
        \begin{eqnarray*}
            \partial_t^{\ell} \bar{u}(0)|_{y = 0} = 0,~~0 \leq \ell \leq m,
        \end{eqnarray*}
   then
         there exists a unique solution $(\bar{u},\bar{\eta},\bar{\tau})(x,z,t)$ of \eqref{Outer_prototype} on $[0,T]$ such that
         \begin{align*}
            &\partial_t^{m+1}\bar{u}\in L^2_TL_{xy}^2,~\partial_t^i\bar{u}\in L_T^{\infty}H_{xy}^{2m+1-2i}\cap L_T^2H_{xy}^{2m+2-2i},~i = 0,1,\cdots,m,\\[2mm]
            &\bar{\eta},\bar{\tau} \in L_T^\infty H_{xy}^{2m+1},~\partial_t^k\bar{\eta},\partial_t^k\bar{\tau}\in  L_T^\infty H_{xy}^{2m+2-2k},~k=1,2,\cdots,m+1.
         \end{align*}
    \end{proposition}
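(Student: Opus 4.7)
Given that the coefficients $(\bar{v},\bar{\xi},\bar{\sigma})$ and forcings $(\mathfrak{f},\mathfrak{g},\mathfrak{j})$ are prescribed, the system for $(\bar{u},\bar{\eta},\bar{\tau})$ is linear, so the plan is a Picard iteration combined with uniform a priori energy estimates in $H^{2m+1}_{xy}$, followed by passage to the limit and uniqueness by linearity. Each iterate is built in two substeps: first solve the inhomogeneous Stokes-type parabolic problem for $\bar{u}^{(n+1)}$ on $\mathbb{R}^2_+$ with the Dirichlet condition $\bar{u}^{(n+1)}|_{y=0}=0$ (classical Solonnikov-type theory), using $(\bar{\tau}^{(n)},\bar{\eta}^{(n)})$ in the source; then solve the damped transport equations for $(\bar{\eta}^{(n+1)},\bar{\tau}^{(n+1)})$ by the method of characteristics driven by $\bar{v}$. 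Since $\bar{v}|_{y=0}=0$ and $\mathrm{div}\,\bar{v}=0$, the characteristics never cross $\{y=0\}$ and no boundary conditions are needed for $(\bar{\eta},\bar{\tau})$.

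For the energy estimate at the $L^2$ level, testing the momentum equation with $\bar{u}$ eliminates the pressure by incompressibility and converts $\int\mathrm{div}\,\bar{\tau}\cdot\bar{u}$ into $-\int\bar{\tau}\!:\!\nabla\bar{u}$ (boundary terms vanish by $\bar{u}|_{y=0}=0$), producing the parabolic dissipation $\mu\|\nabla\bar{u}\|_{L^2_{xy}}^2$; testing the $\bar{\tau}$ equation with $\bar{\tau}$ kills the convection and yields the damping $\gamma\|\bar{\tau}\|_{L^2_{xy}}^2$, while the cross-coupling $\mathcal{Q}(\nabla\bar{u},\bar{\sigma})$ and $\mathcal{B}(\bar{\xi},\nabla\bar{u})$ are absorbed via Young's inequality using $\bar{\sigma},\bar{\xi}\in L^\infty$ (by Sobolev embedding from the assumed $H^{2m+1}_{xy}$). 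Because $\{y=0\}$ makes naive normal integration by parts delicate, I would apply tangential derivatives $\partial_x^j$ first (they preserve the Dirichlet condition) and pass to pure-time derivatives $\partial_t^i$ by differentiating the system $i$ times in $t$; the strong compatibility hypotheses $\partial_t^\ell\bar{u}(0)|_{y=0}=0$ for $0\le\ell\le m$ are exactly what is needed so that the algebraically determined initial values $\partial_t^\ell\bar{u}(0)$ lie in the Sobolev space compatible with the boundary. Mixed normal derivatives $\partial_y^{2k}\bar{u}$ are then recovered by inverting $-\mu\Delta$ in the momentum equation, which trades $\mu\partial_y^2\bar{u}$ against $\partial_t\bar{u}$, tangential derivatives, $\nabla\bar{\Pi}$ (from the div-free projection), and lower-order terms; derivatives of $(\bar{\eta},\bar{\tau})$ are obtained directly by differentiating the transport equations and running the same energy argument, with no boundary terms arising.

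The main obstacle is the top-order coupling between $\bar{u}$, which gains one derivative from parabolicity, and $\bar{\tau}$, which does not. At order $2m+1$, the terms $\partial^\alpha\mathrm{div}\,\bar{\tau}$ in the momentum equation and $\partial^\alpha\mathcal{Q}(\nabla\bar{u},\bar{\sigma})$ in the stress equation both produce a contribution of the form $\|\partial^\alpha\bar{\tau}\|_{L^2_{xy}}\|\partial^\alpha\nabla\bar{u}\|_{L^2_{xy}}$, which Young's inequality splits into a piece absorbed by half of the parabolic dissipation and a piece of the form $C\|\partial^\alpha\bar{\tau}\|_{L^2_{xy}}^2$; Gronwall on the combined quantity $\|\bar{u}\|_{H^{2m+1}_{xy}}^2+\|\bar{\tau}\|_{H^{2m+1}_{xy}}^2+\|\bar{\eta}\|_{H^{2m+1}_{xy}}^2$ then closes the estimate, provided that $(\bar{v},\bar{\xi},\bar{\sigma})\in L^\infty_T H^{2m+1}_{xy}\cap L^2_T H^{2m+2}_{xy}$ and that commutators such as $[\partial^\alpha,\bar{v}\cdot\nabla]\bar{\tau}$ and $[\partial^\alpha,\bar{v}\cdot\nabla]\bar{u}$ are controlled by the Kato--Ponce/Moser calculus in the same Sobolev class. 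The time regularity $\partial_t^{m+1}\bar{u}\in L^2_TL^2_{xy}$ and $\partial_t^k(\bar{\eta},\bar{\tau})\in L^\infty_TH^{2m+2-2k}_{xy}$ is then read off by differentiating the equations $k$ times, using the already-established spatial regularity and the compatibility conditions. Uniqueness follows from the same energy estimate applied to the difference of two solutions, and passage to the Picard limit is standard.
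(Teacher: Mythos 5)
Your strategy---decouple the system, run a Picard iteration where the Stokes subproblem for $\bar{u}$ and the transport subproblems for $(\bar{\eta},\bar{\tau})$ are solved alternately, and close with energy estimates (tangential/time derivatives preserving the boundary condition, normal derivatives recovered from the equations via Stokes regularity, compatibility conditions supplying the initial values of time derivatives) plus Gronwall---is exactly what the paper's remark outlines and omits; your write-up is correct and supplies the details the paper leaves implicit. One small imprecision worth noting: ``inverting $-\mu\Delta$'' is shorthand for the Stokes regularity estimate, since the pressure is not known a priori and must be handled via the Helmholtz--Leray decomposition or the Cattabriga--Solonnikov theory, but you clearly have this in mind and the paper itself invokes the Stokes estimate elsewhere, so this does not affect the validity of the argument.
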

    \begin{remark}
        Since the system \eqref{Outer_prototype}  is linear, the proof of Proposition \ref{prop_outer_prototype}
        is standard for the higher regularity of the given coefficients and initial data.
        One of the ways is to make each equation of the system \eqref{Outer_prototype} decoupled from each other in a way like “linearization”.  Then one can use the standard
iteration arguments or the fixed-point
arguments to obtain the local existence and uniqueness, see for instance \cite{Fang_Hieber_Zi_2013}.
Due to the linearity of system \eqref{Outer_prototype}, one can deduce some global {\it a priori} estimates by the standard energy method.
    Finally,  the global wellposedness can be proved via local existence + global {\it a priori}. We omit the details here for brevity.
    \end{remark}

    \bigskip

     Based on Propositions \ref{prop_boundary_prototype} and \ref{prop_outer_prototype}, we can establish wellposedness of the inner and outer layer profiles successively. To begin with, for the higher order regularities of the problems \eqref{eq_eta_B1}, \eqref{eq_tau_B1_22}, \eqref{eq_tau_B1_12} and \eqref{eq_tau_B1_11}, we need the following strong compatibility conditions:
    \begin{eqnarray} \label{eq_compatibility_eta_tau}
        (\partial_t^{j}\partial_y\eta^{I,0},\partial_t^{j}\partial_y\tau^{I,0})(0)|_{y=0} = 0,~~ j= 1\cdots,5.
   \end{eqnarray}
   Then, we have the following results.
    \begin{lemma}\label{lemma_eta_tau_b_1}
         Under the assumptions of Theorem \ref{theorem_limit}, for any fixed $T\in(0,T_0)$, there exist unique
        solutions $\eta^{B,1},\tau^{B,1}_{22},\tau^{B,1}_{12},$ $\tau^{B,1}_{11}$ of the problems \eqref{eq_eta_B1}, \eqref{eq_tau_B1_22}, \eqref{eq_tau_B1_12}  and \eqref{eq_tau_B1_11} on $[0,T]$,
        satisfying
        \begin{eqnarray}\label{eq_eta_tau_B_1_1}
            \begin{aligned}
                \langle z \rangle^\ell \partial_t^j (\eta^{B,1},
                  \tau_{22}^{B,1},  \tau_{12}^{B,1},  \tau_{11}^{B,1})    \in L^\infty(0,T;H^{11-2j}_xH^1_z) \cap L^2(0,T;H^{{ 11}-2j}_xH^2_z),
            \end{aligned}
        \end{eqnarray} for all $\ell \in \mathbb{N}$ and $j = 0,1,\cdots,5$.
       Furthormore, using \eqref{eq_eta_B1}, \eqref{eq_tau_B1_22}, \eqref{eq_tau_B1_12}  and \eqref{eq_tau_B1_11},    we have
            \begin{align}
            &\langle z \rangle^\ell (\eta^{B,1},  \tau_{22}^{B,1},  \tau_{12}^{B,1},
             \tau_{11}^{B,1}) \in L^\infty(0,T;H^{{ 9}}_xH^3_z),\label{eq_eta_tau_B_1_2} \\[2mm]
            &\langle z \rangle^\ell \partial_t (\eta^{B,1}, \tau_{22}^{B,1}, \tau_{12}^{B,1},
             \tau_{11}^{B,1}) \in L^\infty(0,T;H^{{7}}_xH^3_z).\label{eq_eta_tau_B_1_t_2}
            \end{align}
        As a consequence, for $u_1^{B,2}$ and $u_2^{B,3}$, we have
            \begin{align}
                &\langle z \rangle^\ell \partial_t^j u_1^{B,2}   \in L^\infty(0,T;H^{11-2j}_xH^2_z) \cap L^2(0,T;H^{{11}-2j}_xH^3_z),~~j = 0,1,\cdots,5, \label{eq_u_B_2_1_1}\\[2mm]
                &\langle z \rangle^\ell \partial_t^j u_2^{B,3}   \in L^\infty(0,T;H^{10-2j}_xH^2_z) \cap L^2(0,T;H^{{10}-2j}_xH^3_z),~~j = 0,1,\cdots,5. \label{eq_u_B_3_2_1}
            \end{align}
    \end{lemma}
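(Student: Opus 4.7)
The plan is to reduce each of the four scalar initial-boundary value problems for $\eta^{B,1}, \tau^{B,1}_{22}, \tau^{B,1}_{12}, \tau^{B,1}_{11}$ to an instance of Proposition \ref{prop_boundary_prototype} by lifting away the inhomogeneous Neumann data, and then to solve them sequentially thanks to the upper-triangular coupling structure of \eqref{eq_eta_B1}--\eqref{eq_tau_B1_11}. For each scalar $f\in\{\eta,\tau_{22},\tau_{12},\tau_{11}\}$, the Neumann datum $g_f(x,t):=-\partial_y f^{I,0}(x,0,t)$ is a trace of the outer solution, whose space-time regularity is supplied by Proposition \ref{local_wellposedness_0} combined with the trace inequality in Lemma \ref{lemma_hou_wang}. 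I would introduce the compactly supported lift $\Psi_f(x,z,t):=z\,\varphi(z)\,g_f(x,t)$ using the cutoff $\varphi$ from \eqref{eq_phi}; then $\partial_z\Psi_f|_{z=0}=g_f$ and $\Psi_f$ vanishes identically for $z>1$. Setting $\tilde f^{B,1}:=f^{B,1}-\Psi_f$ produces a homogeneous Neumann problem for $\tilde f^{B,1}$ with source $r_f:=-(\partial_t-\partial_z^2+\gamma)\Psi_f+(\text{coupling})$, where the coupling terms involve only outer traces and already-constructed profiles. Since $\Psi_f$ has $z$-support in $[0,1]$, the weight $\langle z\rangle^\ell$ is harmless on the lift, so the weighted-source hypothesis of Proposition \ref{prop_boundary_prototype} is met for every $\ell\in\mathbb{N}$.

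Next I would solve the four problems in the order $\eta^{B,1}\to\tau^{B,1}_{22}\to\tau^{B,1}_{12}\to\tau^{B,1}_{11}$: the first two are pure parabolic problems (with $a(x,t)\equiv 0$ in the notation of Proposition \ref{prop_boundary_prototype}), while \eqref{eq_tau_B1_12} fits the framework with $a(x,t)=\tfrac{1}{\mu}\bigl(\tau_{22}^{I,0}(x,0,t)+k\eta^{I,0}(x,0,t)\bigr)$, whose regularity is again supplied by Proposition \ref{local_wellposedness_0} and Lemma \ref{lemma_hou_wang}. Applying Proposition \ref{prop_boundary_prototype} with $m=11$ yields the spatial index $11-2j$ in \eqref{eq_eta_tau_B_1_1} for $j=0,1,\ldots,5$. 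The improved $H^3_z$-bounds \eqref{eq_eta_tau_B_1_2} and \eqref{eq_eta_tau_B_1_t_2} then follow \emph{a posteriori} by isolating $\partial_z^2 f^{B,1}$ from the equation and reading off its regularity from the lower-order terms: schematically $\partial_z^2 f^{B,1}=\partial_t f^{B,1}+\gamma f^{B,1}-(\text{source})$, whose right-hand side lies in $L^\infty_T H^9_x H^1_z$ by \eqref{eq_eta_tau_B_1_1} with $j=1$, whence $f^{B,1}\in L^\infty_T H^9_x H^3_z$; applying the same trick with $j=2$ upgrades $\partial_t f^{B,1}$ to $L^\infty_T H^7_x H^3_z$, completing \eqref{eq_eta_tau_B_1_2}--\eqref{eq_eta_tau_B_1_t_2}.

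Finally, the formulas \eqref{eq_u_B_2_exp} and \eqref{eq_u_B_3} write $u_1^{B,2}$ and $u_2^{B,3}$ as $\int_z^{+\infty}\tau_{12}^{B,1}\,\mathrm{d}\xi$ and $\int_z^{+\infty}\partial_x u_1^{B,2}\,\mathrm{d}\xi$ respectively; Cauchy--Schwarz in $\xi$ with the weight pair $\langle\xi\rangle^{\ell+1}$ and $\langle\xi\rangle^{-(\ell+1)}$ trades one integrable factor for one unit of polynomial weight, converting a weighted $L^2_z$-bound on the integrand into a weighted pointwise-in-$z$ bound on the primitive while effectively gaining one order of $z$-regularity, which yields \eqref{eq_u_B_2_1_1} and \eqref{eq_u_B_3_2_1} from the already-established bounds on $\tau_{12}^{B,1}$. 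The main obstacle is bookkeeping the compatibility conditions through the cascade: the source of \eqref{eq_tau_B1_12}, for example, contains the product $k\eta^{B,1}\partial_y u_1^{I,0}(x,0,t)$, and verifying $\partial_t^k(\text{source})|_{t=0}=0$ for $k=0,\ldots,4$ requires using the $\eta^{B,1}$-equation to convert $\partial_t^j\eta^{B,1}|_{t=0}$ into $\partial_y$-traces of $\eta^{I,0}$ that are killed by \eqref{eq_compatibility_eta_tau}. The possible corner-singularity at $(z,t)=(0,0)$ when $g_f(x,0)\neq 0$ must also be addressed, either by a mild extension of Proposition \ref{prop_boundary_prototype} that admits compactly supported nonzero initial data, or by a refined lift that enforces $\Psi_f(x,z,0)\equiv 0$ while still matching the Neumann trace for $t>0$.
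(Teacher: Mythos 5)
Your proposal reproduces the paper's proof essentially step for step: the paper's lift is exactly $\tilde{\eta}^{B,1}=\eta^{B,1}+z\varphi(z)\partial_y\eta^{I,0}(x,0,t)$ and the analogous construction for $\tau_{12}^{B,1}$, which is identical to your $\tilde f^{B,1}=f^{B,1}-\Psi_f$ with $\Psi_f=z\varphi(z)g_f$; the paper then invokes Proposition \ref{prop_boundary_prototype} with $m=11$, solves the four problems in the same $\eta\to\tau_{22}\to\tau_{12}\to\tau_{11}$ order, upgrades $z$-regularity by reading $\partial_z^2 f^{B,1}$ off the equation exactly as you describe, and bounds $\int_z^{+\infty}\tau_{12}^{B,1}$ by the same split-weight Cauchy--Schwarz trick (with weight pair $\langle\zeta\rangle^{-1}\cdot\langle\zeta\rangle^{\ell+2}$ rather than your $\langle\zeta\rangle^{\pm(\ell+1)}$, a cosmetic difference).

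Your closing concern about a corner singularity when $g_f(x,0)\neq 0$ is a legitimate observation, and it is worth seeing how the paper resolves it: the lift is used precisely so that $\tilde{\eta}^{B,1}(x,z,0)=z\varphi(z)\partial_y\eta^{I,0}(x,0,0)$ is declared to be identically zero, which forces $\partial_y\eta^{I,0}(x,0,0)=0$. The same requirement is needed for $r^{\eta,1}(x,z,0)=0$, since the term $-\partial_z^2(z\varphi(z))\partial_y\eta^{I,0}(x,0,t)$ does not vanish at $t=0$ unless $\partial_y\eta^{I,0}(x,0,0)=0$. In other words, the argument quietly uses the $j=0$ instance of \eqref{eq_compatibility_eta_tau}, even though that display lists only $j=1,\dots,5$. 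So the gap you flagged is not an obstruction to the method, but you should state explicitly that the zeroth-order trace compatibility $\partial_y\eta_0(x,0)=\partial_y\tau_0(x,0)=0$ is assumed (it is in any case the natural zeroth-order compatibility for the Neumann problem \eqref{OB_main}--\eqref{OB_main_boundary}); with that, no extension of Proposition \ref{prop_boundary_prototype} to nonzero initial data and no refined time-dependent lift is needed. The cascade bookkeeping you mention for $\tau_{12}^{B,1}$ is also exactly what the paper does: $\partial_t^k\eta^{B,1}|_{t=0}$ is obtained recursively from \eqref{eq_eta_B1} and reduces to $\partial_y$-traces of $\eta^{I,0}$ annihilated by \eqref{eq_compatibility_eta_tau}.
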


\begin{proof}
         Letting $\varphi$ be cut-off function defined in \eqref{eq_phi},
         and defining $$\tilde{\eta}^{B,1}(x,z,t) = \eta^{B,1}(x,z,t) + z\varphi(z)\partial_y\eta^{I,0}(x,0,t),$$ we deduce from \eqref{eq_eta_B1} that
         \begin{equation}\label{eq_eta_B1_tilde}
            \begin{cases}
                \partial_t \tilde{\eta}^{B,1} -\partial_z^2\tilde{\eta}^{B,1} = r^{\eta,1}(x,z,t),\\[2mm]
                \tilde{\eta}^{B,1}(x,z,0) = 0,\,\,\partial_z\tilde{\eta}^{B,1}(x,0,t) = 0,
            \end{cases}
         \end{equation}
         where $$r^{\eta,1}(x,z,t) =  z\varphi(z)\partial_t \partial_y\eta^{I,0}(x,0,t) -\partial_z^2(z\varphi(z))\partial_y\eta^{I,0}(x,0,t).$$
         A direct estimate together with Proposition \ref{local_wellposedness_0} and Lemma \ref{lemma_hou_wang} yields that
    \begin{eqnarray*}
        \begin{aligned}
            &\| \langle z\rangle^\ell \partial_t^jr^{\eta,1} \|_{L^2_TH^{{11}-2j}_xL^2_z} \\
            \lesssim& \|\partial_t^{j + 1}\partial_y\eta^{I,0}(x,0,t)\|_{L^2_TH^{11-2j}_x}\|\langle z \rangle^{\ell +1}\varphi(z) \|_{L^2_z}
            + \|\partial_t^{j}\partial_y\eta^{I,0}(x,0,t)\|_{L^2_TH^{11-2j}_x}\|\langle z \rangle^{\ell}\partial_z^2(z\varphi(z)) \|_{L^2_z} \\
            \lesssim &\|\partial_t^{j + 1} \eta^{I,0} \|_{L^2_TH^{15-2(j+1)}_{xy}}
            + \|\partial_t^{j} \eta^{I,0} \|_{L^2_TH^{13-2j}_{xy}}
            \leq C,\\
        \end{aligned}
    \end{eqnarray*}
    for $j = 0, 1,\cdots,5.$  Moreover, from \eqref{eq_compatibility_eta_tau}, it is not difficult to check that $r^{\eta,1}$ satisfies the fourth order compatibility condition stated in Proposition \ref{prop_boundary_prototype}.
     Therefore, using Proposition \ref{prop_boundary_prototype} with  $m={11}$, we get
    \begin{eqnarray*}
        \langle z\rangle^\ell \partial_t^j\tilde{\eta}^{B,1} \in L^{\infty}(0,T;H^{11-2j}_xH^1_z)\cap L^2(0,T;H_x^{11-2j}H_z^2),~~~j=0,1,\cdots,5.
    \end{eqnarray*}
    Noticing that $$\langle z\rangle^\ell  z\varphi(z)\partial_y\partial_t^j\eta^{I,0}(x,0,t) \in  L^{\infty}(0,T;H^{11-2j}_xH^1_z)\cap L^2(0,T;H_x^{11-2j}H_z^2),~~~j=0,1,\cdots,5,$$ we have
    \begin{eqnarray*}
        \langle z\rangle^\ell \partial_t^j \eta^{B,1} \in L^{\infty}(0,T;H^{11-2j}_xH^1_z)\cap L^2(0,T;H_x^{11-2j}H_z^2),~~~j=0,1,\cdots,5,
    \end{eqnarray*}
    which together with   \eqref{eq_eta_B1} yields the desired regularity for $\eta^{B,1}$ in \eqref{eq_eta_tau_B_1_1}, \eqref{eq_eta_tau_B_1_t_2} and \eqref{eq_eta_tau_B_1_t_2}.
    A similar argument shows that $\tau_{22}^{B,1}$ enjoys the desired regularity.

     Next, denoting
    $\tilde{\tau}_{12}^{B,1}(x,z,t) = \tau_{12}^{B,1}(x,z,t) + z\varphi(z)\partial_y\tau_{12}^{I,0}(x,0,t)$, and using \eqref{eq_tau_B1_12}, we have
    \begin{equation}\label{eq_tau_B1_12_tilde}
        \begin{cases}
            \partial_t \tilde{\tau}^{B,1}_{12} - \partial_z^2\tilde{\tau}^{B,1}_{12}+ \gamma\tilde{\tau}^{B,1}_{12}  +  a^{\tau,1}_{12}\tilde{\tau}^{B,1}_{12}
            = r^{\tau,1}_{12}, \\[2mm]
             \tilde{\tau}^{B,1}_{12}(x,z,0) = 0,~\partial_z\tilde{\tau}^{B,1}_{12}(x,0,t) = 0,
        \end{cases}
    \end{equation}
    where
    \begin{eqnarray*}
        \begin{aligned}
            a^{\tau,1}_{12}(x,t) = ~& \frac{1}{\mu}[ \tau_{22}^{I,0}(x,0,t) + k\eta^{I,0}(x,0,t) ],\\
            r^{\tau,1}_{12}(x,z,t) =~ & \partial_yu_1^{I,0}(x,0,t)\tau_{22}^{B,1} + k\eta^{B,1}\partial_y u_1^{I,0}(x,0,t) + z\varphi(z)\partial_t \partial_y\tau_{12}^{I,0}(x,0,t)  + \gamma z\varphi(z) \partial_y\tau_{12}^{I,0}(x,0,t)\\
            &+ \frac{1}{\mu}[ \tau_{22}^{I,0}(x,0,t) + k\eta^{I,0}(x,0,t) ]z\varphi(z) \partial_y\tau_{12}^{I,0}(x,0,t)
        -\partial_z^2(z\varphi(z))\partial_y\tau_{12}^{I,0}(x,0,t). \\
        \end{aligned}
    \end{eqnarray*}
    From Proposition \ref{local_wellposedness_0}, Lemma \ref{lemma_hou_wang}, and the regularity of $\eta^{B,1}$ and $\tau_{22}^{B,1},$ we get
    \begin{eqnarray*}
        \begin{aligned}
            \| \partial_t^j a^{\tau,1}_{12}(x,t)\|_{L^2_TH^{12-2j}_x}
            \lesssim{}  & \| \partial_t^j \tau_{22}^{I,0}(x,0,t)\|_{L^2_TH^{12-2j}_x} + \| \partial_t^j \eta^{I,0}(x,0,t)\|_{L^2_TH^{12-2j}_x}\\
            \lesssim {}& \| \partial_t^j \tau_{22}^{I,0} \|_{L^2_TH^{13-2j}_{xy}} + \| \partial_t^j \eta^{I,0} \|_{L^2_TH^{13-2j}_{xy}}  \leq C,
        \end{aligned}
    \end{eqnarray*}
and
        \begin{align*}
            &\| \langle z\rangle^\ell \partial_t^jr^{\tau,1}_{12} \|_{L^2_TH^{11-2j}_xL^2_z} \\
            \lesssim& \sum_{i = 0}^{j}\|\partial_t^{i}\partial_y u_1^{I,0}(x,0,t)\|_{L^\infty_T H^{12-2j}_x  }
             \|\langle z \rangle^{\ell}\partial_t^{j-i}(\tau_{22}^{B,1},\eta^{B,1})\|_{L^2_T H^{11-2j}_x  L^2_z}
              \\
            & + \Biggl( \sum_{i = 0}^{j} \|\partial_t^{i}(\tau_{22}^{I,0},\eta^{I,0})(x,0,t)\|_{L^\infty_T    H^{12-2j}_x} \|\partial_t^{j-i}\partial_y\tau_{12}^{I,0}(x,0,t)\|_{L^2_TH^{ 11 -2j}_x}
             \\
            & +  \|\partial_t^{j + 1}\partial_y\tau_{12}^{I,0}(x,0,t)\|_{L^2_TH^{ 11 -2j}_x}
            + \|\partial_t^{j}\partial_y\tau_{12}^{I,0}(x,0,t)\|_{L^2_TH^{ 11 -2j}_x}\Biggr )\|\langle z \rangle^{\ell +1}\varphi(z) \|_{L^2_z}\\
            &+ \|\partial_t^{j}\partial_y\tau_{12}^{I,0}(x,0,t)\|_{L^2_TH^{ 11 -2j}_x}\|\langle z \rangle^{\ell}\partial_z^2(z\varphi(z)) \|_{L^2_z} \\
            \lesssim &  \sum_{i = 0}^{j}\|\partial_t^{i} u_1^{I,0} \|_{L^\infty_T{   H^{14-2i}_{xy}}} \sum_{i = 0}^{j}    \Bigl(\|\langle z \rangle^{\ell}\partial_t^{i}\tau_{22}^{B,1}\|_{L^2_T  H^{ 11 -2i}_x  L^2_z}
            +\|\langle z \rangle^{\ell}\partial_t^{i}\eta^{B,1}\|_{L^2_T   H^{ 11 -2i}_x   L^2_z}\Bigr) \\
            &+  \sum_{i = 0}^{j}\Bigl(\|\partial_t^{i}\tau_{22}^{I,0} \|_{L^\infty_T{   H^{13-2i}_{xy}}} + \|\partial_t^{i}\eta^{I,0} \|_{L^\infty_T    H^{13-2i}_{xy} } \Bigr) \sum_{i = 0}^{j}  \|\partial_t^{i} \tau_{12}^{I,0} \|_{L^2_T    H^{13-2i}_{xy}  }\\
            &+\|\partial_t^{j + 1} \tau_{12}^{I,0} \|_{L^2_TH^{ 15 -2(j+1)}_{xy}}
            + \|\partial_t^{j} \tau_{12}^{I,0} \|_{L^2_TH^{ 13 -2j}_x}
            \leq C,
        \end{align*}
    for $j = 0, 1,\cdots ,5.$
   Using   \eqref{eq_compatibility_eta_tau}, we obtain that $r^{\tau,1}_{12}$ satisfies the fourth order compatibility condition stated in Proposition \ref{prop_boundary_prototype}.
    Therefore, using Proposition \ref{prop_boundary_prototype} with  $m={ 11}$, we get
    \begin{eqnarray*}
        \langle z\rangle^\ell \partial_t^j\tilde{\tau}_{12}^{B,1} \in L^{\infty}(0,T;H^{ 11 -2j}_xH^1_z)\cap L^2(0,T;H_x^{ 11 -2j}H_z^2),~~~j=0,1,\cdots,5.
    \end{eqnarray*}
    Similar to the case of $\eta^{B,1},$ one can easily obtain that $ \tau_{12}^{B,1}$ enjoys the desired regularity. The proof of the regularity for $\tau_{11}^{B,1}$ is similar to $\tau_{12}^{B,1}$, we omit here for brevity.

    Furthermore, using \eqref{eq_u_B2}, we have
   \begin{eqnarray*}
        \begin{aligned}
      \|\langle z \rangle^\ell \partial_t^j u_1^{B,2}\|_{L^\infty_TH^{ 11 -2j}_xH^2_z}^2 ={}& \frac{1}{\mu^2} \left\|\langle z \rangle^\ell \partial_t^j\int_z^{+\infty}\tau_{12}^{B,1} \mathrm{d} z\right\|_{L^\infty_TH^{ 11 -2j}_xH^2_z}^2\\
            \lesssim {}& \left\|\langle z \rangle^\ell \partial_t^j\int_z^{+\infty}\tau_{12}^{B,1} \mathrm{d} z\right\|_{L^\infty_TH^{ 11 -2j}_xL^2_z}^2 +
            \|\langle z \rangle^\ell \partial_t^j \tau_{12}^{B,1}\|_{L^\infty_TH^{ 11 -2j}_xH^1_z}^2.
        \end{aligned}
    \end{eqnarray*}
    The first term on the right-hand side can be estimated as follows:
        \begin{align*}
             &\left\|\langle z \rangle^\ell \partial_t^j\int_z^{+\infty}\tau_{12}^{B,1} \mathrm{d} z\right\|_{L^\infty_TH^{ 11 -2j}_xL^2_z}^2 \notag \\
            \lesssim {}& \left\|\sum_{j^\prime = 0 }^{ 11 -2j}\int_0^{+\infty}\frac{1}{\langle z\rangle^2}      \langle z\rangle^{2\ell+2}   \left(\int_z^{+\infty}\|\partial_t^j \partial_x^{j^{\prime}}\tau_{12}^{B,1}(x,\zeta,t)\|_{ L_x^{2}}\mathrm{d}\zeta\right)^2\mathrm{d}z \right\|_{L_T^\infty}\\
            \lesssim {}& \left\| \sum_{j^\prime = 0 }^{ 11 -2j} \int_0^{+\infty}\frac{1}{\langle z\rangle^2} \mathrm{d}z\left(\int_0^{+\infty}
            \frac{1}{\langle \zeta\rangle}   \|    \langle \zeta\rangle^{\ell+2}   \partial_t^j \partial_x^{j^{\prime}} \tau_{12}^{B,1}(x,\zeta,t)\|_{L_x^{2}}\mathrm{d}\zeta\right)^2\mathrm{d}z\right\|_{L_T^\infty}\\
            \lesssim{} & \|    \langle z \rangle^{\ell+ 2}     \partial_t^j \tau_{12}^{B,1}\|_{L^\infty_TH^{ 11 -2j}_xL^2_z}^2.
        \end{align*}
    Therefore, we have
    \begin{eqnarray*}
        \begin{aligned}
            \|\langle z \rangle^\ell \partial_t^j u_1^{B,2}\|_{L^\infty_T    H^{ 11 -2j}_x   H^2_z}^2
            \lesssim & \|   \langle z \rangle^{\ell+ 2}    \partial_t^j \tau_{12}^{B,1}\|_{L^\infty_T   H^{ 11 -2j}_x    H^1_z}^2,
        \end{aligned}
    \end{eqnarray*}
    for $j = 0, 1,\cdots,5.$
    Next, noticing that
    \begin{eqnarray*}
        u_2^{B,3}(x,z,t) = \int_{z}^{+\infty}\partial_x u_1^{B,2}(x,\zeta,t)\mathrm{d}\zeta,
    \end{eqnarray*}
    one can easily  prove \eqref{eq_u_B_3_2_1}.
    The proof is complete.
    \end{proof}

    \bigskip

   For the second order outer layer profiles $(u^{I,2},\eta^{I,2},\tau^{I,2})$, we have the following result.
    \begin{lemma}\label{local_wellposedness_outer_2}
        Under the assumptions of Theorem \ref{theorem_limit}, for any fixed   $T\in (0,T_0]$, there exists a unique
        solution $(u^{I,2},\eta^{I,2},\tau^{I,2})$ of the problem \eqref{OB_outer_second} on $[0,T]$,
        satisfying that
        \begin{eqnarray*}
          &    \partial_t^{5}u^{I,2}\in L^2_TL_{xy}^2,~\partial_t^iu^{I,2}\in L_T^{\infty}H_{xy}^{9-2i}\cap L_T^2H_{xy}^{10-2i},~i = 0,1,\cdots,4,
            \\[2mm]
            &     \eta^{I,2},\tau^{I,2} \in L_T^\infty H_{xy}^{9},~\partial_t^j\eta^{I,2},\partial_t^j\tau^{I,2}\in  L_T^\infty H_{xy}^{10-2j},~j=1,2,\cdots,5.
         \end{eqnarray*}
    \end{lemma}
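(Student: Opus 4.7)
The plan is to reduce the problem to the linear prototype \eqref{Outer_prototype} and invoke Proposition \ref{prop_outer_prototype} with $m=4$, so that the output regularity index $2m+1=9$ matches the target spaces in the statement. Reading off \eqref{OB_outer_second} yields the identifications $\bar v=u^{I,0}$, $\bar\xi=\eta^{I,0}$, $\bar\sigma=\tau^{I,0}$, $\mathfrak{f}=0$, $\mathfrak{g}=\Delta\eta^{I,0}$, $\mathfrak{j}=\Delta\tau^{I,0}$, and vanishing initial data. The only mismatch with the prototype is the non-homogeneous Dirichlet datum $u^{I,2}(x,0,t)=-u^{B,2}(x,0,t)=(-u_1^{B,2}(x,0,t),0)^{\top}$, which I first remove by a divergence-free lifting.

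To construct the lifting, I would set
\begin{eqnarray*}
\psi(x,y,t):=-y\,\varphi(y)\,u_1^{B,2}(x,0,t),\qquad w^{b}:=\nabla^{\perp}\psi=(\partial_y\psi,\,-\partial_x\psi),
\end{eqnarray*}
with $\varphi$ the cut-off from \eqref{eq_phi}. Since $\varphi(0)=1$ and $\varphi'(0)=0$, the trace of $w^{b}$ at $y=0$ equals $(-u_1^{B,2}(x,0,t),0)^{\top}$, $w^{b}$ is divergence-free, and $w^{b}(x,y,0)\equiv 0$ because $u_1^{B,2}(x,0,0)=\frac{1}{\mu}\int_{0}^{\infty}\tau_{12}^{B,1}(x,z,0)\,\mathrm{d}z=0$ by \eqref{eq_tau_B1_12}. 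Then $\tilde u:=u^{I,2}-w^{b}$ satisfies a system of the form \eqref{Outer_prototype} with the same $\bar v,\bar\xi,\bar\sigma$, homogeneous boundary datum $\tilde u(x,0,t)=0$, zero initial value, and modified forcing
\begin{eqnarray*}
\tilde{\mathfrak{f}}:=-\partial_t w^{b}-u^{I,0}\cdot\nabla w^{b}-w^{b}\cdot\nabla u^{I,0}+\mu\Delta w^{b}.
\end{eqnarray*}

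Next, I would verify the hypotheses of Proposition \ref{prop_outer_prototype}. Proposition \ref{local_wellposedness_0} supplies regularity of $u^{I,0},\eta^{I,0},\tau^{I,0}$ at the $H^{14}$-level, comfortably dominating the $H^{9}$/$H^{10}$-level required for $\bar v,\bar\xi,\bar\sigma$, and losing two derivatives in $\mathfrak{g}=\Delta\eta^{I,0}$ and $\mathfrak{j}=\Delta\tau^{I,0}$ still leaves them in the required $L^{\infty}_{T}H^{8-2i}_{xy}\cap L^{2}_{T}H^{9-2i}_{xy}$ spaces. For $\tilde{\mathfrak{f}}$, all the $(x,t)$-dependence comes through the trace $u_1^{B,2}(x,0,t)$ and its time derivatives, whose norms are controlled from Lemma \ref{lemma_eta_tau_b_1} combined with the trace inequality in Lemma \ref{lemma_hou_wang}; smoothness in $y$ is inherited from $\varphi$, and the resulting regularity of $\tilde{\mathfrak{f}}$ again dominates what the prototype demands. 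The compatibility condition $\partial_t^{\ell}\tilde u(0)|_{y=0}=0$ for $0\le\ell\le 4$ translates, via $u^{I,2}=\tilde u+w^{b}$, into a relation between $\partial_t^{\ell}u^{I,2}(0)|_{y=0}$ (obtained by differentiating the system \eqref{OB_outer_second} in $t$) and the traces of $\partial_t^{\ell}w^{b}$; this is precisely the content of the assumption \eqref{compatibility_cond_u_I_2}.

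Applying Proposition \ref{prop_outer_prototype} with $m=4$ then produces a unique $(\tilde u,\eta^{I,2},\tau^{I,2})$ with the asserted regularity, and $u^{I,2}=\tilde u+w^{b}$ inherits it because $w^{b}$ is at least as regular, being determined by $u_1^{B,2}(x,0,\cdot)\in L^{\infty}_{T}H^{11}_{x}$ (and similarly for time derivatives). The only genuine technical obstacle here is the bookkeeping of how many derivatives in $x$ and in $t$ the trace $u_1^{B,2}(x,0,t)$ carries along so that $\tilde{\mathfrak{f}}$ and the compatibility identities survive the lifting procedure; this is precisely the point of the margin built into Lemma \ref{lemma_eta_tau_b_1} (indices $11-2j$ rather than $9-2j$), so the check is routine rather than subtle.
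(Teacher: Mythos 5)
Your proof is correct and follows essentially the same route as the paper: lift the inhomogeneous boundary datum by a compactly supported divergence-free field, then invoke Proposition \ref{prop_outer_prototype} with $m=4$. The only difference is the choice of stream function ($-y\varphi(y)u_1^{B,2}(x,0,t)$ rather than $\varphi(y)\int_0^y\varphi(\xi)\,\mathrm{d}\xi\,u_1^{B,2}(x,0,t)$), which is cosmetic since both give the same boundary trace, divergence-free structure, and vanishing at $t=0$.
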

    \begin{proof}
        Letting $\varphi$ be the cut-off function in \eqref{eq_phi}, and
        defining
        \begin{align*}
            \tilde{u}^{I,2}(x,y,t) =&
           u^{I,2}(x,y,t) +
           \left( \begin{matrix}
                   \frac{1}{\mu}\varphi^2(y)\int_0^{+\infty}\tau_{12}^{B,1}(x,z,t)\mathrm{d}z + \frac{1}{\mu}\varphi^\prime(y)\int_0^y
                      \varphi(y)   \mathrm{d}y\int_0^{+\infty}\tau_{12}^{B,1}(x,z,t)\mathrm{d}z \\[3mm]
                -  \frac{1}{\mu}\varphi(y)\int_0^y  \varphi(y)   \mathrm{d}y\int_0^{+\infty}\partial_x\tau_{12}^{B,1}(x,z,t)\mathrm{d}z
            \end{matrix}\right) \\
            =:&u^{I,2}(x,y,t) + \mathfrak{U}^2(x,y,t),
        \end{align*}
         then, we deduce from \eqref{OB_outer_second} that
        \begin{equation}\label{OB_outer_second_tilde}
            \begin{cases}
            \partial_t\tilde{u}^{I,2} +u^{I,0}\cdot\nabla \tilde{u}^{I,2}   +\tilde{u}^{I,2}\cdot\nabla u^{I,0} +\nabla \tilde{p}^{I,2} -\mu \Delta \tilde{u}^{I,2}
            -\mathrm{div} \tau^{I,2} = \mathfrak{f}^{I,2},\\[2mm]
            \partial_t\eta^{I,2}+ u^{I,0}\cdot\nabla \eta^{I,2} + \tilde{u}^{I,2}\cdot\nabla \eta^{I,0} = \mathfrak{g}^{I,2},\\[2mm]
            \partial_t\tau^{I,2}+u^{I,0}\cdot \nabla \tau^{I,2} +\tilde{u}^{I,2}\cdot \nabla \tau^{I,0}+ \gamma\tau^{I,2}-\mathcal{Q}(\nabla u^{I,0}, \tau^{I,2})  \\
            ~~~-\mathcal{Q}(\nabla \tilde{u}^{I,2}, \tau^{I,0}) - \mathcal{B} (\eta^{I,2},\nabla u^{I,0}) - \mathcal{B} (\eta^{I,0},\nabla \tilde{u}^{I,2})=\mathfrak{j}^{I,2},\\[2mm]
            \mathrm{div} \tilde{u}^{I,2} = 0,\\[2mm]
            (\tilde{u}^{I,2},\eta^{I,2},\tau^{I,2})(x,y,0) = 0,~~ \tilde{u}^{I,2}(x,0,t)   =0,
            \end{cases}
        \end{equation}
        where
        \begin{align*}
            \mathfrak{f}^{I,2} =& \partial_t\mathfrak{U}^2 + u^{I,0}\cdot\nabla\mathfrak{U}^2 + \mathfrak{U}^2\cdot\nabla u^{I,0} - \mu \Delta \mathfrak{U}^2,~
            \mathfrak{g}^{I,2} =  \mathfrak{U}^2\cdot\nabla\eta^{I,0} + \Delta\eta^{I,0},\\
            \mathfrak{j}^{I,2} =&  \mathfrak{U}^2\cdot\nabla\tau^{I,0} -\mathcal{Q}(\nabla\mathfrak{U}^2,\tau^{I,0})-\mathcal{B}(\eta^{I,0},\nabla\mathfrak{U}^2) +\Delta\tau^{I,0}.
        \end{align*}
        A direct calculation yields that
        \begin{eqnarray*}
             \partial_t^j\mathfrak{f}^{I,2}\in L^\infty_TH^{8-2j}_{xy},\,\,\,\,\,\, \partial_t^j(\mathfrak{g}^{I,2},\mathfrak{j}^{I,2}) \in L^\infty_TH^{9-2j}_{xy},
        \end{eqnarray*}   for  $j = 0,1,2,3,4.$
          In view of \eqref{compatibility_cond_u_I_2}, it is not difficult to check that the problem \eqref{OB_outer_second_tilde} satisfies the fourth order compatibility condition stated in Proposition \ref{prop_outer_prototype}. Thus, using Proposition \ref{prop_outer_prototype} with $m = 4$,  we finish the proof of Lemma \ref{local_wellposedness_outer_2}.
    \end{proof}
    \begin{remark}\label{remark_compatibility}
        To guarantee that the problem \eqref{OB_outer_second_tilde} satisfies the fourth order compatibility condition, we need to propose the following conditions on the initial data:
         \begin{equation}\label{compatibility_cond_u_I_2}
             \begin{cases}
                \mathbb{P}\mathrm{div}\Delta \tau^{I,0}(0)|_{y=0} =0, \\[2mm]
                \mathbb{P}\Big(-u^{I,0}\cdot\nabla(\mathbb{P}\mathrm{div}\Delta \tau^{I,0}) - \mathbb{P}\mathrm{div}\Delta \tau^{I,0}\cdot\nabla u^{I,0} + \mu \Delta\mathbb{P}\mathrm{div}\Delta  \tau^{I,0}\Big)(0)|_{y = 0}\\[2mm]
                + \mathbb{P}\mathrm{div}\Big( - u^{I,0}\cdot\nabla \Delta \tau^{I,0}   -\gamma \Delta \tau^{I,0} + \mathcal{Q}(\nabla u^{I,0},\Delta \tau^{I,0}) + \mathcal{B}(\Delta \eta^{I,0},\nabla u^{I,0}) \\[2mm]
                + \Delta\partial_t \tau^{I,0}\Big)(0)|_{y = 0} = 0,\\[2mm]
                \mathbb{P}\Bigl(-3\partial_t u^{I,0}\cdot\nabla (\mathbb{P}\mathrm{div}\Delta \tau^{I,0})
                -3 (\mathbb{P}\mathrm{div}\Delta \tau^{I,0})\cdot\nabla\partial_t u^{I,0}
                \Bigr)(0)|_{y = 0} + \mu \mathbb{P}\Delta \partial_t^3 \tilde{u}^{I,2}(0)|_{y = 0}\\[2mm]
                +\mathbb{P}\mathrm{div}\Bigl( -2\partial_{t}u^{I,0}\cdot\nabla\Delta\tau^{I,0} - u^{I,0}\cdot\nabla \partial_t^2\tau^{I,2}
                - \mathbb{P}\mathrm{div}\Delta \tau^{I,0}\cdot\nabla\tau^{I,0} - \gamma \partial_t^2\tau^{I,2}\Bigr)(0)|_{y = 0} \\[2mm]
                +\mathbb{P}\mathrm{div}\Bigl( 2\mathcal{Q}(\nabla\partial_t u^{I,0}, \Delta\tau^{I,0}) + \mathcal{Q}(\nabla u^{I,0}, \partial_t^2\tau^{I,2}) + \mathcal{Q}(\nabla \mathbb{P}\mathrm{div} \Delta\tau^{I,0},  \tau^{I,0})\Bigr)(0)|_{y = 0}\\[2mm]
                +\mathbb{P}\mathrm{div}\Bigl( 2\mathcal{B}(\Delta\eta^{I,0},\nabla\partial_t u^{I,0}) + \mathcal{B}(\partial_t^2\eta^{I,2},\nabla u^{I,0} ) + \mathcal{B}(\eta^{I,0},\nabla \mathbb{P}\mathrm{div}\partial_t\Delta\tau^{I,0})\Bigr)(0)|_{y = 0}\\[2mm]
                +{ \mathbb{P}\mathrm{div} }\Delta\partial_t^2\tau^{I,0}(0)|_{y = 0}
                { +\mathbb{P} \Bigl( -u^{I,0}\cdot\nabla \partial_t^3\tilde{u}^{I,2} -\partial_t^3 \tilde{u}^{I,2}\cdot\nabla {u}^{I,0}\Bigr) } = 0,
             \end{cases}
         \end{equation}
        where
        \begin{align*}
            \partial_t^3 \tilde{u}^{I,2}(0) ={}& \mathbb{P}\Big(-u^{I,0}\cdot\nabla(\mathbb{P}\mathrm{div}\Delta  \tau^{I,0}) - \mathbb{P}\mathrm{div}\Delta  \tau^{I,0}\cdot\nabla u^{I,0} + \mu \Delta\mathbb{P}\mathrm{div}\Delta  \tau^{I,0}\Big)(0) \\[1mm]
            &+ \mathbb{P}\mathrm{div}\Big(- u^{I,0}\cdot\nabla \Delta \tau^{I,0} - \gamma \Delta \tau^{I,0} + \mathcal{Q}(\nabla u^{I,0},\Delta \tau^{I,0}) + \mathcal{B}(\Delta \eta^{I,0},\nabla u^{I,0}) \\[1mm]
            &+ \Delta\partial_t \tau^{I,0}\Big)(0) ,\\[1mm]
            \partial_t^2\tau^{I,2}(0) ={}&   \Big(- u^{I,0}\cdot\nabla \Delta \tau^{I,0} - \gamma \Delta \tau^{I,0} + \mathcal{Q}(\nabla u^{I,0},\Delta \tau^{I,0}) + \mathcal{B}(\Delta \eta^{I,0},\nabla u^{I,0})
             + \Delta\partial_t \tau^{I,0}\Big)(0) \\
            \partial_t^2\eta^{I,2}(0) ={}& \Big(    -    u^{I,0}\cdot\nabla\Delta\eta^{I,0} + \Delta\partial_t\eta^{I,0}\Big)(0).
        \end{align*}
    \end{remark}

\medskip

    Based on the previous results, the well-posedness of the problems \eqref{eq_eta_B2}, \eqref{eq_tau_B2_22}, \eqref{eq_tau_B2_12}  and \eqref{eq_tau_B2_11} can be established as follows.
    \begin{lemma}\label{lemma_eta_tau_b_2}
        Suppose that $(u^{I,0},\eta^{I,0},\tau^{I,0})$ and $(\eta^{B,1},\tau^{B,1}_{22},\tau^{B,1}_{12},\tau^{B,1}_{11})$ are the solutions stated in Proposition \ref{local_wellposedness_0} and Lemma \ref{lemma_eta_tau_b_1}, respectively. Under the assumptions of Theorem \ref{theorem_limit}, then, for any fixed $T\in(0, T_0]$, there exist unique
        solutions $\eta^{B,2},\tau^{B,2}_{22},\tau^{B,2}_{12},\tau^{B,2}_{11}$ of the problems \eqref{eq_eta_B2}, \eqref{eq_tau_B2_22}, \eqref{eq_tau_B2_12}  and \eqref{eq_tau_B2_11} on $[0,T]$,
        satisfying
        \begin{eqnarray}\label{eq_eta_tau_B_2_1}
            \begin{aligned}
                \langle z \rangle^\ell \partial_t^j (\eta^{B,2},
                  \tau_{22}^{B,2},  \tau_{12}^{B,2},   \tau_{11}^{B,2})  \in L^\infty(0,T;H^{10-2j}_xH^1_z) \cap L^2(0,T;H^{10-2j}_xH^2_z),
            \end{aligned}
        \end{eqnarray} for all $\ell \in \mathbb{N}$ and $j = 0,1,\cdots,5$.

        Furthermore, using \eqref{eq_eta_B2}, \eqref{eq_tau_B2_22}, \eqref{eq_tau_B2_12}  and \eqref{eq_tau_B2_11},    we have
        \begin{eqnarray}\label{eq_eta_tau_B_2_t_2}
            \begin{aligned}
            &\langle z \rangle^\ell (\eta^{B,2}, \tau_{22}^{B,2},  \tau_{12}^{B,2},
              \tau_{11}^{B,2}) \in L^\infty(0,T;H^{8}_xH^3_z),~
             \langle z \rangle^\ell \partial_t(\eta^{B,2},  \partial_t\tau_{22}^{B,2},   \tau_{12}^{B,2},
             \tau_{11}^{B,2}) \in L^\infty(0,T;H^{6}_xH^3_z).
            \end{aligned}
        \end{eqnarray}
    \end{lemma}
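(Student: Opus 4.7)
My overall plan is to apply Proposition \ref{prop_boundary_prototype} with $m=10$ to each of the four scalar parabolic problems \eqref{eq_eta_B2}, \eqref{eq_tau_B2_22}, \eqref{eq_tau_B2_12}, \eqref{eq_tau_B2_11}. Unlike the first-order profiles treated in Lemma \ref{lemma_eta_tau_b_1}, the Neumann data here is identically zero, so no lifting by $z\varphi(z)$ is required and the problems fit the prototype directly. Because the source $\mathfrak{h}^{B,2}_{12}$ depends on $\tau^{B,2}_{22}$ and $\mathfrak{h}^{B,2}_{11}$ depends on $\tau^{B,2}_{12}$, the four problems must be solved in the order $\eta^{B,2}\to \tau^{B,2}_{22}\to \tau^{B,2}_{12}\to \tau^{B,2}_{11}$.

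For each problem I would verify the three hypotheses of Proposition \ref{prop_boundary_prototype}: (a) the weighted regularity $\langle z\rangle^\ell \partial_t^i \mathfrak{h}^{B,2}_{ij}\in L^2_T H^{10-2i}_x L^2_z$ for $i=0,\ldots,5$; (b) when present, the coefficient regularity $\partial_t^j a^{\tau,2}_{12}\in L^2_T H^{11-2j}_x$ for $j=0,\ldots,5$; and (c) the compatibility $\partial_t^k \mathfrak{h}^{B,2}_{ij}|_{t=0}=0$ for $k=0,\ldots,4$. For (a), I would decompose $\mathfrak{h}^{B,2}_{ij}$ term by term and apply Leibniz plus H\"older in $x$ and $t$, using Lemma \ref{lemma_hou_wang} to bound trace factors such as $\partial_y u_1^{I,0}(x,0,t)$, $\tau^{I,0}(x,0,t)$, $\eta^{I,0}(x,0,t)$ by interior norms controlled via Proposition \ref{local_wellposedness_0}, while controlling the bulk factors $u^{B,2}_1$, $\eta^{B,1}$, $\tau^{B,1}_{ij}$ by means of \eqref{eq_eta_tau_B_1_1} and \eqref{eq_u_B_2_1_1}. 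The polynomial prefactors $z$ and $z^2$ appearing in front of $\partial_z\eta^{B,1}$, $\partial_x\tau^{B,1}_{ij}$, etc., are absorbed simply by bumping the weight exponent to $\ell+1$ or $\ell+2$, which is legitimate since \eqref{eq_eta_tau_B_1_1} holds for every $\ell\in\mathbb{N}$. For (b), the coefficient in \eqref{eq_tau_B2_12} coincides, up to the lower regularity demand, with $a^{\tau,1}_{12}$ from \eqref{eq_tau_B1_12_tilde}, whose bound has already been established in the proof of Lemma \ref{lemma_eta_tau_b_1}.

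The main obstacle is the compatibility (c). Each summand of $\mathfrak{h}^{B,2}_{ij}$ is a product in which at least one factor is either (i) a boundary-layer profile from the lower stratum --- $u^{B,2}_1$, $\eta^{B,1}$, or $\tau^{B,1}_{ij}$ --- or (ii) a boundary trace of the form $\partial_y\eta^{I,0}(x,0,t)$ or $\partial_y\tau^{I,0}(x,0,t)$. The vanishing of all $t$-derivatives up to order $4$ at $t=0$ of type (i) factors follows by a short induction: the initial conditions in \eqref{eq_eta_B1}, \eqref{eq_tau_B1_22}, \eqref{eq_tau_B1_12}, \eqref{eq_tau_B1_11} and the representation \eqref{eq_u_B_2_exp} give the base case, and differentiating those equations in $t$ and evaluating at $t=0$ propagates the vanishing upward, because every surviving summand still contains at least one such profile. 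The vanishing of $t$-derivatives of type (ii) factors is precisely the content of the strong compatibility condition \eqref{eq_compatibility_eta_tau}. Leibniz's rule then yields (c), and Proposition \ref{prop_boundary_prototype} produces \eqref{eq_eta_tau_B_2_1}.

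Finally, \eqref{eq_eta_tau_B_2_t_2} is obtained by a bootstrap read off directly from the equations. Rewriting \eqref{eq_eta_B2} as $\partial_z^2\eta^{B,2}=\partial_t\eta^{B,2}-\mathfrak{h}^{B,2}_\eta$, the $j=1$ instance of \eqref{eq_eta_tau_B_2_1} combined with the $L^\infty_T H^8_x H^1_z$-bound on $\mathfrak{h}^{B,2}_\eta$ (established along the way in step (a)) gives $\langle z\rangle^\ell \partial_z^2\eta^{B,2}\in L^\infty_T H^8_x H^1_z$, hence $\langle z\rangle^\ell \eta^{B,2}\in L^\infty_T H^8_x H^3_z$. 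Differentiating \eqref{eq_eta_B2} once in $t$ and using the $j=2$ instance of \eqref{eq_eta_tau_B_2_1} likewise delivers $\langle z\rangle^\ell \partial_t\eta^{B,2}\in L^\infty_T H^6_x H^3_z$. The same trade of one time derivative for two $z$-derivatives applies to $\tau^{B,2}_{22}$, $\tau^{B,2}_{12}$, $\tau^{B,2}_{11}$, completing \eqref{eq_eta_tau_B_2_t_2}.
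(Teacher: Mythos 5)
Your proposal is correct and follows the same route as the paper, which simply states that the proof applies Proposition \ref{prop_boundary_prototype} with $m=10$ as in Lemma \ref{lemma_eta_tau_b_1} and omits the details; you supply exactly those details, notably that the zero Neumann data removes the need for the $z\varphi(z)$ lifting, the sequential solve order $\eta^{B,2}\to\tau^{B,2}_{22}\to\tau^{B,2}_{12}\to\tau^{B,2}_{11}$ forced by the dependencies in the source terms, the compatibility check by induction, and the $\partial_t\leftrightarrow\partial_z^2$ bootstrap for \eqref{eq_eta_tau_B_2_t_2}. One minor imprecision: in the compatibility step your dichotomy (i)/(ii) omits factors such as $\tau^{B,2}_{22}$ (appearing in $\mathfrak{h}^{B,2}_{12}$) or $\eta^{B,2}$ that are neither first-order profiles nor boundary traces of $\partial_y\eta^{I,0},\partial_y\tau^{I,0}$, but this is already handled by the sequential ordering you stipulate, since the already-solved second-order profiles inherit vanishing time-derivatives at $t=0$ by the same induction.
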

    \begin{proof}
      The proof is similar to that of Lemma \ref{lemma_eta_tau_b_2} by applying Proposition \ref{prop_boundary_prototype} with $m=10$. We omit it for brevity.
    \end{proof}

    \begin{lemma}\label{lemma_p_g}
        Under the assumptions of Theorem \ref{theorem_limit}, we have
        \begin{align*}
            &\langle z \rangle^\ell \partial_t^j  \tilde{p}^{B,1}   \in L^\infty(0,T;H^{11-2j}_xH^1_z),
             ~\langle z \rangle^\ell \partial_t^j  \tilde{p}^{B,2}   \in L^\infty(0,T;H^{10-2j}_xH^1_z), ~j = 0,1,\cdots,5,
        \end{align*} for any fixed $T\in(0, T_0]$ and all $\ell\in\mathbb{N}.$
    \end{lemma}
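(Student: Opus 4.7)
The plan is to read off the regularity of $\tilde{p}^{B,1}$ and $\tilde{p}^{B,2}$ directly from the algebraic expressions derived in Section \ref{subsection_asym}, namely $\tilde{p}^{B,1}=\tau_{22}^{B,1}$ and $\tilde{p}^{B,2}=-\mu\partial_x u_1^{B,2}-\int_z^{+\infty}\partial_x\tau_{12}^{B,1}\mathrm{d}z+\tau_{22}^{B,2}$, and then invoke the regularity results for the inner profiles established in Lemmas \ref{lemma_eta_tau_b_1} and \ref{lemma_eta_tau_b_2}. The estimate for $\tilde{p}^{B,1}$ is immediate, since $\langle z\rangle^\ell\partial_t^j\tau_{22}^{B,1}\in L^\infty(0,T;H^{11-2j}_xH^1_z)$ is part of the conclusion of Lemma \ref{lemma_eta_tau_b_1}.

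For $\tilde{p}^{B,2}$ I would bound the three summands separately. The term $\tau_{22}^{B,2}$ is handled directly by Lemma \ref{lemma_eta_tau_b_2}, which gives $\langle z\rangle^\ell\partial_t^j\tau_{22}^{B,2}\in L^\infty(0,T;H^{10-2j}_xH^1_z)$. The term $-\mu\partial_x u_1^{B,2}$ is controlled via \eqref{eq_u_B_2_1_1}: one loses one $x$-derivative by applying $\partial_x$, but since $u_1^{B,2}$ lies in $H^{11-2j}_xH^2_z$ with the weight $\langle z\rangle^\ell$, one obtains $\langle z\rangle^\ell\partial_t^j\partial_x u_1^{B,2}\in L^\infty(0,T;H^{10-2j}_xH^1_z)$, which more than suffices.

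The slightly more delicate contribution is the integral term $\int_z^{+\infty}\partial_x\tau_{12}^{B,1}\mathrm{d}z$. Its $H^1_z$ regularity in $z$ is essentially automatic, because $\partial_z\int_z^{+\infty}\partial_x\tau_{12}^{B,1}\mathrm{d}z=-\partial_x\tau_{12}^{B,1}$, and by Lemma \ref{lemma_eta_tau_b_1} this function lies in the required space after losing one $x$-derivative. For the $L^2_z$ bound, I would repeat the weighted Hardy-type calculation already used in the proof of Lemma \ref{lemma_eta_tau_b_1} for $u_1^{B,2}$: inserting $\langle z\rangle^{-1}\cdot\langle z\rangle$ inside the integral, applying Cauchy--Schwarz, and using $\int_0^{+\infty}\langle z\rangle^{-2}\mathrm{d}z<\infty$ together with the stronger decay $\langle z\rangle^{\ell+2}\partial_t^j\tau_{12}^{B,1}\in L^\infty(0,T;H^{11-2j}_xL^2_z)$, which is available because the weight exponent $\ell$ in Lemma \ref{lemma_eta_tau_b_1} is arbitrary.

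Combining these three bounds and applying the triangle inequality yields the claimed regularity $\langle z\rangle^\ell\partial_t^j\tilde{p}^{B,2}\in L^\infty(0,T;H^{10-2j}_xH^1_z)$ for $j=0,1,\ldots,5$. No genuine obstacle arises: the only step that is not a direct quote of previous lemmas is the Hardy argument for the integral term, and that argument is a verbatim repetition of the one already carried out in Lemma \ref{lemma_eta_tau_b_1}. For this reason I would simply state the calculation briefly and refer the reader to that earlier proof, omitting the repeated details.
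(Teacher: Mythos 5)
Your proof is correct and follows essentially the same route as the paper's: the paper itself disposes of the lemma in one sentence by citing $\tilde{p}^{B,1}=\tau_{22}^{B,1}$, the formula \eqref{eq_u_B3_p_B2}, and Lemmas \ref{lemma_eta_tau_b_1} and \ref{lemma_eta_tau_b_2}, leaving the ``direct calculation'' to the reader, which is exactly what you have filled in. One minor simplification worth noting: since $u_1^{B,2}=\frac{1}{\mu}\int_z^{+\infty}\tau_{12}^{B,1}\,\mathrm{d}z$ by \eqref{eq_u_B2}, the integral term $\int_z^{+\infty}\partial_x\tau_{12}^{B,1}\,\mathrm{d}z$ is just $\mu\partial_x u_1^{B,2}$, so it is already covered by the bound \eqref{eq_u_B_2_1_1} and the Hardy-type calculation need not be repeated at all.
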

    \begin{proof}
        Using the fact that $\tilde{p}^{B,1} = \tau^{B,1}_{22}$,  \eqref{eq_u_B3_p_B2},  Lemmas \ref{lemma_eta_tau_b_1}  and \ref{lemma_eta_tau_b_2}, we obtain the regularities for $\tilde{p}^{B,1}$ and $\tilde{p}^{B,2}$ via a direct calculation.
    \end{proof}
    As a consequence,  using \eqref{eq_u_B_3}, for  $u_1^{B,3}$ and $u_2^{B,4}$, we have the following results.
    \begin{lemma}\label{lemma_u_B_3}
        Under the assumptions of Theorem \ref{theorem_limit}, we have
    \begin{align}
        &\langle z \rangle^\ell \partial_t^i u_1^{B,3}   \in L^\infty(0,T;H^{10-2i}_xH^2_z),~\langle z \rangle^\ell \partial_t^j u_2^{B,4}  \in L^\infty(0,T;H^{9-2j}_xH^3_z),    \label{eq_u_B_3_1_1}
    \end{align}
    for $i =0,1,\cdots,5$, and $j = 0,1,\cdots,4.$
\end{lemma}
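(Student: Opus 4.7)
The plan is to mimic the argument already used at the end of the proof of Lemma \ref{lemma_eta_tau_b_1} to bound $u_1^{B,2}$, now applied to the integral representations \eqref{eq_u_B_3} and \eqref{eq_u_B4_2}. The only tool needed is the weighted Cauchy--Schwarz trick based on $\int_0^{+\infty}\langle z\rangle^{-2}\,\mathrm{d}z<\infty$, which converts bounds on $\int_z^{+\infty} f(\cdot,\zeta,\cdot)\,\mathrm{d}\zeta$ into weighted bounds on $f$ itself, at the cost of a single extra power $\langle z\rangle^{+2}$ on the right-hand side. Since Lemmas \ref{lemma_eta_tau_b_1} and \ref{lemma_eta_tau_b_2} grant arbitrary polynomial $\langle z\rangle$-weights on $\tau^{B,1}$ and $\tau^{B,2}$, these extra weights cost nothing.

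First I would treat $u_1^{B,3}$ by splitting it according to \eqref{eq_u_B_3} into the single integral $\frac{1}{\mu}\int_z^{+\infty}\tau_{12}^{B,2}\,\mathrm{d}\zeta$ and the double integral $-\frac{1}{\mu}\int_z^{+\infty}\int_\xi^{+\infty}(\partial_x\tau_{11}^{B,1}-\partial_x\tau_{22}^{B,1})\,\mathrm{d}\zeta\,\mathrm{d}\xi$. For the single integral I would apply the weighted trick to control the $L^\infty_T H^{10-2i}_x L^2_z$ norm by $\|\langle z\rangle^{\ell+2}\partial_t^i\tau_{12}^{B,2}\|_{L^\infty_T H^{10-2i}_x L^2_z}$, and then observe that $\partial_z$ of the integral equals $-\tau_{12}^{B,2}$, so the two $z$-derivatives needed for $H^2_z$ cost at most one more $z$-derivative of $\tau_{12}^{B,2}$; this is controlled by Lemma \ref{lemma_eta_tau_b_2}, since $\langle z\rangle^{\ell+2}\partial_t^i\tau_{12}^{B,2}\in L^\infty_T H^{10-2i}_x H^1_z$. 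For the double integral I would iterate the same trick: each outer integration over $z$ effectively gains one $z$-derivative, so that the $H^2_z$-regularity of this piece reduces to an $L^2_z$-estimate on $\partial_x(\tau_{11}^{B,1}-\tau_{22}^{B,1})$ with a slightly larger weight $\langle z\rangle^{\ell+4}$, which Lemma \ref{lemma_eta_tau_b_1} furnishes (the loss of one $x$-derivative is absorbed because $\tau^{B,1}$ is in $H^{11-2i}_x$, one derivative more than what the claim demands).

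Next, for $u_2^{B,4}=\int_z^{+\infty}\partial_x u_1^{B,3}\,\mathrm{d}\zeta$, the same trick applied once more delivers the claimed $L^\infty(0,T;H^{9-2j}_x H^3_z)$ bound: the gain of one $z$-derivative from the outer integration shifts the target from $H^2_z$ (for $u_1^{B,3}$) to $H^3_z$ (for $u_2^{B,4}$), while the $\partial_x$ inside the integrand shifts $H^{10-2j}_x$ to $H^{9-2j}_x$. The range $j=0,1,\ldots,4$ is dictated by the requirement that $\partial_x u_1^{B,3}$ live in a Sobolev space with nonnegative $x$-exponent, which matches $i=j+1\leq 5$ in the estimate just proved for $u_1^{B,3}$.

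The only point requiring a little care is bookkeeping: tracking how many extra powers of $\langle z\rangle$ are absorbed when the Cauchy--Schwarz trick is iterated in the double integral, and verifying that each time derivative hits $\tau^{B,1}$ or $\tau^{B,2}$ rather than generating boundary terms (which it does, since the integrands and all their space derivatives decay rapidly as $z\to+\infty$ by Lemmas \ref{lemma_eta_tau_b_1}--\ref{lemma_eta_tau_b_2}). Since $\ell$ is arbitrary throughout, this is a routine verification rather than a genuine obstacle, and I would omit the repetitive computations and simply record \eqref{eq_u_B_3_1_1}.
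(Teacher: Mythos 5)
Your proposal is correct and matches the paper's intent exactly. The paper omits any explicit proof of Lemma \ref{lemma_u_B_3}, stating only that it is ``a consequence, using \eqref{eq_u_B_3}'' (and \eqref{eq_u_B4_2}) of the regularity established in Lemmas \ref{lemma_eta_tau_b_1}--\ref{lemma_eta_tau_b_2}; the intended argument is precisely the weighted Cauchy--Schwarz device already displayed at the end of the proof of Lemma \ref{lemma_eta_tau_b_1} for $u_1^{B,2}$, iterated once for the double integral and exploiting that $\partial_z$ of a tail integral returns the integrand. Your bookkeeping is accurate: the single $\langle z\rangle^{+2}$ weight loss per integration is absorbed because $\ell$ is arbitrary in Lemmas \ref{lemma_eta_tau_b_1}--\ref{lemma_eta_tau_b_2}, the one $x$-derivative spent on $\partial_x\tau^{B,1}$ is covered by the extra $x$-regularity $H^{11-2i}_x$ of $\tau^{B,1}$, and the index range $j\le 4$ for $u_2^{B,4}$ is forced by the $\partial_x$ in $\int_z^\infty\partial_x u_1^{B,3}$ exhausting the $x$-regularity at $j=5$ (the correspondence is $i=j$ rather than the $i=j+1$ you wrote, but this is a harmless slip and the conclusion is right).
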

    Next, for the third order outer layer profiles $(u^{I,3},\eta^{I,3},\tau^{I,3})$, we have the following result.
    \begin{lemma}\label{local_wellposedness_outer_3}
        Suppose that $(u^{I,0},\eta^{I,0},\tau^{I,0})$, $u_1^{B,2}$, and $u_1^{B,3}$ are the solutions stated in Proposition \ref{local_wellposedness_0}, and Lemmas \ref{lemma_eta_tau_b_1}
        and \ref{lemma_u_B_3}, respectively. Under the assumptions of Theorem \ref{theorem_limit}, then, for any fixed $T\in(0,T_0]$, there exists a unique solution $(u^{I,3},\eta^{I,3},\tau^{I,3})$ of the problem \eqref{OB_outer_third} on $[0,T]$, satisfying
       \begin{eqnarray*}
            &\partial_t^{3}u^{I,3}\in L^2_TL_{xy}^2,~\partial_t^ju^{I,3}\in L_T^{\infty}H_{xy}^{5-2j}\cap L_T^2H_{xy}^{6-2j},~j = 0,1,2,\\[2mm]
            & \eta^{I,3},\tau^{I,3} \in L_T^\infty H_{xy}^{5},~\partial_t^j\eta^{I,3},\partial_t^j\tau^{I,3}\in  L_T^\infty H_{xy}^{6-2j},~j=1,2,3.
         \end{eqnarray*}
        \end{lemma}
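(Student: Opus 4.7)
The approach mirrors the proof of Lemma \ref{local_wellposedness_outer_2}: reduce the problem \eqref{OB_outer_third} with inhomogeneous Dirichlet datum to an equivalent linearized Oldroyd-B problem with zero Dirichlet datum by means of a divergence-free lift, and then apply Proposition \ref{prop_outer_prototype} with $m=2$, which is precisely calibrated to deliver the stated regularity $\partial_t^j u^{I,3}\in L_T^\infty H^{5-2j}_{xy}\cap L_T^2 H^{6-2j}_{xy}$ and the parallel estimates for $\eta^{I,3},\tau^{I,3}$.

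First I would construct a divergence-free lift $\mathfrak{U}^3(x,y,t)$ satisfying $\mathfrak{U}^3(x,0,t)=u^{B,3}(x,0,t)$. Writing $A(x,t):=u_1^{B,3}(x,0,t)$ and noting from \eqref{eq_u_B_3} that $u_2^{B,3}(x,0,t)=\partial_x\tilde B(x,t)$ with $\tilde B(x,t):=\int_0^{+\infty}u_1^{B,2}(x,z,t)\mathrm{d}z$, one can introduce the stream function
\[
\Psi(x,y,t):= -\tilde B(x,t)\varphi^2(y)+A(x,t)\varphi(y)\int_0^y\varphi(y')\mathrm{d}y',
\]
with $\varphi$ from \eqref{eq_phi}, and set $\mathfrak{U}^3:=(\partial_y\Psi,-\partial_x\Psi)^\top$. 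Using $\varphi(0)=1$ and $\varphi'(0)=0$, a direct calculation gives $\mathrm{div}\,\mathfrak{U}^3=0$ and $\mathfrak{U}^3(x,0,t)=(A,\partial_x\tilde B)^\top=u^{B,3}(x,0,t)$; since $\varphi$ is compactly supported, so are the components of $\mathfrak{U}^3$ in $y$, hence no growth at infinity occurs. Unlike the case of $\mathfrak{U}^2$ in Lemma \ref{local_wellposedness_outer_2}, here both components are genuinely nontrivial because $u_2^{B,3}(x,0,t)\not\equiv 0$.

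Next I would set $\tilde u^{I,3}:=u^{I,3}+\mathfrak{U}^3$ to recast \eqref{OB_outer_third} into the abstract form \eqref{Outer_prototype} with homogeneous Dirichlet datum and source terms
\begin{align*}
\mathfrak{f}^{I,3}&=\partial_t\mathfrak{U}^3+u^{I,0}\cdot\nabla\mathfrak{U}^3+\mathfrak{U}^3\cdot\nabla u^{I,0}-\mu\Delta\mathfrak{U}^3,\\
\mathfrak{g}^{I,3}&=\mathfrak{U}^3\cdot\nabla\eta^{I,0},\\
\mathfrak{j}^{I,3}&=\mathfrak{U}^3\cdot\nabla\tau^{I,0}-\mathcal{Q}(\nabla\mathfrak{U}^3,\tau^{I,0})-\mathcal{B}(\eta^{I,0},\nabla\mathfrak{U}^3).
\end{align*}
Proposition \ref{local_wellposedness_0} controls $(u^{I,0},\eta^{I,0},\tau^{I,0})$, while Lemmas \ref{lemma_eta_tau_b_1}, \ref{lemma_eta_tau_b_2}, and \ref{lemma_u_B_3}, combined with Lemma \ref{lemma_hou_wang} used to convert weighted $z$-norms into the $x$-regularity of the traces $A(x,t),\tilde B(x,t)$, supply more than enough control on $A,\tilde B$ and their $t$-derivatives. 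Routine product estimates then yield $\partial_t^2\mathfrak{f}^{I,3}\in L^2_T L^2_{xy}$, $\partial_t^i\mathfrak{f}^{I,3}\in L_T^\infty H^{3-2i}_{xy}\cap L_T^2 H^{4-2i}_{xy}$ for $i=0,1$, and the analogous bounds $\partial_t^i\mathfrak{g}^{I,3},\partial_t^i\mathfrak{j}^{I,3}\in L_T^\infty H^{4-2i}_{xy}\cap L_T^2 H^{5-2i}_{xy}$ for $i=0,1,2$, matching exactly the hypotheses of Proposition \ref{prop_outer_prototype} with $m=2$.

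The main obstacle, as in Lemma \ref{local_wellposedness_outer_2}, is verifying the second-order compatibility conditions $\partial_t^\ell\tilde u^{I,3}(0)|_{y=0}=0$ for $\ell=0,1,2$. Since the initial data of all lower-order boundary-layer profiles vanish, $\tau^{B,1}(x,z,0)=\tau^{B,2}(x,z,0)=0$ and $u_1^{B,2}(x,z,0)=0$, one has $\mathfrak{U}^3(x,y,0)\equiv 0$ and thus $\tilde u^{I,3}(0)\equiv 0$, settling $\ell=0$ for free. For $\ell=1,2$, one successively uses the transformed evolution equation to express $\partial_t^\ell\tilde u^{I,3}(0)$ as traces on $\{y=0\}$ of polynomial combinations of $\partial_t^i u^{I,0},\partial_t^i\eta^{I,0},\partial_t^i\tau^{I,0}$ at $t=0$ together with $\partial_t^i\mathfrak{U}^3(0)$. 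The former traces vanish by the compatibility conditions \eqref{eq_compatibility_u} and \eqref{eq_compatibility_eta_tau}, while each $\partial_t^i\mathfrak{U}^3(0)$ is a polynomial in $t$-derivatives at $t=0$ of $\tau^{B,j}$ and $u_1^{B,2}$, which inherit vanishing from their defining parabolic problems with zero initial data. Once compatibility is established, Proposition \ref{prop_outer_prototype} with $m=2$ produces the claimed regularity for $(\tilde u^{I,3},\eta^{I,3},\tau^{I,3})$, and since $\mathfrak{U}^3$ is manifestly smoother than needed, subtracting the lift recovers the stated regularity of $(u^{I,3},\eta^{I,3},\tau^{I,3})$.
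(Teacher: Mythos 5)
Your proof follows essentially the same route as the paper's: construct a divergence-free lift $\mathfrak{U}^3$ of the boundary data, homogenize \eqref{OB_outer_third} to the abstract form \eqref{Outer_prototype}, verify the source-term regularity and the second-order compatibility conditions, and invoke Proposition \ref{prop_outer_prototype} with $m=2$. The only difference is cosmetic: you build the lift from the stream function $\Psi=-\tilde B\varphi^2+A\varphi\int_0^y\varphi$, while the paper's $\mathfrak{U}^3$ corresponds to $\Phi=A\varphi\int_0^y\varphi-\tilde B\varphi$; both are compactly supported in $y$, divergence-free, and match $u^{B,3}(x,0,t)$ at the boundary, so either choice works.
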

    \begin{proof}
        Letting $\varphi$ be the cut-off function in \eqref{eq_phi}, and defining
      \begin{align*}
            \tilde{u}^{I,3}(x,y,t) ={}&
           u^{I,3}(x,y,t) +
           \left( \begin{matrix}
                   \varphi^2(y)\, u_{1}^{B,3}(x,0,t) +  \varphi^\prime(y)\int_0^y  \varphi(y)   \mathrm{d}y\, u_{1}^{B,3}(x,0,t)\\[2mm]
                -  \varphi(y)\int_0^y    \varphi(y)  \mathrm{d}y\,    \partial_x u_{1}^{B,3}(x,0,t)
            \end{matrix}\right) \\[2mm]
            &+\left(\begin{matrix}
            -\varphi^{\prime}(y)\int_{0}^{+\infty}    u_1^{B,2}(x,z,t)\mathrm{d}z  \\[2mm]
            \varphi(y)\int_{0}^{+\infty}\partial_x   u_1^{B,2}(x,z,t)\mathrm{d}z
            \end{matrix}\right)
            =: u^{I,3}(x,y,t) + \mathfrak{U}^3(x,y,t),
        \end{align*}
        where
    \begin{eqnarray*}
        u_{1}^{B,3}(x,0,t) = \frac{1}{\mu}\int_0^{+\infty}\tau_{12}^{B,2}(x,z,t)\mathrm{d}z -\frac{1}{\mu}\int_{0}^{+\infty}\int_{\zeta}^{+\infty}\left(\partial_x\tau_{11}^{B,1} - \partial_x\tau_{22}^{B,1}\right)(x,z,t)\mathrm{d}z\mathrm{d}\zeta,
    \end{eqnarray*}
    then, we deduce from \eqref{OB_outer_third} that
        \begin{equation}\label{OB_outer_third_tilde}
            \begin{cases}
            \partial_t\tilde{u}^{I,3} +u^{I,0}\cdot\nabla \tilde{u}^{I,3}   +\tilde{u}^{I,3}\cdot\nabla u^{I,0} +\nabla \tilde{p}^{I,3} -\mu \Delta \tilde{u}^{I,3}
            -\mathrm{div} \tau^{I,3} = \mathfrak{f}^{I,3},\\[2mm]
            \partial_t\eta^{I,3}+ u^{I,0}\cdot\nabla \eta^{I,3} + \tilde{u}^{I,3}\cdot\nabla \eta^{I,0} = \mathfrak{g}^{I,3},\\[2mm]
            \partial_t\tau^{I,3}+u^{I,0}\cdot \nabla \tau^{I,3} +\tilde{u}^{I,3}\cdot \nabla \tau^{I,0}+ \gamma\tau^{I,3}-\mathcal{Q}(\nabla u^{I,0}, \tau^{I,3})  \\
            ~~~-\mathcal{Q}(\nabla \tilde{u}^{I,3}, \tau^{I,0}) - \mathcal{B} (\eta^{I,3},\nabla u^{I,0}) - \mathcal{B} (\eta^{I,0},\nabla \tilde{u}^{I,3})=\mathfrak{j}^{I,3},\\[2mm]
            \mathrm{div} \tilde{u}^{I,3} = 0,\\[2mm]
            (\tilde{u}^{I,3},\eta^{I,3},\tau^{I,3})(x,y,0) = 0,~~\tilde{u}^{I,3}(x,0,t) =0,
            \end{cases}
        \end{equation}
        where
        \begin{align*}
            \mathfrak{f}^{I,3} ={}& \partial_t\mathfrak{U}^3 + u^{I,0}\cdot\nabla\mathfrak{U}^3 + \mathfrak{U}^3  \cdot\nabla u^{I,0} -   \mu \Delta \mathfrak{U}^3,
            ~~\mathfrak{g}^{I,3} =  \mathfrak{U}^3\cdot\nabla\eta^{I,0},\\[2mm]
            \mathfrak{j}^{I,3} ={}&  \mathfrak{U}^3\cdot\nabla\tau^{I,0} -\mathcal{Q}(\nabla\mathfrak{U}^3,\tau^{I,0})-\mathcal{B}(\eta^{I,0},\nabla  \mathfrak{U}^3  ).
        \end{align*}
        A direct calculation yields that
        \begin{eqnarray*}
             \partial_t^j\mathfrak{f}^{I,3} \in L^\infty_TH^{6-2j}_{xy},\,\,\, \partial_t^j(\mathfrak{g}^{I,3},\mathfrak{j}^{I,3}) \in L^\infty_TH^{7-2j}_{xy},
        \end{eqnarray*} for  $j=0,1,2,3$. From \eqref{eq_compatibility_eta_tau}, it is not difficult to check that the problem \eqref{OB_outer_third_tilde} satisfies the   second order compatibility condition stated in Proposition \ref{prop_outer_prototype}. Thus, using Proposition \ref{prop_outer_prototype} with $m = 2$,   we finish the proof of Lemma \ref{local_wellposedness_outer_3}.
    \end{proof}

    Based on the previous results, the well-posedness of the problems  \eqref{eq_eta_B3}, \eqref{eq_tau_B3_22}, \eqref{eq_tau_B3_12}  and \eqref{eq_tau_B3_11} can be established as follows.
    \begin{lemma}\label{lemma_eta_tau_b_3}
        Suppose that $(u^{I,0},\eta^{I,0},\tau^{I,0})$ be the solution stated in Proposition \ref{local_wellposedness_0}. Under the assumptions of Theorem \ref{theorem_limit}, then, for any fixed $T\in(0,T_0]$, there exist unique
        solutions $\eta^{B,3},\tau^{B,3}_{22},\tau^{B,3}_{12},\tau^{B,3}_{11}$ of the problems \eqref{eq_eta_B3}, \eqref{eq_tau_B3_22}, \eqref{eq_tau_B3_12}  and \eqref{eq_tau_B3_11} on $[0,T]$,
        satisfying
        \begin{eqnarray}\label{eq_eta_tau_B_3_1}
            \begin{aligned}
                \langle z \rangle^\ell \partial_t^j (\eta^{B,3},
                  \tau_{22}^{B,3},  \tau_{12}^{B,3},  \tau_{11}^{B,3})  \in L^\infty(0,T;H^{5-2j}_xH^1_z) \cap L^2(0,T;H^{5-2j}_xH^2_z), \\
            \end{aligned}
        \end{eqnarray} for all $\ell \in \mathbb{N}$ and $j = 0,1,2$.

        Furthermore, using \eqref{eq_eta_B3}, \eqref{eq_tau_B3_22}, \eqref{eq_tau_B3_12}  and \eqref{eq_tau_B3_11},    we have
        \begin{eqnarray}\label{eq_eta_tau_B_3_t_2}
            \begin{aligned}
            &\langle z \rangle^\ell (\eta^{B,3},  \tau_{22}^{B,3},  \tau_{12}^{B,3},
              \tau_{11}^{B,3}) \in L^\infty(0,T;H^{3}_xH^3_z),~
            \langle z \rangle^\ell \partial_t(\eta^{B,3}, \tau_{22}^{B,3}, \tau_{12}^{B,3},
             \tau_{11}^{B,3}) \in L^\infty(0,T;H^{1}_xH^3_z).
            \end{aligned}
        \end{eqnarray}
      As a consequence, using \eqref{eq_u_B4_2}, we have
         \begin{align}
                &\langle z \rangle^\ell \partial_t^j  u_1^{B,4}    \in L^\infty(0,T;H^{5-2j}_xH^2_z),~\langle z \rangle^\ell \partial_t^j u_2^{B,5}   \in L^\infty(0,T;H^{4-2j}_xH^2_z), \label{eq_u_B_4_1_1}
            \end{align}
            and
            \begin{eqnarray}
                \langle z \rangle^\ell \partial_t^j  \tilde{p}^{B,3} \in L^\infty(0,T;H^{5-2j}_xH^1_z),
            \end{eqnarray}
            for $j = 0,1,2$.
    \end{lemma}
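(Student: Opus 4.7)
The plan is to follow the same template that established Lemmas \ref{lemma_eta_tau_b_1} and \ref{lemma_eta_tau_b_2}: homogenize each inhomogeneous Neumann boundary condition by a cut-off lift, apply Proposition \ref{prop_boundary_prototype} at an appropriate level $m$, then recover the higher $z$-regularity by using the equation itself, and finally read off the regularity of $u_1^{B,4},u_2^{B,5},\tilde{p}^{B,3}$ from the defining formulas \eqref{eq_u_B4_2}. Concretely, for $\eta^{B,3}$ one would set
$$\tilde{\eta}^{B,3}(x,z,t):=\eta^{B,3}(x,z,t)+z\varphi(z)\partial_y\eta^{I,2}(x,0,t),$$
and analogously for $\tau^{B,3}_{22},\tau^{B,3}_{12},\tau^{B,3}_{11}$ using $\partial_y\tau^{I,2}_{ij}(x,0,t)$, turning \eqref{eq_eta_B3}--\eqref{eq_tau_B3_11} into problems of the form \eqref{boundary_prototype} with homogeneous Neumann data, zeroth-order coefficient of the type $a^{\tau,3}_{ij}(x,t)=\frac{1}{\mu}[\tau^{I,0}_{22}(x,0,t)+k\eta^{I,0}(x,0,t)]$ or $\frac{2}{\mu}\tau^{I,0}_{12}(x,0,t)$, and right-hand side $r^{\eta,3}$ (resp.~$r^{\tau,3}_{ij}$) equal to $\mathfrak{h}^{B,3}_{\eta}$ (resp.~$\mathfrak{h}^{B,3}_{ij}$) plus the forcing produced by moving $z\varphi(z)\partial_y(\cdot)(x,0,t)$ through the operator.

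The core work is then to check that the hypotheses of Proposition \ref{prop_boundary_prototype} at level $m=5$ are satisfied, which yields exactly the index range $j=0,1,2$ with $H^{5-2j}_x$ regularity stated in \eqref{eq_eta_tau_B_3_1}. For the coefficients $a^{\tau,3}_{ij}$, the trace bound of Lemma \ref{lemma_hou_wang} combined with Proposition \ref{local_wellposedness_0} immediately provides $\partial_t^j a^{\tau,3}_{ij}\in L^2_TH^{6-2j}_x$. For the source terms, we inspect every building block of $\mathfrak{h}^{B,3}_{\eta}$ and $\mathfrak{h}^{B,3}_{ij}$: the interior-trace factors $\partial_t^i\partial_y^k (u^{I,0},\eta^{I,0},\tau^{I,0})(x,0,t)$ and $\partial_t^i\partial_y^k (u^{I,2},\eta^{I,2},\tau^{I,2})(x,0,t)$ come from Proposition \ref{local_wellposedness_0} and Lemma \ref{local_wellposedness_outer_2} via Lemma \ref{lemma_hou_wang}; the trace $(u^{I,3},u^{B,3})(x,0,t)$ factors come from Lemma \ref{local_wellposedness_outer_3} and Lemma \ref{lemma_u_B_3}; the boundary-layer factors $\eta^{B,1},\tau^{B,1},\eta^{B,2},\tau^{B,2},u_1^{B,2},u_2^{B,3}$ (and their $\partial_z$, $\partial_x$, $\partial_t$ derivatives, weighted by $\langle z\rangle^\ell$ or polynomials in $z$) are controlled by Lemmas \ref{lemma_eta_tau_b_1} and \ref{lemma_eta_tau_b_2}. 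Multiplying, applying the algebra of the anisotropic space $H^k_xH^\ell_z$ together with the decay weights $\langle z\rangle^{\ell}$, one obtains $\langle z\rangle^\ell \partial_t^i r^{\eta,3},\langle z\rangle^\ell \partial_t^i r^{\tau,3}_{ij}\in L^2_T H^{5-2i}_x L^2_z$ for $i=0,1,2$.

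The compatibility conditions at $t=0$ (order up to $[5/2]-1=1$) follow from the strong compatibility \eqref{eq_compatibility_eta_tau} and from the vanishing of initial data for the lower-order boundary-layer profiles and of $(u^{I,2},\eta^{I,2},\tau^{I,2}),(u^{I,3},\eta^{I,3},\tau^{I,3})$: both $r^{\eta,3}|_{t=0}$ and $\partial_t r^{\eta,3}|_{t=0}$ reduce to sums of terms in which at least one factor vanishes, and similarly for $r^{\tau,3}_{ij}$. Having applied Proposition \ref{prop_boundary_prototype}, we undo the shift to recover \eqref{eq_eta_tau_B_3_1}, and then bootstrap two more $z$-derivatives by solving for $\partial_z^2$ in \eqref{eq_eta_B3}--\eqref{eq_tau_B3_11} at the cost of two $x$-derivatives, yielding the $H^{3-2j}_x H^3_z$ bound \eqref{eq_eta_tau_B_3_t_2}.

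For the consequences, $u_1^{B,4}, u_2^{B,5}, \tilde p^{B,3}$ are given by the explicit primitives in \eqref{eq_u_B4_2}. Applying the weighted Hardy-type argument already used in the final step of the proof of Lemma \ref{lemma_eta_tau_b_1} (namely $\|\langle z\rangle^\ell\!\int_z^{+\infty}\! f\,dz'\|_{L^2_z}\lesssim\|\langle z\rangle^{\ell+2}f\|_{L^2_z}$), combined with Lemma \ref{lemma_p_g} for the $\tilde p^{B,2}$ contribution and with the preceding bounds on $u_1^{B,2},u_1^{B,3},\tau^{B,2}_{12},\tau^{B,3}_{12}$, produces \eqref{eq_u_B_4_1_1} and the asserted bound on $\tilde p^{B,3}$. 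The single real obstacle is purely bookkeeping: verifying that every monomial in $\mathfrak{h}^{B,3}_{\eta},\mathfrak{h}^{B,3}_{22},\mathfrak{h}^{B,3}_{12},\mathfrak{h}^{B,3}_{11}$ (together with the commutator terms coming from the cut-off lift, some of which carry polynomial factors $z,\tfrac{1}{2}z^2,\tfrac{1}{6}z^3$) falls within the $\langle z\rangle^\ell$-weighted, anisotropic regularity class demanded by Proposition \ref{prop_boundary_prototype} at level $m=5$, and that the two-step compatibility conditions hold; once this is in hand, the remainder is mechanical.
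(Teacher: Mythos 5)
Your proposal follows exactly the template the paper implicitly uses: homogenize the Neumann data by a cut-off lift, apply Proposition \ref{prop_boundary_prototype} with $m=5$ (which yields precisely the range $j=0,1,2$ and the $H^{5-2j}_x$ scale), bootstrap two extra $z$-derivatives through the equation, and finish with the weighted Hardy estimate for the primitives in \eqref{eq_u_B4_2}. The paper omits the proof of Lemma \ref{lemma_eta_tau_b_3} and evidently intends this same chain of reasoning, so your approach is essentially the paper's.

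One point in your compatibility argument deserves closer scrutiny. You assert that $r^{\eta,3}|_{t=0}$ and $\partial_t r^{\eta,3}|_{t=0}$ reduce to sums of terms in which at least one factor vanishes. That is true for every monomial of $\mathfrak{h}^{B,3}_{\eta}$ (all higher-order inner and outer profiles vanish at $t=0$), but it is not true for the contribution of the lift: with $\tilde{\eta}^{B,3}=\eta^{B,3}+z\varphi(z)\partial_y\eta^{I,2}(x,0,t)$, the induced source contains the single-factor term $z\varphi(z)\partial_t\partial_y\eta^{I,2}(x,0,t)$, and at $t=0$ the second equation of \eqref{OB_outer_second}, together with $\eta^{I,2}(0)=u^{I,2}(0)=0$, gives $\partial_t\eta^{I,2}(0)=\Delta\eta_0$, so this term equals $z\varphi(z)\partial_y\Delta\eta_0(x,0)$, which (using $\partial_y\eta_0|_{y=0}=0$) reduces to $z\varphi(z)\partial_y^3\eta_0(x,0)$. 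Nothing in \eqref{eq_compatibility_eta_tau}, which constrains only traces of $\eta^{I,0}$ and $\tau^{I,0}$, forces this to vanish, and the analogous issue appears for each $\tau^{B,3}_{ij}$ through $\partial_t\partial_y\tau^{I,2}_{ij}(x,0,0)=\partial_y\Delta\tau_{0,ij}(x,0)$. The first- and second-order compatibility for the lifted problems therefore requires conditions of the form $\partial_t^{k+1}\partial_y(\eta^{I,2},\tau^{I,2})(0)|_{y=0}=0$ for $k=0,1$, which must be either derived from the stated hypotheses (not mechanical) or recorded as an additional assumption. The paper does not address this, so the gap is shared; but your statement that the compatibility "follows from \eqref{eq_compatibility_eta_tau} and the vanishing of initial data" should be qualified accordingly.
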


    \subsection{An approximate solution}
    Based on the analysis in Section \ref{subsection_asym}, we define an approximate solution for the system (\ref{OB_main}) as follows:
         \begin{align*}
             &u^a(x,y,t) = u^I(x,y,t) + \varepsilon u^B(x,\frac{y}{\sqrt{\varepsilon}},t) +  \varepsilon^2 w (x,y,t), ~~\tilde{p}^a(x,y,t) = \tilde{p}^I(x,y,t) +  \sqrt{\varepsilon}\tilde{p}^B(x,\frac{y}{\sqrt{\varepsilon}},t),\\
             &\eta^a(x,y,t) = \eta^I(x,y,t) +   \sqrt{\varepsilon}\eta^B(x,\frac{y}{\sqrt{\varepsilon}},t),~~ \tau^a(x,y,t) = \tau^I(x,y,t) +   \sqrt{\varepsilon}\tau^B(x,\frac{y}{\sqrt{\varepsilon}},t),
         \end{align*}
     where
     \begin{align*}
             u^I(x,y,t) &=u^{I,0}(x,y,t) + \varepsilon u^{I,2}(x,y,t) + \varepsilon^{\frac{3}{2}} u^{I,3}(x,y,t),\\
             \tilde{p}^I(x,y,t) &=\tilde{p}^{I,0}(x,y,t)  + \varepsilon\tilde{p}^{I,2}(x,y,t), \\
             \eta^I(x,y,t) &=\eta^{I,0}(x,y,t)  + \varepsilon \eta^{I,2}(x,y,t),\\
             \tau^I(x,y,t) &=\tau^{I,0}(x,y,t)  +\varepsilon \tau^{I,2}(x,y,t),
         \end{align*}
         \begin{align*}
             u^B(x,\frac{y}{\sqrt{\varepsilon}},t) =&
             \left( \begin{matrix}
                  u_1^{B,2}(x,\frac{y}{\sqrt{\varepsilon}},t)\\
                  0
             \end{matrix}\right)
              +
             \sqrt{\varepsilon}
             \left(\begin{matrix}
                   u_1^{B,3}(x,\frac{y}{\sqrt{\varepsilon}},t)\\
                 u_2^{B,3}(x,\frac{y}{\sqrt{\varepsilon}},t)
             \end{matrix}\right)
             +  \varepsilon
             \left(\begin{matrix}
                u_1^{B,4}(x,\frac{y}{\sqrt{\varepsilon}},t)\\
                 u_2^{B,4}(x,\frac{y}{\sqrt{\varepsilon}},t)
             \end{matrix}\right)
             +
             \varepsilon^{\frac{3}{2}}
             \left(\begin{matrix}
                0\\
                 u_2^{B,5}(x,\frac{y}{\sqrt{\varepsilon}},t)
             \end{matrix}\right),
         \end{align*}
          \begin{align*}
           w(x,y,t) =&{}
            \left(\begin{matrix}
               \varphi^{\prime}(y)\int_0^{+\infty} u_1^{B,3}(x,z,t)\mathrm{d}z -\left[ \varphi^2(y) + \varphi^{\prime}(y)\int_0^{y}\varphi(\xi)\mathrm{d}\xi\right] u_1^{B,4}(x,0,t)\\[2mm]
                -\varphi(y)\int_0^{+\infty}\partial_x u_1^{B,3}(x,z,t)\mathrm{d}z + \varphi(y)\int_0^{y} \varphi(\xi)\mathrm{d}\xi \partial_x u_1^{B,4}(x,0,t)
            \end{matrix}\right)\\
            &+
           \sqrt{ \varepsilon }
            \left(\begin{matrix}
               \varphi^{\prime}(y) \int_0^{+\infty} u_1^{B,4}(x,z,t) \mathrm{d} z\\[2mm]
                -\varphi (y) \int_0^{+\infty} \partial_x u_1^{B,4}(x,z,t) \mathrm{d} z
            \end{matrix}\right),
        \end{align*}
         \begin{align*}
         \tilde{p}^{B}(x,\frac{y}{\sqrt{\varepsilon}},t)&=\tilde{p}^{B,1}(x,\frac{y}{\sqrt{\varepsilon}},t) + \sqrt{\varepsilon}\tilde{p}^{B,2}(x,\frac{y}{\sqrt{\varepsilon}},t)
         + \varepsilon \tilde{p}^{B,3}(x,\frac{y}{\sqrt{\varepsilon}},t),\\
         \eta^{B}(x,\frac{y}{\sqrt{\varepsilon}},t) &= \eta^{B,1}(x,\frac{y}{\sqrt{\varepsilon}},t) + \sqrt{\varepsilon}\eta^{B,2}(x,\frac{y}{\sqrt{\varepsilon}},t)
         + \varepsilon \eta^{B,3}(x,\frac{y}{\sqrt{\varepsilon}},t),\\
         \tau^{B}(x,\frac{y}{\sqrt{\varepsilon}},t) &= \tau^{B,1}(x,\frac{y}{\sqrt{\varepsilon}},t) + \sqrt{\varepsilon} \tau^{B,2}(x,\frac{y}{\sqrt{\varepsilon}},t)
         + \varepsilon \tau^{B,3}(x,\frac{y}{\sqrt{\varepsilon}},t),
         \end{align*}
     and $\varphi$ is the cut-off function defined by \eqref{eq_phi}.
     Letting $(u,\tilde{p},\eta,\tau)$ be the solution of problem  \eqref{OB_main}-\eqref{OB_main_boundary}, we define the error terms as below:
     \begin{eqnarray*}
         \begin{aligned}
                U^{\varepsilon}(x,y,t):=  \varepsilon^{-\frac{1}{2}}(u-u^a)(x,y,t), ~~ P^{\varepsilon}(x,y,t):=  \varepsilon^{-\frac{1}{2}}(\tilde{p}-\tilde{p}^a)(x,y,t),\\
                H^{\varepsilon}(x,y,t):=  \varepsilon^{-\frac{1}{2}}(\eta-\eta^a)(x,y,t), ~~ \varTheta^{\varepsilon}(x,y,t):=  \varepsilon^{-\frac{1}{2}}(\tau-\tau^a)(x,y,t).
         \end{aligned}
     \end{eqnarray*}
     \begin{remark}\label{rem_wellposedness_error}
     The wellposedness of $(u,\tilde{p},\eta,\tau)$ is equivalent to that of $(\varepsilon^\frac{1}{2}U^\varepsilon,\varepsilon^\frac{1}{2}P^\varepsilon,\varepsilon^\frac{1}{2}H^\varepsilon,\varepsilon^\frac{1}{2}\varTheta^\varepsilon)$, since $(u^a,\tilde{p}^a,\eta^a,\tau^a)$ is known due to Proposition \ref{local_wellposedness_0} and  Section  \ref{subsection_regularity} where the existence time is independent of $\epsilon$.
        See Corollary \ref{corollary_error} for the details.
     \end{remark}
     Substituting $(u^a,\tilde{p}^a,\eta^a,\tau^a)$ into \eqref{OB_main}-\eqref{OB_main_boundary}, and using the equations of inner and outer layer profiles in Section \ref{subsection_asym}, one can find that the error terms
     $(U^\varepsilon,H^\varepsilon,P^\varepsilon,\varTheta^\varepsilon)$ satisfies the following system:
     \begin{eqnarray}\label{eq_UHT_varepsilon}
         \left\{
         \begin{aligned}
             \partial_t U^\varepsilon &+ u^a\cdot\nabla U^\varepsilon   + U^\varepsilon\cdot\nabla u^a
          +  \sqrt{\varepsilon} U^\varepsilon\cdot \nabla U^\varepsilon
           + \nabla P^\varepsilon -\mu\Delta U^\varepsilon-\mathrm{div} \varTheta^\varepsilon = \mathfrak{F},\\
         \partial_t H^\varepsilon &+ u^a\cdot\nabla H^\varepsilon   + U^\varepsilon\cdot\nabla \eta^a
           +  \sqrt{\varepsilon}U^\varepsilon\cdot \nabla H^\varepsilon -\varepsilon\Delta H^\varepsilon
            = \mathfrak{G},\\
            \partial_t \varTheta^\varepsilon  &+ u^a\cdot\nabla \varTheta^\varepsilon  + U^\varepsilon\cdot\nabla  \tau^a
              +  \sqrt{\varepsilon}U^\varepsilon\cdot \nabla \varTheta^\varepsilon +\gamma \varTheta^\varepsilon
            -\varepsilon\Delta \varTheta^\varepsilon\\
             &-\big[\mathcal{Q}(\nabla u^a,\varTheta^\varepsilon)
            + \mathcal{Q}(\nabla U^\varepsilon,\tau^a)
              + \sqrt{\varepsilon}\mathcal{Q}(\nabla U^\varepsilon,\varTheta^\varepsilon)\big]\\
          &-\big[\mathcal{B}(\eta^a,\nabla U^\varepsilon)
            +\mathcal{B}(H^\varepsilon,\nabla u^a )
            + \sqrt{\varepsilon}\mathcal{B}(H^\varepsilon,\nabla U^\varepsilon )\big]
             = \mathfrak{J}, \\
             \mathrm{div} U^\varepsilon & = 0,
         \end{aligned}
         \right.
     \end{eqnarray}
     where
     \begin{eqnarray*}
        & \mathfrak{F}= \mathfrak{F}_1+\mathfrak{F}_2,~~\mathfrak{G} = \mathfrak{G}_1+\mathfrak{G}_2,~~\mathfrak{J}=\mathfrak{J}_1+\mathfrak{J}_2,
     \end{eqnarray*}
       with
         \begin{align*}
             \mathfrak{F}_1 =& -\sqrt{\varepsilon} \Big(\partial_t u^B  +  u^a\cdot\nabla u^B + u^B\cdot\nabla (u^I + \varepsilon^2 w)  -\mu\Delta u^B \Big) - \nabla \tilde{p}^B + \mathrm{div} \tau^B ,\\
             \mathfrak{F}_2 =&-\varepsilon   \Big( \partial_t (u^{I,3} +\sqrt{\varepsilon} w) + (u^{I,0} + \varepsilon u^{I,2})\cdot\nabla u^{I,3} + u^{I,3}\cdot\nabla u^I + \sqrt{\varepsilon}(u^{I}\cdot\nabla w + w\cdot\nabla u^I)   \\
             &+ \varepsilon^{\frac{5}{2}}w\cdot\nabla w- \mu\Delta (u^{I,3} +\sqrt{\varepsilon} w)+ \sqrt{\varepsilon}u^{I,2}\cdot\nabla u^{I,2}\Big),\\
             \mathfrak{G}_1= &  - (\partial_t \eta^B +   u^a\cdot\nabla \eta^B  + \sqrt{\varepsilon}u^B\cdot\nabla \eta^I   -\varepsilon \Delta \eta^B),\\
             \mathfrak{G}_2=  & - \varepsilon u^{I,3}\cdot\nabla\eta^{I } - \varepsilon^{\frac{3}{2}} (  u^{I,2}\cdot\nabla\eta^{I,2}  + w\cdot\nabla\eta^I -  \Delta \eta^{I,2}),   \\
             \mathfrak{J}_1 =& -  \Big(\partial_t \tau^B + \gamma \tau^B - \varepsilon\Delta \tau^B  +   u^a\cdot\nabla \tau^B   + \sqrt{\varepsilon}u^B\cdot\nabla \tau^I
                                  -\mathcal{Q}(\nabla u^{a}, \tau^{B})  - \sqrt{\varepsilon} \mathcal{Q}(\nabla u^{B}, \tau^{I })  \\
                             &  -   \mathcal{B}(\eta^{B},\nabla u^{a})  - \sqrt{\varepsilon}\mathcal{B}(\eta^{I},\nabla u^{B})  \Big), \\
             \mathfrak{J}_2 =& -  \varepsilon \Big(  u^{I,3}\cdot\nabla \tau^I  - \mathcal{Q}(\nabla u^{I,3}, \tau^{I }) -\mathcal{B}(\eta^{I},\nabla u^{I,3}) \Big) - \varepsilon^{\frac{3}{2}}\Big(    u^{I,2}\cdot\nabla \tau^{I,2}  + w\cdot\nabla \tau^I   \\
             &-
               \mathcal{Q}(\nabla u^{I,2}, \tau^{I,2}) -   \mathcal{Q}(\nabla w, \tau^{I})
             - \mathcal{B}(\eta^{I,2},\nabla u^{I,2}) - \mathcal{B}(\eta^{I},\nabla w) - \Delta \tau^{I,2}   \Big).
         \end{align*}

     Using the equations of the boundary layer profiles and some basic calculations, we can rewrite $\mathfrak{F}_1, \mathfrak{G}_1 $ and $\mathfrak{J}_1$ as follows:
     \begin{eqnarray*}
         \mathfrak{F}_1= \mathfrak{F}_{11} + \mathfrak{F}_{12},~~\mathfrak{G}_1= \mathfrak{G}_{11} + \mathfrak{G}_{12},
         ~~\mathfrak{J}_1 = \mathfrak{J}_{11} + \mathfrak{J}_{12},
     \end{eqnarray*}
     where
         \begin{align*}
         &\mathfrak{F}_{11} =\left(\begin{matrix}
             \mathfrak{F}_{11, 1}  \\
             \mathfrak{F}_{11, 2}
         \end{matrix}\right),
         ~~
         \mathfrak{F}_{12} =\left(\begin{matrix}
             \mathfrak{F}_{12, 1}  \\
             \mathfrak{F}_{12, 2}
         \end{matrix}\right),
         ~~
          \mathfrak{J}_{11}
         =\left(\begin{matrix}
             \mathfrak{J}_{11,11} & \mathfrak{J}_{11,12}\\
             \mathfrak{J}_{11,12} & \mathfrak{J}_{11,22}
         \end{matrix}\right),
         ~~
          \mathfrak{J}_{12}
         =\left(\begin{matrix}
             \mathfrak{J}_{12,11} & \mathfrak{J}_{12,12}\\
             \mathfrak{J}_{12,12} & \mathfrak{J}_{12,22}
         \end{matrix}\right),
     \end{align*}
     are stated in Appendix \ref{section_source_terms}.
     Furthermore, the system \eqref{eq_UHT_varepsilon} satisfies the initial boundary conditions
     \begin{equation}\label{eq_UHT_int_bo}
             (U^\varepsilon,H^\varepsilon,\varTheta^\varepsilon)(x,y,0) = 0,~(U^\varepsilon,\partial_yH^\varepsilon,\partial_y\varTheta^\varepsilon)(x,0,t) = 0.
     \end{equation}
    \section{Proof of Theorem \ref{theorem_limit}} \label{section_theorem_limit}
    In the sequel, all constants used in the estimates are independent of $\varepsilon,$ but may  depend  on  $\mu,\gamma,k$ or the bounds of the inner and outer profiles obtained in Proposition \ref{local_wellposedness_0} and Lemmas \ref{lemma_eta_tau_b_1}-\ref{lemma_p_g}.
    Moreover, the regularity results in Section \ref{subsection_regularity} will be frequently used.
    Since we are interested in the limit process, without loss of generality, we assume that $\varepsilon \in(0, 1]$ throughout the rest of the paper.
  \subsection{Estimates for the source terms }
    To begin with, we give some basic estimates involving the inner and outer profiles which will be frequently used later.
    \begin{lemma}\label{lemma_inner_outer_layer_est}
       Under the assumptions of Theorem \ref{theorem_limit}, then, for any $T\in(0, T_{0}]$, there exists a constant $C$ independent of $\varepsilon$ such that
        \begin{eqnarray}\label{eq_w_t_tt}
            \|w\|_{L^\infty_TH^4_{xy}} + \|\partial_tw\|_{L^\infty_TH^2_{xy}} + \|\partial_t^2w\|_{L^\infty_TL^2_{xy}}
            \leq C,~
        \end{eqnarray}
        \begin{eqnarray}\label{eq_u_eta_tau_I_t}
            \|(u^I,\eta^I,\tau^I)\|_{L^\infty_TH^4_{xy}} + \|\partial_t(u^I,\eta^I,\tau^I)\|_{L^\infty_TH^2_{xy}}
            \leq C,~
        \end{eqnarray}
        \begin{eqnarray}\label{eq_u_eta_tau_b_infty_2}
           \|(u^B,\eta^B,\tau^B)\|_{L^\infty_TH^3_xL^2_{z}} + \|(u^B,\eta^B,\tau^B)\|_{L^\infty_TH^2_xH^1_{z}}
           + \|(u^B,\eta^B,\tau^B)\|_{L^\infty_TH^1_xH^2_{z}}
            \leq C,~
        \end{eqnarray}
        and
        \begin{eqnarray}\label{eq_u_eta_tau_b_infty_3}
             \|\partial_t(u^B, \eta^B,\tau^B)\|_{L^\infty_TH^1_xL^2_{z}}
             + \|\partial_t u^B \|_{L^\infty_TH^2_xL^2_{z}}
             + \|\partial_t u^B \|_{L^\infty_TH^1_xH^1_{z}}
             + \|\partial_t^2u^B\|_{L^\infty_TL^2_{xz}} \leq C.
        \end{eqnarray}
\end{lemma}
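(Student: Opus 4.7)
\textbf{Proof proposal for Lemma \ref{lemma_inner_outer_layer_est}.}
The plan is to unfold the definitions of $u^I,\eta^I,\tau^I$, of $u^B,\eta^B,\tau^B$, and of $w$ term by term and simply harvest the uniform bounds already proved in Proposition \ref{local_wellposedness_0} and Lemmas \ref{lemma_eta_tau_b_1}--\ref{lemma_eta_tau_b_3}. Since $\varepsilon\in(0,1]$, every prefactor $\varepsilon^{j/2}$ appearing in those definitions is bounded, so it suffices to bound each summand separately; the full estimates then follow from the triangle inequality.

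First, for the outer estimate \eqref{eq_u_eta_tau_I_t}, I would write $u^I=u^{I,0}+\varepsilon u^{I,2}+\varepsilon^{3/2}u^{I,3}$ and invoke the $L^\infty_TH^{14}_{xy}$ bound on $u^{I,0}$ (Proposition \ref{local_wellposedness_0}), the $L^\infty_TH^{9}_{xy}$ bound on $u^{I,2}$ (Lemma \ref{local_wellposedness_outer_2}), and the $L^\infty_TH^{5}_{xy}$ bound on $u^{I,3}$ (Lemma \ref{local_wellposedness_outer_3}); the $L^\infty_TH^2_{xy}$ estimate on $\partial_tu^I$ uses the bounds on $\partial_tu^{I,\ell}$ in the same three results. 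The treatment of $\eta^I$ and $\tau^I$ is identical.

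Second, for \eqref{eq_u_eta_tau_b_infty_2}--\eqref{eq_u_eta_tau_b_infty_3}, note that $u^B,\eta^B,\tau^B$ are already functions of $(x,z,t)$, so no scaling between $y$ and $z$ is needed and the target norms $H^s_xH^\ell_z$ are exactly those controlled by Lemmas \ref{lemma_eta_tau_b_1}--\ref{lemma_eta_tau_b_3} and Lemma \ref{lemma_u_B_3}. Again I would split $u^B$ into the four components $u_1^{B,2}$, $(u_1^{B,3},u_2^{B,3})$, $(u_1^{B,4},u_2^{B,4})$, $u_2^{B,5}$, and similarly $\eta^B=\eta^{B,1}+\sqrt\varepsilon\eta^{B,2}+\varepsilon\eta^{B,3}$, and read off the required regularity from the corresponding lemma; the strongest index $H^{11}_xH^2_z$ on $u_1^{B,2}$ and the weakest $H^4_xH^2_z$ on $u_2^{B,5}$ (all uniform in $t\in[0,T]$) comfortably dominate $H^3_xL^2_z\cap H^2_xH^1_z\cap H^1_xH^2_z$. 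The time-derivative bounds in \eqref{eq_u_eta_tau_b_infty_3} use the $\partial_t^j$ results in those same lemmas with $j=1$ or $j=2$; the available regularity in $j$ (up to $j=5$ for the first-order profiles and $j=2$ for the third-order ones) is more than enough.

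Third, for \eqref{eq_w_t_tt}, the new feature is that $w$ contains the trace quantity $u_1^{B,4}(x,0,t)$, the integrals $\int_0^{+\infty}u_1^{B,3}(x,z,t)\,dz$ and $\int_0^{+\infty}u_1^{B,4}(x,z,t)\,dz$, and the cut-off $\varphi$. The cut-off contributes only absolute constants (it is smooth, compactly supported, and $\int_0^y\varphi\,d\xi$ is uniformly bounded on the support of the prefactor). For the trace I use Lemma \ref{lemma_hou_wang}, which turns the $x$-Sobolev norm of $u_1^{B,4}(x,0,t)$ into $\|u_1^{B,4}\|_{H^k_xH^1_z}$, controlled by Lemma \ref{lemma_eta_tau_b_3}. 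For the integrals I use Cauchy--Schwarz with the weight $\langle z\rangle^{-1}$:
\begin{equation*}
\Bigl\|\int_0^{+\infty}\partial_t^j\partial_x^{j'}u_1^{B,3}(x,z,t)\,dz\Bigr\|_{L^2_x}
\lesssim \bigl\|\langle z\rangle\,\partial_t^j\partial_x^{j'}u_1^{B,3}\bigr\|_{L^2_xL^2_z},
\end{equation*}
and the right-hand side is bounded by the weighted estimate in Lemma \ref{lemma_u_B_3} (which explicitly allows any $\ell\in\mathbb N$). Counting derivatives with $j\le 2$ and $j'\le 4$ shows $\|w\|_{L^\infty_TH^4_{xy}}$, $\|\partial_tw\|_{L^\infty_TH^2_{xy}}$, $\|\partial_t^2w\|_{L^\infty_TL^2_{xy}}$ are each finite. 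I do not expect a serious obstacle: the slightly delicate bookkeeping is making sure that the number of $\partial_t$ and $\partial_x$ derivatives taken on each boundary-layer profile in $w$ does not exceed the regularity provided by Lemmas \ref{lemma_u_B_3} and \ref{lemma_eta_tau_b_3}, which is easily verified since the worst case requires only $H^5_xH^2_z$ regularity of $u_1^{B,4}$ and $\partial_t^2u_1^{B,4}\in L^\infty_TL^2_xH^2_z$.
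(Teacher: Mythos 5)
Your proposal is correct and follows essentially the same route as the paper: unfold the definitions, apply the triangle inequality with $\varepsilon\le 1$, and harvest the regularity statements of Proposition \ref{local_wellposedness_0} and Lemmas \ref{lemma_eta_tau_b_1}--\ref{lemma_eta_tau_b_3}, with the weighted Cauchy--Schwarz trick $\int_0^\infty\langle z\rangle^{-1}\langle z\rangle g\,dz$ and the trace estimate of Lemma \ref{lemma_hou_wang} handling the nonlocal terms in $w$ exactly as the paper does.
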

\begin{proof}
    Using \eqref{eq_u_B_3_1_1}, \eqref{eq_u_B_4_1_1}, the expression of $w,$ and a direct calculation, we have
    \begin{align*}
        \|w\|_{L^\infty_TH^4_{xy}} \lesssim{}& \Bigl\|\int_0^{+\infty} \langle z\rangle^{-1} \|\langle z\rangle(u_1^{B,3},u_1^{B,4})(x,z,t)\|_{H^5_x}\mathrm{d}z + \|u_1^{B,4}(x,0,t)\|_{H^5_x} \Bigr\|_{L^\infty_T}\\
        \lesssim{}&\|\langle z\rangle(u_1^{B,3},u_1^{B,4})(x,z,t)\|_{L^\infty_TH^5_xH^1_z} \leq C.
    \end{align*}
    Similarly, we have
    \begin{align*}
       & \|\partial_t w\|_{L^\infty_TH^2_{xy}} + \|\partial_t^2 w\|_{L^\infty_TL^2_{xy}}\\
          \lesssim {} &\|\langle z\rangle \partial_t(u_1^{B,3},u_1^{B,4})(x,z,t)\|_{L^\infty_TH^3_xH^1_z}
          + \|\langle z\rangle \partial_t^2(u_1^{B,3},u_1^{B,4})(x,z,t)\|_{L^\infty_TH^1_xH^1_z} \leq C.
    \end{align*}
    Thus, we get \eqref{eq_w_t_tt}.  Furthermore,
    \eqref{eq_u_eta_tau_I_t}, \eqref{eq_u_eta_tau_b_infty_2} and \eqref{eq_u_eta_tau_b_infty_3}
    can be obtained by directly using Proposition \ref{local_wellposedness_0}, and
     Lemmas   \ref{lemma_inequality}, \ref{lemma_hou_wang}, \ref{lemma_eta_tau_b_1}, \ref{local_wellposedness_outer_2}, \ref{lemma_eta_tau_b_2}, \ref{lemma_u_B_3}, \ref{local_wellposedness_outer_3}, \ref{lemma_eta_tau_b_3}. We omit the details for brevity.
\end{proof}
\begin{remark}\label{remark_profiles_w}
    For readers' convenience,  we collect all the necessary regularities of the boundary layer profiles used in the proof of Lemma \ref{lemma_inner_outer_layer_est}, i.e.,
    \begin{align*}
        &\partial_t^i(u^{I,0},\eta^{I,0},\tau^{I,0},u^{I,2},\eta^{I,2},\tau^{I,2},u^{I,3}) \in L^\infty_TH^{4-2i}_{xy},~
         \langle z\rangle\partial_t^j(u_1^{B,3},u_1^{B,4})  \in L^\infty_TH^{5-2j}_xH^1_z,~\\
        &(u_1^{B,2},u_2^{B,k+2}, \eta^{B,k},\tau^{B,k}) \in L^\infty_TH^{2}_xH^1_z,~
         \partial_t^i(u_1^{B,2},u_2^{B,k+2}, \eta^{B,k},\tau^{B,k}) \in L^\infty_TH^{3-2i}_xL^2_z,~ \\
        &(u_1^{B,k+1},u_2^{B,k+2},\eta^{B,k},\tau^{B,k}) \in L^\infty_TH^1_xH^{2}_z,~\partial_t(u_1^{B,2},u_2^{B,k+2}) \in L^\infty_TH^1_xH^1_z\cap L^\infty_TH^2_xL^2_z,
    \end{align*}
    for $i=0,1,~j=0,1,2,~k=1,2,3$.
\end{remark}
Next, for the source terms, we have the following estimates.
   \begin{lemma}\label{lemma_source_term_1}
    Under the assumptions of Theorem \ref{theorem_limit}, then, for any $T\in(0, T_{0}]$, there exists a constant $C$ independent of $\varepsilon$ such that
    \begin{eqnarray}\label{eq_source_term}
        \|(\mathfrak{F},\mathfrak{G},\mathfrak{J})\|_{L^\infty_TH^1_{x}L^2_{y}}   +
        \|  (\mathfrak{G}_{2},\mathfrak{J}_{2})\|_{L^\infty_TH^1_{xy}}
        \leq C\varepsilon,~ ~\|(\mathfrak{G}_1,\mathfrak{J}_1)\|_{L^\infty_TH^1_{x}L^2_y}  \leq C \varepsilon^{\frac{7}{4}},
    \end{eqnarray}
    and
    \begin{eqnarray}\label{eq_source_term_t}
        \|(\partial_t\mathfrak{F},\partial_t\mathfrak{G},\partial_t\mathfrak{J})\|_{L^\infty_TL^2_{xy}}  \leq C \varepsilon.
    \end{eqnarray}
   \end{lemma}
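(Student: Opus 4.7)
The plan is to estimate each source term by separating contributions that come from outer profiles (genuine functions of $(x,y)$) from contributions that come from boundary layer profiles (functions of $(x, y/\sqrt{\varepsilon})$), because the two families require different tools: outer pieces are bounded by direct application of Proposition~\ref{local_wellposedness_0} and Lemmas~\ref{local_wellposedness_outer_2}, \ref{local_wellposedness_outer_3}, and \ref{lemma_inner_outer_layer_est}, while boundary-layer pieces need the rescaling identity of Lemma~\ref{lemma_hou_wang} that turns $\|\partial_y^m f(x, y/\sqrt{\varepsilon}, t)\|_{L^2_y}$ into $\varepsilon^{1/4 - m/2}\|\partial_z^m f\|_{L^2_z}$, producing the additional small factor $\varepsilon^{1/4}$ on every boundary-layer piece.

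First I would treat $\mathfrak{G}_2$ and $\mathfrak{J}_2$, which are explicit polynomials in $\varepsilon$ of overall order $\varepsilon$ whose coefficients are products of the outer quantities $u^{I,2}, u^{I,3}, \eta^{I,2}, \tau^{I,2}, w$ and the base profile $(u^{I,0},\eta^{I,0},\tau^{I,0})$. All such coefficients are uniformly bounded on $[0,T]$ in $H^2_{xy}$ (a Banach algebra in two dimensions via Lemma~\ref{lemma_inequality}), so the Leibniz rule combined with Lemmas~\ref{local_wellposedness_outer_2}, \ref{local_wellposedness_outer_3}, and \ref{lemma_inner_outer_layer_est} immediately yields $\|(\mathfrak{G}_2, \mathfrak{J}_2)\|_{L^\infty_T H^1_{xy}} \leq C\varepsilon$. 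The piece $\mathfrak{F}_2$ is handled in the same way, except that it contains $\partial_t u^{I,3}$ and $\partial_t w$; since the $H^2_{xy}$ control of $\partial_t w$ in Lemma~\ref{lemma_inner_outer_layer_est} only allows us to reach $L^\infty_T H^1_x L^2_y$, this is the norm recorded in \eqref{eq_source_term}.

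Next, and this is where the matched expansion pays off, I would handle $\mathfrak{G}_1$ and $\mathfrak{J}_1$. Using the explicit splittings $\mathfrak{F}_1 = \mathfrak{F}_{11} + \mathfrak{F}_{12}$ and $\mathfrak{J}_1 = \mathfrak{J}_{11} + \mathfrak{J}_{12}$ (together with the analogous $\mathfrak{G}_1 = \mathfrak{G}_{11} + \mathfrak{G}_{12}$) collected in Appendix~\ref{section_source_terms}, the pieces $\mathfrak{G}_{11}, \mathfrak{J}_{11}$ consist of exactly those terms that are annihilated once one plugs in the defining equations \eqref{eq_eta_B1}--\eqref{eq_eta_B3} for $\eta^{B,k}$ and \eqref{eq_tau_B1_22}--\eqref{eq_tau_B3_11} for $\tau^{B,k}$, combined with the Taylor expansions of the outer profiles at $y = 0$. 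The residues $\mathfrak{G}_{12}, \mathfrak{J}_{12}$ then collect only the terms of formal order at least $\varepsilon^{3/2}$ in the $(x,z,t)$ variables, which are uniformly bounded in $H^1_x L^2_z$ thanks to the weighted-in-$z$ regularity from Lemmas~\ref{lemma_eta_tau_b_1}--\ref{lemma_eta_tau_b_3}. Invoking Lemma~\ref{lemma_hou_wang} to convert to $L^2_y$ adds the factor $\varepsilon^{1/4}$, upgrading $\varepsilon^{3/2}$ to $\varepsilon^{7/4}$ and yielding the claimed estimate for $\mathfrak{G}_1, \mathfrak{J}_1$. The analogous splitting of $\mathfrak{F}_1$ uses \eqref{eq_u_B_2_exp}, \eqref{eq_u_B_3}, \eqref{eq_u_B4_2} and the pressure identities for $\tilde{p}^{B,k}$; because $u^B$ comes with an overall factor $\varepsilon$ (one higher than $\eta^B, \tau^B$), the matching cancellation is one order weaker and delivers $O(\varepsilon)$ rather than $O(\varepsilon^{7/4})$, which is still what \eqref{eq_source_term} requires for $\mathfrak{F}$.

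Finally, for the time-derivative estimates in \eqref{eq_source_term_t}, I would apply $\partial_t$ to each of these expressions and repeat the argument, paying the standard price of two spatial derivatives per time derivative; the regularity needed is furnished by Lemmas~\ref{lemma_eta_tau_b_1}, \ref{local_wellposedness_outer_2}, \ref{lemma_eta_tau_b_2}, \ref{lemma_u_B_3}, \ref{local_wellposedness_outer_3}, \ref{lemma_eta_tau_b_3}, \ref{lemma_p_g}, and \ref{lemma_inner_outer_layer_est}. The main obstacle throughout is the purely algebraic verification that $\mathfrak{G}_{11}, \mathfrak{J}_{11}, \mathfrak{F}_{11}$ cancel to the advertised order: expanding a boundary-layer term $f(x, y/\sqrt{\varepsilon}, t)$ against the outer operators produces polynomials in $z$ (from Taylor-expanding outer coefficients at $y = 0$), and these have to match coefficient-by-coefficient the forcing terms appearing in \eqref{eq_eta_B2}, \eqref{eq_tau_B2_22}, \eqref{eq_eta_B3}, \eqref{eq_tau_B3_22}, etc. The weighted decay $\langle z\rangle^\ell(\cdot) \in L^\infty_T H^*_x H^*_z$ supplied by Lemmas~\ref{lemma_eta_tau_b_1}--\ref{lemma_eta_tau_b_3} is exactly what makes the resulting $z$-integrals finite and the corresponding $L^2_z$ norms controlled uniformly in $\varepsilon$.
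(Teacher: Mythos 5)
Your overall structure matches the paper: outer pieces are controlled through the regularity from Proposition~\ref{local_wellposedness_0} and Lemmas~\ref{local_wellposedness_outer_2}, \ref{local_wellposedness_outer_3}, \ref{lemma_inner_outer_layer_est}; boundary-layer pieces gain the factor $\varepsilon^{1/4}$ from the rescaling in Lemma~\ref{lemma_hou_wang}; and the weighted decay $\langle z\rangle^\ell(\cdot)\in L^\infty_TH^m_xH^\ell_z$ from Lemmas~\ref{lemma_eta_tau_b_1}--\ref{lemma_eta_tau_b_3} handles the $z$-powers. This is exactly the machinery the paper deploys, and the claimed rates ($\varepsilon$ for $\mathfrak{F},\mathfrak{G},\mathfrak{J}$ in $H^1_xL^2_y$, $\varepsilon$ for $\mathfrak{G}_2,\mathfrak{J}_2$ in $H^1_{xy}$, and $\varepsilon^{7/4}$ for $\mathfrak{G}_1,\mathfrak{J}_1$) follow.

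However, your description of $\mathfrak{F}_{11},\mathfrak{G}_{11},\mathfrak{J}_{11}$ as ``exactly those terms that are annihilated once one plugs in the defining equations'' mischaracterizes their role and, if taken literally, would leave a hole in the proof. Inspecting the expression $\mathfrak{M}$ in Appendix~\ref{section_source_terms}, the entries of $\mathfrak{G}_{11}$ are Taylor \emph{remainders}: differences of the form $u^{I,0}(x,y,t)-u^{I,0}(x,0,t)-y\partial_y u^{I,0}(x,0,t)-\tfrac12y^2\partial_y^2u^{I,0}(x,0,t)$ multiplied by boundary-layer factors. These are not zero, and no further coefficient-by-coefficient matching against the forcing terms $\mathfrak{h}^{B,j}$ is performed at the estimation stage; that cancellation was carried out once and for all in Appendix~\ref{section_derivation_eqs} when the profile equations were built. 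What remains is to bound each remainder by its next derivative times a power of $y=\sqrt\varepsilon z$, absorb the resulting $z$-polynomial into the $\langle z\rangle^\ell$-weighted bounds on $\eta^{B,k},\tau^{B,k}$, and pick up the extra $\varepsilon^{1/4}$ from the $L^2_y\to L^2_z$ conversion; the product of these factors yields the $\varepsilon^{7/4}$ rate. (Two smaller points: the paper actually obtains $\mathfrak{F}_{11},\mathfrak{F}_{12}=O(\varepsilon^{5/4})$, so your ``$O(\varepsilon)$'' for $\mathfrak{F}_1$ undersells it, although the conclusion for $\mathfrak{F}$ is unchanged since $\mathfrak{F}_2$ dominates; and your stated reason that $\partial_t w\in H^2_{xy}$ forces the $H^1_xL^2_y$ norm for $\mathfrak{F}$ runs backwards --- $H^2_{xy}$ control would permit $H^1_{xy}$, the weaker norm is simply all the lemma requires.) Your argument would close once the estimation of the ``$11$'' pieces is phrased in this Taylor-remainder language rather than as an algebraic cancellation.
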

   \begin{proof}
    To begin with, using the expressions in Appendix \ref{section_source_terms} and
     Taylor's formula, we have
        \begin{align}\label{eq_est_F_11_2}
           &\| \mathfrak{F}_{11,2}\|_{L^\infty_TL^2_{xy}}  \notag\\
           ={} & \varepsilon^{\frac{1}{2}}\left\|\frac{ u_2^{I,0}(x,y,t) -   u_2^{I,0}(x,0,t)   }{y}\varepsilon^{\frac{1}{2}}z\partial_zu_2^{B,3}
            + \varepsilon^{\frac{1}{2}}zu_1^{B,2}\frac{ \partial_x u_2^{I,0}(x,y,t) -   \partial_x u_2^{I,0}(x,0,t)   }{y}\right\|_{L^\infty_TL^2_{xy}}\notag \\
           \leq{} & \varepsilon^{\frac{5}{4}}\|\partial_y u_2^{I,0}\|_{L^\infty_TL^\infty_{xy}} \|\langle z \rangle\partial_zu_2^{B,3} \|_{L^\infty_TL^2_{xz}}
           +\varepsilon^{\frac{5}{4}}\|\langle z \rangle u_1^{B,2} \|_{L^\infty_TL^2_{xz}}\|\partial_y\partial_x u_2^{I,0}\|_{L^\infty_TL^\infty_{xy}} \\
           \leq{} &C\varepsilon^{\frac{5}{4}}\|  u_2^{I,0}\|_{L^\infty_TH^3_{xy}} \|\langle z \rangle\partial_zu_2^{B,3} \|_{L^\infty_TL^2_{xz}}
           + C\varepsilon^{\frac{5}{4}}\|\langle z \rangle u_1^{B,2} \|_{L^\infty_TL^2_{xz}}\|  u_2^{I,0}\|_{L^\infty_TH^4_{xy}}  \leq C\varepsilon^{\frac{5}{4}}.\notag
        \end{align}
        Noticing that
        \begin{eqnarray*}
            \|fg\|_{H^1_{x}L^2_y} \leq \|f\|_{W_{xy}^{1,\infty}}\|g\|_{H^1_{x}L^2_{y}},~~\text{ for all } f\in W_{xy}^{1,\infty},~g\in H^1_x L^2_y,
        \end{eqnarray*}
        similar to \eqref{eq_est_F_11_2}, we have
        \begin{eqnarray}\label{eq_est_F_11_2_x}
            \begin{aligned}
                \| \mathfrak{F}_{11,2}\|_{L^\infty_TH^1_{x}L^2_{y}}\lesssim{}& \varepsilon^{\frac{5}{4}}\|\partial_y u_2^{I,0}\|_{L^\infty_TW^{1,\infty}_{xy}} \|\langle z \rangle\partial_zu_2^{B,3} \|_{L^\infty_TH^1_{x}L^2_{z}}
            +\varepsilon^{\frac{5}{4}}\|\langle z \rangle u_1^{B,2} \|_{L^\infty_TH^1_{x}L^2_{z}}\|\partial_y\partial_x u_2^{I,0}\|_{L^\infty_TW^{1,\infty}_{xy}} \\
            \leq{}& C\varepsilon^{\frac{5}{4}}\|  u_2^{I,0}\|_{L^\infty_TH^5_{xy}} \Bigl( \|\langle z \rangle\partial_zu_2^{B,3} \|_{L^\infty_TH^1_{x}L^2_{z}}
            + \|\langle z \rangle u_1^{B,2} \|_{L^\infty_TH^1_{x}L^2_{z}} \Bigr) \leq C\varepsilon^{\frac{5}{4}}.
            \end{aligned}
        \end{eqnarray}

   Similar to \eqref{eq_est_F_11_2} and \eqref{eq_est_F_11_2_x}, we have, for $\mathfrak{F}_{11,1}$, that
       \begin{align}\label{eq_est_F_11_1}
        \|\mathfrak{F}_{11,1}\|_{L^\infty_TH^1_xL^2_{y}} \lesssim{} &  \varepsilon^{\frac{5}{4}}\Big( \|u_1^{I,0}\|_{L^\infty_TH^5_{xy}}\|\langle z\rangle u_1^{B,2}\|_{L^\infty_TH^2_xL^2_z} +  \|u_2^{I,0}\|_{L^\infty_TH^5_{xy}} \|\langle z\rangle^2 (u_1^{B,2},u_1^{B,3})\|_{L^\infty_TH^1_xH^1_z} \Big) \\
        \leq{}& C\varepsilon^{\frac{5}{4}}\notag.
       \end{align}

   From the expression of $\mathfrak{F}_{12,1}$, we have
   \begin{align}\label{eq_est_F_12_1}
     & \|\mathfrak{F}_{12,1}\|_{L^\infty_TH^1_xL^2_{y}}  \notag \\
     \lesssim{} &  \varepsilon^{\frac{5}{4}} \|\partial_t (u_1^{B,3},u_1^{B,4})\|_{L^\infty_TH^1_xL^2_{z}}
     + \varepsilon^{\frac{7}{4}}  \|(u_1^{I,2},u_1^{I,3})\|_{L^\infty_TH^3_{xy}}\|u_1^{B,2}\|_{L^\infty_TH^2_xL^2_z}
     + \varepsilon^{\frac{5}{4}}  \|(u_2^{I,2},u_2^{I,3})\|_{L^\infty_TH^3_{xy}}\notag\\
     &\times\|(u_1^{B,2},u_1^{B,3})\|_{L^\infty_TH^1_xH^1_z}
     + \varepsilon^{\frac{5}{4}}\|u^I_1\|_{L^\infty_TH^3_{xy}}\|(u_1^{B,3},u_1^{B,4})\|_{L^\infty_TH^2_xL^2_z}
     +\varepsilon^{\frac{5}{4}}\|u^I_2\|_{L^\infty_TH^3_{xy}}\| u_1^{B,4} \|_{L^\infty_TH^1_xH^1_z}  \notag\\
     &+ \varepsilon^{\frac{5}{4}}\|(u_1^{B,3},u_1^{B,4},u_2^{B,3},u_2^{B,4},u_2^{B,5})\|_{L^\infty_TH^1_xL^2_z}\|u_1^{I,0}\|_{L^\infty_TH^4_{xy}}
     + \varepsilon^{\frac{7}{4}}\|u^B\|_{L^\infty_TH^1_xL^2_z}\|(u_1^{I,2},u_1^{I,3})\|_{L^\infty_TH^4_{xy}} \notag\\
     &+ \varepsilon^{\frac{7}{4}}\|u^B_1\|_{L^\infty_TH^2_{xz}}^2 + \varepsilon^{\frac{5}{4}}\|u^B_2\|_{L^\infty_TH^2_{xz}} \|u^B_1\|_{L^\infty_TH^2_{xz}}
     +  \varepsilon^{\frac{7}{4}}\|u^B\|_{L^\infty_TH^2_{xz}}\|w\|_{L^\infty_TH^2_{xy}}+\varepsilon^{\frac{5}{4}}\|\tilde{p}^{B,3}\|_{L^\infty_TH^2_xL^2_z}  \\
     &+ \varepsilon^{\frac{5}{4}}\|(u_1^{B,3},u_1^{B,4})\|_{L^\infty_TH^3_xL^2_z} +    \varepsilon^{\frac{5}{4}}\|\tau^{B,3}_{11}\|_{L^\infty_TH^2_{x}L^2_z} \notag\\
     \lesssim{}&\varepsilon^{\frac{5}{4}}\Bigl( \|\partial_t (u_1^{B,3},u_1^{B,4})\|_{L^\infty_TH^1_xL^2_{z}}
     +\|(u_1^{B,3},u_1^{B,4})\|_{L^\infty_TH^3_xL^2_z}
     +\|(u_1^{B,2},u_1^{B,3},u_1^{B,4},u_2^{B,3},u_2^{B,4},u_2^{B,5})\|_{L^\infty_TH^2_xH^1_z}  \notag\\
     &\times\|(u^{I,0},u^{I,2},u^{I,3})\|_{L^\infty_TH^4_{xy}}
     +  \|u^B\|_{L^\infty_TH^2_{xz}}^2    + \|w\|_{L^\infty_TH^2_{xy}}^2+ \|\tilde{p}^{B,3}\|_{L^\infty_TH^2_xL^2_z}
     +\|\tau^{B,3}_{11}\|_{L^\infty_TH^2_{x}L^2_z} \Bigr)  \notag\\
     \leq{}&C  \varepsilon^{\frac{5}{4}}.\notag
    \end{align}

    Similarly, for $\mathfrak{F}_{12,2}$, we have
    \begin{align}\label{eq_est_F_12_2}
        &\|\mathfrak{F}_{12,2}\|_{L^\infty_TH^1_xL^2_{y}} \notag\\
        \lesssim{}& \varepsilon^{\frac{5}{4}}\Bigl(  \|\partial_t (u_2^{B,3},u_2^{B,4},u_2^{B,5})\|_{L^\infty_TH^1_xL^2_{z}}
        +\|(u_2^{B,3},u_2^{B,4},u_2^{B,5})\|_{L^\infty_TH^3_xL^2_z}
        +\|(u^{I,0},u^{I,2},u^{I,3})\|_{L^\infty_TH^4_{xy}} \notag\\
          &\times\|(u_1^{B,2},u_1^{B,3},u_1^{B,4},u_2^{B,3},u_2^{B,4},u_2^{B,5})\|_{L^\infty_TH^2_xH^1_z}
        +  \|u^B\|_{L^\infty_TH^2_{xz}}^2    + \|w\|_{L^\infty_TH^2_{xy}}^2 + \|u_2^{B,5}\|_{L^\infty_TH^1_xH^2_z}  \\
        &+\|\tau^{B,3}_{12}\|_{L^\infty_TH^2_{x}L^2_z} \Bigr)
        \leq{} C  \varepsilon^{\frac{5}{4}}.  \notag
    \end{align}

       Moreover,  for $\mathfrak{F}_{2}$, we have
       \begin{align}\label{eq_est_F_2}
        \|\mathfrak{F}_{2}\|_{L^\infty_TH^1_xL^2_{y}} \lesssim {}& \varepsilon\Bigl(  \|\partial_t (u^{I,3},w)\|_{L^\infty_T{H^1_{xy}}}  +  \|(u^{I,0},u^{I,2},u^{I,3},w)\|_{L^\infty_T H^2_{xy} }^2 + \|(u^{I,3},w)\|_{L^\infty_T H^3_{xy} }\Bigr) \leq C \varepsilon.
    \end{align}
    Combining \eqref{eq_est_F_11_2}-\eqref{eq_est_F_2}, we deduce that
    \begin{eqnarray*}
         \|\mathfrak{F} \|_{L^\infty_TH^1_xL^2_{y}}  \leq C \varepsilon.
    \end{eqnarray*}

    Similar to \eqref{eq_est_F_11_2_x}, for $\mathfrak{G}_{11}$, we have
    \begin{eqnarray}\label{eq_est_G_11_x}
        \begin{aligned}
            \| \mathfrak{G}_{11}\|_{L^\infty_TH^1_{x}L^2_{y}}\lesssim{}&  \varepsilon^{\frac{7}{4}}\Bigl( \|  u_1^{I,2}\|_{L^\infty_TH^4_{xy}} \|\langle z \rangle \eta^{B,1} \|_{L^\infty_TH^2_{x}L^2_{z}}
        +\|  u_1^{I,0}\|_{L^\infty_TH^4_{xy}} \|\langle z \rangle \eta^{B,3} \|_{L^\infty_TH^2_{x}L^2_{z}} \\
        &+ \|  u_2^{I,3}\|_{L^\infty_TH^4_{xy}}\|\langle z \rangle \eta^{B,1} \|_{L^\infty_TH^1_{x}H^1_{z}}
        + \|  u_2^{I,2}\|_{L^\infty_TH^4_{xy}}\|\langle z \rangle \eta^{B,2} \|_{L^\infty_TH^1_{x}H^1_{z}}\\
        &+ \|\langle z \rangle u^{B,3} \|_{L^\infty_TH^1_{x}L^2_{z}}\|  \eta^{I,0}\|_{L^\infty_TH^5_{xy}}
        +\|  u_1^{I,0}\|_{L^\infty_TH^6_{xy}} \|\langle z \rangle^3 \eta^{B,1} \|_{L^\infty_TH^2_{x}L^2_{z}}\\
        &+\|  u_2^{I,0}\|_{L^\infty_TH^6_{xy}} \|\langle z \rangle^3 \eta^{B,2} \|_{L^\infty_TH^1_{x}H^1_{z}}
        +\|  u_1^{I,0}\|_{L^\infty_TH^5_{xy}} \|\langle z \rangle^2 \eta^{B,2} \|_{L^\infty_TH^2_{x}L^2_{z}}\\
        &+\|  u_2^{I,2}\|_{L^\infty_TH^5_{xy}} \|\langle z \rangle^2 \eta^{B,1} \|_{L^\infty_TH^1_{x}H^1_{z}}
         +\|  u_2^{I,0}\|_{L^\infty_TH^5_{xy}} \|\langle z \rangle^2 \eta^{B,3} \|_{L^\infty_TH^1_{x}H^1_{z}}\\
         &+ \|\langle z \rangle^2 u_1^{B,2} \|_{L^\infty_TH^1_{x}L^2_{z}}\|  \eta^{I,0}\|_{L^\infty_TH^6_{xy}}
         +\|  u_2^{I,0}\|_{L^\infty_TH^7_{xy}} \|\langle z \rangle^4 \eta^{B,1} \|_{L^\infty_TH^1_{x}H^1_{z}}\Bigr)\\
        \leq{}& C\varepsilon^{\frac{7}{4}}.
        \end{aligned}
    \end{eqnarray}

    Similar to \eqref{eq_est_F_12_1}, for $\mathfrak{G}_{12},$ we have
    \begin{align}\label{eq_est_G_12}
         &\|\mathfrak{G}_{12}\|_{L^\infty_TH^1_xL^2_{y}}  \notag\\
       \lesssim{}& \varepsilon^{\frac{7}{4}}\Bigl(
         \|(u^{I,0},u^{I,2},u^{I,3},w)\|_{L^\infty_TH^3_{xy}}\|(\eta^{B,1},\eta^{B,2},\eta^{B,3})\|_{L^\infty_TH^2_xH^1_z} + \|(\eta^{I,0},\eta^{I,2})\|_{L^\infty_TH^4_{xy}}
        \notag\\
        &\times \|(u_1^{B,2},u_1^{B,3},u_1^{B,4},u_2^{B,3},u_2^{B,4},u_2^{B,5})\|_{L^\infty_TH^1_xL^2_z}
        +\|( \eta^{B,2},\eta^{B,3})\|_{L^\infty_TH^3_{x}L^2_z}\\
          & +\big( \|(u_1^{B,2},u_1^{B,3},u_1^{B,4},u_2^{B,3},u_2^{B,4},u_2^{B,5})\|_{L^\infty_TH^2_{xz}} + \|w\|_{L^\infty_TH^2_{xy}}\big)\|(\eta^{B,1},\eta^{B,2},\eta^{B,3})\|_{L^\infty_TH^2_{xz}}
             \Bigr) \notag\\
    \leq{}            &C  \varepsilon^{\frac{7}{4}}.\notag
       \end{align}
       Next, for  $\mathfrak{G}_{2},$ we have
       \begin{align}\label{eq_est_G_2}
        \|\mathfrak{G}_{2}\|_{L^\infty_TH^1_{xy}}
         \lesssim {}& \varepsilon\Bigl(    \|(u^{I,0},u^{I,2},u^{I,3},w)\|_{L^\infty_T H^2_{xy} }\|(\eta^{I,0},\eta^{I,2})\|_{L^\infty_T H^2_{xy} }    + \|\eta^{I,2}\|_{L^\infty_T H^3_{xy} }\Bigr) \leq C \varepsilon.
    \end{align}
    Combining \eqref{eq_est_G_11_x}, \eqref{eq_est_G_12} and \eqref{eq_est_G_2}, we deduce that
    \begin{eqnarray*}
        \|\mathfrak{G} \|_{L^\infty_TH^1_xL^2_{y}}  + \|\mathfrak{G}_2 \|_{L^\infty_TH^1_{xy}}  \leq C \varepsilon,
        ~~\|\mathfrak{G}_1\|_{L^\infty_TH^1_{x}L^2_y}  \leq C \varepsilon^{\frac{7}{4}}.
    \end{eqnarray*}

    The first term on the right-hand side of $\mathfrak{J}_{11,11}$ can be estimated in the same way as  (\ref{eq_est_G_11_x}). The rest of the estimates is similar to \eqref{eq_est_F_11_2_x}, namely,
        \begin{align}\label{eq_est_J_11_11_x}
            \| \mathfrak{J}_{11,11}\|_{L^\infty_TH^1_{x}L^2_{y}}\lesssim{}&  \varepsilon^{\frac{7}{4}}+\varepsilon^{\frac{7}{4}}\Big(  \|\langle z \rangle u_1^{B,3} \|_{L^\infty_TH^2_{x}L^2_{z}}\|  \tau_{11}^{I,0}\|_{L^\infty_TH^4_{xy}}+\|\langle z \rangle u_1^{B,4} \|_{L^\infty_TH^1_{x}H^1_{z}}\|  \tau_{12}^{I,0}\|_{L^\infty_TH^4_{xy}}
        \notag\\
        & +\|\langle z \rangle u_1^{B,2} \|_{L^\infty_TH^1_{x}H^1_{z}}\|  \tau_{12}^{I,2}\|_{L^\infty_TH^4_{xy}} +\| u_1^{I,2} \|_{L^\infty_TH^5_{xy}}\|  \langle z \rangle \tau_{11}^{B,1}\|_{L^\infty_TH^1_{x}L^2_z}
       \notag\\
        & +\| u_1^{I,0} \|_{L^\infty_TH^5_{xy}}\|  \langle z \rangle \tau_{11}^{B,3}\|_{L^\infty_TH^1_{x}L^2_z} \notag +\| u_1^{I,2} \|_{L^\infty_TH^5_{xy}}\|  \langle z \rangle \tau_{12}^{B,1}\|_{L^\infty_TH^1_{x}L^2_z}
       \notag \\
        &+\| u_1^{I,0} \|_{L^\infty_TH^5_{xy}}\|  \langle z \rangle \tau_{12}^{B,3}\|_{L^\infty_TH^1_{x}L^2_z} +\| \eta^{I,0} \|_{L^\infty_TH^4_{xy}}\|  \langle z \rangle u_1^{B,3}\|_{L^\infty_TH^2_{x}L^2_z}
        \notag\\
        &+\|  \langle z \rangle \eta^{B,1}\|_{L^\infty_TH^1_{x}L^2_z}\| u_1^{I,2} \|_{L^\infty_TH^5_{xy}}+\|  \langle z \rangle \eta^{B,3}\|_{L^\infty_TH^1_{x}L^2_z}\| u_1^{I,0} \|_{L^\infty_TH^5_{xy}}
       \notag\\
        & +\|  \tau_{12}^{I,0}\|_{L^\infty_TH^6_{xy}} \|\langle z \rangle^3 u_1^{B,2} \|_{L^\infty_TH^1_{x}H^1_{z}} + \|  u_1^{I,0}\|_{L^\infty_TH^7_{xy}}  \|\langle z \rangle^3 \tau_{11}^{B,1} \|_{L^\infty_TH^1_{x}L^2_{z}}
        \notag\\
        & + \|  u_1^{I,0}\|_{L^\infty_TH^7_{xy}}  \|\langle z \rangle^3 \tau_{12}^{B,1} \|_{L^\infty_TH^1_{x}L^2_{z}}    + \|  u_1^{I,0}\|_{L^\infty_TH^7_{xy}} \|\langle z \rangle^3 \eta^{B,1} \|_{L^\infty_TH^1_{x}L^2_{z}}
         \\&
         +\|  \tau_{11}^{I,0}\|_{L^\infty_TH^5_{xy}}\|\langle z \rangle^2 u_1^{B,2} \|_{L^\infty_TH^2_{x}L^2_{z}}+\|  \tau_{12}^{I,0}\|_{L^\infty_TH^5_{xy}}\|\langle z \rangle^2 u_1^{B,3} \|_{L^\infty_TH^1_{x}H^1_{z}}
         \notag\\
         &+\|  u_1^{I,0}\|_{L^\infty_TH^6_{xy}} \|\langle z \rangle^2 \tau_{11}^{B,2} \|_{L^\infty_TH^1_{x}L^2_{z}}+\|  u_1^{I,0}\|_{L^\infty_TH^6_{xy}} \|\langle z \rangle^2 \tau_{12}^{B,2} \|_{L^\infty_TH^1_{x}L^2_{z}}
         \notag\\
         &+\|  u_1^{I,0}\|_{L^\infty_TH^6_{xy}} \|\langle z \rangle^2 \eta^{B,2} \|_{L^\infty_TH^1_{x}L^2_{z}}  +\|  \eta^{I,0}\|_{L^\infty_TH^5_{xy}} \|\langle z \rangle^2 u_{1}^{B,2} \|_{L^\infty_TH^2_{x}L^2_{z}}\Big)\notag\\
         \lesssim{}&\varepsilon^{\frac{7}{4}}+\varepsilon^{\frac{7}{4}}\Bigl(\|u^{I,0}\|_{L^\infty_TH^7_{xy}} + \|(\eta^{I,0},\tau^{I,0})\|_{L^\infty_TH^6_{xy}}
         +\|u^{I,2}\|_{L^\infty_TH^5_{xy}} + \|\tau^{I,2}\|_{L^\infty_TH^4_{xy}}\Bigr) \notag \\
         &\times \|\langle z \rangle^3(u_1^{B,2},u_1^{B,3},u_1^{B,4},\eta^{B,1},\eta^{B,2},\eta^{B,3},\tau^{B,1},\tau^{B,2},\tau^{B,3})\|_{L^\infty_TH^2_{x}H^1_{z}} \notag\\
         \leq{}& C\varepsilon^{\frac{7}{4}}. \notag
        \end{align}

    Similar to \eqref{eq_est_J_11_11_x}, for $\mathfrak{J}_{11,12}$ and $\mathfrak{J}_{11,22}$, we have
    \begin{align}\label{eq_est_J_11_12_x}
        &\| \mathfrak{J}_{11,12}\|_{L^\infty_TH^1_{x}L^2_{y}} + \| \mathfrak{J}_{11,22}\|_{L^\infty_TH^1_{x}L^2_{y}} \notag\\
        \lesssim{}&\varepsilon^{\frac{7}{4}}+\varepsilon^{\frac{7}{4}}\Bigl(\|u^{I,0}\|_{L^\infty_TH^7_{xy}} + \|(\eta^{I,0},\tau^{I,0})\|_{L^\infty_TH^6_{xy}}
         +\|u^{I,2}\|_{L^\infty_TH^5_{xy}} + \|(\eta^{I,2},\tau^{I,2})\|_{L^\infty_TH^4_{xy}}\Bigr)  \\
         &\times \|\langle z \rangle^3(u_1^{B,2},u_1^{B,3},u_1^{B,4},u_2^{B,3},u_2^{B,4},\eta^{B,1},\eta^{B,2},\eta^{B,3},\tau^{B,1},\tau^{B,2},\tau^{B,3})\|_{L^\infty_TH^2_{x}H^1_{z}} \notag\\
         \leq{}& C\varepsilon^{\frac{7}{4}}. \notag
    \end{align}

    The first term on the right-hand side of $\mathfrak{J}_{12,11},$ $\mathfrak{J}_{12,12}$, and $\mathfrak{J}_{12,22}$ can be estimated in the same way as  (\ref{eq_est_G_12}). The rest of the estimates is similar to \eqref{eq_est_F_12_1}, namely,
    \begin{align}\label{eq_est_J_12}
        &\|\mathfrak{J}_{12}\|_{L^\infty_TH^1_xL^2_{y}}\\
        \leq{} & \|\mathfrak{J}_{12,11}\|_{L^\infty_TH^1_xL^2_{y}}+ \|\mathfrak{J}_{12,12}\|_{L^\infty_TH^1_xL^2_{y}} + \|\mathfrak{J}_{12,22}\|_{L^\infty_TH^1_xL^2_{y}}\notag\\
      \lesssim{}&\varepsilon^{\frac{7}{4}}\Bigl( \|(u_1^{B,2},u_1^{B,3},u_1^{B,4},u_2^{B,3},u_2^{B,4},u_2^{B,5},\eta^{B,1},\eta^{B,2},\eta^{B,3},\tau^{B,1},\tau^{B,2},\tau^{B,3})\|_{L^\infty_TH^2_{x}H^1_{z}} \notag\\
        &\times\|(u^{I,0},u^{I,2},u^{I,3},w,\eta^{I,0},\eta^{I,2},\tau^{I,0},\tau^{I,2})\|_{L^\infty_TH^4_{xy}}
        +\|(\tau^{B,2},\tau^{B,3})\|_{L^\infty_TH^3_{x}L^2_z}
       \\
         &\big( \| (u_1^{B,2},u_1^{B,3},u_1^{B,4},u_2^{B,3},u_2^{B,4},u_2^{B,5})\|_{L^\infty_TH^2_{xz}} + \|w\|_{L^\infty_TH^2_{xy}}\big) \notag\\
         &\times\|(\eta^{B,1},\eta^{B,2},\eta^{B,3},\tau^{B,1},\tau^{B,2},\tau^{B,3})\|_{L^\infty_TH^2_{xz}}   \Bigr) \leq{} C  \varepsilon^{\frac{7}{4}}. \notag
      \end{align}

    Finally, for $ \mathfrak{J}_{2}$, we have
    \begin{align}\label{eq_est_J_2}
        \|\mathfrak{J}_{2}\|_{L^\infty_TH^1_{xy}}
         \lesssim {}& \varepsilon\Bigl(    \|(u^{I,0},u^{I,2},u^{I,3},w)\|_{L^\infty_T H^2_{xy} }\|(\eta^{I,0},\eta^{I,2},\tau^{I,0},\tau^{I,2})\|_{L^\infty_T H^2_{xy} }    + \| \tau^{I,2}\|_{L^\infty_T H^3_{xy} }\Bigr) \leq C \varepsilon.
    \end{align}
    Combining \eqref{eq_est_J_11_11_x}, \eqref{eq_est_J_11_12_x}, \eqref{eq_est_J_12} and \eqref{eq_est_J_2}, we deduce that
    \begin{eqnarray*}
        \|\mathfrak{J} \|_{L^\infty_TH^1_xL^2_{y}}  + \|\mathfrak{J}_2 \|_{L^\infty_TH^1_{xy}}  \leq C \varepsilon,
        ~~\|\mathfrak{J}_1\|_{L^\infty_TH^1_{x}L^2_y}  \leq C \varepsilon^{\frac{7}{4}}.
    \end{eqnarray*}
    The estimates for $\partial_t\mathfrak{F},\partial_t\mathfrak{G},\partial_t\mathfrak{J}$ can be obtained in a similar way, we omit the details here for brevity.
\end{proof}
\begin{remark}\label{remark_source_terms}
    For readers' convenience, we remark that, in the proof of  Lemma \ref{lemma_source_term_1}, the regularities stated in Lemma \ref{lemma_inner_outer_layer_est} and the following additional regularities:
    \begin{align*}
        & \tilde{p}^{B,3} \in L^\infty_T H^2_xL^2_z,
         ~\partial_t \tilde{p}^{B,3} \in L^\infty_T H^1_xL^2_z,~\langle z \rangle^4(u_1^{B,i},u_2^{B,i+1},\eta^{B,j},\tau^{B,j}) \in L^\infty_TH^3_xH^1_z,\\[2mm]
         & \langle z \rangle^4 \partial_t(u_1^{B,i},u_2^{B,i+1},\eta^{B,j},\tau^{B,j}) \in L^\infty_TH^2_xH^1_z,~
         (u_1^{B,i},u_2^{B,i+1},\eta^{B,j},\tau^{B,j}) \in L^\infty_T H^2_{xz},  \\[2mm]
        & u_2^{B,5} \in L^\infty_TH^1_xH^2_z,~\partial_tu_2^{B,5} \in L^\infty_TL^2_xH^2_z,~\partial_t^2(u_1^{B,3},u_1^{B,4},u_2^{B,i+1})\in L^\infty_T L^2_{xz},
        \end{align*}
      and
         \begin{align*}
         &u^{I,0}\in L^\infty_TH^7_{xy},~\partial_t u^{I,0}\in L^\infty_TH^6_{xy},
        ~(\eta^{I,0},\tau^{I,0})\in L^\infty_TH^6_{xy},~(\partial_t \eta^{I,0},\partial_t\tau^{I,0})\in L^\infty_TH^5_{xy},\\[2mm]
        &u^{I,2}\in L^\infty_TH^5_{xy},~\partial_tu^{I,2}\in L^\infty_TH^4_{xy},
        ~(\eta^{I,2},\tau^{I,2}) \in L^\infty_TH^4_{xy},~(\partial_t\eta^{I,2},\partial_t\tau^{I,2}) \in L^\infty_TH^3_{xy},\\[2mm]
        &u^{I,3}\in L^\infty_TH^4_{xy},~\partial_tu^{I,3}\in L^\infty_TH^3_{xy},~ \partial_t^2u^{I,3}\in L^\infty_TL^2_{xy},
    \end{align*}
  are used, for $i=2,3,4,$ $j=1,2,3.$
\end{remark}
\subsection{Wellposedness of the error system}
In this section, we establish the wellposedness of $(\varepsilon^{\frac{1}{2}}U^\varepsilon,\varepsilon^{\frac{1}{2}}H^\varepsilon,\varepsilon^{\frac{1}{2}}\varTheta^\varepsilon)$
  for any fixed $\varepsilon\in (0,1].$ To begin with,
we denote
\begin{eqnarray}\label{def_UHT_tilde}
    \tilde{U^\varepsilon}:=\varepsilon^{\frac{1}{2}}U^\varepsilon,~\tilde{H^\varepsilon}:=\varepsilon^{\frac{1}{2}}H^\varepsilon,  \text{ and }\tilde{\varTheta^\varepsilon}:=\varepsilon^{\frac{1}{2}}\varTheta^\varepsilon.
\end{eqnarray}
Then, from \eqref{eq_UHT_varepsilon} and \eqref{eq_UHT_int_bo}, we obtain that  $(\tilde{U^\varepsilon},\tilde{H^\varepsilon},\tilde{\varTheta^\varepsilon})$ satisfies the following system:
\begin{eqnarray}\label{eq_UHT_varepsilon_tilde}
    \left\{
    \begin{aligned}
        \partial_t \tilde{U^\varepsilon} &+ u^a\cdot\nabla \tilde{U^\varepsilon}   + \tilde{U^\varepsilon}\cdot\nabla u^a
     +    \tilde{U^\varepsilon}\cdot \nabla \tilde{U^\varepsilon}
      + \nabla (\varepsilon^{\frac{1}{2}}P^\varepsilon) -\mu\Delta \tilde{U^\varepsilon}-\mathrm{div} \tilde{\varTheta^\varepsilon} = \varepsilon^{\frac{1}{2}}\mathfrak{F},\\
    \partial_t \tilde{H^\varepsilon} &+ u^a\cdot\nabla \tilde{H^\varepsilon}   + \tilde{U^\varepsilon}\cdot\nabla \eta^a
      +   \tilde{U^\varepsilon}\cdot \nabla \tilde{H^\varepsilon} -\varepsilon\Delta \tilde{H^\varepsilon}
       = \varepsilon^{\frac{1}{2}}\mathfrak{G},\\
       \partial_t \tilde{\varTheta^\varepsilon}  &+ u^a\cdot\nabla \tilde{\varTheta^\varepsilon}  + \tilde{U^\varepsilon}\cdot\nabla  \tau^a
         +   \tilde{U^\varepsilon}\cdot \nabla \tilde{\varTheta^\varepsilon} +\gamma \tilde{\varTheta^\varepsilon}
       -\varepsilon\Delta \tilde{\varTheta^\varepsilon}\\
        &-\big[\mathcal{Q}(\nabla u^a,\tilde{\varTheta^\varepsilon})
       + \mathcal{Q}(\nabla \tilde{U^\varepsilon},\tau^a)
         +  \mathcal{Q}(\nabla \tilde{U^\varepsilon},\tilde{\varTheta^\varepsilon})\big]\\
     &-\big[\mathcal{B}(\eta^a,\nabla \tilde{U^\varepsilon})
       +\mathcal{B}(\tilde{H^\varepsilon},\nabla u^a )
       +  \mathcal{B}(\tilde{H^\varepsilon},\nabla \tilde{U^\varepsilon} )\big]
        = \varepsilon^{\frac{1}{2}}\mathfrak{J}, \\
        {}\mathrm{div} \tilde{U^\varepsilon} & = 0,
    \end{aligned}
    \right.
\end{eqnarray}
with  the initial boundary condition:
\begin{equation}\label{eq_UHT_int_bo_tilde}
        (\tilde{U^\varepsilon},\tilde{H^\varepsilon},\tilde{\varTheta^\varepsilon})(x,y,0) = 0,~(\tilde{U^\varepsilon},\partial_y\tilde{H^\varepsilon},\partial_y\tilde{\varTheta^\varepsilon})(x,0,t) = 0.
\end{equation}

We will state a local-in-time wellposedness result for the system \eqref{eq_UHT_varepsilon_tilde} equipped with more general initial-boundary conditions:
\begin{equation}\label{eq_UHT_int_bo_tilde_general}
    (\tilde{U^\varepsilon},\tilde{H^\varepsilon},\tilde{\varTheta^\varepsilon})(x,y,0) = (\tilde{U^\varepsilon_0},\tilde{H^\varepsilon_0},\tilde{\varTheta^\varepsilon_0})(x,y)
     ,~(\tilde{U^\varepsilon},\partial_y\tilde{H^\varepsilon},\partial_y\tilde{\varTheta^\varepsilon})(x,0,t) = 0.
\end{equation}

\begin{proposition}\label{prop_local_well_posedness_tilde}
    Assume that the conditions of Theorem \ref{theorem_limit} hold, and that
    $$(\tilde{U^\varepsilon_0},\tilde{H^\varepsilon_0},\tilde{\varTheta^\varepsilon_0})\in H^2_{xy},~\partial_x(\tilde{U^\varepsilon_0},\tilde{H^\varepsilon_0},\tilde{\varTheta^\varepsilon_0})\in H^2_{xy},~    \mathrm{div}\, \tilde{U^\varepsilon_0} = 0,~\,(\tilde{U^\varepsilon_0},\partial_y\tilde{H^\varepsilon_0},\partial_y\tilde{\varTheta^\varepsilon_0})(x,0)=0.$$
    Then, for any given $\varepsilon\in (0,1]$, there exists a positive constant $T_{\varepsilon} \leq T_0$ such that the initial-boundary-value
    problem \eqref{eq_UHT_varepsilon_tilde} and \eqref{eq_UHT_int_bo_tilde_general} has a unique local solution $(\tilde{U^\varepsilon},\tilde{H^\varepsilon},\tilde{\varTheta^\varepsilon})$ satisfying
    $$(\tilde{U^\varepsilon},\tilde{H^\varepsilon},\tilde{\varTheta^\varepsilon}) \in C([0,T_\varepsilon],H_{xy}^{2})
     \text{ ~and~ }   \partial_x(u^\varepsilon,\eta^\varepsilon,\tau^\varepsilon) \in C([0,T_{\varepsilon}];H^2_{xy}).$$
\end{proposition}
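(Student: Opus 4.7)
The plan is to construct a local solution by linearizing \eqref{eq_UHT_varepsilon_tilde} and running a standard Picard-type iteration. The crucial observation is that for any fixed $\varepsilon>0$, the equations for $\tilde H^\varepsilon$ and $\tilde\varTheta^\varepsilon$ carry the full second-order dissipation $\varepsilon\Delta$, so the coupled system is essentially of Navier--Stokes/heat/parabolic type and the usual methods apply. Note that $(u^a,\eta^a,\tau^a)$ and the forcing $(\mathfrak F,\mathfrak G,\mathfrak J)$ enjoy the smoothness recorded in Lemma \ref{lemma_inner_outer_layer_est} uniformly on $[0,T_0]$, so the coefficients of the linearized system are under control.

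First I would define an iterate $(\tilde U^{(n+1)},\tilde H^{(n+1)},\tilde\varTheta^{(n+1)})$ as the solution of the system obtained by evaluating every quadratic unknown---the terms $\tilde U^\varepsilon\!\cdot\!\nabla\tilde U^\varepsilon$, $\tilde U^\varepsilon\!\cdot\!\nabla\tilde H^\varepsilon$, $\tilde U^\varepsilon\!\cdot\!\nabla\tilde\varTheta^\varepsilon$, $\mathcal Q(\nabla\tilde U^\varepsilon,\tilde\varTheta^\varepsilon)$ and $\mathcal B(\tilde H^\varepsilon,\nabla\tilde U^\varepsilon)$---at the previous iterate. This decouples each step into (i) a non-stationary Stokes problem for $(\tilde U^{(n+1)},\varepsilon^{1/2}P^{(n+1)})$ with no-slip boundary condition and forcing $\mathrm{div}\,\tilde\varTheta^{(n)}+\varepsilon^{1/2}\mathfrak F+\cdots$, (ii) a linear advection--diffusion equation for $\tilde H^{(n+1)}$ with Neumann condition, and (iii) a linear parabolic system for $\tilde\varTheta^{(n+1)}$ (with zeroth-order damping $\gamma$ and first-order coupling through $\nabla\tilde U^{(n+1)}$) with Neumann condition. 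Each of these three problems is classical in $L^2$-based Sobolev spaces (Galerkin or semigroup theory for the Stokes block, standard parabolic theory for the scalar and tensor equations, see e.g.\ \cite{Ladyzenskaja_etal_1968,Fang_Hieber_Zi_2013}), so an $H^2_{xy}$-regular iterate produces an $H^2_{xy}$-regular next iterate.

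Next I would derive uniform-in-$n$ a priori bounds on a short interval $[0,T_\varepsilon]$ by running energy estimates for the functional
\begin{equation*}
\mathcal E(t):=\|(\tilde U^{(n+1)},\tilde H^{(n+1)},\tilde\varTheta^{(n+1)})(t)\|_{H^2_{xy}}^2+\|\partial_x(\tilde U^{(n+1)},\tilde H^{(n+1)},\tilde\varTheta^{(n+1)})(t)\|_{H^2_{xy}}^2.
\end{equation*}
Since $\partial_x$ is tangential to $\{y=0\}$, applying $\partial_x$ to the system preserves both the no-slip condition on $\tilde U^\varepsilon$ and the Neumann conditions on $\tilde H^\varepsilon,\tilde\varTheta^\varepsilon$, so the extra $\partial_x$-regularity claimed in the proposition follows from repeating the same energy estimates on the $\partial_x$-differentiated system. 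The compatibility $(\tilde U_0^\varepsilon,\partial_y\tilde H_0^\varepsilon,\partial_y\tilde\varTheta_0^\varepsilon)(x,0)=0$ together with $\mathrm{div}\,\tilde U_0^\varepsilon=0$ is exactly what makes all boundary terms vanish at $t=0$. Applying Lemma \ref{lemma_inequality}, Lemma \ref{lemma_inner_outer_layer_est} and standard product/commutator estimates yields a differential inequality of the form
\begin{equation*}
\tfrac{\mathrm d}{\mathrm d t}\mathcal E(t)+c_\varepsilon\,\mathcal D(t)\leq C_\varepsilon\bigl(1+\mathcal E^{(n)}(t)\bigr)\bigl(1+\mathcal E(t)+\mathcal E(t)^2\bigr),
\end{equation*}
where $\mathcal D(t)$ is the parabolic dissipation and the constants $c_\varepsilon,C_\varepsilon$ depend on $\varepsilon,\mu,\gamma$ and the data. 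A standard Gronwall/continuation argument closes the bound uniformly in $n$ on some $[0,T_\varepsilon]$ with $T_\varepsilon$ depending on the $H^2$-size of the data and on $\varepsilon$.

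Finally I would prove that the map $(\tilde U^{(n)},\tilde H^{(n)},\tilde\varTheta^{(n)})\mapsto(\tilde U^{(n+1)},\tilde H^{(n+1)},\tilde\varTheta^{(n+1)})$ is a contraction in a lower-regularity norm, e.g.\ $L^\infty_tL^2_{xy}\cap L^2_tH^1_{xy}$, after possibly shrinking $T_\varepsilon$, pass to the limit to obtain a unique $(\tilde U^\varepsilon,\tilde H^\varepsilon,\tilde\varTheta^\varepsilon)$ with the stated regularity, and upgrade time integrability to continuity via the standard parabolic identity. The main technical obstacle, as is typical for Oldroyd-B type systems, will be controlling the term $\mathcal Q(\nabla\tilde U^{(n+1)},\tau^a)$ in the equation for $\tilde\varTheta^{(n+1)}$, which carries a full gradient of the velocity and would cost one derivative if treated naively; this is precisely why the proposition demands the extra $\partial_x$-regularity of the data, and it is absorbed using the higher regularity $\tau^a\in L^\infty_tH^3_{xy}$ built in Section \ref{subsection_regularity} together with the parabolic dissipation of the $\tilde\varTheta$-equation.
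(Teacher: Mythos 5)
Your proposal is correct and follows essentially the same route as the paper, which itself only sketches the argument by appealing to the standard Picard iteration / fixed-point methodology of \cite{Fang_Hieber_Zi_2013,Heywood_1980} once the forcing regularity $(\mathfrak F,\mathfrak G,\mathfrak J)\in L^\infty_{T_0}H^1_xL^2_y$, $\partial_t(\mathfrak F,\mathfrak G,\mathfrak J)\in L^\infty_{T_0}L^2_{xy}$ from Lemma \ref{lemma_source_term_1} is in hand. Your decoupling into a non-stationary Stokes block plus two linear parabolic blocks, the use of tangential $\partial_x$ differentiation to preserve the boundary conditions, and the absorption of $\mathcal Q(\nabla\tilde U^{(n+1)},\tau^a)$ by the $\varepsilon\Delta$-dissipation are exactly the details the paper leaves implicit.
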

\begin{remark}
    From Lemma \ref{lemma_source_term_1}, we know that $(\mathfrak{F},\mathfrak{G},\mathfrak{J}) \in L^\infty_{T_0}H^1_{x}L^2_y$ and  $\partial_t(\mathfrak{F},\mathfrak{G},\mathfrak{J}) \in L^\infty_{T_0}L^2_{xy}$. Therefore, the proof of Proposition \ref{prop_local_well_posedness_tilde} can be established via  the standard iteration arguments or the fixed-point argument. Refer, for
    instance, to \cite{Fang_Hieber_Zi_2013,Heywood_1980}.
\end{remark}
Note that the existence time $T_\varepsilon$ in Proposition \ref{prop_local_well_posedness_tilde} may depend on $\varepsilon$. However, we are interested in the limit process $\varepsilon\to 0.$ Hence, we need to show that $T_\varepsilon$ ($0<\varepsilon\leq 1$) has a positive lower bound uniform  for  $\varepsilon.$
For this purpose, we firstly derive the following uniform estimates, and the proof will be given in Sections \ref{section_est_error_lemma} and \ref{section_est_error_prop}.
\begin{lemma}\label{lemma_a_priori}
   Suppose that
    $(\tilde{U^\varepsilon},\tilde{H^\varepsilon},\tilde{\varTheta^\varepsilon})$ is the solution of the problem \eqref{eq_UHT_varepsilon_tilde}-\eqref{eq_UHT_int_bo_tilde} on $[0,T_\varepsilon].$ Under the assumptions of Theorem \ref{theorem_limit},
    then, there  exists  a positive time $T_{\star}$ (defined in \eqref{def_T_star})   which is independent of $\varepsilon$, such that
    \begin{equation*}
        \|(\tilde{U^\varepsilon},\tilde{H^\varepsilon},\tilde{\varTheta^\varepsilon})\|_{C([0,T];L_{xy}^2)}^2 \leq \varepsilon,
    \end{equation*}
    for all $T\in (0,\min\{T_\star,T_\varepsilon\}].$
\end{lemma}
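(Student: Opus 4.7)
The plan is to prove Lemma \ref{lemma_a_priori} by a standard $L^2$ energy estimate on the error system \eqref{eq_UHT_varepsilon_tilde}--\eqref{eq_UHT_int_bo_tilde}, closed by a continuity (bootstrap) argument. The basic scaling is favourable: Lemma \ref{lemma_source_term_1} makes the forcing $O(\varepsilon)$ in $L^2$, so the forced contribution to $E(t):=\|(\tilde U^\varepsilon,\tilde H^\varepsilon,\tilde\varTheta^\varepsilon)(t)\|_{L^2}^2$ should behave like $\varepsilon^{3}$, comfortably inside the target $\varepsilon$. The whole game is therefore to control the quadratic/cubic self-interactions uniformly in $\varepsilon$.

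Concretely, I would test the three equations of \eqref{eq_UHT_varepsilon_tilde} against $\tilde U^\varepsilon$, $\tilde H^\varepsilon$, and $\tilde\varTheta^\varepsilon$ respectively, integrate over $\mathbb{R}^2_+$, and sum. Using $\mathrm{div}\, u^a=\mathrm{div}\,\tilde U^\varepsilon=0$ together with $\tilde U^\varepsilon|_{y=0}=0$ and $\partial_y(\tilde H^\varepsilon,\tilde\varTheta^\varepsilon)|_{y=0}=0$, the pressure drops out and every pure transport integral $\int u^a\cdot\nabla(\cdot)\cdot(\cdot)$ and $\int\tilde U^\varepsilon\cdot\nabla(\cdot)\cdot(\cdot)$ vanishes. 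Integrating the Laplacians by parts produces the nonnegative dissipation $D(t):=\mu\|\nabla\tilde U^\varepsilon\|_{L^2}^2+\varepsilon\|\nabla\tilde H^\varepsilon\|_{L^2}^2+\varepsilon\|\nabla\tilde\varTheta^\varepsilon\|_{L^2}^2$, yielding an identity
\[
\tfrac{1}{2}\tfrac{d}{dt}E+D=\mathcal{L}+\mathcal{N}+\mathcal{S},
\]
where $\mathcal{L}$ collects the quadratic terms in $(\tilde U^\varepsilon,\tilde H^\varepsilon,\tilde\varTheta^\varepsilon)$ carrying one copy of $(u^a,\eta^a,\tau^a)$ or its first derivatives, $\mathcal{N}$ is the genuinely cubic part consisting of $\int\mathcal{Q}(\nabla\tilde U^\varepsilon,\tilde\varTheta^\varepsilon):\tilde\varTheta^\varepsilon$ and $\int\mathcal{B}(\tilde H^\varepsilon,\nabla\tilde U^\varepsilon):\tilde\varTheta^\varepsilon$, and $\mathcal{S}=\varepsilon^{1/2}\int(\mathfrak{F}\cdot\tilde U^\varepsilon+\mathfrak{G}\tilde H^\varepsilon+\mathfrak{J}:\tilde\varTheta^\varepsilon)$. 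The source is bounded by Cauchy--Schwarz and Lemma \ref{lemma_source_term_1} as $|\mathcal{S}|\le C\varepsilon^{3}+E$, and the pieces of $\mathcal{L}$ are controlled by the uniform bounds on $(u^a,\eta^a,\tau^a)$ from Lemma \ref{lemma_inner_outer_layer_est}: terms without $\nabla\tilde U^\varepsilon$ contribute $CE$, while those featuring $\nabla\tilde U^\varepsilon$ are absorbed into $\tfrac{\mu}{4}\|\nabla\tilde U^\varepsilon\|_{L^2}^2$ by Young's inequality.

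To handle $\mathcal{N}$, I would run a continuity argument: let $T^{**}:=\sup\{T\le T_\varepsilon:E(t)\le\varepsilon~\text{for all}~t\in[0,T]\}$, which is positive since $E(0)=0$ by \eqref{eq_UHT_int_bo_tilde}. Under the resulting bootstrap $\|(\tilde H^\varepsilon,\tilde\varTheta^\varepsilon)\|_{L^2}\le\sqrt{\varepsilon}$, H\"older together with Ladyzhenskaya's inequality \eqref{ieq_Lady} applied to $\tilde H^\varepsilon$ and $\tilde\varTheta^\varepsilon$ gives
\[
|\mathcal{N}|\lesssim\sqrt{\varepsilon}\,\|\nabla\tilde U^\varepsilon\|_{L^2}\,\|\nabla(\tilde H^\varepsilon,\tilde\varTheta^\varepsilon)\|_{L^2},
\]
and a weighted Young inequality (with weights adapted to $\mu,\gamma,k$) absorbs $|\mathcal{N}|$ into $\tfrac12 D$. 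What survives is $\tfrac{d}{dt}E\le CE+C\varepsilon^{3}$, and Gronwall yields $E(t)\le C'\varepsilon^{3}e^{Ct}$. I would then define $T_\star$ as in \eqref{def_T_star} so that $C'\varepsilon^{2}e^{CT_\star}\le 1/2$ uniformly in $\varepsilon\in(0,1]$, which gives $E(T^{**})\le\varepsilon/2<\varepsilon$ and contradicts maximality of $T^{**}$ unless $T^{**}\ge T_\star$.

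The main obstacle is the absorption of $\mathcal{N}$: the only dissipation available for the stress/density errors is the $\varepsilon$-weighted term $\varepsilon\|\nabla(\tilde H^\varepsilon,\tilde\varTheta^\varepsilon)\|_{L^2}^2$, which collapses as $\varepsilon\to 0$. That absorption works at all hinges on the bootstrap producing the sharp $\varepsilon^{1/2}$ prefactor above, which matches precisely the $\sqrt{\varepsilon}$ imbalance between $\mu\|\nabla\tilde U^\varepsilon\|_{L^2}^2$ and $\varepsilon\|\nabla\tilde\varTheta^\varepsilon\|_{L^2}^2$; any weaker smallness hypothesis would fail to close, which is why Lemma \ref{lemma_a_priori} is tuned to the exact bound $\|\cdot\|_{L^2}^2\le\varepsilon$ rather than something tighter or looser.
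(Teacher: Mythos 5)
Your proposal is correct and follows essentially the same strategy as the paper's: the paper first establishes the bootstrap estimate in the unscaled variables $(U^\varepsilon,H^\varepsilon,\varTheta^\varepsilon)=\varepsilon^{-1/2}(\tilde U^\varepsilon,\tilde H^\varepsilon,\tilde\varTheta^\varepsilon)$ (Lemma \ref{lemma_UHT_infty_L2_cont}: if $\|(U^\varepsilon,H^\varepsilon,\varTheta^\varepsilon)\|_{C_TL^2}^2\le 1$ then $\le 1/2$, proved via the same $L^2$ energy identity, Ladyzhenskaya applied to the $\sqrt{\varepsilon}\mathcal{Q}(\nabla U^\varepsilon,\varTheta^\varepsilon)$ and $\sqrt{\varepsilon}\mathcal{B}(H^\varepsilon,\nabla U^\varepsilon)$ terms, absorption of $\|\nabla U^\varepsilon\|^2$ by weighting the momentum equation with a large constant $M$, and Gronwall), and then invokes the continuity argument using $E(0)=0$, exactly as in your $T^{**}$ construction. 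The only cosmetic difference is that you run the energy estimate directly in the tilded variables, which merely rescales the Gronwall constant without changing the content.
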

\begin{remark}
     The proof of Lemma \ref{lemma_a_priori} is given in Section \ref{section_est_error_lemma}.
\end{remark}
Based on Lemma \ref{lemma_a_priori}, we have the following further higher order estimates.

\begin{proposition}\label{prop_a_priori_tilde}
   Suppose that
    $(\tilde{U^\varepsilon},\tilde{H^\varepsilon},\tilde{\varTheta^\varepsilon})$ is the solution of the problem \eqref{eq_UHT_varepsilon_tilde}-\eqref{eq_UHT_int_bo_tilde} on $[0,T_\varepsilon].$ Under the assumptions of  Theorem \ref{theorem_limit},
    then, there  exists  a positive time $T_{\star}$ (defined in \eqref{def_T_star})  which is independent of $\varepsilon$,  such that
    \begin{eqnarray}\label{eq_UHT_infty_nabla_L_2_tilde}
        \|(\tilde{U^\varepsilon},\tilde{H^\varepsilon},\tilde{\varTheta^\varepsilon})\|_{L^\infty_TL^2_{xy}}^2 +  \| \nabla \tilde{U^\varepsilon} \|_{L^2_TL^2_{xy}}^2
        + \varepsilon \| (\nabla \tilde{H^\varepsilon},\nabla\tilde{\varTheta^\varepsilon}) \|_{L^2_TL^2_{xy}}^2\leq C\varepsilon^3,
    \end{eqnarray}
    \begin{eqnarray}\label{eq_UHT_t_infty_tilde}
        \begin{aligned}
            &  \|\partial_t (\tilde{U^\varepsilon},\tilde{H^\varepsilon},\tilde{\varTheta^\varepsilon})\|_{L^\infty_TL^2_{xy}}^2
        +  \|\nabla (\tilde{U^\varepsilon},\tilde{H^\varepsilon},\tilde{\varTheta^\varepsilon})\|_{L^\infty_TL^2_{xy}}^2
    \leq  C  \varepsilon^{\frac{5}{2}} ,
        \end{aligned}
    \end{eqnarray}
    \begin{eqnarray}\label{eq_UHT_t_nabla_L_2_tilde}
        \begin{aligned}
             &\|\nabla\partial_t \tilde{U^\varepsilon}\|_{L^2_TL^2_{xy}}^2  +   \|\tilde{U^\varepsilon}\|_{L^2_TH^2_{xy}}^2
            +  \varepsilon \|\nabla\partial_t (\tilde{H^\varepsilon}, \tilde{\varTheta^\varepsilon} )\|_{L^2_TL^2_{xy}}^2
              +\varepsilon \|\nabla^2 ( \tilde{H^\varepsilon}, \tilde{\varTheta^\varepsilon} )\|_{L^2_TL^2_{xy}}^2  \leq  C  \varepsilon^{\frac{5}{2}}.
        \end{aligned}
    \end{eqnarray}
    \begin{eqnarray}\label{eq_U_H_2_tilde}
        \|\tilde{U^\varepsilon}\|_{L^\infty_TH^2_{xy}}^2 \leq C \varepsilon^{\frac{5}{2}},~~
           \|\nabla\partial_x \tilde{\varTheta^\varepsilon}\|_{L^\infty_TL^2_{xy}}^2 + \|\nabla\partial_x \tilde{H^\varepsilon}\|_{L^\infty_TL^2_{xy}}^2
        \leq C\varepsilon^{\frac{3}{2}},
     \end{eqnarray}
     \begin{eqnarray}\label{eq_HT_t_nabla_H_2_tilde}
        \varepsilon \Big(\|\nabla^2\partial_x \tilde{H^\varepsilon}\|_{L^2_TL^2_{xy}}^2
        +\|\nabla^2\partial_x \tilde{\varTheta^\varepsilon}\|_{L^2_TL^2_{xy}}^2\Big)\leq C\varepsilon^{\frac{3}{2}}.
     \end{eqnarray}
     \begin{eqnarray}\label{eq_HT_H_2_tilde}
           \|\tilde{H^\varepsilon}\|_{L^\infty_TH^2_{xy}}^2  +   \|\tilde{\varTheta^\varepsilon}\|_{L^\infty_TH^2_{xy}}^2 \leq C \varepsilon^{\frac{1}{2}},
     \end{eqnarray}
     \begin{eqnarray}\label{eq_UHT_x_t_L_2_tilde}
        \|\partial_t \partial_x (\tilde{U^\varepsilon},\tilde{H^\varepsilon},\tilde{\varTheta^\varepsilon})\|_{L^\infty_TL^2_{xy}}^2 +\|\nabla\partial_t\partial_x \tilde{U^\varepsilon}\|_{L^2_tL^2_{xy}}^2  +   \varepsilon\|\nabla\partial_t\partial_x (\tilde{H^\varepsilon},\tilde{\varTheta^\varepsilon})\|_{L^2_TL^2_{xy}}^2
          \leq C  \varepsilon^{\frac{3}{2}},
     \end{eqnarray}
     and
     \begin{eqnarray}\label{eq_UHT_x_infty_H_2_tilde}
        \| \partial_x\tilde{U^\varepsilon} \|_{L^\infty_TH^2_{xy}}^2
        \leq C\varepsilon^{\frac{3}{2}},~~
          \varepsilon^{\frac{1}{2}}  \|\partial_x( \tilde{H^\varepsilon},\tilde{\varTheta^\varepsilon})\|_{L^\infty_TH^2_{xy}}^2
        \leq C,
     \end{eqnarray}
    for all $T\in (0,\min\{T_\star,T_\varepsilon\}]$ and some positive constant $C$ independent of $\varepsilon$.

      In particular, we have
     \begin{eqnarray}\label{eq_UHT_infty_H2_copy}
        \|(\tilde{U^\varepsilon},\tilde{H^\varepsilon},\tilde{\varTheta^\varepsilon})\|_{L^\infty_TH^2_{xy}}^2
      \leq C,
     \end{eqnarray}
     for all $T\in (0,\min\{T_\star,T_\varepsilon\}]$ and some positive constant $C$ independent of $\varepsilon$.
\end{proposition}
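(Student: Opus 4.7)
The plan is to prove the stated inequalities in the order given, bootstrapping from the base $L^2$-bound in Lemma \ref{lemma_a_priori} and repeatedly using the profile bounds from Lemma \ref{lemma_inner_outer_layer_est}, the source estimates from Lemma \ref{lemma_source_term_1}, and the anisotropic inequalities from Lemma \ref{lemma_inequality}; throughout, $T$ is fixed in $(0,\min\{T_\star,T_\varepsilon\}]$. For \eqref{eq_UHT_infty_nabla_L_2_tilde}, I will perform the basic energy identity by testing the three equations in \eqref{eq_UHT_varepsilon_tilde} by $(\tilde U^\varepsilon,\tilde H^\varepsilon,\tilde\Theta^\varepsilon)$; divergence-freeness and the boundary conditions in \eqref{eq_UHT_int_bo_tilde} let me integrate the transport terms by parts and turn the $-\varepsilon\Delta$ terms into $\varepsilon\|\nabla(\tilde H^\varepsilon,\tilde\Theta^\varepsilon)\|_{L^2}^2$, while the couplings $\mathrm{div}\,\tilde\Theta^\varepsilon\cdot\tilde U^\varepsilon$ and $\mathcal B(\eta^a,\nabla\tilde U^\varepsilon)\cdot\tilde\Theta^\varepsilon$ contribute only lower-order remainders after integration by parts. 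The nonlinearities $\tilde U^\varepsilon\cdot\nabla\tilde H^\varepsilon$ and $\tilde U^\varepsilon\cdot\nabla\tilde\Theta^\varepsilon$ are absorbed by combining \eqref{eq_fgh} with the bootstrap $\|(\tilde U^\varepsilon,\tilde H^\varepsilon,\tilde\Theta^\varepsilon)\|_{L^2}^2\leq\varepsilon$ from Lemma \ref{lemma_a_priori}. With $\|\varepsilon^{1/2}(\mathfrak F,\mathfrak G,\mathfrak J)\|_{L^\infty_T H^1_x L^2_y}=O(\varepsilon^{3/2})$, Gronwall closes at the $O(\varepsilon^3)$ level.

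For \eqref{eq_UHT_t_infty_tilde}-\eqref{eq_UHT_t_nabla_L_2_tilde}, I will differentiate \eqref{eq_UHT_varepsilon_tilde} once in $t$ and test by $\partial_t(\tilde U^\varepsilon,\tilde H^\varepsilon,\tilde\Theta^\varepsilon)$; the boundary conditions commute with $\partial_t$, and the initial datum $\partial_t(\tilde U^\varepsilon,\tilde H^\varepsilon,\tilde\Theta^\varepsilon)|_{t=0}=\varepsilon^{1/2}(\mathfrak F,\mathfrak G,\mathfrak J)|_{t=0}$ is $O(\varepsilon^{3/2})$ in $L^2$. Coupled with $\|\partial_t(\mathfrak F,\mathfrak G,\mathfrak J)\|_{L^\infty_T L^2}\le C\varepsilon$ from Lemma \ref{lemma_source_term_1}, Gronwall yields the $O(\varepsilon^{5/2})$ bound for $\partial_t$ in $L^\infty_T L^2$, and the dissipation on the left gives $\nabla\partial_t\tilde U^\varepsilon$ and $\sqrt\varepsilon\,\nabla\partial_t(\tilde H^\varepsilon,\tilde\Theta^\varepsilon)$ in $L^2_TL^2$. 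The $L^\infty_T L^2$-bound on $\nabla \tilde U^\varepsilon$ is produced because testing by $\partial_t\tilde U^\varepsilon$ also yields $\mu\,\partial_t\|\nabla\tilde U^\varepsilon\|_{L^2}^2$ on the left; for $\tilde H^\varepsilon,\tilde\Theta^\varepsilon$ I read the leading diffusion from the PDE, $\varepsilon\Delta\tilde H^\varepsilon=\partial_t\tilde H^\varepsilon+u^a\cdot\nabla\tilde H^\varepsilon+\tilde U^\varepsilon\cdot\nabla\eta^a+\tilde U^\varepsilon\cdot\nabla\tilde H^\varepsilon-\varepsilon^{1/2}\mathfrak G$, producing the remaining bounds on $\nabla$ and $\nabla^2$; the $\|\tilde U^\varepsilon\|_{L^2_TH^2}^2$-term in \eqref{eq_UHT_t_nabla_L_2_tilde} comes from a stationary Stokes estimate applied to $-\mu\Delta\tilde U^\varepsilon+\nabla(\varepsilon^{1/2}P^\varepsilon)=\text{RHS}$.

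The $\partial_x$-hierarchy \eqref{eq_U_H_2_tilde}-\eqref{eq_UHT_x_infty_H_2_tilde} is obtained by applying $\partial_x$ (and subsequently $\partial_t\partial_x$) to \eqref{eq_UHT_varepsilon_tilde}, which preserves the boundary conditions since $x$ is tangential. Mimicking the first two steps with the improved $H^1_x L^2_y$-source bounds from Lemma \ref{lemma_source_term_1} yields the tangential-derivative energy estimates. Normal second derivatives of $\tilde H^\varepsilon,\tilde\Theta^\varepsilon$ are then extracted from the PDEs by isolating $\varepsilon\partial_y^2$ and dividing by $\varepsilon$, which explains the $\varepsilon^{-1/2}$ degradation in \eqref{eq_HT_H_2_tilde}. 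For $\tilde U^\varepsilon$, $H^2$-regularity follows from a Stokes estimate on $-\mu\Delta\tilde U^\varepsilon+\nabla(\varepsilon^{1/2}P^\varepsilon)=\text{RHS}$; running these arguments also at the $\partial_x$-level produces \eqref{eq_UHT_x_infty_H_2_tilde}, and assembling the pieces yields \eqref{eq_UHT_infty_H2_copy}.

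The main difficulty is the vanishing of the stress-and-density diffusion as $\varepsilon\to 0$: the $\tilde H^\varepsilon$- and $\tilde\Theta^\varepsilon$-equations degenerate into pure transport, so uniform $H^2$-regularity is impossible, each higher derivative demands an anisotropic split in which tangential $\partial_x$ derivatives are controlled directly while $\partial_y^2$ can only be recovered from the PDE at the cost of a factor $\varepsilon^{-1}$, and every $\varepsilon$-power in the hierarchy must be tracked with care. Equally delicate are the quadratic couplings $\tilde U^\varepsilon\cdot\nabla\tilde\Theta^\varepsilon$ and $\mathcal Q(\nabla\tilde U^\varepsilon,\tilde\Theta^\varepsilon)$, for which the weak dissipation $\varepsilon\|\nabla\tilde\Theta^\varepsilon\|_{L^2}^2$ is not sufficient for direct absorption; closure only succeeds because the smallness from Lemma \ref{lemma_a_priori} combines with the anisotropic inequality \eqref{eq_fgh} to furnish a tiny prefactor at every level of the bootstrap.
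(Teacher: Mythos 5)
Your overall architecture---base $L^2$ estimate, then time derivatives, then tangential $\partial_x$ derivatives, then $\partial_t\partial_x$, with Stokes estimates supplying $H^2$-regularity of $\tilde U^\varepsilon$ and elliptic estimates recovering $\nabla^2$ of $\tilde H^\varepsilon,\tilde\Theta^\varepsilon$---matches the paper's organization (the paper splits the proof into Lemmas \ref{lemma_UHT_infty_L2}, \ref{lemma_UHT_nabla_t}, \ref{lemma_HT_nabla_x}, \ref{lemma_UHT_x_t}, \ref{lemma_UHT_x_H_2}, stated in the untilded variables). The first step, the $\partial_t\partial_x$ step, and the Stokes/elliptic-reading for \eqref{eq_HT_H_2_tilde} and \eqref{eq_UHT_x_infty_H_2_tilde} are all as in the paper.

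There is, however, a genuine gap in your treatment of the gradient and dissipation bounds for $\tilde H^\varepsilon,\tilde\Theta^\varepsilon$ in \eqref{eq_UHT_t_infty_tilde}--\eqref{eq_UHT_t_nabla_L_2_tilde} and the analogues \eqref{eq_U_H_2_tilde}$_2$, \eqref{eq_HT_t_nabla_H_2_tilde}. You propose to ``read the leading diffusion from the PDE,'' i.e.\ isolate $\varepsilon\Delta\tilde H^\varepsilon$ on one side and bound $\nabla\tilde H^\varepsilon$ and $\nabla^2\tilde H^\varepsilon$ by the remaining terms. This gives $\varepsilon^2\|\Delta\tilde H^\varepsilon\|_{L^2}^2\lesssim\|\partial_t\tilde H^\varepsilon\|_{L^2}^2+\cdots$, which (using $\|\partial_t\tilde H^\varepsilon\|_{L^\infty_TL^2}^2\lesssim\varepsilon^{5/2}$) yields only $\|\Delta\tilde H^\varepsilon\|_{L^\infty_TL^2}^2\lesssim\varepsilon^{1/2}$ and hence $\varepsilon\|\nabla^2\tilde H^\varepsilon\|_{L^2_TL^2}^2\lesssim\varepsilon^{3/2}$, a full power of $\varepsilon$ short of the claimed $\varepsilon^{5/2}$. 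Likewise interpolation gives $\|\nabla\tilde H^\varepsilon\|^2\lesssim\|\tilde H^\varepsilon\|\,\|\Delta\tilde H^\varepsilon\|\lesssim\varepsilon^{7/4}$, again short of $\varepsilon^{5/2}$. The reason is that dividing by $\varepsilon$ ``costs'' a whole power per derivative, whereas the parabolic structure only costs $\varepsilon^{1/2}$ per derivative in $L^2_T$. The paper avoids this by performing a genuine $H^1$-energy identity: test $\nabla\eqref{eq_UHT_varepsilon}_2$ against $\nabla H^\varepsilon$ (and similarly for $\varTheta^\varepsilon$) to obtain
$\tfrac12\tfrac{d}{dt}\|\nabla H^\varepsilon\|_{L^2}^2+\varepsilon\|\nabla^2 H^\varepsilon\|_{L^2}^2\le(\text{terms of size }O(\varepsilon^{3/2}))$, and then Gronwall. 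The $L^\infty_T$ bound for $\nabla H^\varepsilon$ and the $\varepsilon$-weighted $L^2_T$ bound for $\nabla^2 H^\varepsilon$ both come simultaneously from this single integration; nothing is divided by $\varepsilon$. The same applies at the $\nabla\partial_x$ level (Lemma \ref{lemma_HT_nabla_x}): you need to test $\nabla\partial_x\eqref{eq_UHT_varepsilon}_{2,3}$ against $\nabla\partial_x(H^\varepsilon,\varTheta^\varepsilon)$, not read the PDE. Your PDE-reading is the right tool only for the $L^\infty_TH^2$ bounds \eqref{eq_HT_H_2_tilde} and \eqref{eq_UHT_x_infty_H_2_tilde}, which are indeed a half-power weaker and are what the paper's Lemmas \ref{lemma_UHT_x_t} and \ref{lemma_UHT_x_H_2} extract that way.
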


     \begin{remark}
        The proof of Proposition \ref{prop_a_priori_tilde} is divided into several parts,
        which are  stated in terms of $(U^\varepsilon,H^\varepsilon,\varTheta^\varepsilon)$, in Section \ref{section_est_error_prop}.
        More precisely, \eqref{eq_UHT_infty_nabla_L_2_tilde}, \eqref{eq_UHT_t_infty_tilde} and \eqref{eq_UHT_t_nabla_L_2_tilde}, \eqref{eq_U_H_2_tilde} and \eqref{eq_HT_t_nabla_H_2_tilde}, \eqref{eq_HT_H_2_tilde} and \eqref{eq_UHT_x_t_L_2_tilde}, and \eqref{eq_UHT_x_infty_H_2_tilde} are proved in Lemmas \ref{lemma_UHT_infty_L2}, \ref{lemma_UHT_nabla_t}, \ref{lemma_HT_nabla_x}, \ref{lemma_UHT_x_t}, and \ref{lemma_UHT_x_H_2}, respectively.
     \end{remark}
     Next, for any given $\varepsilon\in (0,1]$, denote by $\tilde{T_\varepsilon}$
     the maximal time for the existence of the solution stated in Proposition \ref{prop_local_well_posedness_tilde} with zero initial data.
     In other words, the solution of the problem \eqref{eq_UHT_varepsilon_tilde}-\eqref{eq_UHT_int_bo_tilde} exists on $[0,T_\varepsilon]$ for $T_\varepsilon\in (0,\tilde{T}_\varepsilon),$
     but does not exist on $ [0,\tilde{T}_\varepsilon].$
      In fact, Section \ref{section_est_error_prop} indicates that Proposition \ref{prop_a_priori_tilde}  holds for all $T\in (0,\min\{T_\star,T_\varepsilon\}] = (0,T_\star]\cap(0,T_\varepsilon]$, where $(0,T_\varepsilon]$ is the existence time interval. Therefore, it still holds with $(0,T_\varepsilon]$ in Proposition \ref{prop_a_priori_tilde} replaced by $(0,\tilde{T}_\varepsilon)$.

     With Proposition \ref{prop_a_priori_tilde}, we will show that the solution of the problem \eqref{eq_UHT_varepsilon_tilde}-\eqref{eq_UHT_int_bo_tilde}
     exists on $[0,T_\star]$  for all $\varepsilon\in (0,1].$ More precisely, we have the following result.
     \begin{corollary}\label{corollary_error}
        Under the conditions of Theorem \ref{theorem_limit}, the problem \eqref{eq_UHT_varepsilon_tilde}-\eqref{eq_UHT_int_bo_tilde} has a unique   solution
        $(\tilde{U^\varepsilon},\tilde{H^\varepsilon},\tilde{\varTheta^\varepsilon}) \in C([0,T_\star],H_{xy}^{2})$ with $\partial_x (\tilde{U^\varepsilon},\tilde{H^\varepsilon},\tilde{\varTheta^\varepsilon})  \in C([0,T_{\star}];H^2_{xy})$.
        Furthermore,  \eqref{eq_UHT_infty_nabla_L_2_tilde}-\eqref{eq_UHT_infty_H2_copy} hold for all $T\in (0,T_\star].$
    \end{corollary}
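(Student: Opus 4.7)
The plan is to combine the local-in-time existence result of Proposition~\ref{prop_local_well_posedness_tilde} with the uniform-in-$\varepsilon$ a priori estimates of Proposition~\ref{prop_a_priori_tilde} via a standard continuation argument. Fix $\varepsilon\in(0,1]$, and let $\tilde{T}_\varepsilon\in(0,T_{0}]$ denote the maximal existence time of the local solution $(\tilde{U^\varepsilon},\tilde{H^\varepsilon},\tilde{\varTheta^\varepsilon})$ to \eqref{eq_UHT_varepsilon_tilde}--\eqref{eq_UHT_int_bo_tilde} in the class $C([0,T];H^2_{xy})$ with $\partial_x(\tilde{U^\varepsilon},\tilde{H^\varepsilon},\tilde{\varTheta^\varepsilon})\in C([0,T];H^2_{xy})$, whose existence is guaranteed by Proposition~\ref{prop_local_well_posedness_tilde} applied to the zero initial data. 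The goal is to show $\tilde{T}_\varepsilon>T_\star$ for every $\varepsilon\in(0,1]$.

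The argument proceeds by contradiction. Suppose $\tilde{T}_\varepsilon\leq T_\star$ for some $\varepsilon\in(0,1]$. By the remark following Proposition~\ref{prop_a_priori_tilde}, the estimates \eqref{eq_UHT_infty_nabla_L_2_tilde}--\eqref{eq_UHT_infty_H2_copy} hold on every subinterval $[0,T]$ with $T<\tilde{T}_\varepsilon$ with constants independent of $T$ (and of $\varepsilon$). In particular, \eqref{eq_UHT_infty_H2_copy} and \eqref{eq_UHT_x_infty_H_2_tilde} yield the uniform bound
\begin{equation*}
    \sup_{0\leq t<\tilde{T}_\varepsilon}\Big(\|(\tilde{U^\varepsilon},\tilde{H^\varepsilon},\tilde{\varTheta^\varepsilon})(t)\|_{H^2_{xy}}+\|\partial_x(\tilde{U^\varepsilon},\tilde{H^\varepsilon},\tilde{\varTheta^\varepsilon})(t)\|_{H^2_{xy}}\Big)\leq C,
\end{equation*}
where $C$ is independent of $\varepsilon$. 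Choosing $t_{0}<\tilde{T}_\varepsilon$ sufficiently close to $\tilde{T}_\varepsilon$ and regarding $(\tilde{U^\varepsilon},\tilde{H^\varepsilon},\tilde{\varTheta^\varepsilon})(\cdot,t_{0})$ as new initial data, one applies Proposition~\ref{prop_local_well_posedness_tilde} (with the general initial condition \eqref{eq_UHT_int_bo_tilde_general}) to obtain a local solution on $[t_{0},t_{0}+\delta]$ for some $\delta>0$ depending only on the $H^2_{xy}$-bound of the initial data and on the fixed $\varepsilon$. Since the above bound is independent of $t_{0}$, one may fix $\delta$ and then choose $t_{0}$ so that $t_{0}+\delta>\tilde{T}_\varepsilon$. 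Patching the two solutions (which agree on the overlap by the uniqueness part of Proposition~\ref{prop_local_well_posedness_tilde}) extends $(\tilde{U^\varepsilon},\tilde{H^\varepsilon},\tilde{\varTheta^\varepsilon})$ past $\tilde{T}_\varepsilon$, contradicting the maximality of $\tilde{T}_\varepsilon$. Therefore $\tilde{T}_\varepsilon>T_\star$, the solution exists in the required class on $[0,T_\star]$, and \eqref{eq_UHT_infty_nabla_L_2_tilde}--\eqref{eq_UHT_infty_H2_copy} are inherited from Proposition~\ref{prop_a_priori_tilde} on this fixed interval. Uniqueness on $[0,T_\star]$ follows at once from the local uniqueness clause of Proposition~\ref{prop_local_well_posedness_tilde} together with a standard connectedness argument on the agreement set of any two solutions.

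The main (and only nontrivial) obstacle is verifying that the continuation criterion just invoked is indeed the $H^2_{xy}$-bound of the solution at the blow-up time. This is implicit in Proposition~\ref{prop_local_well_posedness_tilde}: the local existence time produced there depends on the initial data only through the $H^2_{xy}$ and $\partial_{x}H^2_{xy}$ norms, together with fixed parameters ($\mu,\gamma,k,\varepsilon$, and the norms of the outer/inner profiles on $[0,T_{0}]$). Both of these norms are controlled uniformly on $[0,\tilde{T}_\varepsilon)$ by \eqref{eq_UHT_infty_H2_copy} and \eqref{eq_UHT_x_infty_H_2_tilde}, so $\delta$ can be chosen uniformly in $t_{0}\in[0,\tilde{T}_\varepsilon)$, which is exactly what the contradiction requires. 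Once this is in place, the rest of the proof is book-keeping, and so we omit the routine details.
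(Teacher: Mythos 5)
Your proof is correct and follows essentially the same route as the paper: argue by contradiction that the maximal existence time $\tilde{T}_\varepsilon$ exceeds $T_\star$, using the uniform a priori $H^2_{xy}$ bound from Proposition~\ref{prop_a_priori_tilde} together with the local-in-time well-posedness of Proposition~\ref{prop_local_well_posedness_tilde} as the continuation criterion. Your writeup is in fact somewhat more careful than the paper's, which only explicitly cites \eqref{eq_UHT_infty_H2_copy} before invoking the continuation principle; you correctly observe that the local existence time in Proposition~\ref{prop_local_well_posedness_tilde} also depends on the $\partial_x H^2_{xy}$ norm of the data, so \eqref{eq_UHT_x_infty_H_2_tilde} is needed too. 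One small inaccuracy worth flagging: you claim the combined bound on $\|(\tilde{U^\varepsilon},\tilde{H^\varepsilon},\tilde{\varTheta^\varepsilon})\|_{H^2_{xy}}+\|\partial_x(\tilde{U^\varepsilon},\tilde{H^\varepsilon},\tilde{\varTheta^\varepsilon})\|_{H^2_{xy}}$ is independent of $\varepsilon$, but \eqref{eq_UHT_x_infty_H_2_tilde} only gives $\|\partial_x(\tilde{H^\varepsilon},\tilde{\varTheta^\varepsilon})\|_{L^\infty_T H^2_{xy}}^2\leq C\varepsilon^{-1/2}$, which blows up as $\varepsilon\to 0$. This is harmless for the continuation argument, which runs at \emph{fixed} $\varepsilon$, but the phrase ``independent of $\varepsilon$'' should be dropped for that component (uniformity in $\varepsilon$ is only needed for the final estimates in the theorem, where the $H^2_{xy}$ bound \eqref{eq_UHT_infty_H2_copy} suffices).
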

    \begin{proof}
        To prove the Corollary, we need to prove that $\tilde{T}_\varepsilon > T_\star$ for any $\varepsilon\in (0,1]$.
     If $\tilde{T}_\varepsilon \leq  T_\star$ for some $\varepsilon\in (0,1]$,
        then, from \eqref{eq_UHT_infty_H2_copy},  we have
        \begin{eqnarray}\label{H2estimates}
            \|(\tilde{U^\varepsilon},\tilde{H^\varepsilon},\tilde{\varTheta^\varepsilon}) \|_{C([0,T] H^2_{xy})}^2\leq C,~~\text{ for all } T\in (0,\tilde{T_\varepsilon}),
         \end{eqnarray}
         where $C$ is independent of $\varepsilon$. In view of the local existence results specified in Proposition \ref{prop_local_well_posedness_tilde}, it is
easy to check (see for instance \cite{Sun-Zhang_book}) that (\ref{H2estimates}) implies the strong solution
can be extended beyond $\tilde{T}_\varepsilon$, which leads to a contradiction with the definition of $\tilde{T}_\varepsilon.$  The proof of Corollary \ref{corollary_error} is complete.

    \end{proof}

\subsection{Proof of Lemma \ref{lemma_a_priori}}\label{section_est_error_lemma}
To begin with, we get the following $L^2_{xy}$ estimate for $( U^\varepsilon,H^\varepsilon,\varTheta^\varepsilon).$
 And the corresponding estimate for $(\tilde{U^\varepsilon},\tilde{H^\varepsilon},\tilde{\varTheta^\varepsilon})$
   can be easily deduced via the relation \eqref{def_UHT_tilde}.
   \begin{lemma}\label{lemma_UHT_infty_L2_cont}
   Suppose $(U^\varepsilon,H^\varepsilon,\varTheta^\varepsilon)$ is the solution of  problem \eqref{eq_UHT_varepsilon}-\eqref{eq_UHT_int_bo}
     on $[0,T_\varepsilon].$ Under the assumptions of  Theorem \ref{theorem_limit}, there exists a positive time $T_{\star}$ (defined in \eqref{def_T_star}) which is independent of $\varepsilon$, such that
     if
     \begin{equation}\label{eq_smallness_asumption}
         \|(U^\varepsilon,H^\varepsilon,\varTheta^\varepsilon)\|_{C([0,T];L_{xy}^2)}^2 \leq 1,
     \end{equation}
    then, the following estimate:
     \begin{equation}\label{eq_UHT_infty_L2}
         \|(U^\varepsilon,H^\varepsilon,\varTheta^\varepsilon)\|_{C([0,T];L_{xy}^2)}^2  \leq \frac{1}{2},
     \end{equation}
    holds for all $T\in (0,\min\{T_\star,T_\varepsilon\}]$.
    \end{lemma}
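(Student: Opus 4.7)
The plan is to perform a standard $L^2$ energy estimate on the error system \eqref{eq_UHT_varepsilon} under the bootstrap assumption \eqref{eq_smallness_asumption}, with the Gronwall argument closed by choosing $T_\star$ sufficiently small. More precisely, I would take the $L^2$ inner product of the first equation in \eqref{eq_UHT_varepsilon} with $U^\varepsilon$, of the second with $H^\varepsilon$, and of the third with $\varTheta^\varepsilon$, and sum.

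First, several structural cancellations kill the most dangerous quantities. Since $\mathrm{div}\, u^a = 0$, $\mathrm{div}\, U^\varepsilon = 0$, and $u^a|_{y=0} = U^\varepsilon|_{y=0} = 0$, the transport terms $\langle u^a\cdot\nabla U^\varepsilon, U^\varepsilon\rangle$, $\langle u^a\cdot\nabla H^\varepsilon, H^\varepsilon\rangle$, $\langle u^a\cdot\nabla \varTheta^\varepsilon, \varTheta^\varepsilon\rangle$ vanish by integration by parts, and the same argument (together with $U^\varepsilon_2(x,0,t)=0$) eliminates the quadratic self-transport terms $\sqrt{\varepsilon}\langle U^\varepsilon\cdot\nabla U^\varepsilon, U^\varepsilon\rangle$, $\sqrt{\varepsilon}\langle U^\varepsilon\cdot\nabla H^\varepsilon, H^\varepsilon\rangle$, $\sqrt{\varepsilon}\langle U^\varepsilon\cdot\nabla \varTheta^\varepsilon, \varTheta^\varepsilon\rangle$. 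The pressure term $\langle\nabla P^\varepsilon, U^\varepsilon\rangle$ also vanishes. The parabolic terms give the good quantities $\mu\|\nabla U^\varepsilon\|_{L^2_{xy}}^2$, $\varepsilon\|\nabla H^\varepsilon\|_{L^2_{xy}}^2$, $\varepsilon\|\nabla \varTheta^\varepsilon\|_{L^2_{xy}}^2$, using the Neumann conditions $\partial_y H^\varepsilon(x,0,t) = \partial_y \varTheta^\varepsilon(x,0,t) = 0$, while the dissipative term in the $\varTheta$-equation contributes $\gamma\|\varTheta^\varepsilon\|_{L^2_{xy}}^2$.

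Next, the linear terms coupling the three equations to the approximate profile $(u^a,\eta^a,\tau^a)$, namely $\langle U^\varepsilon\cdot\nabla u^a, U^\varepsilon\rangle$, $\langle U^\varepsilon\cdot\nabla \eta^a, H^\varepsilon\rangle$, $\langle U^\varepsilon\cdot\nabla \tau^a, \varTheta^\varepsilon\rangle$, $\langle\mathcal{Q}(\nabla u^a,\varTheta^\varepsilon),\varTheta^\varepsilon\rangle$, $\langle\mathcal{B}(H^\varepsilon,\nabla u^a),\varTheta^\varepsilon\rangle$, are controlled using Lemma \ref{lemma_inner_outer_layer_est} (which gives $L^\infty$ bounds on $u^a,\eta^a,\tau^a,\nabla u^a,\nabla \eta^a,\nabla \tau^a$ uniform in $\varepsilon$) and Cauchy--Schwarz. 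The stress-velocity coupling $\langle\mathrm{div}\,\varTheta^\varepsilon,U^\varepsilon\rangle + \langle\mathcal{Q}(\nabla U^\varepsilon,\tau^a),\varTheta^\varepsilon\rangle + \langle\mathcal{B}(\eta^a,\nabla U^\varepsilon),\varTheta^\varepsilon\rangle$, after integration by parts on the first term, is bounded by $\frac{\mu}{4}\|\nabla U^\varepsilon\|_{L^2_{xy}}^2 + C\|\varTheta^\varepsilon\|_{L^2_{xy}}^2$. The source terms are handled by Young's inequality and Lemma \ref{lemma_source_term_1}, which gives $\|(\mathfrak{F},\mathfrak{G},\mathfrak{J})\|_{L^\infty_T L^2_{xy}} \leq C\varepsilon$.

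The main obstacle is the triplet of genuinely nonlinear terms $\sqrt{\varepsilon}\langle\mathcal{Q}(\nabla U^\varepsilon,\varTheta^\varepsilon),\varTheta^\varepsilon\rangle$ and $\sqrt{\varepsilon}\langle\mathcal{B}(H^\varepsilon,\nabla U^\varepsilon),\varTheta^\varepsilon\rangle$ (and symmetrically $\sqrt{\varepsilon}\langle U^\varepsilon\cdot\nabla \eta^a,H^\varepsilon\rangle$-type terms, which are actually easier). For the first I would use Ladyzhenskaya's inequality \eqref{ieq_Lady} together with the bootstrap assumption \eqref{eq_smallness_asumption}:
\begin{equation*}
\sqrt{\varepsilon}\bigl|\langle\mathcal{Q}(\nabla U^\varepsilon,\varTheta^\varepsilon),\varTheta^\varepsilon\rangle\bigr|
\lesssim \sqrt{\varepsilon}\,\|\nabla U^\varepsilon\|_{L^2_{xy}}\|\varTheta^\varepsilon\|_{L^4_{xy}}^2
\lesssim \sqrt{\varepsilon}\,\|\nabla U^\varepsilon\|_{L^2_{xy}}\|\varTheta^\varepsilon\|_{L^2_{xy}}\|\nabla \varTheta^\varepsilon\|_{L^2_{xy}},
\end{equation*}
so Young's inequality gives a bound of the form $\tfrac{\mu}{8}\|\nabla U^\varepsilon\|_{L^2_{xy}}^2 + \tfrac{C\varepsilon}{\mu}\|\varTheta^\varepsilon\|_{L^2_{xy}}^2\|\nabla \varTheta^\varepsilon\|_{L^2_{xy}}^2$. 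Under the assumption $\|\varTheta^\varepsilon\|_{L^2_{xy}}^2\le 1$ (from \eqref{eq_smallness_asumption}), the last factor is dominated by the good term $\varepsilon\|\nabla\varTheta^\varepsilon\|^2_{L^2_{xy}}$ after splitting the constant appropriately (if $\mu$ is too small to absorb directly, I would further exploit that one can split $\sqrt{\varepsilon} = \varepsilon^{1/4}\cdot\varepsilon^{1/4}$ and allocate the two $\nabla$ factors symmetrically). The term with $\mathcal{B}(H^\varepsilon,\nabla U^\varepsilon)\cdot\varTheta^\varepsilon$ is handled analogously via \eqref{eq_fgh}.

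Combining everything yields the differential inequality
\begin{equation*}
\frac{d}{dt}E(t) + \tfrac{\mu}{2}\|\nabla U^\varepsilon\|_{L^2_{xy}}^2 + \tfrac{\varepsilon}{2}\|(\nabla H^\varepsilon,\nabla \varTheta^\varepsilon)\|_{L^2_{xy}}^2
\;\le\; C\,E(t) + C\varepsilon^2,
\qquad E(t):=\|(U^\varepsilon,H^\varepsilon,\varTheta^\varepsilon)\|_{L^2_{xy}}^2,
\end{equation*}
with a constant $C$ independent of $\varepsilon$. Since $E(0)=0$, Gronwall gives $E(t)\le C\varepsilon^2 t\,e^{Ct}$. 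Choosing $T_\star$ independent of $\varepsilon$ small enough that $C T_\star e^{CT_\star}\le \tfrac{1}{2}$, and using $\varepsilon\in(0,1]$, yields $E(t)\le \tfrac{\varepsilon^2}{2}\le \tfrac{1}{2}$ on $[0,\min\{T_\star,T_\varepsilon\}]$, which is \eqref{eq_UHT_infty_L2}. The precise definition of $T_\star$ (to be used uniformly throughout the paper, cf.~\eqref{def_T_star}) is then fixed as the minimum of this value and those required by the higher-order estimates in the subsequent lemmas.
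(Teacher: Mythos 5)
Your overall strategy---testing the error system with $(U^\varepsilon,H^\varepsilon,\varTheta^\varepsilon)$, using the divergence-free and boundary conditions to cancel the transport and pressure terms, invoking Ladyzhenskaya and the bootstrap assumption for the genuinely quadratic terms, and closing via Gronwall---coincides with the paper's. However, there is a real gap in the way you propose to absorb the dissipative terms, and it is exactly where the paper uses a device you omit: a \emph{weighted} sum of the three $L^2$ estimates.

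Concretely, estimating $\sqrt\varepsilon\langle \mathcal{Q}(\nabla U^\varepsilon,\varTheta^\varepsilon),\varTheta^\varepsilon\rangle$ by Ladyzhenskaya and the bootstrap $\|\varTheta^\varepsilon\|_{L^2_{xy}}\le 1$ leaves a product $\sqrt\varepsilon\|\nabla U^\varepsilon\|_{L^2_{xy}}\|\nabla\varTheta^\varepsilon\|_{L^2_{xy}}$. Young's inequality gives $\delta\|\nabla U^\varepsilon\|_{L^2_{xy}}^2 + \frac{\varepsilon}{4\delta}\|\nabla \varTheta^\varepsilon\|_{L^2_{xy}}^2$ for any $\delta>0$; since the only available dissipation for $\nabla\varTheta^\varepsilon$ is $\varepsilon\|\nabla\varTheta^\varepsilon\|_{L^2_{xy}}^2$ (with the $\varepsilon$ prefactor), the second piece forces $\delta$ to be bounded below by an absolute constant independent of $\varepsilon$, whereupon $\delta\|\nabla U^\varepsilon\|_{L^2_{xy}}^2$ cannot be absorbed by $\mu\|\nabla U^\varepsilon\|_{L^2_{xy}}^2$ once $\mu$ is small. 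Your allocation $\tfrac{\mu}{8}\|\nabla U^\varepsilon\|^2_{L^2_{xy}}+\tfrac{C\varepsilon}{\mu}\|\nabla\varTheta^\varepsilon\|^2_{L^2_{xy}}$ is the same obstruction from the other side (it would require $C/\mu\le 1$), and your fallback of writing $\sqrt\varepsilon=\varepsilon^{1/4}\cdot\varepsilon^{1/4}$ and ``symmetric allocation'' makes things worse: it produces $\tfrac{\sqrt\varepsilon}{2}\|\nabla U^\varepsilon\|^2_{L^2_{xy}}+\tfrac{\sqrt\varepsilon}{2}\|\nabla\varTheta^\varepsilon\|^2_{L^2_{xy}}$, and $\sqrt\varepsilon\gg\varepsilon$ as $\varepsilon\to 0$, so the $\nabla\varTheta^\varepsilon$ piece becomes uncontrollable. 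The paper resolves this by allocating the bad term entirely onto $\|\nabla U^\varepsilon\|^2_{L^2_{xy}}$ (whose dissipation constant $\mu$ carries no $\varepsilon$), and then taking the combination $M\cdot(\text{$U^\varepsilon$-estimate})+(\text{$H^\varepsilon$-estimate})+(\text{$\varTheta^\varepsilon$-estimate})$ with $M>1$ large depending on $\mu$ but not on $\varepsilon$, so that the boosted dissipation $\mu M\|\nabla U^\varepsilon\|^2_{L^2_{xy}}$ absorbs the universal constant $C\|\nabla U^\varepsilon\|^2_{L^2_{xy}}$. You need to add this weighting (or an equivalent rescaling of the unknowns) before the Gronwall step; once it is in place, your choice of $T_\star$ and the rest of the argument go through as you describe.
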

    \begin{proof}
        To begin with, taking the  $L_{xy}^2$ inner product of \eqref{eq_UHT_varepsilon}$_1$ with $U^\varepsilon$, and using integration by parts, we have
        \begin{align*}
                \frac{1}{2}\frac{\mathrm{d}}{\mathrm{d}t}\|U^\varepsilon\|_{L_{xy}^2}^2 + \mu\|\nabla U^\varepsilon\|_{L^2_{xy}}^2
                = {}&  \langle u^a \otimes U^\varepsilon  ,\nabla U^\varepsilon\rangle
                -\langle \varTheta^\varepsilon,\nabla U^\varepsilon\rangle  + \langle \mathfrak{F} , U^\varepsilon\rangle \\
                \leq {}&  \|  u^a\|_{L^\infty_{xy}}\|  U^\varepsilon\|_{L^2_{xy}} \| \nabla U^\varepsilon\|_{L^2_{xy}}  +\|  \varTheta^\varepsilon\|_{L^2_{xy}}\|\nabla U^\varepsilon\|_{L^2_{xy}}\\
                &+ \|\mathfrak{F}  \|_{L^2_{xy}}\|  U^\varepsilon\|_{L^2_{xy}}  \\
                \leq {}& \frac{1}{2}\mu\|\nabla U^\varepsilon\|_{L^2_{xy}}^2 +C\left(\|  u^a\|_{L^\infty_{xy}}^2+ 1\right)\|  (U^\varepsilon,\varTheta^\varepsilon)\|_{L^2_{xy}}^2   + C\|\mathfrak{F}  \|_{L^2_{xy}}^2,
        \end{align*}
        which implies
        \begin{equation}\label{eq_U_varepsilon_L2}
            \frac{\mathrm{d}}{\mathrm{d}t}\|U^\varepsilon\|_{L_{xy}^2}^2 + \mu\|\nabla U^\varepsilon\|_{L^2_{xy}}^2 \lesssim   \|  \varTheta^\varepsilon\|_{L^2_{xy}}^2 +  \|  U^\varepsilon\|_{L^2_{xy}}^2  + \varepsilon^2,
        \end{equation}
        where  Lemma \ref{lemma_inner_outer_layer_est}  and (\ref{eq_source_term}) are used.
        Next, taking the  $L_{xy}^2$ inner product of \eqref{eq_UHT_varepsilon}$_2$ with $H^\varepsilon$, we have from integration by parts that
            \begin{align*}
               & \frac{1}{2}\frac{\mathrm{d}}{\mathrm{d}t}\|H^\varepsilon\|_{L_{xy}^2}^2 + \varepsilon\|\nabla H^\varepsilon\|_{L^2_{xy}}^2\\
                = {}& -\langle U^\varepsilon\cdot\nabla \eta^{I}, H^\varepsilon \rangle +\varepsilon^{\frac{1}{2}}\langle \eta^B  U^\varepsilon  ,\nabla H^\varepsilon\rangle
                 + \langle \mathfrak{G}, H^\varepsilon\rangle\\
                \leq {}& \|\nabla \eta^I\|_{L^\infty_{xy}}\|  U^\varepsilon\|_{L^2_{xy}}\|  H^\varepsilon\|_{L^2_{xy}}  +  \varepsilon^{\frac{1}{2}}\|  \eta^B\|_{L^\infty_{xy}}\|  U^\varepsilon\|_{L^2_{xy}}\| \nabla H^\varepsilon\|_{L^2_{xy}}
                  + \|\mathfrak{G}\|_{L^2_{xy}}\|  H^\varepsilon\|_{L^2_{xy}}\\
                \leq{} & \frac{1}{2}\varepsilon\|\nabla H^\varepsilon\|_{L^2_{xy}}^2+ C\left(\|  \eta^I\|_{H^3_{xy}}+   \varepsilon^{\frac{1}{2}}\|  \eta^B\|_{H^1_xH^1_z}^2 + 1\right)\|  (U^\varepsilon,H^\varepsilon)\|_{L^2_{xy}}^2
                + C    \|\mathfrak{G}\|_{L^2_{xy}}^2,
                  \end{align*}
        which, combined with   Lemma  \ref{lemma_inner_outer_layer_est}, and (\ref{eq_source_term}), yields
        \begin{equation}\label{eq_H_varepsilon_L2}
            \frac{\mathrm{d}}{\mathrm{d}t}\|H^\varepsilon\|_{L_{xy}^2}^2 + \varepsilon\|\nabla H^\varepsilon\|_{L^2_{xy}}^2 \lesssim   \|  H^\varepsilon\|_{L^2_{xy}}^2 +  \|  U^\varepsilon\|_{L^2_{xy}}^2  + \varepsilon^2.
        \end{equation}
        Taking the  $L_{xy}^2$ inner product of \eqref{eq_UHT_varepsilon}$_3$ with $\varTheta^\varepsilon$, we have from integration by parts that
        \begin{eqnarray}\label{eq_varT_L2}
            \begin{aligned}
                &\frac{1}{2}\frac{\mathrm{d}}{\mathrm{d}t}\|\varTheta^\varepsilon\|_{L_{xy}^2}^2 +\gamma\|\varTheta^\varepsilon\|_{L_{xy}^2}^2+ \varepsilon\|\nabla \varTheta^\varepsilon\|_{L^2_{xy}}^2 \\
                ={}&\langle \mathfrak{J},\varTheta^\varepsilon\rangle -\langle U^\varepsilon\cdot\nabla \tau^{I}, \varTheta^\varepsilon \rangle + \sqrt{\varepsilon}\langle \tau^B \otimes U^\varepsilon  ,\nabla\varTheta^\varepsilon\rangle
                + \langle \mathcal{Q} (\nabla u^I ,\varTheta^\varepsilon)  , \varTheta^\varepsilon\rangle
                + \varepsilon\langle  \mathcal{Q} ( \nabla u^B,\varTheta^\varepsilon ), \varTheta^\varepsilon\rangle
                \\
               &+ \langle  \mathcal{Q} (\nabla U^\varepsilon,\tau^I  ), \varTheta^\varepsilon\rangle
                + \sqrt{\varepsilon}\langle  \mathcal{Q} (\nabla U^\varepsilon,\tau^B ),\varTheta^\varepsilon\rangle
                +  \langle  \mathcal{B} (\eta^I,\nabla U^\varepsilon  ), \varTheta^\varepsilon\rangle
                +\sqrt{\varepsilon}\langle\mathcal{B} ( \eta^B,\nabla U^\varepsilon ), \varTheta^\varepsilon\rangle
                 \\
                & +\langle \mathcal{B} (H^\varepsilon,\nabla u^I ), \varTheta^\varepsilon\rangle
                  + \varepsilon\langle \mathcal{B} (H^\varepsilon,\nabla u^B ), \varTheta^\varepsilon\rangle
                + \varepsilon^2 \langle \mathcal{Q} ( \nabla w,\varTheta^\varepsilon) , \varTheta^\varepsilon\rangle
                + \varepsilon^2 \langle \mathcal{B} (H^\varepsilon, \nabla w) , \varTheta^\varepsilon\rangle
                \\
                &  +\sqrt{\varepsilon}\langle \mathcal{Q} ( \nabla U^\varepsilon,\varTheta^\varepsilon) , \varTheta^\varepsilon\rangle
                + \sqrt{\varepsilon} \langle\mathcal{B} ( H^\varepsilon,\nabla U^\varepsilon  ),\varTheta^\varepsilon\rangle \\
               =:{} & \mathcal{I}_1 + \mathcal{I}_2 + \mathcal{I}_3 + \mathcal{I}_4,
            \end{aligned}
        \end{eqnarray}
         where $\mathcal{I}_i$ represents the whole $i$-th line on the right hand side of the first equality sign in \eqref{eq_varT_L2}.

     $\mathcal{I}_1, \mathcal{I}_2$ and $\mathcal{I}_3$ are estimated as below:
            \begin{align*}
                &\mathcal{I}_1 + \mathcal{I}_2  + \mathcal{I}_3\\
                \leq {}&  \|\mathfrak{J}\|_{L_{xy}^2}\|\varTheta^\varepsilon\|_{L_{xy}^2} + \|U^\varepsilon\|_{L_{xy}^2} \|\nabla \tau^I \|_{L^\infty_{xy}}\|\varTheta^\varepsilon\|_{L_{xy}^2}
                  + \varepsilon^{\frac{1}{2}}\| \tau^B \|_{L^\infty_{xy}}\|U^\varepsilon\|_{L_{xy}^2}  \|\nabla\varTheta^\varepsilon\|_{L_{xy}^2}\\
                  &+  \|\nabla u^I \|_{L^\infty_{xy}} \|(H^\varepsilon,\varTheta^\varepsilon)\|_{L_{xy}^2}^2
                  +\varepsilon \|\nabla u^B \|_{L^2_{xy}}\|(H^\varepsilon,\varTheta^\varepsilon)\|_{L_{xy}^2} \|\nabla(H^\varepsilon,\varTheta^\varepsilon)\|_{L_{xy}^2} \\
                  &+ \|\nabla U^\varepsilon\|_{L_{xy}^2}  \|(\eta^I,\tau^I) \|_{L^\infty_{xy}}  \|\varTheta^\varepsilon\|_{L_{xy}^2}
                  + \varepsilon^{\frac{1}{2}}\|\nabla U^\varepsilon\|_{L_{xy}^2}\|(\eta^B,\tau^B) \|_{L^4_{xy}}\|\varTheta^\varepsilon\|_{L_{xy}^2}^{\frac{1}{2}}\|\nabla\varTheta^\varepsilon\|_{L_{xy}^2}^{\frac{1}{2}}\\
                  &+ \varepsilon^2 \|\nabla w \|_{L^\infty_{xy}} \|(H^\varepsilon,\varTheta^\varepsilon)\|_{L_{xy}^2}^2 \\
                  \leq{}&
                   C\left(1+ \| (u^I,w,\eta^I,\tau^I) \|_{H^3_{xy}}^2 + \varepsilon \|( \eta^B,\tau^B) \|_{L^4_{xz}}^4 +\|\tau^B\|_{H^1_xH^1_z}^2+ \varepsilon \|\nabla u^B \|_{L^2_{xy}}^2 \right)
                     \|(U^\varepsilon,H^\varepsilon,\varTheta^\varepsilon)\|_{L_{xy}^2}^2  \\
                  &+ C\|\mathfrak{J}\|_{L_{xy}^2}^2+ \frac{\varepsilon}{4} \|\nabla \varTheta^\varepsilon\|_{L_{xy}^2}^2 + \frac{\varepsilon}{8} \|\nabla H^\varepsilon\|_{L_{xy}^2}^2+  C \|\nabla U^\varepsilon\|_{L_{xy}^2}^2.
            \end{align*}
        By using Ladyzenskaya's inequality and the assumption $\|(H^\varepsilon,\varTheta^\varepsilon)\|_{L_{xy}^2}\leq 1$, we can estimate $\mathcal{I}_4$ as follows:
        \begin{eqnarray*}
            \begin{aligned}
                \mathcal{I}_4\leq{} &
                  \varepsilon^{\frac{1}{2}}\|\nabla U^\varepsilon\|_{L^2_{xy}}\left(\|\varTheta^\varepsilon\|_{L_{xy}^4}+\|H^\varepsilon\|_{L_{xy}^4} \right)\|\varTheta^\varepsilon\|_{L_{xy}^4} \\
                \leq{} & C \|\nabla U^\varepsilon\|_{L^2_{xy}}
                \left(\varepsilon^{\frac{1}{2}}\|H^\varepsilon\|_{L_{xy}^2} \|\nabla H^\varepsilon\|_{L_{xy}^2} +\varepsilon^{\frac{1}{2}}\|\varTheta^\varepsilon\|_{L_{xy}^2} \|\nabla\varTheta^\varepsilon\|_{L_{xy}^2} \right)\\
                \leq {}& \frac{\varepsilon}{4} \|\nabla \varTheta^\varepsilon\|_{L_{xy}^2}^2 + \frac{\varepsilon}{8} \|\nabla H^\varepsilon\|_{L_{xy}^2}^2 + C \|\nabla U^\varepsilon\|_{L_{xy}^2}^2.
            \end{aligned}
        \end{eqnarray*}
        Plugging the estimates of $\mathcal{I}_1$-$\mathcal{I}_4$ into \eqref{eq_varT_L2}, and using   Lemma  \ref{lemma_inner_outer_layer_est} and  (\ref{eq_source_term}), we get
        \begin{eqnarray}\label{eq_T_varepsilon_L2}
            \begin{aligned}
                &\frac{\mathrm{d}}{\mathrm{d}t}\|\varTheta^\varepsilon\|_{L_{xy}^2}^2 +\gamma\|\varTheta^\varepsilon\|_{L_{xy}^2}^2+ \varepsilon\|\nabla \varTheta^\varepsilon\|_{L^2_{xy}}^2 \\
                \leq{} & \frac{\varepsilon}{2} \|\nabla H^\varepsilon\|_{L_{xy}^2}^2+C \|\nabla U^\varepsilon\|_{L_{xy}^2}^2 + C \|  (U^\varepsilon,H^\varepsilon,\varTheta^\varepsilon)\|_{L^2_{xy}}^2     + C\varepsilon^2.
            \end{aligned}
            \end{eqnarray}
        Letting  $M$\eqref{eq_U_varepsilon_L2} + \eqref{eq_H_varepsilon_L2} + \eqref{eq_T_varepsilon_L2} for some $M>1$ large enough, we find that there exists a constant $\tilde{C}_1$ independent of $\varepsilon$ such that
        \begin{eqnarray*}
            \begin{aligned}
                &\frac{\mathrm{d}}{\mathrm{d}t}\left(M\|U^\varepsilon\|_{L_{xy}^2}^2 + \|H^\varepsilon\|_{L_{xy}^2}^2 + \|\varTheta^\varepsilon\|_{L_{xy}^2}^2 \right)  +\frac{\mu M}{2}\|\nabla U^\varepsilon\|_{L^2_{xy}}^2
                  + \frac{\varepsilon}{2}\|\nabla H^\varepsilon\|_{L^2_{xy}}^2+ \frac{\varepsilon}{2}\|\nabla \varTheta^\varepsilon\|_{L^2_{xy}}^2 \\
                \leq{} & \tilde{C}_1\left(M\|U^\varepsilon\|_{L_{xy}^2}^2 + \|H^\varepsilon\|_{L_{xy}^2}^2 + \|\varTheta^\varepsilon\|_{L_{xy}^2}^2 \right) +  \tilde{C}_1\varepsilon^2,
            \end{aligned}
        \end{eqnarray*}
        which, together with Gronwall's inequality, implies that
        \begin{equation}\label{eq_proof_UHT_L_2}
            \begin{split}
                &  M\|U^\varepsilon\|_{C([0,T];L_{xy}^2)}^2 + \|H^\varepsilon\|_{C([0,T];L_{xy}^2)}^2 + \|\varTheta^\varepsilon\|_{C([0,T];L_{xy}^2)}^2
                +\frac{\mu M}{2}\|\nabla U^\varepsilon\|_{L^2_TL^2_{xy}}^2   \\
                &
                  + \frac{\varepsilon}{2}\|\nabla H^\varepsilon\|_{L^2_TL^2_{xy}}^2+ \frac{\varepsilon}{2}\|\nabla \varTheta^\varepsilon\|_{L^2_TL^2_{xy}}^2
                  \leq{}  \tilde{C}_1T \exp(\tilde{C}_1T )\varepsilon^2.
            \end{split}
        \end{equation}

        Defining
        \begin{eqnarray}\label{def_T_star}
            T_\star :=\frac{1}{2e\tilde{C}_1},
        \end{eqnarray}
       we have, for any $T\in (0,\min\{T_\star,T_\varepsilon\}],$ that
        \begin{eqnarray*}
            \|U^\varepsilon\|_{C([0,T];L_{xy}^2)}^2 + \|H^\varepsilon\|_{C([0,T];L_{xy}^2)}^2 + \|\varTheta^\varepsilon\|_{C([0,T];L_{xy}^2)}^2
            \leq   \frac{1}{2}\varepsilon^2 \leq  \frac{1}{2},
        \end{eqnarray*}
        where the fact  that $\varepsilon\leq 1$ is used.
        The proof is complete.
    \end{proof}
    \begin{remark}\label{remark_lemma_UHT_infty_L2}
        For readers' convenience, we remark  that the following bounds:
        \begin{align*}
            &     \|(u^I,w,\eta^I,\tau^I)\|_{L^\infty_TH^3_{xy}} +  \|(u^B,\eta^B,\tau^B)\|_{L^\infty_TH^1_xH^1_z} \leq C,
          \end{align*} and \eqref{eq_source_term} are used in the proof of  Lemma \ref{lemma_UHT_infty_L2}.
    \end{remark}
    Using Proposition \ref{prop_local_well_posedness_tilde} and Lemma \ref{lemma_UHT_infty_L2_cont}, we can prove Lemma \ref{lemma_a_priori} as follows.
    \begin{proof}[{\bfseries Proof of Lemma \ref{lemma_a_priori}}]
        Note that $(U^\varepsilon,H^\varepsilon, \varTheta^\varepsilon)|_{t = 0} = 0$, and  the conclusion \eqref{eq_UHT_infty_L2} is stronger than the assumption \eqref{eq_smallness_asumption}.
        Then,  using  the relation \eqref{def_UHT_tilde}, one can prove that  the conclusion  in Lemma \ref{lemma_a_priori}
       holds true by the standard continuity argument for any $T\in (0, \min\{T_\star,T_\varepsilon\}]$. (see page 21 in \cite{Tao_2006}).
    \end{proof}
\subsection{Proof of Proposition \ref{prop_a_priori_tilde}}\label{section_est_error_prop}
To seek of simplicity, we divide  Proposition \ref{prop_a_priori_tilde} into several parts and restate the results in terms of $( U^\varepsilon,H^\varepsilon,\varTheta^\varepsilon)$, see Lemmas \ref{lemma_UHT_infty_L2},
\ref{lemma_UHT_nabla_t}, \ref{lemma_HT_nabla_x}, \ref{lemma_UHT_x_t} and \ref{lemma_UHT_x_H_2}.
Then we  prove them successively in this section.
     In fact, the corresponding estimates for $(\tilde{U^\varepsilon},\tilde{H^\varepsilon},\tilde{\varTheta^\varepsilon})$
        can be easily deduced via the relation \eqref{def_UHT_tilde}. To begin with, we have the following $L^2_{xy}$ estimates.
    \begin{lemma}\label{lemma_UHT_infty_L2}
       Suppose that $(U^\varepsilon,H^\varepsilon,\varTheta^\varepsilon)$ is  the solution of the problem \eqref{eq_UHT_varepsilon}-\eqref{eq_UHT_int_bo} on $[0,T]$ with $T\in (0,\min\{T_\star,T_\varepsilon\}]$. Under the assumptions of  Theorem \ref{theorem_limit}, there exists a generic positive constant C such that
         \begin{equation}\label{eq_UHT_infty_nabla_L^2}
            \|(U^\varepsilon,H^\varepsilon,\varTheta^\varepsilon)\|_{L^\infty_TL^2_{xy}}^2 +  \| \nabla U^\varepsilon \|_{L^2_TL^2_{xy}}^2
            + \varepsilon \| (\nabla H^\varepsilon,\nabla \varTheta^\varepsilon) \|_{L^2_TL^2_{xy}}^2\leq C\varepsilon^2.
        \end{equation}
    \end{lemma}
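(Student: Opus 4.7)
\textbf{Proof plan for Lemma \ref{lemma_UHT_infty_L2}.} The strategy is essentially to rerun the $L^2$ energy estimate that was carried out in Lemma \ref{lemma_UHT_infty_L2_cont}, but now to extract both the $L^\infty_T L^2_{xy}$ bound \emph{and} the dissipation terms on the left-hand side simultaneously. The first observation is that the smallness hypothesis \eqref{eq_smallness_asumption} of Lemma \ref{lemma_UHT_infty_L2_cont} is free: Lemma \ref{lemma_a_priori} gives $\|(\tilde U^\varepsilon,\tilde H^\varepsilon,\tilde\varTheta^\varepsilon)\|_{C([0,T];L^2_{xy})}^2\leq\varepsilon$, so by the rescaling \eqref{def_UHT_tilde} and $\varepsilon\in(0,1]$ we automatically have $\|(U^\varepsilon,H^\varepsilon,\varTheta^\varepsilon)\|_{C([0,T];L^2_{xy})}^2\leq 1$ on $[0,\min\{T_\star,T_\varepsilon\}]$.

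With that smallness in hand, I would test \eqref{eq_UHT_varepsilon}$_1$, \eqref{eq_UHT_varepsilon}$_2$, \eqref{eq_UHT_varepsilon}$_3$ against $U^\varepsilon$, $H^\varepsilon$, $\varTheta^\varepsilon$ respectively in $L^2_{xy}$, using integration by parts together with $\mathrm{div}\,U^\varepsilon=0$ and the boundary conditions \eqref{eq_UHT_int_bo}. The transport terms $u^a\cdot\nabla(\cdot)$ and the pressure drop out; the $\sqrt{\varepsilon}\,U^\varepsilon\cdot\nabla$ self-interactions and the quadratic $\mathcal{Q}(\nabla U^\varepsilon,\varTheta^\varepsilon)$, $\mathcal{B}(H^\varepsilon,\nabla U^\varepsilon)$ terms are absorbed by splitting $\sqrt{\varepsilon}\,\|f g\|_{L^1}\lesssim \sqrt{\varepsilon}\,\|f\|_{L^4}\|g\|_{L^4}$, applying Ladyzhenskaya's inequality \eqref{ieq_Lady}, and using the hypothesis $\|(H^\varepsilon,\varTheta^\varepsilon)\|_{L^2_{xy}}\leq 1$ to trade the resulting interpolation factors against $\frac{\varepsilon}{4}\|\nabla(H^\varepsilon,\varTheta^\varepsilon)\|_{L^2}^2+C\|\nabla U^\varepsilon\|_{L^2}^2$. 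The remaining linear coupling terms involving the profiles $(u^a,\eta^a,\tau^a,u^B,\eta^B,\tau^B)$ are absorbed using the uniform bounds of Lemma \ref{lemma_inner_outer_layer_est} (in particular $\|u^a\|_{L^\infty_{xy}}+\|\nabla(u^I,\eta^I,\tau^I)\|_{L^\infty_{xy}}+\|(u^B,\eta^B,\tau^B)\|_{H^1_xH^1_z}\leq C$), while the source terms contribute $\varepsilon^2$ by the estimate $\|(\mathfrak F,\mathfrak G,\mathfrak J)\|_{L^\infty_T L^2_{xy}}\leq C\varepsilon$ from Lemma \ref{lemma_source_term_1}.

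Adding $M\cdot$(the $U^\varepsilon$-identity) to the other two for a suitably large constant $M>1$ (chosen so that the $C\|\nabla U^\varepsilon\|_{L^2}^2$ terms produced in the $\varTheta^\varepsilon$-identity are absorbed into $\tfrac{\mu M}{2}\|\nabla U^\varepsilon\|_{L^2}^2$) yields a differential inequality of the form
\begin{equation*}
\frac{d}{dt}\mathcal{E}(t)+\tfrac{\mu M}{2}\|\nabla U^\varepsilon\|_{L^2_{xy}}^2+\tfrac{\varepsilon}{2}\|\nabla(H^\varepsilon,\varTheta^\varepsilon)\|_{L^2_{xy}}^2\leq \tilde C_1\,\mathcal{E}(t)+\tilde C_1\varepsilon^2,
\end{equation*}
where $\mathcal{E}(t)=M\|U^\varepsilon\|_{L^2}^2+\|H^\varepsilon\|_{L^2}^2+\|\varTheta^\varepsilon\|_{L^2}^2$ and $\tilde C_1$ is independent of $\varepsilon$. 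Since $\mathcal{E}(0)=0$, Gronwall's inequality on $[0,T]$ produces $\mathcal{E}(T)\leq \tilde C_1 T\exp(\tilde C_1 T)\varepsilon^2$, and integrating the surviving dissipation terms in time gives the full bound \eqref{eq_UHT_infty_nabla_L^2} on $[0,T]$ for any $T\leq T_\star = 1/(2e\tilde C_1)$. There is no real obstacle here beyond careful bookkeeping; the lemma is effectively a sharpening of what was already proved (cf.\ \eqref{eq_proof_UHT_L_2}) now that the crude bound of Lemma \ref{lemma_a_priori} has legitimized the smallness assumption.
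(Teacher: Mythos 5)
Your proposal is correct and matches the paper's argument: the paper also observes that Lemma \ref{lemma_a_priori} (together with the rescaling \eqref{def_UHT_tilde} and $\varepsilon\le 1$) legitimizes the smallness hypothesis \eqref{eq_smallness_asumption}, and then simply invokes the already-established inequality \eqref{eq_proof_UHT_L_2} from the proof of Lemma \ref{lemma_UHT_infty_L2_cont}, whose left-hand side already contains the dissipation terms, to conclude. Your re-derivation of the Gronwall inequality is therefore redundant in the paper's logic but not different in substance.
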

   \begin{proof}
        From Lemma \ref{lemma_a_priori}, we know that
        \begin{eqnarray*}
            \|(U^\varepsilon,H^\varepsilon,\varTheta^\varepsilon)\|_{C([0,T];L_{xy}^2)}^2 \leq 1,
        \end{eqnarray*}
        for all $T\in (0,\min\{T_\star,T_\varepsilon\}]$. Thus, we can prove Lemma \ref{lemma_UHT_infty_L2} by \eqref{eq_proof_UHT_L_2}.
   \end{proof}

    Next, we get the following estimates for $\partial_t (U^\varepsilon,H^\varepsilon, \varTheta^\varepsilon )$ and $\partial_t (U^\varepsilon,H^\varepsilon, \varTheta^\varepsilon ).$
     \begin{lemma}\label{lemma_UHT_nabla_t}
        Suppose that
        $(U^\varepsilon,H^\varepsilon,\varTheta^\varepsilon)$ is  the solution of the problem \eqref{eq_UHT_varepsilon}-\eqref{eq_UHT_int_bo} on $[0,T]$ with $T\in (0,\min\{T_\star,T_\varepsilon\}]$. Under the assumptions of  Theorem \ref{theorem_limit}, there exists a generic positive constant C such that
        \begin{eqnarray*}
            \begin{aligned}
                &  \|\partial_t (U^\varepsilon,H^\varepsilon, \varTheta^\varepsilon )\|_{L^\infty_TL^2_{xy}}^2
            +  \|\nabla (U^\varepsilon,H^\varepsilon, \varTheta^\varepsilon )\|_{L^\infty_TL^2_{xy}}^2
        \leq  C  \varepsilon^{\frac{3}{2}},
            \end{aligned}
        \end{eqnarray*}
        and
        \begin{eqnarray*}
            \begin{aligned}
                 &\|\nabla\partial_t U^\varepsilon\|_{L^2_TL^2_{xy}}^2  +   \|U^\varepsilon\|_{L^2_TH^2_{xy}}^2
                +  \varepsilon \|\nabla\partial_t (H^\varepsilon, \varTheta^\varepsilon )\|_{L^2_TL^2_{xy}}^2
                  +\varepsilon \|\nabla^2 ( H^\varepsilon, \varTheta^\varepsilon )\|_{L^2_TL^2_{xy}}^2  \leq  C  \varepsilon^{\frac{3}{2}}.
            \end{aligned}
        \end{eqnarray*}

    \end{lemma}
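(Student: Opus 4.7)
The plan is to argue by standard energy estimates applied to both the original system \eqref{eq_UHT_varepsilon} and its time-differentiated version, closing the estimates using the $L^2$-smallness already provided by Lemma \ref{lemma_UHT_infty_L2}, and recovering spatial regularity from the equations themselves by viewing $\mu\Delta U^\varepsilon$ as the leading elliptic term in the Stokes part and $\varepsilon\Delta(H^\varepsilon,\varTheta^\varepsilon)$ as the leading parabolic term in the density/stress equations.

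First I would compute $\partial_t(U^\varepsilon,H^\varepsilon,\varTheta^\varepsilon)|_{t=0}$ directly from \eqref{eq_UHT_varepsilon}: since $(U^\varepsilon,H^\varepsilon,\varTheta^\varepsilon)|_{t=0}=0$, every term involving the error vanishes, so (after applying the Helmholtz projection to kill $\nabla P^\varepsilon$) one gets $\partial_tU^\varepsilon|_{t=0}=\mathbb P\mathfrak F|_{t=0}$, $\partial_tH^\varepsilon|_{t=0}=\mathfrak G|_{t=0}$, $\partial_t\varTheta^\varepsilon|_{t=0}=\mathfrak J|_{t=0}$, whose $L^2_{xy}$-norms are controlled by $C\varepsilon$ thanks to \eqref{eq_source_term}. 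Next, I would test the three equations by $\partial_tU^\varepsilon$, $\partial_tH^\varepsilon$, $\partial_t\varTheta^\varepsilon$ respectively; the pressure drops out because $\mathrm{div}\,\partial_tU^\varepsilon=0$, the boundary terms vanish thanks to \eqref{eq_UHT_int_bo} (note $\partial_tU^\varepsilon|_{y=0}=0$ and the Neumann conditions on $H^\varepsilon,\varTheta^\varepsilon$), and the coupling $\langle\mathrm{div}\,\varTheta^\varepsilon,\partial_tU^\varepsilon\rangle$ can be rewritten as
\[
-\frac{d}{dt}\langle\varTheta^\varepsilon,\nabla U^\varepsilon\rangle+\langle\partial_t\varTheta^\varepsilon,\nabla U^\varepsilon\rangle .
\]
After summing with suitable weights and using Lemma \ref{lemma_inner_outer_layer_est} for the given profiles plus Lemma \ref{lemma_source_term_1} for $\mathfrak F,\mathfrak G,\mathfrak J$, this yields the time-integrated control on $\|\nabla(U^\varepsilon,H^\varepsilon,\varTheta^\varepsilon)\|_{L^\infty_T L^2_{xy}}$ and on $\|\partial_t(U^\varepsilon,H^\varepsilon,\varTheta^\varepsilon)\|_{L^2_T L^2_{xy}}$ in terms of $\|\mathfrak F\|_{L^2_T L^2}^2+\|\mathfrak G\|_{L^2_T L^2}^2+\|\mathfrak J\|_{L^2_T L^2}^2$ plus the $L^2$ smallness from Lemma \ref{lemma_UHT_infty_L2}.

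Then I would differentiate the whole system once in $t$ and test with $(\partial_tU^\varepsilon,\partial_tH^\varepsilon,\partial_t\varTheta^\varepsilon)$; integration by parts against $\mathrm{div}\,U^\varepsilon=0$ removes the quadratic transport terms $u^a\cdot\nabla\partial_t(\cdot)$ and $U^\varepsilon\cdot\nabla\partial_t(\cdot)$, leaving commutator-type nonlinearities such as $\sqrt\varepsilon\,(\partial_tU^\varepsilon\cdot\nabla U^\varepsilon,\partial_tU^\varepsilon)$, $\mathcal Q(\nabla\partial_tU^\varepsilon,\tau^a)$, $\mathcal B(\eta^a,\nabla\partial_tU^\varepsilon)$, and their $\sqrt\varepsilon$-weighted counterparts. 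These are handled by Ladyzhenskaya's inequality \eqref{ieq_Lady}, for instance
\[
\sqrt\varepsilon\,|\langle\partial_tU^\varepsilon\cdot\nabla U^\varepsilon,\partial_tU^\varepsilon\rangle|\lesssim\sqrt\varepsilon\,\|\partial_tU^\varepsilon\|_{L^2}\|\nabla\partial_tU^\varepsilon\|_{L^2}\|\nabla U^\varepsilon\|_{L^2},
\]
and absorbed into the dissipation $\mu\|\nabla\partial_tU^\varepsilon\|_{L^2}^2+\varepsilon\|\nabla\partial_t(H^\varepsilon,\varTheta^\varepsilon)\|_{L^2}^2$ on the left; the remaining lower-order factors carry the smallness $\varepsilon$ from Lemma \ref{lemma_UHT_infty_L2}. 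A Gronwall argument, using the $O(\varepsilon^2)$ initial data for $\partial_t(\cdot)|_{t=0}$ and the $O(\varepsilon)$ source-term bounds $\|\partial_t(\mathfrak F,\mathfrak G,\mathfrak J)\|_{L^2_TL^2_{xy}}$ from \eqref{eq_source_term_t}, then gives the claimed $L^\infty_TL^2$ bound on $\partial_t(\cdot)$ and the $L^2_TL^2$ bound on $\nabla\partial_t(\cdot)$. Finally, using \eqref{eq_UHT_varepsilon} to isolate $-\mu\Delta U^\varepsilon+\nabla P^\varepsilon$ (Stokes estimate with $L^2_{xy}$ right-hand side, whose norm is already controlled by the previous step and the $L^\infty_TL^2$ bound on $\nabla\varTheta^\varepsilon$) delivers $\|U^\varepsilon\|_{L^2_TH^2_{xy}}$, and treating the $H^\varepsilon$ and $\varTheta^\varepsilon$ equations as heat equations with $\varepsilon$-diffusion yields $\varepsilon\|\nabla^2(H^\varepsilon,\varTheta^\varepsilon)\|_{L^2_TL^2_{xy}}^2$.

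The main obstacle is the simultaneous closure of the three energy identities: the cross-terms $\mathcal B(\eta^a,\nabla\partial_tU^\varepsilon)$ and $\langle\mathrm{div}\,\partial_t\varTheta^\varepsilon,\partial_tU^\varepsilon\rangle$ couple $\|\nabla\partial_tU^\varepsilon\|_{L^2}$ (which has unit weight from the viscous term) to $\|\partial_t\varTheta^\varepsilon\|_{L^2}$ (which has only $\gamma$-weight from $\gamma\varTheta^\varepsilon$), and similarly for $H^\varepsilon$, so one must multiply the $U^\varepsilon$-identity by a sufficiently large constant before summing in order to let $\mu\|\nabla\partial_tU^\varepsilon\|_{L^2}^2$ absorb the cross-terms while keeping $\gamma\|\partial_t\varTheta^\varepsilon\|_{L^2}^2$ and $\varepsilon\|\nabla\partial_t(H^\varepsilon,\varTheta^\varepsilon)\|_{L^2}^2$ positive on the left. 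Tracking the exact powers of $\varepsilon$ arising from the factors $\sqrt\varepsilon$ in the nonlinear terms of \eqref{eq_UHT_varepsilon}, combined with the $L^2$ smallness $\|(U^\varepsilon,H^\varepsilon,\varTheta^\varepsilon)\|_{L^\infty_TL^2}^2\le C\varepsilon^2$ of Lemma \ref{lemma_UHT_infty_L2}, is what produces the stated $\varepsilon^{3/2}$ bound (equivalently $\varepsilon^{5/2}$ for the tilde quantities after rescaling by \eqref{def_UHT_tilde}).
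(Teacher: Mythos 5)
Your plan matches the paper on the $\partial_t$-energy part: the paper also differentiates the system in $t$, tests with $\partial_t(U^\varepsilon,H^\varepsilon,\varTheta^\varepsilon)$, and closes with Gronwall using Lemmas \ref{lemma_inequality}, \ref{lemma_inner_outer_layer_est}, \ref{lemma_source_term_1}, and \ref{lemma_UHT_infty_L2} (in fact it observes that $\partial_t(U^\varepsilon,H^\varepsilon,\varTheta^\varepsilon)|_{t=0}=0$ exactly, stronger than your $O(\varepsilon)$ bound, but both would suffice there). Your use of the Stokes estimate to obtain $\|U^\varepsilon\|_{L^2_T H^2_{xy}}$ is also precisely what the paper does in \eqref{eq_nabla2_U}.

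There is, however, a genuine gap in the way you propose to control $\|\nabla(H^\varepsilon,\varTheta^\varepsilon)\|_{L^\infty_T L^2_{xy}}^2$ and $\varepsilon\|\nabla^2(H^\varepsilon,\varTheta^\varepsilon)\|_{L^2_T L^2_{xy}}^2$. Testing the $H^\varepsilon$-equation with $\partial_t H^\varepsilon$ produces only $\tfrac{\varepsilon}{2}\tfrac{d}{dt}\|\nabla H^\varepsilon\|_{L^2_{xy}}^2$ on the left, because the diffusion coefficient is $\varepsilon$, not $O(1)$ like the viscosity $\mu$ in the $U^\varepsilon$-equation. Integrating in time and dividing by $\varepsilon$ then yields at best $\|\nabla H^\varepsilon\|_{L^\infty_T L^2_{xy}}^2 \leq C\varepsilon^{-1}(\cdots)$, which loses a full power of $\varepsilon$ compared to the stated $C\varepsilon^{3/2}$. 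Likewise, reading the $H^\varepsilon$-equation as an $\varepsilon$-heat equation and applying elliptic regularity gives $\varepsilon^2\|\nabla^2 H^\varepsilon\|_{L^2_T L^2_{xy}}^2 \lesssim \|\partial_t H^\varepsilon\|_{L^2_T L^2_{xy}}^2 + \|\nabla H^\varepsilon\|_{L^2_T L^2_{xy}}^2 + \cdots$, hence only $\varepsilon\|\nabla^2 H^\varepsilon\|_{L^2_T L^2_{xy}}^2 \lesssim \varepsilon^{1/2}$, again one power short of the claimed $C\varepsilon^{3/2}$. The paper instead \emph{applies $\nabla$ to the $H^\varepsilon$- and $\varTheta^\varepsilon$-equations and tests against $\nabla H^\varepsilon$, $\nabla\varTheta^\varepsilon$} (see \eqref{eq_nabla_H} and \eqref{eq_nabla_T}); this produces $\tfrac{1}{2}\tfrac{d}{dt}\|\nabla(H^\varepsilon,\varTheta^\varepsilon)\|_{L^2_{xy}}^2 + \varepsilon\|\nabla^2(H^\varepsilon,\varTheta^\varepsilon)\|_{L^2_{xy}}^2$ with an $O(1)$ weight on the first term, and this gradient-energy is then coupled with the $\partial_t$-energy and the Stokes estimate into a single Gronwall inequality for the combined functional $\|\partial_t(\cdot)\|_{L^2_{xy}}^2 + \mu\|\nabla U^\varepsilon\|_{L^2_{xy}}^2 + \|\nabla(H^\varepsilon,\varTheta^\varepsilon)\|_{L^2_{xy}}^2$ (see \eqref{eq_U_H_T_nabla_t}). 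Without that $\nabla$-tested estimate, neither the $\varepsilon^{3/2}$ rate for $\nabla(H^\varepsilon,\varTheta^\varepsilon)$ nor that for $\varepsilon\|\nabla^2(H^\varepsilon,\varTheta^\varepsilon)\|^2$ can be recovered.
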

    \begin{proof}
        Multiplying $\partial_t$ \eqref{eq_UHT_varepsilon}$_1$ by $\partial_t U^\varepsilon$, integrating the result over $\mathbb{R}^2_{xy}$, and using integration by parts, Lemmas \ref{lemma_inequality}, \ref{lemma_inner_outer_layer_est}, \ref{lemma_source_term_1} and \ref{lemma_UHT_infty_L2}, we get
            \begin{align*}
                &\frac{1}{2}\frac{\mathrm{d}}{\mathrm{d}t} \|\partial_t U^\varepsilon\|_{L^2_{xy}}^2 +  \mu\|\nabla\partial_t U^\varepsilon\|_{L^2_{xy}}^2 \\
                ={}& \langle U^\varepsilon\otimes \partial_tu^a  ,\nabla\partial_t U^\varepsilon \rangle
                +\langle   u^a\otimes\partial_t U^\varepsilon,\nabla\partial_t U^\varepsilon \rangle
                +\langle \partial_tu^a  \otimes U^\varepsilon  , \nabla\partial_t U^\varepsilon \rangle
                -\sqrt{\varepsilon}\langle \partial_t U^\varepsilon \cdot\nabla U^\varepsilon  , \partial_t U^\varepsilon \rangle
                 \\
                &
                -\langle \partial_t \varTheta^\varepsilon, \nabla \partial_t U^\varepsilon \rangle + \langle \partial_t \mathfrak{F},\partial_t U^\varepsilon \rangle \\
                \leq{}& \|  U^\varepsilon\|_{L^2_{xy}}^{\frac{1}{2}}\|\partial_y U^\varepsilon\|_{L^2_{xy}}^{\frac{1}{2}} \|\partial_t u^a\|_{L^2_{xy}}^{\frac{1}{2}}\| \partial_t\partial_x u^a\|_{L^2_{xy}}^{\frac{1}{2}} \|\nabla\partial_t U^\varepsilon\|_{L^2_{xy}}
                 +  \|  u^a\|_{L^\infty_{xy}} \| \partial_t U^\varepsilon\|_{L^2_{xy}}\| \nabla\partial_t U^\varepsilon\|_{L^2_{xy}}\\
                & + \varepsilon^{\frac{1}{2}}\| \partial_t U^\varepsilon\|_{L^2_{xy}}\| \nabla\partial_t U^\varepsilon\|_{L^2_{xy}}\|\nabla  U^\varepsilon\|_{L^2_{xy}}
                +  \| \partial_t \varTheta^\varepsilon\|_{L^2_{xy}} \| \nabla\partial_t U^\varepsilon\|_{L^2_{xy}}
                + \|  \partial_t \mathfrak{F} \|_{L^2_{xy}}\|  \partial_t U^\varepsilon\|_{L^2_{xy}}\\
                \leq{} &  \frac{\mu}{2}\|\nabla\partial_t U^\varepsilon\|_{L^2_{xy}}^2
                +C\|\nabla U^\varepsilon\|_{L^2_{xy}}^2
                +C \|\partial_t u^a\|_{L^2_{xy}}^2\|\partial_t \partial_x u^a\|_{L^2_{xy}}^2  \|  U^\varepsilon\|_{L^2_{xy}}^2
                +C\Big(  1 + \|  u^a\|_{L^\infty_{xy}}^2\\
                &  + \|\nabla  U^\varepsilon\|_{L^2_{xy}}^2\Big) \| \partial_t U^\varepsilon\|_{L^2_{xy}}^2 +C\Big(   \| \partial_t \varTheta^\varepsilon\|_{L^2_{xy}}^2+ \|  \partial_t \mathfrak{F} \|_{L^2_{xy}}^2\Big)   \\
                \leq{} & \frac{\mu}{2}\|\nabla\partial_t U^\varepsilon\|_{L^2_{xy}}^2 + C\Big(\|\nabla  U^\varepsilon\|_{L^2_{xy}}^2\| \partial_t U^\varepsilon\|_{L^2_{xy}}^2
                 +  \| \partial_t (U^\varepsilon,\varTheta^\varepsilon)\|_{L^2_{xy}}^2  + \| \nabla U^\varepsilon\|_{L^2_{xy}}^2  \Big) + C  \varepsilon^2,
            \end{align*}
        which implies
        \begin{eqnarray}\label{eq_U_t}
            \begin{aligned}
                &\frac{\mathrm{d}}{\mathrm{d}t} \|\partial_t U^\varepsilon\|_{L^2_{xy}}^2 +  \mu\|\nabla\partial_t U^\varepsilon\|_{L^2_{xy}}^2
                \lesssim       \Big(\|\nabla  U^\varepsilon\|_{L^2_{xy}}^2 + 1\Big)\| \partial_t (U^\varepsilon,\varTheta^\varepsilon)\|_{L^2_{xy}}^2
                +\| \nabla U^\varepsilon\|_{L^2_{xy}}^2  +  \varepsilon^2.
            \end{aligned}
        \end{eqnarray}

       Differentiating \eqref{eq_UHT_varepsilon}$_2$ with respect to $t$, then taking $L^2_{xy}$ inner product with  $\partial_t H^\varepsilon$, and using integration by parts and the divergence-free conditions, we deduce
        \begin{eqnarray}\label{eq_H_t_1}
            \begin{aligned}
                &\frac{1}{2}\frac{\mathrm{d}}{\mathrm{d}t} \|\partial_t H^\varepsilon\|_{L^2_{xy}}^2 +  \varepsilon\|\nabla\partial_t H^\varepsilon\|_{L^2_{xy}}^2 \\
                =& -\langle \partial_tu^a  \cdot\nabla H^\varepsilon,\partial_t H^\varepsilon \rangle
                +\langle \sqrt{\varepsilon} H^\varepsilon\partial_t U^\varepsilon ,\nabla\partial_t H^\varepsilon \rangle
                -\langle\partial_t U^\varepsilon\cdot\nabla \eta^I,\partial_t H^\varepsilon \rangle -\langle U^\varepsilon \cdot\nabla  \partial_t\eta^I  , \partial_t H^\varepsilon \rangle\\
                &
                 +\sqrt{\varepsilon} \langle\eta^B\partial_t U^\varepsilon  ,\nabla\partial_t H^\varepsilon \rangle
                +\sqrt{\varepsilon}\langle\partial_t \eta^B  U^\varepsilon , \nabla\partial_t H^\varepsilon \rangle
                  + \langle \partial_t \mathfrak{G},\partial_t H^\varepsilon \rangle. \\
            \end{aligned}
        \end{eqnarray}
       Using Ladyzhenskaya's inequality, we have
        \begin{eqnarray}\label{eq_H_Ladyzhenskaya}
            \begin{aligned}
            \sqrt{\varepsilon}|\langle  H^\varepsilon\partial_t U^\varepsilon ,\nabla\partial_t H^\varepsilon \rangle|
            \leq{} &\sqrt{\varepsilon}\|\nabla\partial_t H^\varepsilon\|_{L^2_{xy}}\|H^\varepsilon\|_{L^4_{xy}}\|\partial_t U^\varepsilon\|_{L^4_{xy}}\\
            \lesssim {}&\sqrt{\varepsilon}\|\nabla\partial_t H^\varepsilon\|_{L^2_{xy}}\|H^\varepsilon\|_{L^2_{xy}}^{\frac{1}{2}}\|\nabla H^\varepsilon\|_{L^2_{xy}}^{\frac{1}{2}}
            \|\partial_t U^\varepsilon\|_{L^2_{xy}}^{\frac{1}{2}}\|\nabla \partial_t U^\varepsilon\|_{L^2_{xy}}^{\frac{1}{2}}\\
            \leq{}& \frac{\varepsilon}{ 8}\|\nabla\partial_t H^\varepsilon\|_{L^2_{xy}}^2  + \frac{\mu}{8}\|\nabla\partial_t U^\varepsilon\|_{L^2_{xy}}^2
            +C\|  H^\varepsilon\|_{L^2_{xy}}^2 \|\nabla H^\varepsilon\|_{L^2_{xy}}^2 \|  \partial_t U^\varepsilon\|_{L^2_{xy}}^2.
            \end{aligned}
        \end{eqnarray}
    Then,  by virtue of Lemmas \ref{lemma_inequality} and \ref{lemma_inner_outer_layer_est},  the first line  on  the right-hand side of \eqref{eq_H_t_1} can be  estimated  as follows:
             \begin{align}\label{ptua}
                \nonumber&-\langle \partial_tu^a \cdot\nabla H^\varepsilon,\partial_t H^\varepsilon \rangle
                +\sqrt{\varepsilon}\langle  H^\varepsilon\partial_t U^\varepsilon ,\nabla\partial_t H^\varepsilon \rangle
                -\langle\partial_t U^\varepsilon\cdot\nabla \eta^I,\partial_t H^\varepsilon \rangle
                -\langle U^\varepsilon \cdot\nabla  \partial_t\eta^I  , \partial_t H^\varepsilon \rangle \\
                \leq & \nonumber
                \varepsilon\|\partial_t u^B\|_{L^2_{xy}}^{\frac{1}{2}}\|\partial_t\partial_x u^B\|_{L^2_{xy}}^{\frac{1}{2}}\|\nabla H^\varepsilon\|_{L^2_{xy}}\|\partial_t H^\varepsilon\|_{L^2_{xy}}^{ \frac{1}{2}}\|\partial_y\partial_t H^\varepsilon\|_{L^2_{xy}}^{\frac{1}{2}}
                +  \|\partial_t (u^I,w)\|_{L^\infty_{xy}} \|\nabla H^\varepsilon\|_{L^2_{xy}}\|\partial_t H^\varepsilon\|_{L^2_{xy}} \\ \nonumber
                & +\|\nabla \eta^I\|_{L^\infty_{xy}}  \|\partial_t U^\varepsilon\|_{L^2_{xy}}  \|\partial_t H^\varepsilon\|_{L^2_{xy}}
                + \|\nabla \partial_t\eta^I\|_{L^4_{xy}}  \|  U^\varepsilon\|_{L^2_{xy}}^{\frac{1}{2}}\|  \nabla U^\varepsilon\|_{L^2_{xy}}^{\frac{1}{2}} \|\partial_t H^\varepsilon\|_{L^2_{xy}}
                  \\
                 & \nonumber+\sqrt{\varepsilon} |\langle  H^\varepsilon\partial_t U^\varepsilon ,\nabla\partial_t H^\varepsilon \rangle| \\
                \leq{} &\nonumber
                C\|  H^\varepsilon\|_{L^2_{xy}}^2 \|\nabla H^\varepsilon\|_{L^2_{xy}}^2 \|  \partial_t U^\varepsilon\|_{L^2_{xy}}^2
                +  C\Big(\|\partial_t(U^\varepsilon, H^\varepsilon)\|_{L^2_{xy}}^2
                 +   \|\nabla (U^\varepsilon,H^\varepsilon)\|_{L^2_{xy}}^2  + \varepsilon^2\Big)
                          \\
                &+\frac{\varepsilon}{4}\|\nabla\partial_t H^\varepsilon\|_{L^2_{xy}}^2   + \frac{\mu}{ 8}\|\nabla\partial_t U^\varepsilon\|_{L^2_{xy}}^2.
            \end{align}
          Similarly, the second line  on  the right-hand side of \eqref{eq_H_t_1} can be estimated as follows:
        \begin{eqnarray}\label{etaBptU}
            \begin{aligned}
                & \sqrt{\varepsilon} \langle\eta^B\partial_t U^\varepsilon  ,\nabla\partial_t H^\varepsilon \rangle
                +\sqrt{\varepsilon}\langle\partial_t \eta^B  U^\varepsilon , \nabla\partial_t H^\varepsilon \rangle
                  + \langle \partial_t \mathfrak{G},\partial_t H^\varepsilon \rangle \\
                \leq{}&\sqrt{\varepsilon}\|\nabla\partial_t H^\varepsilon\|_{L^2_{xy}} \Bigl( \|  \eta^B\|_{L^\infty_{xy}}\|\partial_t U^\varepsilon\|_{L^2_{xy}}
                +   \|  \partial_t \eta^B\|_{L^2_{xy}}^{\frac{1}{2}}\|  \partial_t\partial_x \eta^B\|_{L^2_{xy}}^{\frac{1}{2}}\|U^\varepsilon\|_{L^2_{xy}}^{\frac{1}{2}}\|\partial_yU^\varepsilon\|_{L^2_{xy}}^{\frac{1}{2}}\Bigr)\\
                 &
                  +\|\partial_t \mathfrak{G}\|_{L^2_{xy}}\|\partial_t H^\varepsilon\|_{L^2_{xy}}\\
                 \leq{}& \frac{\varepsilon}{4}\|\nabla\partial_t H^\varepsilon\|_{L^2_{xy}}^2 +
                 C\left(\|\partial_t (U^\varepsilon,H^\varepsilon)\|_{L^2_{xy}}^2
                +   \|  \nabla U^\varepsilon\|_{L^2_{xy}}^2 +   \varepsilon^2\right).
            \end{aligned}
        \end{eqnarray}
        Then, putting the above estimates (\ref{eq_H_Ladyzhenskaya}), (\ref{ptua}), and (\ref{etaBptU}) to (\ref{eq_H_t_1}), we get       \begin{eqnarray}\label{eq_H_t}
            \begin{aligned}
                &\frac{\mathrm{d}}{\mathrm{d}t} \|\partial_t H^\varepsilon\|_{L^2_{xy}}^2 +  \varepsilon\|\nabla\partial_t H^\varepsilon\|_{L^2_{xy}}^2 \\
                \leq{} &  \frac{\mu}{4}\|\nabla\partial_t U^\varepsilon\|_{L^2_{xy}}^2
                   +C\Big(   \|  \partial_t (U^\varepsilon,H^\varepsilon)\|_{L^2_{xy}}^2 +   \|\nabla(U^\varepsilon, H^\varepsilon)\|_{L^2_{xy}}^2
                 +  \varepsilon^2 \|\nabla H^\varepsilon\|_{L^2_{xy}}^2 \|  \partial_t U^\varepsilon\|_{L^2_{xy}}^2
                    + \varepsilon^2\Big).
            \end{aligned}
        \end{eqnarray}

       Differentiating \eqref{eq_UHT_varepsilon}$_3$ with respect to $t$, then taking $L^2_{xy}$ inner product with  $\partial_t \varTheta^\varepsilon$, and using integration by parts and the divergence-free conditions, we obtain
        \begin{eqnarray}\label{eq_T_t_1}
            \begin{aligned}
                &\frac{1}{2}\frac{\mathrm{d}}{\mathrm{d}t} \|\partial_t \varTheta^\varepsilon\|_{L^2_{xy}}^2 + \gamma  \|\partial_t \varTheta^\varepsilon\|_{L^2_{xy}}^2 + \varepsilon\|\nabla\partial_t \varTheta^\varepsilon\|_{L^2_{xy}}^2 \\
                ={}& \langle \partial_t \mathfrak{J},\partial_t \varTheta^\varepsilon \rangle-\langle \partial_tu^a  \cdot\nabla \varTheta^\varepsilon,\partial_t \varTheta^\varepsilon \rangle
                -\langle\partial_t (U^\varepsilon\cdot\nabla  \tau^I )   ,\partial_t \varTheta^\varepsilon \rangle
                +\sqrt{\varepsilon} \langle\partial_t ( \tau^B \otimes U^\varepsilon)   ,\nabla \partial_t \varTheta^\varepsilon \rangle
                  \\
                &
                  + \langle \partial_t\mathcal{Q}(\nabla   u^a, \varTheta^\varepsilon ) +\partial_t\mathcal{Q}(\nabla U^\varepsilon, \tau^a)  ,\partial_t \varTheta^\varepsilon\rangle
                  + \langle \partial_t\mathcal{B}(\eta^a,\nabla U^\varepsilon)+\partial_t\mathcal{B}(H^\varepsilon,\nabla u^a)   ,\partial_t \varTheta^\varepsilon\rangle
                 \\
                  &
                   +\sqrt{\varepsilon}\langle  \varTheta^\varepsilon\partial_t U^\varepsilon ,\nabla\partial_t \varTheta^\varepsilon \rangle
                  + \sqrt{\varepsilon} \langle \partial_t\mathcal{Q}(\nabla U^\varepsilon, \varTheta^\varepsilon),\partial_t \varTheta^\varepsilon\rangle
                   +\sqrt{\varepsilon}\langle \partial_t\mathcal{B}(H^\varepsilon,\nabla U^\varepsilon),\partial_t \varTheta^\varepsilon\rangle
                  \\
                  =:&~  \mathcal{K}_1 + \mathcal{K}_2 + \mathcal{K}_3,
             \end{aligned}
        \end{eqnarray}
     where $\mathcal{K}_i$ represents the whole $i$-th line on the right hand side of the first equality sign in \eqref{eq_T_t_1}.

  By virtue of Lemmas \ref{lemma_inequality}, \ref{lemma_inner_outer_layer_est}, \ref{lemma_source_term_1} and \ref{lemma_UHT_infty_L2},  $\mathcal{K}_1$ can be estimated as follows:
            \begin{align*}
                \mathcal{K}_1 \leq{}& \|\partial_t (u^{I},w)\|_{L^\infty_{xy}} \|\nabla \varTheta^\varepsilon\|_{L^2_{xy}} \|\partial_t\varTheta^\varepsilon\|_{L^2_{xy}}
                +\varepsilon\|\partial_t u^B\|_{L^2_{xy}}^{\frac{1}{2}}\|\partial_t \partial_x u^B\|_{L^2_{xy}}^{\frac{1}{2}}
                 \|\nabla \varTheta^\varepsilon\|_{L^2_{xy}} \|\partial_t\varTheta^\varepsilon\|_{L^2_{xy}}^{\frac{1}{2}}\|\partial_y\partial_t\varTheta^\varepsilon\|_{L^2_{xy}}^{\frac{1}{2}} \\
                &
                +\Big(\|\partial_t \mathfrak{J}\|_{L^2_{xy}}^2 + \|\partial_t U^\varepsilon\|_{L^2_{xy}}  \|\nabla\tau^I\|_{L^\infty_{xy}} \Big)\|\partial_t \varTheta^\varepsilon\|_{L^2_{xy}}
                + \|U^\varepsilon\|_{L^2_{xy}}^{\frac{1}{2}} \|\nabla U^\varepsilon\|_{L^2_{xy}}^{\frac{1}{2}} \|\nabla\partial_t \tau^I\|_{L^4_{xy}} \|\partial_t \varTheta^\varepsilon\|_{L^2_{xy}}\\
                &
                 + \sqrt{\varepsilon}\|\nabla \partial_t \varTheta^\varepsilon\|_{L^2_{xy}}  \Bigl( \| \tau^B\|_{L^\infty_{xy}}\|\partial_t U^\varepsilon\|_{L^2_{xy}}
                    + \| \partial_t\tau^B\|_{L^2_{xy}}^{\frac{1}{2}}\| \partial_t\partial_x\tau^B\|_{L^2_{xy}}^{\frac{1}{2}}\| U^\varepsilon\|_{L^2_{xy}}^{\frac{1}{2}}\| \partial_y U^\varepsilon\|_{L^2_{xy}}^{\frac{1}{2}} \Bigr)  \\
                 \leq{} & \frac{\varepsilon}{8}\|\nabla \partial_t \varTheta^\varepsilon\|_{L^2_{xy}}^2 +C\Bigl( \|\nabla (U^\varepsilon,\varTheta^\varepsilon)\|_{L^2_{xy}}^2 + \|\partial_t(U^\varepsilon,\varTheta^\varepsilon)\|_{L^2_{xy}}^2  + \varepsilon^2\Bigr).
            \end{align*}
        Similarly, we can estimate $\mathcal{K}_2 $ as follows:
            \begin{align*}
                 \mathcal{K}_2
                \leq{}  &    \|\nabla\partial_t (u^I,w)\|_{L^4_{xy}}  (\|  \varTheta^\varepsilon\|_{L^2_{xy}}^{\frac{1}{2}}\|  \nabla\varTheta^\varepsilon\|_{L^2_{xy}}^{\frac{1}{2}}
                +\|  H^\varepsilon\|_{L^2_{xy}}^{\frac{1}{2}}\|  \nabla H^\varepsilon\|_{L^2_{xy}}^{\frac{1}{2}}  \Big)  \|\partial_t\varTheta^\varepsilon\|_{L^2_{xy}}\\
                &+ \varepsilon\|\nabla\partial_t u^B\|_{L^2_{xy}}  \Big( \|  \varTheta^\varepsilon\|_{L^2_{xy}}^{\frac{1}{2}}\|  \nabla\varTheta^\varepsilon\|_{L^2_{xy}}^{\frac{1}{2}}
                +\|  H^\varepsilon\|_{L^2_{xy}}^{\frac{1}{2}}\|  \nabla H^\varepsilon\|_{L^2_{xy}}^{\frac{1}{2}}\Big)
                \|  \partial_t\varTheta^\varepsilon\|_{L^2_{xy}}^{\frac{1}{2}}\|  \nabla\partial_t\varTheta^\varepsilon\|_{L^2_{xy}}^{\frac{1}{2}} \\
                &+  \|\nabla   (u^I,w)\|_{L^\infty_{xy}} \| \partial_t (H^\varepsilon,\varTheta^\varepsilon)\|_{L^2_{xy}}^2
                  +  \varepsilon\|\nabla  u^B\|_{L^2_{xy}}
                \Big(\|  \partial_t\varTheta^\varepsilon\|_{L^2_{xy}}\|  \nabla\partial_t\varTheta^\varepsilon\|_{L^2_{xy}}\\
                &+\|  \partial_t H^\varepsilon\|_{L^2_{xy}} \|  \nabla\partial_tH^\varepsilon\|_{L^2_{xy}} \Big)
                +\|\nabla\partial_t U^\varepsilon\|_{L^2_{xy}} \|(\eta^a,\tau^a)\|_{L^\infty_{xy}} \|\partial_t\varTheta^\varepsilon\|_{L^2_{xy}}\\
                &
                  +\|\nabla  U^\varepsilon\|_{L^2_{xy}}  \|\partial_t(\eta^I,\tau^I)\|_{L^\infty_{xy}}
                  \|\partial_t\varTheta^\varepsilon\|_{L^2_{xy}}
                    +\sqrt{\varepsilon}  \|\nabla  U^\varepsilon\|_{L^2_{xy}}\|\partial_t(\eta^B,\tau^B)\|_{L^2_{xy}}^{\frac{1}{2}}\\
                    &\times \|\partial_t\partial_x(\eta^B,\tau^B)\|_{L^2_{xy}}^{\frac{1}{2}}
                    \|  \partial_t\varTheta^\varepsilon\|_{L^2_{xy}}^{\frac{1}{2}}\|  \partial_y\partial_t\varTheta^\varepsilon\|_{L^2_{xy}}^{\frac{1}{2}}\\
                \leq{}&\frac{\varepsilon}{8}\|\nabla \partial_t ( H^\varepsilon,\varTheta^\varepsilon)\|_{L^2_{xy}}^2+ \frac{\mu}{16}\|\nabla\partial_t U^\varepsilon\|_{L^2_{xy}}^2
                + C\Big(\|\partial_t(H^\varepsilon,\varTheta^\varepsilon)\|_{L^2_{xy}}^2 + \|\nabla (U^\varepsilon,  H^\varepsilon,\varTheta^\varepsilon )\|_{L^2_{xy}}^2 + \varepsilon^2\Big).
                \end{align*}
                 Similar to the estimate in \eqref{eq_H_Ladyzhenskaya}, using Ladyzhenskaya's  inequality,  we can estimates $\mathcal{K}_3$ as follows:
        \begin{eqnarray*}
            \begin{aligned}
                \mathcal{K}_3\leq{} & \sqrt{\varepsilon}\| \varTheta^\varepsilon\|_{L^2_{xy}}^{\frac{1}{2}}\|\nabla \varTheta^\varepsilon\|_{L^2_{xy}}^{\frac{1}{2}}
                \| \partial_t U^\varepsilon\|_{L^2_{xy}}^{\frac{1}{2}}\|\nabla\partial_t U^\varepsilon\|_{L^2_{xy}}^{\frac{1}{2}}                 \|\nabla\partial_t \varTheta^\varepsilon\|_{L^2_{xy}} \\
                  & + \sqrt{\varepsilon} \|\nabla\partial_t U^\varepsilon\|_{L^2_{xy}}\|\partial_t \varTheta^\varepsilon\|_{L^2_{xy}}^{\frac{1}{2}}
                 \|\nabla\partial_t \varTheta^\varepsilon\|_{L^2_{xy}}^{\frac{1}{2}}
                 \Big( \| \varTheta^\varepsilon\|_{L^2_{xy}}^{\frac{1}{2}}\|\nabla \varTheta^\varepsilon\|_{L^2_{xy}}^{\frac{1}{2}}
                 +\| H^\varepsilon\|_{L^2_{xy}}^{\frac{1}{2}}\|\nabla H^\varepsilon\|_{L^2_{xy}}^{\frac{1}{2}}\Big)\\
                 &+  \sqrt{\varepsilon} \|\nabla U^\varepsilon\|_{L^2_{xy}}
                  \Big(\|\partial_t\varTheta^\varepsilon\|_{L^2_{xy}} \|\nabla\partial_t\varTheta^\varepsilon\|_{L^2_{xy}}
                   +\|\partial_tH^\varepsilon\|_{L^2_{xy}} \|\nabla\partial_tH^\varepsilon\|_{L^2_{xy}}
                  \Big)  \\
             \leq{}
               & C \Big(\|\nabla  U^\varepsilon\|_{L^2_{xy}}^2 +
                \| \varTheta^\varepsilon\|_{L^2_{xy}}^{2}\|\nabla \varTheta^\varepsilon\|_{L^2_{xy}}^{2}
               +\varepsilon \| H^\varepsilon\|_{L^2_{xy}}^{2}\|\nabla H^\varepsilon\|_{L^2_{xy}}^{2}   \Big)\|\partial_t (U^\varepsilon,H^\varepsilon,\varTheta^\varepsilon)\|_{L^2_{xy}}^{2}\\
              & \frac{\varepsilon}{8}\Big(\|\nabla \partial_t \varTheta^\varepsilon\|_{L^2_{xy}}^2+ \|\nabla \partial_t H^\varepsilon\|_{L^2_{xy}}^2 \Big)
                +  \frac{\mu}{16}\|\nabla \partial_t U^\varepsilon\|_{L^2_{xy}}^2.
            \end{aligned}
        \end{eqnarray*}
        Substituting the above estimates for $\mathcal{K}_1,\mathcal{K}_2, \mathcal{K}_3$  into  \eqref{eq_T_t_1}, we get
        \begin{eqnarray}\label{eq_T_t}
            \begin{aligned}
        & \frac{\mathrm{d}}{\mathrm{d}t} \|\partial_t \varTheta^\varepsilon\|_{L^2_{xy}}^2 + \gamma  \|\partial_t \varTheta^\varepsilon\|_{L^2_{xy}}^2 + \varepsilon\|\nabla\partial_t \varTheta^\varepsilon\|_{L^2_{xy}}^2 \\
        \leq{} &  C \Big(\|\nabla  U^\varepsilon\|_{L^2_{xy}}^2 +
        \| \varTheta^\varepsilon\|_{L^2_{xy}}^{2}\|\nabla \varTheta^\varepsilon\|_{L^2_{xy}}^{2}
       +  \| H^\varepsilon\|_{L^2_{xy}}^{2}\|\nabla H^\varepsilon\|_{L^2_{xy}}^{2}+ 1  \Big)\|\partial_t (U^\varepsilon,H^\varepsilon,\varTheta^\varepsilon)\|_{L^2_{xy}}^{2} \\
         & +C\Big(\| \nabla (U^\varepsilon,H^\varepsilon,\varTheta^\varepsilon)\|_{L^2_{xy}}^{2}
          + \varepsilon^2\Big)
         +\frac{\varepsilon}{2}\|\nabla \partial_t H^\varepsilon\|_{L^2_{xy}}^2  + \frac{\mu}{4}\|\nabla \partial_t U^\varepsilon\|_{L^2_{xy}}^2
         . \\
     \end{aligned}
\end{eqnarray}

Summing \eqref{eq_U_t}, \eqref{eq_H_t} and \eqref{eq_T_t} up, we can get the following estimate:
\begin{eqnarray}\label{eq_U_H_T_t}
    \begin{aligned}
& \frac{\mathrm{d}}{\mathrm{d}t}  \|\partial_t (U^\varepsilon,H^\varepsilon, \varTheta^\varepsilon)\|_{L^2_{xy}}^2
 +   \mu\|\nabla\partial_t U^\varepsilon\|_{L^2_{xy}}^2
 +  \varepsilon \|\nabla\partial_t (H^\varepsilon, \varTheta^\varepsilon)\|_{L^2_{xy}}^2      \\
\leq{}
  &  C \Big(\|\nabla  U^\varepsilon\|_{L^2_{xy}}^2 +
\varepsilon\|\nabla (H^\varepsilon,\varTheta^\varepsilon)\|_{L^2_{xy}}^{2}
 +   1  \Big)
 \|\partial_t (U^\varepsilon,H^\varepsilon,\varTheta^\varepsilon)\|_{L^2_{xy}}^{2}
 +C \| \nabla (U^\varepsilon,H^\varepsilon,\varTheta^\varepsilon)\|_{L^2_{xy}}^{2}  \\
& + C  \varepsilon^2 . \\
\end{aligned}
\end{eqnarray}

Next, to handle some associated terms on the right-hand side of \eqref{eq_U_H_T_t}, we need to estimate $\|\nabla U^\varepsilon\|_{L^2_{xy}},$ $\|\nabla H^\varepsilon\|_{L^2_{xy}}$ and  $\|\nabla \varTheta^\varepsilon\|_{L^2_{xy}}$. More specifically, taking $L^2_{xy}$ inner product  of \eqref{eq_UHT_varepsilon}$_1$ with $\partial_t U^\varepsilon,$ and using integration by parts, Lemmas \ref{lemma_inequality}, \ref{lemma_inner_outer_layer_est}, \ref{lemma_source_term_1} and \ref{lemma_UHT_infty_L2}, we have
\begin{eqnarray}\label{eq_nabla_U}
    \begin{aligned}
        &\frac{\mu}{2}\frac{\mathrm{d}}{\mathrm{d}t}  \|\nabla U^\varepsilon\|_{L^2_{xy}}^2 +  \|\partial_t U^\varepsilon\|_{L^2_{xy}}^2\\
        ={}& \langle U^\varepsilon\otimes u^a  , \nabla\partial_t U^\varepsilon \rangle
         + \langle   u^a  \otimes U^\varepsilon, \nabla \partial_t U^\varepsilon \rangle
         + \sqrt{\varepsilon}\langle U^\varepsilon \otimes U^\varepsilon, \nabla\partial_t U^\varepsilon \rangle
          - \langle \varTheta^\varepsilon, \nabla \partial_t U^\varepsilon \rangle
         +\langle \mathfrak{F}, \partial_t U^\varepsilon \rangle \\
         \leq{} &C   \|  U^\varepsilon\|_{L^2_{xy}}^{\frac{1}{2}} \| \partial_y U^\varepsilon\|_{L^2_{xy}}^{\frac{1}{2}}\|   u^a\|_{L^2_{xy}}^{\frac{1}{2}} \| \partial_x u^a\|_{L^2_{xy}}^{\frac{1}{2}}   \| \nabla \partial_t U^\varepsilon\|_{L^2_{xy}}
         +\sqrt{\varepsilon}\|U^{\varepsilon}\|_{L^2_{xy}}\|\nabla U^{\varepsilon}\|_{L^2_{xy}} \| \nabla \partial_t U^\varepsilon\|_{L^2_{xy}}\\
         & +\| \varTheta^\varepsilon\|_{L^2_{xy}}\| \nabla \partial_t U^\varepsilon\|_{L^2_{xy}}   + \| \mathfrak{F}\|_{L^2_{xy}}\| \partial_t U^\varepsilon\|_{L^2_{xy}}\\
         \leq{} & \frac{\mu}{4}\| \nabla \partial_t U^\varepsilon\|_{L^2_{xy}}^2 + C\left(
           \| \nabla U^\varepsilon\|_{L^2_{xy}}^2 + \|\partial_t U^\varepsilon\|_{L^2_{xy}}^2+ \varepsilon^2\right).
    \end{aligned}
\end{eqnarray}
Taking $L^2_{xy}$ inner product of $\nabla\eqref{eq_UHT_varepsilon}_2$ with $\nabla H^\varepsilon,$ and using integration by parts and Lemma \ref{lemma_inequality}, we have
    \begin{align*}
        &\frac{1}{2}\frac{\mathrm{d}}{\mathrm{d}t}  \|\nabla H^\varepsilon\|_{L^2_{xy}}^2 +  \varepsilon \|\nabla^2 H^\varepsilon\|_{L^2_{xy}}^2\\
        ={}&-\langle\nabla (u^I + \varepsilon^2 w)  \cdot\nabla H^\varepsilon, \nabla H^\varepsilon \rangle
           + \varepsilon \langle  u^B\cdot\nabla H^\varepsilon, \Delta H^\varepsilon \rangle
         - \langle \nabla (U^\varepsilon \cdot\nabla  \eta^I ), \nabla H^\varepsilon \rangle  \\
        & + \sqrt{\varepsilon} \langle  U^\varepsilon \cdot\nabla    \eta^B , \Delta H^\varepsilon \rangle
          - \sqrt{\varepsilon}\langle  \nabla U^\varepsilon \cdot\nabla H^\varepsilon,  \nabla H^\varepsilon \rangle
         - \langle  \mathfrak{G}_1,  \Delta H^\varepsilon \rangle  + \langle \nabla \mathfrak{G}_2,   \nabla H^\varepsilon \rangle \\
         \lesssim{} &  \| \nabla (u^I,w)\|_{L^\infty_{xy}}  \|\nabla H^\varepsilon\|_{L^2_{xy}}^2
         +   \varepsilon\| u^B\|_{L^2_{xy}}^{\frac{1}{2}}\| \partial_x u^B\|_{L^2_{xy}}^{\frac{1}{2}} \|\nabla H^\varepsilon\|_{L^2_{xy}}^{\frac{1}{2}} \|\nabla\partial_y H^\varepsilon\|_{L^2_{xy}}^{\frac{1}{2}} \|\Delta H^\varepsilon\|_{L^2_{xy}}\\
         & + \|  \eta^I\|_{H^3_{xy}}   \|  U^\varepsilon\|_{H^1_{xy}}\|\nabla  H^\varepsilon\|_{L^2_{xy}}
         +\sqrt{\varepsilon}\|U^\varepsilon\|_{L^2_{xy}}^{\frac{1}{2}}\|\partial_y U^\varepsilon\|_{L^2_{xy}}^{\frac{1}{2}}
         \|\nabla\eta^B\|_{L^2_{xy}}^{\frac{1}{2}}\|\partial_x\nabla\eta^B\|_{L^2_{xy}}^{\frac{1}{2}}\|\Delta H^\varepsilon\|_{L^2_{xy}}  \\
         &+ \sqrt{\varepsilon}
         \| \nabla U^\varepsilon\|_{L^2_{xy}}    \|   \nabla H^\varepsilon\|_{L^2_{xy}}\| \nabla^2 H^\varepsilon\|_{L^2_{xy}}
        + \| \mathfrak{G}_1\|_{L^2_{xy}}\| \Delta H^\varepsilon\|_{L^2_{xy}}  +  \| \nabla\mathfrak{G}_2\|_{L^2_{xy}}\|\nabla H^\varepsilon\|_{L^2_{xy}} \\
           \leq{}& \frac{\varepsilon}{2}\| \nabla^2 H^\varepsilon\|_{L^2_{xy}}^2
         + C \|\partial_x\nabla\eta^B\|_{L^2_{xy}}  \| \nabla\eta^B\|_{L^2_{xy}} \|  U^\varepsilon\|_{H^1_{xy}}^2\\
            &  +  C\Big(\|   \nabla U^\varepsilon\|_{L^2_{xy}}^{2} \|   \nabla H^\varepsilon\|_{L^2_{xy}}^{2}
            + \|  U^\varepsilon\|_{H^1_{xy}}^2 + \|\nabla  H^\varepsilon\|_{L^2_{xy}}^2 + \frac{1}{\varepsilon} \|  \mathfrak{G}_1\|_{L^2_{xy}}^2 + \| \nabla \mathfrak{G}_2\|_{L^2_{xy}}^2\Big),
    \end{align*}
which implies
\begin{eqnarray}\label{eq_nabla_H}
    \begin{aligned}
        & \frac{\mathrm{d}}{\mathrm{d}t}  \|\nabla H^\varepsilon\|_{L^2_{xy}}^2 +  \varepsilon \|\nabla^2 H^\varepsilon\|_{L^2_{xy}}^2\\
        \leq{}
           &   C\Big(\|   \nabla U^\varepsilon\|_{L^2_{xy}}^{2} + 1\Big)  \|   \nabla (U^\varepsilon,H^\varepsilon )\|_{L^2_{xy}}^{2}  + C \Bigl( \varepsilon^{-\frac{1}{2}} \|  \nabla U^\varepsilon\|_{L^2_{xy}}^2 + \varepsilon^{ \frac{3}{2}}\Bigr).
       \end{aligned}
\end{eqnarray}

Similarly, taking $L^2_{xy}$ inner product of $\nabla\eqref{eq_UHT_varepsilon}_3$ with $\nabla \varTheta^\varepsilon,$ and using integration by parts, we have
\begin{eqnarray}\label{eq_nabla_T_1}
    \begin{aligned}
        &\frac{1}{2}\frac{\mathrm{d}}{\mathrm{d}t}  \|\nabla \varTheta^\varepsilon\|_{L^2_{xy}}^2 + \gamma  \|\nabla \varTheta^\varepsilon\|_{L^2_{xy}}^2  +  \varepsilon \|\nabla^2 \varTheta^\varepsilon\|_{L^2_{xy}}^2\\
        =&-\langle\nabla (u^I+\varepsilon^2w)\cdot\nabla \varTheta^\varepsilon, \nabla \varTheta^\varepsilon \rangle+  \langle \varepsilon u^B\cdot\nabla \varTheta^\varepsilon + \sqrt{\varepsilon}   U^\varepsilon \cdot\nabla    \tau^B, \Delta \varTheta^\varepsilon \rangle
        - \langle \nabla (  U^\varepsilon \cdot\nabla  \tau^I ) , \nabla \varTheta^\varepsilon \rangle
          \\
         &+\langle \nabla \mathcal{Q}(\nabla (u^I+\varepsilon^2w), \varTheta^\varepsilon ) +\nabla \mathcal{Q}(\nabla U^\varepsilon, \tau^I)
        +   \nabla \mathcal{B}(\eta^I,\nabla U^\varepsilon)  + \nabla\mathcal{B}(H^\varepsilon,\nabla (u^I+\varepsilon^2w)) ,  \nabla \varTheta^\varepsilon\rangle
        \\
        &- \langle \varepsilon   \mathcal{Q}(\nabla u^B, \varTheta^\varepsilon ) + \sqrt{\varepsilon}\mathcal{Q}(\nabla U^\varepsilon, \tau^B) , \Delta \varTheta^\varepsilon\rangle
         -  \langle     \sqrt{\varepsilon} \mathcal{B}(\eta^B,\nabla U^\varepsilon) + \varepsilon\mathcal{B}(H^\varepsilon,\nabla u^B),  \Delta \varTheta^\varepsilon\rangle
         \\
        &- \sqrt{\varepsilon}\langle \nabla  U^\varepsilon \cdot\nabla \varTheta^\varepsilon, \nabla \varTheta^\varepsilon \rangle
         - \sqrt{\varepsilon}\langle   \mathcal{Q}(\nabla U^\varepsilon, \varTheta^\varepsilon)+\mathcal{B}(H^\varepsilon,\nabla U^\varepsilon),\Delta\varTheta^\varepsilon\rangle
          -\langle  \mathfrak{J}_1,  \Delta \varTheta^\varepsilon \rangle + \langle  \nabla\mathfrak{J}_2,  \nabla \varTheta^\varepsilon \rangle  \\
          =:{}& \mathcal{M}_1 + \mathcal{M}_2 + \mathcal{M}_3 + \mathcal{M}_4,
     \end{aligned}
\end{eqnarray}
where $\mathcal{M}_i$ represents the whole $i$-th line on the right hand side of the first equality sign in \eqref{eq_nabla_T_1}.

For $\mathcal{M}_1$, using Lemmas \ref{lemma_inequality}, \ref{lemma_inner_outer_layer_est} and \ref{lemma_UHT_infty_L2}, we have
\begin{eqnarray*}
    \begin{aligned}
        \mathcal{M}_1 \leq{} & \|   \nabla (u^I,w)\|_{L^\infty_{xy}}\|\nabla\varTheta^\varepsilon\|_{L^2_{xy}}^2
        +   \varepsilon\| u^B\|_{L^2_{xy}}^{\frac{1}{2}}\| \partial_x u^B\|_{L^2_{xy}}^{\frac{1}{2}} \|\nabla \varTheta^\varepsilon\|_{L^2_{xy}}^{\frac{1}{2}} \|\nabla\partial_y \varTheta^\varepsilon\|_{L^2_{xy}}^{\frac{1}{2}} \|\Delta \varTheta^\varepsilon\|_{L^2_{xy}}\\
        & +\sqrt{\varepsilon}\|U^\varepsilon\|_{L^2_{xy}}^{\frac{1}{2}}\| \partial_y  U^\varepsilon\|_{L^2_{xy}}^{\frac{1}{2}}
        \|\nabla\tau^B\|_{L^2_{xy}}^{\frac{1}{2}}\|\partial_x\nabla\tau^B\|_{L^2_{xy}}^{\frac{1}{2}}\|\Delta \varTheta^\varepsilon\|_{L^2_{xy}}
       + \|  U^\varepsilon\|_{H^1_{xy}} \|  \tau^I \|_{H^3_{xy}} \|\nabla \varTheta^\varepsilon\|_{L^2_{xy}} \\
        \leq{}& \frac{\varepsilon}{8}\| \nabla^2  \varTheta^\varepsilon\|_{L^2_{xy}}^2 +C\Big(
           \|\nabla  (U^\varepsilon,\varTheta^\varepsilon)\|_{L^2_{xy}}^2 + \varepsilon^{-\frac{1}{2}} \|  \nabla U^\varepsilon\|_{L^2_{xy}}^2 + \varepsilon^{\frac{3}{2}} \Big).
    \end{aligned}
\end{eqnarray*}
In a similar way, we have, for $\mathcal{M}_2,\mathcal{M}_3$, that
    \begin{align*}
        \mathcal{M}_2\leq {}&  \|\nabla ( u^I,w)\|_{H^2_{xy}}\Big(\| \varTheta^\varepsilon\|_{H^1_{xy}}^2 + \| H^\varepsilon\|_{H^1_{xy}}^2\Big)
        +\|\nabla^2U^\varepsilon\|_{L^2_{xy}}   \| (\eta^I,\tau^I)\|_{L^\infty_{xy}}  \|\nabla\varTheta^\varepsilon\|_{L^2_{xy}} \\
        &+\|\nabla U^\varepsilon\|_{L^2_{xy}}    \|\nabla (\eta^I,\tau^I)\|_{L^\infty_{xy}}  \|\nabla\varTheta^\varepsilon\|_{L^2_{xy}} \\
        \leq{}& \frac{1}{16}\|\nabla^2U^\varepsilon\|_{L^2_{xy}}^2
         +C\Big(\| \nabla(U^\varepsilon,H^\varepsilon,\varTheta^\varepsilon)\|_{L^2_{xy}}^2   +     \varepsilon^2\Big),
    \end{align*}
and
    \begin{align*}
        \mathcal{M}_3\leq {}&  \varepsilon  \|(H^\varepsilon,\varTheta^\varepsilon)\|_{H^1_{xy}}\|\nabla u^B\|_{L^2_{xy}}^{\frac{1}{2}}\|\partial_x\nabla u^B\|_{L^2_{xy}}^{\frac{1}{2}}\|\Delta \varTheta^\varepsilon\|_{L^2_{xy}}
        + \sqrt{\varepsilon}\|\nabla U^\varepsilon\|_{L^2_{xy}}\|(\eta^B,\tau^B)\|_{L^\infty_{xy}} \|\Delta \varTheta^\varepsilon\|_{L^2_{xy}}\\
         \leq{}& \frac{\varepsilon}{8}\|  \nabla^2 \varTheta^\varepsilon\|_{L^2_{xy}}^2
        +C\Big(\varepsilon^{\frac{1}{2}}\|\nabla (H^\varepsilon, \varTheta^\varepsilon)\|_{L^2_{xy}}^2   + \|\nabla U^\varepsilon\|_{L^2_{xy}}^2  + \varepsilon^2\Big).
    \end{align*}
Similar to the estimates of $\mathcal{K}_3,$ we get
    \begin{align*}
        \mathcal{M}_4\leq {}&  \sqrt{\varepsilon}
        \|  \nabla U^\varepsilon\|_{L^2_{xy}} \| \nabla \varTheta^\varepsilon\|_{L^2_{xy}} \|   \nabla^2 \varTheta^\varepsilon\|_{L^2_{xy}}
        +  \sqrt{\varepsilon}\Big( \|  \nabla U^\varepsilon\|_{L^2_{xy}}^{\frac{1}{2}}\| \nabla^2 U^\varepsilon\|_{L^2_{xy}}^{\frac{1}{2}}
        \|  \varTheta^\varepsilon\|_{L^2_{xy}}^{\frac{1}{2}}\| \nabla\varTheta^\varepsilon\|_{L^2_{xy}}^{\frac{1}{2}}
         \\
        &+  \|  H^\varepsilon\|_{L^2_{xy}}^{\frac{1}{2}}\| \nabla H^\varepsilon\|_{L^2_{xy}}^{\frac{1}{2}}
        \|  \nabla U^\varepsilon\|_{L^2_{xy}}^{\frac{1}{2}}\| \nabla^2 U^\varepsilon\|_{L^2_{xy}}^{\frac{1}{2}}
          \Big)
        \|\Delta \varTheta^\varepsilon\|_{L^2_{xy}}
         +\|   \mathfrak{J}_1\|_{L^2_{xy}}\|\Delta \varTheta^\varepsilon\|_{L^2_{xy}} \\
        &+  \|  \nabla \mathfrak{J}_2\|_{L^2_{xy}}
        \|  \nabla\varTheta^\varepsilon\|_{L^2_{xy}} \\
        \leq{}& \frac{\varepsilon}{8}\|\nabla^2 \varTheta^\varepsilon\|_{L^2_{xy}}^2 + \frac{1}{16}\|\nabla^2U^\varepsilon\|_{L^2_{xy}}^2
        +  C\Big(1+ \|   \nabla U^\varepsilon\|_{L^2_{xy}}^{2} \Big)   \|   \nabla (H^\varepsilon,\varTheta^\varepsilon)\|_{L^2_{xy}}^{2}     + C\varepsilon^2.
    \end{align*}

Substituting the above estimates for $\mathcal{M}_1,\cdots, \mathcal{M}_4$  into  \eqref{eq_nabla_T_1}, we get
\begin{eqnarray}\label{eq_nabla_T}
    \begin{aligned}
        & \frac{\mathrm{d}}{\mathrm{d}t}  \|\nabla \varTheta^\varepsilon\|_{L^2_{xy}}^2 + \gamma  \|\nabla \varTheta^\varepsilon\|_{L^2_{xy}}^2  +  \varepsilon \|\nabla^2 \varTheta^\varepsilon\|_{L^2_{xy}}^2\\
        \leq {}& \frac{1}{4}\|\nabla^2U^\varepsilon\|_{L^2_{xy}}^2
         + C\Big(\|   \nabla U^\varepsilon\|_{L^2_{xy}}^{2} + 1\Big)  \|   \nabla (U^\varepsilon,H^\varepsilon,\varTheta^\varepsilon)\|_{L^2_{xy}}^{2}  + C \Bigl( \varepsilon^{-\frac{1}{2}} \|  \nabla U^\varepsilon\|_{L^2_{xy}}^2 + \varepsilon^{ \frac{3}{2}}\Bigr).
     \end{aligned}
\end{eqnarray}
Finally, to close the estimates deduced, we also need the dissipation estimate of $ \nabla^2 U^\varepsilon $. To this end,   using the Stokes estimate,  Lemmas \ref{lemma_inequality}, \ref{lemma_inner_outer_layer_est}, \ref{lemma_source_term_1}  and \ref{lemma_UHT_infty_L2}, we have
\begin{eqnarray*}
    \begin{aligned}
           \|U^\varepsilon\|_{H^2_{xy}}^2 \lesssim {}&\|\nabla \varTheta^\varepsilon\|_{L^2_{xy}}^2 + \|\mathfrak{F}\|_{L^2_{xy}}^2 +\|\partial_t U^\varepsilon\|_{L^2_{xy}}^2
           +  \|u^a\|_{L^\infty_{xy}}^2  \|U^\varepsilon\|_{H^1_{xy}}^2
           + \|U^\varepsilon\|_{H^1_{xy}}^2 \|\nabla (u^I,w)\|_{H^1_{xy}}^2 \\
           &  + \varepsilon^2  \|U^\varepsilon\|_{L_{xy}^2} \|U^\varepsilon\|_{H_{xy}^2} \|\nabla u^B\|_{L^2_{xy}}^2
           +\varepsilon\|U^\varepsilon\|_{L^2_{xy}}\|\nabla U^\varepsilon\|_{L^2_{xy}}^2\|\nabla^2 U^\varepsilon\|_{L^2_{xy}}
            \\
        \leq {}& \frac{1}{2}\|  U^\varepsilon\|_{H^2_{xy}}^2 + C\Big(\|\nabla \varTheta^\varepsilon\|_{L^2_{xy}}^2 + \|\mathfrak{F}\|_{L^2_{xy}}^2 +\|\partial_t U^\varepsilon\|_{L^2_{xy}}^2
        + \|U^\varepsilon\|_{H^1_{xy}}^2 + \|U^\varepsilon\|_{L^2_{xy}}^2\|\nabla U^\varepsilon\|_{L^2_{xy}}^4\Big),
     \end{aligned}
\end{eqnarray*}
which implies
\begin{eqnarray}\label{eq_nabla2_U}
    \begin{aligned}
           \|U^\varepsilon\|_{H^2_{xy}}^2 \lesssim{}   &    \|\nabla \varTheta^\varepsilon\|_{L^2_{xy}}^2 + \|\mathfrak{F}\|_{L^2_{xy}}^2 +\|\partial_t U^\varepsilon\|_{L^2_{xy}}^2
        + \|U^\varepsilon\|_{H^1_{xy}}^2 + \|U^\varepsilon\|_{L^2_{xy}}^2\|\nabla U^\varepsilon\|_{L^2_{xy}}^4.
     \end{aligned}
\end{eqnarray}

Summing \eqref{eq_nabla_U}, \eqref{eq_nabla_H}, \eqref{eq_nabla_T} and \eqref{eq_nabla2_U} up, we get
\begin{eqnarray} \label{eq_U_H_T_nabla}
    \begin{aligned}
        & \frac{\mathrm{d}}{\mathrm{d}t}  \left(\mu \|\nabla U^\varepsilon\|_{L^2_{xy}}^2 + \|\nabla (H^\varepsilon,\varTheta^\varepsilon)\|_{L^2_{xy}}^2 \right)
        +   \|U^\varepsilon\|_{H^2_{xy}}^2+\varepsilon   \|\nabla^2(H^\varepsilon, \varTheta^\varepsilon)\|_{L^2_{xy}}^2 \\
        \leq{} &  \frac{\mu}{4}\|\nabla\partial_t U^\varepsilon\|_{L^2_{xy}}^2 +  C\Big(\|   \nabla U^\varepsilon\|_{L^2_{xy}}^{2} + 1\Big)  \|   \nabla (U^\varepsilon,H^\varepsilon,\varTheta^\varepsilon)\|_{L^2_{xy}}^{2}  + C \Bigl( \|\partial_t U^\varepsilon\|_{L^2_{xy}}^2+ \varepsilon^{-\frac{1}{2}} \|  \nabla U^\varepsilon\|_{L^2_{xy}}^2 + \varepsilon^{ \frac{3}{2}}\Bigr).
     \end{aligned}
\end{eqnarray}

Putting \eqref{eq_U_H_T_t} and \eqref{eq_U_H_T_nabla} together, we get
\begin{equation}\label{eq_U_H_T_nabla_t}
    \begin{split}
& \frac{\mathrm{d}}{\mathrm{d}t} \left(    \|\partial_t (U^\varepsilon,H^\varepsilon, \varTheta^\varepsilon)\|_{L^2_{xy}}^2
+\mu \|\nabla U^\varepsilon\|_{L^2_{xy}}^2
+ \|\nabla (H^\varepsilon,\varTheta^\varepsilon)\|_{L^2_{xy}}^2\right)
+   \mu\|\nabla\partial_t U^\varepsilon\|_{L^2_{xy}}^2  \\
 &+   \|U^\varepsilon\|_{H^2_{xy}}^2
 +  \varepsilon\left(\|\nabla\partial_t (H^\varepsilon,\varTheta^\varepsilon)\|_{L^2_{xy}}^2  +\|\nabla^2( H^\varepsilon,\varTheta^\varepsilon)\|_{L^2_{xy}}^2 \right)\\
\lesssim{}
  & \left(\|\nabla  U^\varepsilon\|_{L^2_{xy}}^2 +
  \varepsilon\|\nabla (H^\varepsilon,\varTheta^\varepsilon)\|_{L^2_{xy}}^{2}
    + 1  \right) \left(\|\partial_t (U^\varepsilon, H^\varepsilon,\varTheta^\varepsilon)\|_{L^2_{xy}}^{2}
+\|\nabla (U^\varepsilon, H^\varepsilon,\varTheta^\varepsilon)\|_{L^2_{xy}}^{2} \right)     \\
&    + \varepsilon^{-\frac{1}{2}} \|  \nabla U^\varepsilon\|_{L^2_{xy}}^2
  + \varepsilon^{ \frac{3}{2}}.
\end{split}
\end{equation}
Noticing that $(\partial_tU^\varepsilon,\partial_t H^\varepsilon, \partial_t \varTheta^\varepsilon)|_{t=0} = 0$, using Gronwall's inequality and Lemma \ref{lemma_UHT_infty_L2}, we deduce
\begin{eqnarray*}
    \begin{aligned}
        &  \|\partial_t (U^\varepsilon,H^\varepsilon,\varTheta^\varepsilon)\|_{L^\infty_TL^2_{xy}}^2
+\mu \|\nabla U^\varepsilon\|_{L^\infty_TL^2_{xy}}^2
  + \|\nabla (H^\varepsilon,\varTheta^\varepsilon)\|_{L^\infty_TL^2_{xy}}^2+   \mu\|\nabla\partial_t U^\varepsilon\|_{L^2_TL^2_{xy}}^2\\
 &  +   \|U^\varepsilon\|_{L^2_TH^2_{xy}}^2
 +  \varepsilon\Big(\|\nabla\partial_t (H^\varepsilon,\varTheta^\varepsilon)\|_{L^2_TL^2_{xy}}^2
  +\|\nabla^2 (H^\varepsilon,\varTheta^\varepsilon)\|_{L^2_TL^2_{xy}}^2 \Big)
\leq   \tilde{C}_3 T
     \varepsilon^{\frac{3}{2}} \exp(\tilde{C}_3T).
    \end{aligned}
\end{eqnarray*}
The proof of Lemma \ref{lemma_UHT_nabla_t} is complete.
\end{proof}
\begin{remark}\label{remark_lemma_UHT_nabla_t}
    For readers' convenience, we remark that the following bounds:
    \begin{align*}
          & \|(u^I,w,\eta^I,\tau^I)\|_{L^\infty_TH^3_{xy}} +  \|(u^B,\eta^B,\tau^B)\|_{L^\infty_TH^1_xH^1_z}
          +  \|(u^B,\eta^B,\tau^B)\|_{L^\infty_TH^2_xL^2_z}    \leq C,\\
       & \| \partial_t (  u^I,w,\eta^I,\tau^I ) \|_{L^\infty_TH^2_{xy}}
       +\|\partial_t (  u^B,\eta^B,\tau^B)\|_{L^\infty_TH^1_xL^2_z}
       + \|\partial_t u^B\|_{L^\infty_T L^2_xH^1_z}
            \leq C,
    \end{align*} \eqref{eq_source_term} and \eqref{eq_source_term_t} are used in the proof of  Lemma \ref{lemma_UHT_nabla_t}.
\end{remark}
\begin{lemma}\label{lemma_HT_nabla_x}
    Suppose that $(U^\varepsilon,H^\varepsilon,\varTheta^\varepsilon)$ is the solution of the problem \eqref{eq_UHT_varepsilon}-\eqref{eq_UHT_int_bo} on $[0,T]$ with $T\in (0,\min\{T_\star,T_\varepsilon\}]$. Under the assumptions of  Theorem \ref{theorem_limit}, there exists a generic positive constant C such that
    \begin{eqnarray*}
        \|U^\varepsilon\|_{L^\infty_TH^2_{xy}}^2 \leq C \varepsilon^{\frac{3}{2}},~~
           \|\nabla\partial_x \varTheta^\varepsilon\|_{L^\infty_TL^2_{xy}}^2 + \|\nabla\partial_x H^\varepsilon\|_{L^\infty_TL^2_{xy}}^2
        \leq C\varepsilon^{\frac{1}{2}},
     \end{eqnarray*}
     and
     \begin{eqnarray*}
        \varepsilon \Big(\|\nabla^2\partial_x \varTheta^\varepsilon\|_{L^2_TL^2_{xy}}^2
        + \|\nabla^2\partial_x H^\varepsilon\|_{L^2_TL^2_{xy}}^2  \Big) \leq C\varepsilon^{\frac{1}{2}}.\\
     \end{eqnarray*}
\end{lemma}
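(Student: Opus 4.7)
\bigbreak
\noindent\textbf{Proof plan.} The first estimate, $\|U^\varepsilon\|_{L^\infty_T H^2_{xy}}^2 \leq C\varepsilon^{3/2}$, follows immediately from the Stokes-type regularity bound \eqref{eq_nabla2_U} already derived inside the proof of Lemma \ref{lemma_UHT_nabla_t}. Taking the $L^\infty_T$ norm of \eqref{eq_nabla2_U} and invoking the uniform-in-time bounds of order $\varepsilon^{3/2}$ on $\|\partial_t U^\varepsilon\|_{L^2_{xy}}^2$, $\|\nabla U^\varepsilon\|_{L^2_{xy}}^2$, $\|\nabla\varTheta^\varepsilon\|_{L^2_{xy}}^2$ supplied by Lemma \ref{lemma_UHT_nabla_t}, together with $\|\mathfrak{F}\|_{L^\infty_T L^2_{xy}}^2 \leq C\varepsilon^2$ from Lemma \ref{lemma_source_term_1}, and noting that the quintic remainder $\|U^\varepsilon\|_{L^2_{xy}}^2\|\nabla U^\varepsilon\|_{L^2_{xy}}^4$ is of order $\varepsilon^2\cdot\varepsilon^3$, one closes the bound without invoking any additional estimate.

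For the second and third estimates, my plan is to apply $\partial_x$ to \eqref{eq_UHT_varepsilon}$_2$ and \eqref{eq_UHT_varepsilon}$_3$ and take the $L^2_{xy}$ inner product with $-\Delta\partial_x H^\varepsilon$ and $-\Delta\partial_x\varTheta^\varepsilon$, respectively. The choice of the \emph{tangential} derivative $\partial_x$ (rather than $\partial_y$) is crucial here: the boundary conditions $\partial_yH^\varepsilon|_{y=0}=0$ and $\partial_y\varTheta^\varepsilon|_{y=0}=0$ commute with $\partial_x$ so all boundary contributions from integration by parts vanish, and, using $u^a|_{y=0}=0$ together with $\mathrm{div}\,u^a=0$, the pure convection term $\langle u^a\cdot\nabla\partial_x H^\varepsilon, -\Delta\partial_x H^\varepsilon\rangle$ reduces, after integration by parts, to the Gronwall-friendly expression $\int(\partial_k u^a_i)(\partial_i\partial_x H^\varepsilon)(\partial_k\partial_x H^\varepsilon)\,dxdy$ with an $O(1)$ coefficient $\|\nabla u^a\|_{L^\infty_{xy}}$. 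This yields an energy inequality of the form
\begin{equation*}
\frac{d}{dt}\|\nabla\partial_x(H^\varepsilon,\varTheta^\varepsilon)\|_{L^2_{xy}}^2 + \varepsilon\|\Delta\partial_x(H^\varepsilon,\varTheta^\varepsilon)\|_{L^2_{xy}}^2 \leq \mathrm{(RHS)}.
\end{equation*}
Each term on the right-hand side will be split, via Young's inequality, into a portion absorbed by $\tfrac{\varepsilon}{8}\|\Delta\partial_x(H^\varepsilon,\varTheta^\varepsilon)\|_{L^2_{xy}}^2$ plus a remainder of size $\varepsilon^{-1}\times(\mathrm{lower\ order})$. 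The asserted $\varepsilon^{1/2}$ scaling arises precisely because the governing lower-order quantities $\|\nabla(U^\varepsilon,H^\varepsilon,\varTheta^\varepsilon)\|_{L^\infty_T L^2_{xy}}^2\leq C\varepsilon^{3/2}$ from Lemma \ref{lemma_UHT_nabla_t} (and $\|\partial_x(\mathfrak{G},\mathfrak{J})\|_{L^2_{xy}}^2\lesssim\varepsilon^2$ from Lemma \ref{lemma_source_term_1}) are multiplied by $\varepsilon^{-1}$. A standard Gronwall argument then gives the $L^\infty_T L^2_{xy}$ bound on $\nabla\partial_x(H^\varepsilon,\varTheta^\varepsilon)$, and the residual dissipation integrates in time to furnish the $L^2_T L^2_{xy}$ bound on $\nabla^2\partial_x(H^\varepsilon,\varTheta^\varepsilon)$ weighted by $\varepsilon$.

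The main technical obstacle lies in the nonlinear coupling terms of the $\varTheta^\varepsilon$ equation, in particular $\partial_x\mathcal{Q}(\nabla U^\varepsilon,\tau^a)$ and $\partial_x\mathcal{B}(\eta^a,\nabla U^\varepsilon)$, which involve the second-order derivative $\nabla\partial_x U^\varepsilon$; these can only be handled through the $L^\infty_T H^2_{xy}$ bound on $U^\varepsilon$ established in the first step, which is why the three estimates must be proved in the stated order. A second delicate issue is that the strongly nonlinear pieces $\sqrt{\varepsilon}\partial_x\mathcal{Q}(\nabla U^\varepsilon,\varTheta^\varepsilon)$, $\sqrt{\varepsilon}\partial_x\mathcal{B}(H^\varepsilon,\nabla U^\varepsilon)$, and the convection $\sqrt{\varepsilon}\partial_x(U^\varepsilon\cdot\nabla\varTheta^\varepsilon)$ must be treated via Ladyzhenskaya's inequality \eqref{ieq_Lady} combined with interpolation estimates such as $\|\nabla H^\varepsilon\|_{L^4_{xy}}^2\lesssim\|\nabla H^\varepsilon\|_{L^2_{xy}}\|\nabla^2 H^\varepsilon\|_{L^2_{xy}}$ (and analogues for $\varTheta^\varepsilon$ and $\partial_x U^\varepsilon$), so that the top-order factors can be absorbed into the dissipation $\varepsilon\|\Delta\partial_x(H^\varepsilon,\varTheta^\varepsilon)\|_{L^2_{xy}}^2$ while the remaining factors, after time integration using the $L^\infty_T$ bounds from part one and Lemma \ref{lemma_UHT_nabla_t} together with the $L^2_T$ bounds $\|U^\varepsilon\|_{L^2_T H^2_{xy}}^2\lesssim\varepsilon^{3/2}$ and $\varepsilon\|\nabla^2(H^\varepsilon,\varTheta^\varepsilon)\|_{L^2_TL^2_{xy}}^2\lesssim\varepsilon^{3/2}$, contribute $\varepsilon$-powers strictly better than $\varepsilon^{1/2}$.
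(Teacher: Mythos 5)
Your plan matches the paper's proof almost exactly: both establish the $H^2_{xy}$ bound on $U^\varepsilon$ first from the Stokes estimate \eqref{eq_nabla2_U} and the $\varepsilon^{3/2}$ bounds of Lemma~\ref{lemma_UHT_nabla_t}, then run an energy estimate at the level $\nabla\partial_x(H^\varepsilon,\varTheta^\varepsilon)$ (the paper tests $\nabla\partial_x$ of the equations against $\nabla\partial_x(H^\varepsilon,\varTheta^\varepsilon)$, which is equivalent to your $\partial_x$–$(-\Delta\partial_x)$ pairing after integration by parts), exploiting $\mathrm{div}\,u^a=0$ and $u^a|_{y=0}=0$ for the pure transport term and arriving at $\tfrac{d}{dt}\|\nabla\partial_x(H^\varepsilon,\varTheta^\varepsilon)\|_{L^2_{xy}}^2+\varepsilon\|\nabla^2\partial_x(H^\varepsilon,\varTheta^\varepsilon)\|_{L^2_{xy}}^2\lesssim\|\nabla\partial_x(H^\varepsilon,\varTheta^\varepsilon)\|_{L^2_{xy}}^2+\varepsilon^{1/2}$ followed by Gronwall. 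The only cosmetic difference is that for the quadratic commutator/nonlinear terms the paper relies on the anisotropic estimate \eqref{eq_fgh} (a two-derivative-split version of Ladyzhenskaya) rather than the $L^4$ interpolation you sketch, but the net $\varepsilon$ accounting is the same and your identification of $\varepsilon^{-1}\|\nabla H^\varepsilon\|_{L^\infty_TL^2_{xy}}^2\sim\varepsilon^{1/2}$ as the dominant contribution is precisely how the paper's $\varepsilon^{1/2}$ emerges.
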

\begin{proof}
    It follows from \eqref{eq_nabla2_U}, Lemmas \ref{lemma_inner_outer_layer_est},   \ref{lemma_UHT_infty_L2} and \ref{lemma_UHT_nabla_t} that
    \begin{eqnarray}\label{eq_U_varepsilon_H2}
        \begin{aligned}
               \|U^\varepsilon\|_{L^\infty_TH^2_{xy}}^2 \lesssim{}   &    \|\nabla \varTheta^\varepsilon\|_{L^\infty_TL^2_{xy}}^2 + \|\mathfrak{F}\|_{L^\infty_TL^2_{xy}}^2 +\|\partial_t U^\varepsilon\|_{L^\infty_TL^2_{xy}}^2\\
            &+ \|U^\varepsilon\|_{L^\infty_TH^1_{xy}}^2 + \|U^\varepsilon\|_{L^\infty_TL^2_{xy}}^2\|\nabla U^\varepsilon\|_{L^\infty_TL^2_{xy}}^4
            \lesssim   \varepsilon^{\frac{3}{2}}.
         \end{aligned}
    \end{eqnarray}

    Next, multiplying $\nabla\partial_x$ \eqref{eq_UHT_varepsilon}$_2$ by $\nabla\partial_x H^\varepsilon$, integrating by parts over $\mathbb{R}^2_{+}$, we get
    \begin{eqnarray}\label{eq_H_varepsilon_nabla_x_1}
        \begin{aligned}
            &\frac{1}{2}\frac{\mathrm{d}}{\mathrm{d}t} \|\nabla\partial_x H^\varepsilon\|_{L^2_{xy}}^2 +  \varepsilon\|\nabla^2\partial_x H^\varepsilon\|_{L^2_{xy}}^2 \\
            ={}& \langle \partial_x u^a \cdot\nabla H^\varepsilon,\Delta\partial_x H^\varepsilon \rangle
            -\langle\nabla u^a \cdot\nabla\partial_x H^\varepsilon,\nabla\partial_x H^\varepsilon \rangle
              +\langle \partial_x U^\varepsilon\cdot\nabla  \eta^a ,\Delta\partial_x H^\varepsilon \rangle\\
              &+ \langle  U^\varepsilon\cdot\nabla \partial_x \eta^a ,\Delta\partial_x H^\varepsilon \rangle
             +\sqrt{\varepsilon}\langle  \partial_x U^\varepsilon\cdot\nabla H^\varepsilon   ,\Delta\partial_x H^\varepsilon \rangle
            -\sqrt{\varepsilon}\langle \nabla U^\varepsilon\cdot\nabla\partial_x H^\varepsilon   ,\nabla\partial_x H^\varepsilon \rangle \\
            &-\langle  \partial_x \mathfrak{G},\Delta\partial_x H^\varepsilon \rangle  \\
              =:{}&\mathcal{N}_1 +\mathcal{N}_2 +\mathcal{N}_3 ,
        \end{aligned}
    \end{eqnarray}
   where $\mathcal{N}_i$ represents the whole $i$-th line on the right-hand side of the first equality in \eqref{eq_H_varepsilon_nabla_x_1}.

Using  Lemmas \ref{lemma_inequality}, \ref{lemma_inner_outer_layer_est}, \ref{lemma_source_term_1}, \ref{lemma_UHT_infty_L2}  and \ref{lemma_UHT_nabla_t}, we can estimate $\mathcal{N}_1,\mathcal{N}_2,$ and $\mathcal{N}_3$ as follows:
    \begin{align*}
        \mathcal{N}_1\leq{} &
         \|\partial_x (u^I,w)\|_{L^\infty_{xy}}  \|\nabla H^\varepsilon\|_{L^2_{xy}} \|\Delta\partial_x H^\varepsilon\|_{L^2_{xy}}
          + \|\nabla (u^I,w)\|_{L^\infty_{xy}}\|\nabla\partial_x H^\varepsilon\|_{L^2_{xy}}^2 \\
        &+ \varepsilon\|\partial_x u^B\|_{L^2_{xy}}^{\frac{1}{2}}\|\partial_y\partial_x u^B\|_{L^2_{xy}}^{\frac{1}{2}}
        \|\nabla H^\varepsilon\|_{L^2_{xy}}^{\frac{1}{2}}\|\nabla \partial_x H^\varepsilon\|_{L^2_{xy}}^{\frac{1}{2}}\|\Delta\partial_x H^\varepsilon\|_{L^2_{xy}}\\
          &+\varepsilon \|\nabla  u^B\|_{L^2_{xy}}  \|\nabla\partial_x H^\varepsilon\|_{L^2_{xy}}  \|\nabla^2 \partial_x H^\varepsilon\|_{L^2_{xy}}
         +\| \partial_x U^\varepsilon\|_{L^2_{xy}} \| \nabla \eta^I\|_{L^\infty_{xy}}
           \|\Delta\partial_x H^\varepsilon\|_{L^2_{xy}}\\
           &+ \sqrt{\varepsilon}\|\partial_x U^\varepsilon\|_{L^2_{xy}}^{\frac{1}{2}}\|\partial_y\partial_x U^\varepsilon\|_{L^2_{xy}}^{\frac{1}{2}}
           \|\nabla\eta^B\|_{L^2_{xy}}^{\frac{1}{2}}\|\nabla\partial_x\eta^B\|_{L^2_{xy}}^{\frac{1}{2}}    \|\Delta\partial_x H^\varepsilon\|_{L^2_{xy}} \\
         \leq{}&\frac{\varepsilon}{4}\|\nabla^2\partial_x H^\varepsilon\|_{L^2_{xy}}^2 + C\Big(\|\nabla\partial_x H^\varepsilon\|_{L^2_{xy}}^2 + \varepsilon^{\frac{1}{2}} \Big),
    \end{align*}
and
    \begin{align*}
        & \mathcal{N}_2+\mathcal{N}_3  \\
        \leq{}& \Big(\|U^\varepsilon\|_{L^2}\|\nabla\partial_x\eta^I\|_{L^\infty_{xy}}
         +   \varepsilon^{\frac{1}{2}} \|\nabla \partial_x \eta^B\|_{L^2_{xy}}^{\frac{1}{2}}\|\nabla\partial_x^2 \eta^B\|_{L^2_{xy}}^{\frac{1}{2}}\|  U^\varepsilon\|_{L^2_{xy}}^{\frac{1}{2}}\| \partial_y  U^\varepsilon\|_{L^2_{xy}}^{\frac{1}{2}}\Big) \|\Delta\partial_x H^\varepsilon\|_{L^2_{xy}}\\
          &+\varepsilon^{\frac{1}{2}}\|\partial_xU^\varepsilon\|_{L_{xy}^2}^{\frac{1}{2}}\|\partial_x\partial_yU^\varepsilon\|_{L_{xy}^2}^{\frac{1}{2}}
        \|\nabla H^\varepsilon\|_{L_{xy}^2}^{\frac{1}{2}}\|\nabla\partial_xH^\varepsilon\|_{L_{xy}^2}^{\frac{1}{2}}\|\Delta\partial_xH^\varepsilon\|_{L_{xy}^2}\\
        &+\varepsilon^{\frac{1}{2}} \|\nabla U^\varepsilon\|_{L_{xy}^2}
         \|\nabla\partial_xH^\varepsilon\|_{L_{xy}^2} \|\nabla^2\partial_xH^\varepsilon\|_{L_{xy}^2}
         +   \|  \partial_x \mathfrak{G}\|_{L_{xy}^2} \|\Delta\partial_x H^\varepsilon \|_{L_{xy}^2} \\
        \leq{}&\frac{\varepsilon}{4}\|\nabla^2\partial_x H^\varepsilon\|_{L^2_{xy}}^2 + C\Big(\|\nabla\partial_x H^\varepsilon\|_{L^2_{xy}}^2   + \varepsilon \Big).
    \end{align*}
Substituting the above estimates for $\mathcal{N}_1,\mathcal{N}_2, \mathcal{N}_3$  into  \eqref{eq_H_varepsilon_nabla_x_1}, we get
\begin{eqnarray}\label{eq_H_varepsilon_nabla_x}
    \begin{aligned}
        & \frac{\mathrm{d}}{\mathrm{d}t} \|\nabla\partial_x H^\varepsilon\|_{L^2_{xy}}^2 +  \varepsilon\|\nabla^2\partial_x H^\varepsilon\|_{L^2_{xy}}^2
            \lesssim   \|\nabla\partial_x H^\varepsilon\|_{L^2_{xy}}^2 + \varepsilon^{\frac{1}{2}}.
    \end{aligned}
\end{eqnarray}

Multiplying $\nabla\partial_x$ \eqref{eq_UHT_varepsilon}$_3$ by $\nabla\partial_x \varTheta^\varepsilon$, integrating by parts over  $\mathbb{R}^2_{+}$, we get
    \begin{align}\label{eq_T_varepsilon_nabla_x_1}
        &\frac{1}{2}\frac{\mathrm{d}}{\mathrm{d}t} \|\nabla\partial_x \varTheta^\varepsilon\|_{L^2_{xy}}^2 +  \varepsilon\|\nabla^2\partial_x \varTheta^\varepsilon\|_{L^2_{xy}}^2
        +\gamma\|\nabla\partial_x \varTheta^\varepsilon\|_{L^2_{xy}}^2\notag \\
        ={}&-\langle  \partial_x \mathfrak{J},\Delta\partial_x \varTheta^\varepsilon \rangle
        +\langle \partial_x  u^a \cdot\nabla \varTheta^\varepsilon,\Delta\partial_x \varTheta^\varepsilon \rangle
        -\langle\nabla u^a \cdot\nabla\partial_x  \varTheta^\varepsilon,\nabla\partial_x \varTheta^\varepsilon \rangle
         + \langle  \partial_xU^\varepsilon\cdot\nabla  \tau^a ,\Delta\partial_x \varTheta^\varepsilon \rangle\notag\\
         &+ \langle  U^\varepsilon\cdot\nabla \partial_x \tau^a ,\Delta\partial_x \varTheta^\varepsilon \rangle
         +\sqrt{\varepsilon}\langle  \partial_x U^\varepsilon\cdot\nabla \varTheta^\varepsilon   ,\Delta\partial_x \varTheta^\varepsilon \rangle
         -\sqrt{\varepsilon}\langle \nabla U^\varepsilon\cdot\nabla\partial_x \varTheta^\varepsilon   ,\nabla\partial_x \varTheta^\varepsilon \rangle \\
         &  - \langle \mathcal{Q}(\nabla \partial_x u^a, \varTheta^\varepsilon )
        + \mathcal{Q}(\nabla  u^a, \partial_x\varTheta^\varepsilon )  ,\Delta\partial_x \varTheta^\varepsilon\rangle
           - \langle \mathcal{Q}(\nabla \partial_xU^\varepsilon, \tau^a)
          +\mathcal{Q}(\nabla U^\varepsilon, \partial_x\tau^a)  ,\Delta\partial_x \varTheta^\varepsilon\rangle
          \notag\\
        &
         - \langle \mathcal{B}(\partial_x\eta^a,\nabla U^\varepsilon)
         +\mathcal{B}(\eta^a,\nabla\partial_x U^\varepsilon)  ,\Delta\partial_x \varTheta^\varepsilon\rangle
         -\langle \mathcal{B}(\partial_xH^\varepsilon,\nabla u^a)
           +\mathcal{B}(H^\varepsilon,\nabla \partial_xu^a)  ,\Delta\partial_x \varTheta^\varepsilon\rangle
           \notag\\
                  &  - \sqrt{\varepsilon} \langle \mathcal{Q}(\nabla \partial_xU^\varepsilon, \varTheta^\varepsilon) +  \mathcal{Q}(\nabla U^\varepsilon, \partial_x\varTheta^\varepsilon) ,\Delta\partial_x \varTheta^\varepsilon\rangle
                   -\sqrt{\varepsilon}\langle \mathcal{B}(\partial_xH^\varepsilon,\nabla U^\varepsilon),\Delta\partial_x \varTheta^\varepsilon\rangle
                   \notag\\
                  &-\sqrt{\varepsilon}\langle \mathcal{B}(H^\varepsilon,\nabla \partial_xU^\varepsilon),\Delta\partial_x \varTheta^\varepsilon\rangle
                   \notag\\
          =:&\sum_{i=1}^6\mathcal{O}_i,\notag
    \end{align}
    where $\mathcal{O}_i$ represents the whole $i$-th line on the right hand side of the first equality sign in \eqref{eq_T_varepsilon_nabla_x_1}.

 $\mathcal{O}_1 + \mathcal{O}_2 $ can be estimated similar to $\mathcal{N}_1 +\mathcal{N}_2+ \mathcal{N}_3$. Thus, we just list the conclusion:
 \begin{eqnarray*}
    \mathcal{O}_1 + \mathcal{O}_2  \leq \frac{\varepsilon}{ 8}\|\nabla^2\partial_x \varTheta^\varepsilon\|_{L^2_{xy}}^2 + C\Big(\|\nabla\partial_x \varTheta^\varepsilon\|_{L^2_{xy}}^2  + \varepsilon^{\frac{1}{2}} \Big).
 \end{eqnarray*}
 Using Lemmas \ref{lemma_inequality}, \ref{lemma_inner_outer_layer_est}, \ref{lemma_UHT_infty_L2}  and \ref{lemma_UHT_nabla_t},  we can estimate the rest terms as follows:
 \begin{align*}
    \mathcal{O}_3  \leq{} &
     \Big(\|\nabla\partial_x (u^I,w)\|_{L^\infty_{xy}}\|\varTheta^\varepsilon\|_{L^2_{xy}} + \|\nabla (u^I,w)\|_{L^\infty_{xy}}\| \partial_x \varTheta^\varepsilon\|_{L^2_{xy}} \Big) \|\Delta \partial_x \varTheta^\varepsilon\|_{L^2_{xy}}\\
     &+   \varepsilon\|\nabla \partial_x u^B\|_{L^2_{xy}}^{\frac{1}{2}}\|\nabla \partial_x^2 u^B\|_{L^2_{xy}}^{\frac{1}{2}}
     \|    \varTheta^\varepsilon\|_{L^2_{xy}}^{\frac{1}{2}} \|   \partial_y \varTheta^\varepsilon\|_{L^2_{xy}}^{\frac{1}{2}}  \|\Delta \partial_x \varTheta^\varepsilon\|_{L^2_{xy}}\\
     &+   \varepsilon\|\nabla   u^B\|_{L^2_{xy}}^{\frac{1}{2}}\|\nabla \partial_x  u^B\|_{L^2_{xy}}^{\frac{1}{2}}
     \|    \partial_x\varTheta^\varepsilon\|_{L^2_{xy}}^{\frac{1}{2}} \|   \partial_y\partial_x \varTheta^\varepsilon\|_{L^2_{xy}}^{\frac{1}{2}}  \|\Delta \partial_x \varTheta^\varepsilon\|_{L^2_{xy}}\\
    &+ \|   \nabla \partial_x U^\varepsilon\|_{L^2_{xy}} \|  \tau^a\|_{L^\infty_{xy}}  \|\Delta \partial_x \varTheta^\varepsilon\|_{L^2_{xy}}
    + \|   \nabla  U^\varepsilon\|_{L^2_{xy}}  \| \partial_x \tau^a\|_{L^\infty_{xy}}  \|\Delta \partial_x \varTheta^\varepsilon\|_{L^2_{xy}}\\
     \leq{}&\frac{\varepsilon}{8}\|\nabla^2\partial_x \varTheta^\varepsilon\|_{L^2_{xy}}^2 + C\Big(\|\nabla\partial_x \varTheta^\varepsilon\|_{L^2_{xy}}^2   + \varepsilon^{\frac{1}{2}}\Big).
\end{align*}
Similarly, we have
\begin{eqnarray*}
    \begin{aligned}
       \mathcal{O}_4
       \leq{}&\frac{\varepsilon}{8}\|\nabla^2\partial_x \varTheta^\varepsilon\|_{L^2_{xy}}^2 + C\Big(\|\nabla\partial_x H^\varepsilon\|_{L^2_{xy}}^2   +\varepsilon^{\frac{1}{2}} \Big).
   \end{aligned}
   \end{eqnarray*}
   For the last two terms, using \eqref{eq_u_infty}, we have
   \begin{eqnarray*}
    \begin{aligned}
      & \mathcal{O}_5 +\mathcal{O}_6 \\
       \leq {} &  \varepsilon^{\frac{1}{2}}\|\nabla\partial_xU^\varepsilon\|_{L_{xy}^2}\Big(\|\varTheta^\varepsilon\|_{H^1_{xy}} + \|\varTheta^\varepsilon\|_{L^2_{xy}}^{\frac{1}{2}}
         \|\partial_x\partial_y\varTheta^\varepsilon\|_{L^2_{xy}}^{\frac{1}{2}}
          +\|H^\varepsilon\|_{H^1_{xy}} + \|H^\varepsilon\|_{L^2_{xy}}^{\frac{1}{2}}
         \|\partial_x\partial_yH^\varepsilon\|_{L^2_{xy}}^{\frac{1}{2}}   \Big)\|\Delta\partial_x\varTheta^\varepsilon\|_{L_{xy}^2}\\
       &+\varepsilon^{\frac{1}{2}}\|\nabla U^\varepsilon\|_{L_{xy}^2}^{\frac{1}{2}}\|\nabla^2U^\varepsilon\|_{L_{xy}^2}^{\frac{1}{2}}
       \Big(\|\partial_x \varTheta^\varepsilon\|_{L_{xy}^2}^{\frac{1}{2}}\|\nabla\partial_x\varTheta^\varepsilon\|_{L_{xy}^2}^{\frac{1}{2}}
   + \|\partial_x H^\varepsilon\|_{L_{xy}^2}^{\frac{1}{2}}\|\nabla\partial_xH^\varepsilon\|_{L_{xy}^2}^{\frac{1}{2}} \Big)\|\Delta\partial_x\varTheta^\varepsilon\|_{L_{xy}^2}\\
         \leq {}&\frac{\varepsilon}{8}\|\nabla^2\partial_x H^\varepsilon\|_{L^2_{xy}}^2 + C\Big(\|\nabla\partial_x \varTheta^\varepsilon\|_{L^2_{xy}}^2+ \|\nabla\partial_x H^\varepsilon\|_{L^2_{xy}}^2   + \varepsilon^{\frac{1}{2}}\Big).
   \end{aligned}
   \end{eqnarray*}
Substituting the above estimates for $\mathcal{O}_1,\cdots, \mathcal{O}_6$  into  \eqref{eq_T_varepsilon_nabla_x_1}, we get
\begin{eqnarray}\label{eq_T_varepsilon_nabla_x}
    \begin{aligned}
        & \frac{\mathrm{d}}{\mathrm{d}t} \|\nabla\partial_x \varTheta^\varepsilon\|_{L^2_{xy}}^2 +  \varepsilon\|\nabla^2\partial_x \varTheta^\varepsilon\|_{L^2_{xy}}^2
            \lesssim  \|\nabla\partial_x \varTheta^\varepsilon\|_{L^2_{xy}}^2+ \|\nabla\partial_x H^\varepsilon\|_{L^2_{xy}}^2    + \varepsilon^{\frac{1}{2}}.
    \end{aligned}
\end{eqnarray}

Summing \eqref{eq_H_varepsilon_nabla_x} and \eqref{eq_T_varepsilon_nabla_x} up, we deduce form Gronwall's inequality that
\begin{eqnarray}\label{eq_TH_nabla_x}
    \begin{aligned}
        &   \|\nabla\partial_x (H^\varepsilon, \varTheta^\varepsilon)\|_{L^\infty_TL^2_{xy}}^2 +  \varepsilon \|\nabla^2\partial_x (H^\varepsilon,\varTheta^\varepsilon)\|_{L^2_TL^2_{xy}}^2
            \leq  \tilde{C}_4  T \varepsilon^{\frac{1}{2}}\exp(\tilde{C}_4 T  ).
    \end{aligned}
\end{eqnarray}
\end{proof}
\begin{remark}\label{remark_lemma_HT_nabla_x}
    For readers' convenience, we remark  that the following bounds:
    \begin{align*}
        &    \|(u^I,w,\eta^I,\tau^I)\|_{L^\infty_TH^4_{xy}} +  \|(u^B,\eta^B,\tau^B)\|_{L^\infty_TH^3_xL^2_z}
        +  \|(u^B,\eta^B,\tau^B)\|_{L^\infty_TH^2_xH^1_z}  \leq C,
    \end{align*} and \eqref{eq_source_term} are used in the proof of  Lemma \ref{lemma_HT_nabla_x}.
\end{remark}

\begin{lemma}\label{lemma_UHT_x_t}
    Suppose that $(U^\varepsilon,H^\varepsilon,\varTheta^\varepsilon)$  is the solution of the problem \eqref{eq_UHT_varepsilon}-\eqref{eq_UHT_int_bo} on $[0,T]$ with $T\in (0,\min\{T_\star,T_\varepsilon\}]$. Under the assumptions of  Theorem \ref{theorem_limit}, there exists a generic positive constant C such that
    \begin{eqnarray*}
      \varepsilon^{\frac{1}{2}} \|H^\varepsilon\|_{L^\infty_TH^2_{xy}}^2  +  \varepsilon^{\frac{1}{2}}\|\varTheta^\varepsilon\|_{L^\infty_TH^2_{xy}}^2 \leq C,
    \end{eqnarray*}
    \begin{eqnarray*}
         \|\partial_t \partial_xU^\varepsilon\|_{L^\infty_TL^2_{xy}}^2 + \|\partial_t \partial_xH^\varepsilon\|_{L^\infty_TL^2_{xy}}^2
            + \|\partial_t \partial_x\varTheta^\varepsilon\|_{L^\infty_TL^2_{xy}}^2
           \leq C \varepsilon^{\frac{1}{2}} ,
    \end{eqnarray*}
    and
    \begin{eqnarray*}
         \|\nabla\partial_t\partial_x U^\varepsilon\|_{L^2_tL^2_{xy}}^2  +   \varepsilon\|\nabla\partial_t\partial_x H^\varepsilon\|_{L^2_TL^2_{xy}}^2   + \varepsilon\|\nabla\partial_t\partial_x \varTheta^\varepsilon\|_{L^2_TL^2_{xy}}^2
         \leq C  \varepsilon^{\frac{1}{2}} .
    \end{eqnarray*}
\end{lemma}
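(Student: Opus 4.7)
The plan is to split the lemma into two independent parts. For the $H^2_{xy}$ bounds on $H^\varepsilon$ and $\varTheta^\varepsilon$, I would not use a direct energy estimate but instead view equations \eqref{eq_UHT_varepsilon}$_2$ and \eqref{eq_UHT_varepsilon}$_3$ as inhomogeneous heat equations for those unknowns and solve for the Laplacian:
\begin{equation*}
    \varepsilon \Delta H^\varepsilon = \partial_t H^\varepsilon + u^a\cdot\nabla H^\varepsilon + U^\varepsilon\cdot\nabla \eta^a + \sqrt{\varepsilon}\, U^\varepsilon\cdot\nabla H^\varepsilon - \mathfrak{G},
\end{equation*}
and similarly for $\varTheta^\varepsilon$. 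Lemma~\ref{lemma_source_term_1} gives $\|\mathfrak{G}\|_{L^\infty_T L^2_{xy}}\lesssim\varepsilon$, and Lemmas~\ref{lemma_UHT_infty_L2}--\ref{lemma_HT_nabla_x} supply $\|\partial_t H^\varepsilon\|_{L^\infty_T L^2}^2\lesssim\varepsilon^{3/2}$, $\|U^\varepsilon\|_{L^\infty_T H^2}^2\lesssim\varepsilon^{3/2}$, and $\|\nabla H^\varepsilon\|_{L^\infty_T L^2}^2\lesssim\varepsilon^{3/2}$. Estimating the right-hand side in $L^\infty_T L^2$ yields $\varepsilon^{2}\|\Delta H^\varepsilon\|_{L^\infty_T L^2}^2\lesssim \varepsilon^{3/2}$. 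Combining with the Neumann boundary condition $\partial_y H^\varepsilon|_{y=0}=0$ (so that standard elliptic regularity in $\mathbb{R}^2_+$ gives $\|\nabla^2 H^\varepsilon\|_{L^2}\lesssim\|\Delta H^\varepsilon\|_{L^2}+\|\nabla H^\varepsilon\|_{L^2}$) delivers $\varepsilon^{1/2}\|H^\varepsilon\|_{L^\infty_T H^2}^2\lesssim 1$; the argument for $\varTheta^\varepsilon$ is identical modulo the extra transport/quadratic terms coming from $\mathcal{Q}$ and $\mathcal{B}$, all of which are already controlled at the required level by previous lemmas.

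For the $\partial_t\partial_x$ estimates I would follow the scheme of Lemma~\ref{lemma_UHT_nabla_t}, but with the operator $\partial_t\partial_x$ in place of $\partial_t$. Concretely, apply $\partial_t\partial_x$ to \eqref{eq_UHT_varepsilon}$_1$--\eqref{eq_UHT_varepsilon}$_3$, take $L^2_{xy}$ inner products with $\partial_t\partial_x U^\varepsilon$, $\partial_t\partial_x H^\varepsilon$, $\partial_t\partial_x \varTheta^\varepsilon$ respectively, integrate by parts (using the boundary conditions $\partial_t\partial_x U^\varepsilon|_{y=0}=0$ and $\partial_y\partial_t\partial_x(H^\varepsilon,\varTheta^\varepsilon)|_{y=0}=0$, which are compatible with \eqref{eq_UHT_int_bo}), and sum. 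As in the proof of Lemma~\ref{lemma_UHT_nabla_t}, one also needs the $\nabla\partial_x(U^\varepsilon,H^\varepsilon,\varTheta^\varepsilon)$ coercive part: test \eqref{eq_UHT_varepsilon}$_1$ with $\partial_t\partial_x^2 U^\varepsilon$ and apply $\nabla\partial_x$ to the $H^\varepsilon,\varTheta^\varepsilon$ equations and test with $\nabla\partial_x H^\varepsilon,\nabla\partial_x\varTheta^\varepsilon$ (Lemma~\ref{lemma_HT_nabla_x} already delivers these latter quantities at the level $\varepsilon^{1/2}$). Adding these contributions and using Lemma~\ref{lemma_source_term_1} to bound the source terms $\partial_x\partial_t(\mathfrak{F},\mathfrak{G},\mathfrak{J})$ (whose $L^2_T L^2_{xy}$ norm is $O(\varepsilon)$ using the profile regularities listed in Remark~\ref{remark_source_terms}) should yield a Gronwall-ready inequality
\begin{equation*}
    \tfrac{d}{dt}\|\partial_t\partial_x(U^\varepsilon,H^\varepsilon,\varTheta^\varepsilon)\|_{L^2}^2 + \mu\|\nabla\partial_t\partial_x U^\varepsilon\|_{L^2}^2 + \varepsilon\|\nabla\partial_t\partial_x(H^\varepsilon,\varTheta^\varepsilon)\|_{L^2}^2 \lesssim \Phi(t)\,\|\partial_t\partial_x(\cdots)\|_{L^2}^2 + R(t),
\end{equation*}
where $\Phi\in L^1_T$ is built from the previously bounded quantities and $R$ satisfies $\int_0^T R\,dt\lesssim\varepsilon^{1/2}$.

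The main obstacle will be to handle the many nonlinear contributions produced by $\partial_t\partial_x$ acting on the quadratic terms $\sqrt{\varepsilon}\,U^\varepsilon\cdot\nabla(U^\varepsilon,H^\varepsilon,\varTheta^\varepsilon)$, $\mathcal{Q}(\nabla U^\varepsilon,\varTheta^\varepsilon)$, $\mathcal{B}(H^\varepsilon,\nabla U^\varepsilon)$, etc. Expanding, one encounters typical terms such as
\begin{equation*}
    \sqrt{\varepsilon}\,\partial_t U^\varepsilon\cdot\nabla\partial_x U^\varepsilon,\qquad \sqrt{\varepsilon}\,\partial_x U^\varepsilon\cdot\nabla\partial_t U^\varepsilon,\qquad \sqrt{\varepsilon}\,U^\varepsilon\cdot\nabla\partial_t\partial_x U^\varepsilon,
\end{equation*}
which must be controlled using Ladyzhenskaya-type inequalities \eqref{ieq_Lady}--\eqref{eq_u_infty}, absorbing the highest-order pieces into $\mu\|\nabla\partial_t\partial_x U^\varepsilon\|_{L^2}^2$ and $\varepsilon\|\nabla\partial_t\partial_x(H^\varepsilon,\varTheta^\varepsilon)\|_{L^2}^2$. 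The delicate ones are those with one factor in $H^\varepsilon$ or $\varTheta^\varepsilon$: their $L^2$ norms are only $O(\varepsilon^{1/2})$ (much worse than $U^\varepsilon$), so powers of $\sqrt{\varepsilon}$ must be tracked carefully, and crucially the newly established $H^2_{xy}$ bound $\varepsilon^{1/2}\|(H^\varepsilon,\varTheta^\varepsilon)\|_{L^\infty_T H^2}^2\lesssim 1$ from the first part is what allows us to use $L^\infty_{xy}$ embeddings for $(H^\varepsilon,\varTheta^\varepsilon)$ without losing further powers of $\varepsilon$. Once this bookkeeping is in place, Gronwall combined with the zero initial data delivers the announced $\varepsilon^{1/2}$ right-hand side, finishing the three claimed estimates.
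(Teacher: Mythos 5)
Your overall strategy matches the paper's: you obtain the $H^2$ bounds for $H^\varepsilon,\varTheta^\varepsilon$ by rewriting \eqref{eq_UHT_varepsilon}$_2$--\eqref{eq_UHT_varepsilon}$_3$ so that $\varepsilon\Delta(\cdot)$ is isolated and invoking elliptic regularity with the Neumann condition, and you obtain the $\partial_t\partial_x$ estimates by an energy argument mimicking Lemma~\ref{lemma_UHT_nabla_t} with $\partial_t$ replaced by $\partial_t\partial_x$, closed by Gronwall. The weighted $H^2$ estimate for $H^\varepsilon,\varTheta^\varepsilon$ in the paper is indeed proved exactly as you describe, using $\|\mathfrak{G}\|,\|\mathfrak{J}\|\lesssim\varepsilon$, $\|\partial_t(H^\varepsilon,\varTheta^\varepsilon)\|^2\lesssim\varepsilon^{3/2}$, $\|U^\varepsilon\|_{H^2}^2\lesssim\varepsilon^{3/2}$, and $\|\nabla(H^\varepsilon,\varTheta^\varepsilon)\|^2\lesssim\varepsilon^{3/2}$.

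Two caveats about the second half. First, the paper does \emph{not} use the newly-proved $\varepsilon^{1/2}\|(H^\varepsilon,\varTheta^\varepsilon)\|_{H^2}^2\lesssim 1$ bound anywhere in its $\partial_t\partial_x$ energy estimates; it instead controls every product through the anisotropic Ladyzhenskaya-type inequalities \eqref{eq_fgh}--\eqref{eq_u_infty}, which only invoke the $L^2$, $\nabla$, $\partial_t$, and $\nabla\partial_x$ bounds already established in Lemmas~\ref{lemma_UHT_infty_L2}, \ref{lemma_UHT_nabla_t}, and \ref{lemma_HT_nabla_x}. Your proposal to fall back on $L^\infty_{xy}$ embeddings (costing a factor $\varepsilon^{-1/4}$ from $\|\cdot\|_{L^\infty}\lesssim\|\cdot\|_{H^2}$) is a genuinely alternative route and can be made to close with careful bookkeeping of the $\sqrt\varepsilon$ prefactors, but presenting the $H^2$ bound as ``crucial'' for this step misrepresents the logical structure: the paper's second half is independent of its first half. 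Second, the extra coercive step you propose --- testing \eqref{eq_UHT_varepsilon}$_1$ with $\partial_t\partial_x^2 U^\varepsilon$ and applying $\nabla\partial_x$ to the $H^\varepsilon,\varTheta^\varepsilon$ equations --- is redundant. In Lemma~\ref{lemma_UHT_nabla_t} the companion $\nabla$-estimate had to be coupled to the $\partial_t$-estimate because $\|\nabla(H^\varepsilon,\varTheta^\varepsilon)\|$ was not yet available; here the $\nabla\partial_x$ analogues are already furnished, uniformly in $\varepsilon$, by Lemma~\ref{lemma_HT_nabla_x}, so only the single $\partial_t\partial_x$ energy identity is needed, with $\|\nabla\partial_t U^\varepsilon\|_{L^2_TL^2}^2$ from Lemma~\ref{lemma_UHT_nabla_t} appearing on the right-hand side for Gronwall. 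Neither discrepancy is a fatal gap, but you should tidy up the claimed dependencies before the ``bookkeeping in place'' step is trusted.
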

\begin{proof}
    To begin wth, from  $\eqref{eq_UHT_varepsilon}_2$ and elliptic estimate, Lemmas \ref{lemma_inequality}, \ref{lemma_inner_outer_layer_est}, \ref{lemma_source_term_1} and \ref{lemma_UHT_infty_L2}, we have
    \begin{eqnarray*}\label{eq_H_varepsilon_H2}
        \begin{aligned}
              \varepsilon^2\|H^\varepsilon\|_{L^\infty_TH^2_{xy}}^2
              \lesssim {}  &    \|\mathfrak{G}\|_{L^\infty_TL^2_{xy}}^2 +\|\partial_t H^\varepsilon\|_{L^\infty_TL^2_{xy}}^2
              + \Big(\|u^a\|_{L^\infty_TL^\infty_{xy}}^2 +1 \Big)\| H^\varepsilon\|_{L^\infty_TH^1_{xy}}^2 \\
              &  + \|U^\varepsilon\|_{L^\infty_TH^2_{xy}}^2\Bigl(\|\nabla \eta^a \|_{L^\infty_TL^2_{xy}}^2  + \|\nabla H^\varepsilon\|_{L^\infty_TL^2_{xy}}^2 + 1 \Bigr)\\
             \lesssim{} & \varepsilon^{\frac{3}{2}}.
         \end{aligned}
    \end{eqnarray*}
  And a similar procedure yields that
  \begin{eqnarray*}\label{eq_T_varepsilon_H2}
    \begin{aligned}
          \varepsilon^2\|\varTheta^\varepsilon\|_{L^\infty_TH^2_{xy}}^2
          \lesssim {}  &    \|\mathfrak{J}\|_{L^\infty_TL^2_{xy}}^2 +\|\partial_t \varTheta^\varepsilon\|_{L^\infty_TL^2_{xy}}^2
          + \Big(\|u^a\|_{L^\infty_TL^\infty_{xy}}^2+  \|\nabla u^a\|_{L^4_{xy}}^2 + 1\Big)\| (H^\varepsilon,\varTheta^\varepsilon)\|_{L^\infty_TH^1_{xy}}^2 \\
          &  + \|U^\varepsilon\|_{L^\infty_TH^2_{xy}}^2\Bigl(\|\nabla \tau^a \|_{L^\infty_TL^2_{xy}}^2 +\|  (\eta^a,\tau^a) \|_{L^\infty_TL^4_{xy}}^2 + \|  (H^\varepsilon,\varTheta^\varepsilon)\|_{L^\infty_TH^1_{xy}}^2 + 1 \Bigr)\\
         \lesssim{} & \varepsilon^{\frac{3}{2}}.
     \end{aligned}
\end{eqnarray*}

The proof of the rest inequalities in Lemma \ref{lemma_UHT_x_t} is similar to Lemma \ref{lemma_UHT_nabla_t}. We just sketch the main steps here.
Multiplying $\partial_t\partial_x$ \eqref{eq_UHT_varepsilon}$_1$ by $\partial_t\partial_x U^\varepsilon$, and integrating by parts over $\mathbb{R}^2_{+}$, we get
        \begin{align*}
            &\frac{1}{2}\frac{\mathrm{d}}{\mathrm{d}t} \|\partial_t \partial_xU^\varepsilon\|_{L^2_{xy}}^2 +  \mu\|\nabla\partial_t\partial_x U^\varepsilon\|_{L^2_{xy}}^2 \\
            ={}&  - \langle \partial_t \mathfrak{F},\partial_t \partial_x^2 U^\varepsilon \rangle
            +\langle U^\varepsilon\otimes \partial_t\partial_xu^a
            +  \partial_xU^\varepsilon\otimes \partial_tu^a
            +  \partial_t U^\varepsilon\otimes \partial_xu^a
            ,\nabla\partial_t\partial_x U^\varepsilon \rangle\\
            &
             +\langle   u^a \otimes\partial_t\partial_x U^\varepsilon
             + \partial_xu^a \otimes\partial_t U^\varepsilon
            +   \partial_tu^a \otimes\partial_x U^\varepsilon
            +\partial_t\partial_xu^a  \otimes U^\varepsilon,\nabla\partial_t\partial_x U^\varepsilon \rangle\\
            &
            +\sqrt{\varepsilon}\langle \partial_x U^\varepsilon \otimes \partial_tU^\varepsilon +\partial_t U^\varepsilon \otimes \partial_xU^\varepsilon , \nabla \partial_t\partial_x U^\varepsilon \rangle
            -\sqrt{\varepsilon}\langle \partial_t\partial_x U^\varepsilon \cdot \nabla U^\varepsilon  ,  \partial_t\partial_x U^\varepsilon \rangle \\
            &
             -\langle \partial_t\partial_x \varTheta^\varepsilon, \nabla \partial_t\partial_x U^\varepsilon \rangle \\
            =:{}& \mathcal{R}_1+ \mathcal{R}_2+ \mathcal{R}_3 + \mathcal{R}_4,
         \end{align*}
where $\mathcal{R}_i$ represents the whole $i$-th line on the right-hand side of the first equality sign in the above equality.

    For the first two terms, using Lemmas \ref{lemma_inequality}, \ref{lemma_inner_outer_layer_est}, \ref{lemma_source_term_1}, \ref{lemma_UHT_infty_L2}  and \ref{lemma_UHT_nabla_t}, we have
        \begin{align*}
             \mathcal{R}_1+ \mathcal{R}_2
             \leq{}&  \Bigl( \|U^\varepsilon\|_{L^\infty_{xy}}\|\partial_t\partial_x u^a\|_{L^2_{xy}}
             +\|\partial_x U^\varepsilon\|_{L^2_{xy}}^{\frac{1}{2}}\|\partial_y\partial_x U^\varepsilon\|_{L^2_{xy}}^{\frac{1}{2}}
             \|\partial_t u^a\|_{L^2_{xy}}^{\frac{1}{2}}\|\partial_t\partial_x u^a\|_{L^2_{xy}}^{\frac{1}{2}}\Bigr)
             \|\nabla\partial_t\partial_x U^\varepsilon\|_{L^2_{xy}}
             \\
             & + \Bigl(\|  \partial_t\partial_xU^\varepsilon\|_{L^2_{xy}}  \|  u^a\|_{L^\infty_{xy}} +\|  \partial_tU^\varepsilon\|_{L^2_{xy}}  \|\partial_x u^a\|_{L^\infty_{xy}} \Bigr) \|\nabla\partial_t\partial_x U^\varepsilon\|_{L^2_{xy}}
             +\|  \partial_t  \mathfrak{F} \|_{L^2_{xy}}\|  \partial_t\partial_x^2 U^\varepsilon\|_{L^2_{xy}}   \\
            \leq{}&  \frac{\mu}{4}\|\nabla\partial_t\partial_x U^\varepsilon\|_{L^2_{xy}}^2    +C  \|\partial_t \partial_xU^\varepsilon\|_{L^2_{xy}}^2   + C\varepsilon^{\frac{3}{2}}.
        \end{align*}
    The rest terms can be estimated as follows:
        \begin{align*}
            &\mathcal{R}_3+ \mathcal{R}_4\\
             \leq{}
             &   \varepsilon^{\frac{1}{2}}\|  \partial_x U^\varepsilon\|_{L^2_{xy}}^{\frac{1}{2}}\|  \partial_y\partial_x U^\varepsilon\|_{L^2_{xy}}^{\frac{1}{2}}\| \partial_x \partial_t  U^\varepsilon\|_{L^2_{xy}}^{\frac{1}{2}}\|  \partial_t U^\varepsilon\|_{L^2_{xy}}^{\frac{1}{2}}\| \nabla\partial_t\partial_x U^\varepsilon\|_{L^2_{xy}}\\
            & + \varepsilon^{\frac{1}{2}}\| \partial_t\partial_x U^\varepsilon\|_{L^2_{xy}}\| \nabla\partial_t\partial_x U^\varepsilon\|_{L^2_{xy}}\|\nabla  U^\varepsilon\|_{L^2_{xy}} +  \| \partial_t \partial_x\varTheta^\varepsilon\|_{L^2_{xy}} \| \nabla\partial_t\partial_x U^\varepsilon\|_{L^2_{xy}}\\
            \leq{}&  \frac{\mu}{4}\|\nabla\partial_t\partial_x U^\varepsilon\|_{L^2_{xy}}^2    +C\Big( \|\partial_t \partial_xU^\varepsilon\|_{L^2_{xy}}^2 +\| \partial_t\partial_x \varTheta^\varepsilon\|_{L^2_{xy}}^2 \Big) + C\varepsilon^{\frac{3}{2}},
        \end{align*}
    which, together with the estimate of $\mathcal{R}_1+ \mathcal{R}_2 $, implies
    \begin{equation}\label{eq_U_tx}
        \begin{split}
            &\frac{\mathrm{d}}{\mathrm{d}t} \|\partial_t \partial_xU^\varepsilon\|_{L^2_{xy}}^2 +  \mu\|\nabla\partial_t\partial_x U^\varepsilon\|_{L^2_{xy}}^2
            \lesssim{}
             \| \partial_t\partial_x (U^\varepsilon,\varTheta^\varepsilon)\|_{L^2_{xy}}^2   + \varepsilon^{\frac{3}{2}}.
        \end{split}
    \end{equation}
    Multiplying $\partial_t\partial_x$ \eqref{eq_UHT_varepsilon}$_2$ by $\partial_t\partial_x H^\varepsilon$, and integrating by parts over $\mathbb{R}^2_{+}$, we get
    \begin{eqnarray*}\label{eq_H_xt_1}
        \begin{aligned}
            &\frac{1}{2}\frac{\mathrm{d}}{\mathrm{d}t} \|\partial_t\partial_x H^\varepsilon\|_{L^2_{xy}}^2 +  \varepsilon\|\nabla\partial_t\partial_x H^\varepsilon\|_{L^2_{xy}}^2 \\
            ={}& - \langle \partial_t  \mathfrak{G},\partial_t \partial_x^2 H^\varepsilon \rangle
            + \langle H^\varepsilon  \partial_t\partial_xu^a
            +  \partial_xH^\varepsilon  \partial_tu^a
            +  \partial_t H^\varepsilon  \partial_xu^a ,\nabla\partial_t\partial_x H^\varepsilon \rangle
            -\langle \partial_t\partial_x U^\varepsilon\cdot\nabla\eta^I ,\partial_t\partial_x H^\varepsilon \rangle\\
            &+\sqrt{\varepsilon}\langle  \eta^B  \partial_t\partial_x U^\varepsilon,\nabla\partial_t\partial_x H^\varepsilon \rangle
            +\langle
                \partial_x\eta^a  \partial_t U^\varepsilon
            +   \partial_t \eta^a \partial_x U^\varepsilon
            + \partial_t\partial_x \eta^a   U^\varepsilon  ,\nabla\partial_t\partial_x H^\varepsilon \rangle\\
            &
            +\sqrt{\varepsilon}\langle \partial_t H^\varepsilon  \partial_x U^\varepsilon
            +  \partial_x H^\varepsilon   \partial_tU^\varepsilon
            +  H^\varepsilon  \partial_t\partial_x U^\varepsilon   , \nabla\partial_t\partial_x H^\varepsilon \rangle
              \\
            =:&~ \mathcal{S}_1 + \mathcal{S}_2 +\mathcal{S}_3,
        \end{aligned}
    \end{eqnarray*}
where $\mathcal{S}_i$ represents the whole $i$-th line on the right hand side of the first equality sign in the above equality.

    Similar to the estimate of $\mathcal{R}_1+ \mathcal{R}_2 $, we have
        \begin{align*}
            &\mathcal{S}_1 + \mathcal{S}_2 \\
            \lesssim{}& \|\partial_t\mathfrak{G}\|_{L^2_{xy}}\|\partial_t\partial_x^2 H^\varepsilon\|_{L^2_{xy}}
            + \|H^\varepsilon\|_{L^2_{xy}}^{\frac{1}{2}}\|\partial_yH^\varepsilon\|_{L^2_{xy}}^{\frac{1}{2}}
            \|\partial_t \partial_x u^a\|_{L^2_{xy}}^{\frac{1}{2}}\|\partial_t\partial_x^2 u^a\|_{L^2_{xy}}^{\frac{1}{2}}
            \|\nabla\partial_t\partial_x H^\varepsilon\|_{L^2_{xy}}\\
            &
            + \Bigl(\|\partial_x H^\varepsilon\|_{L^2_{xy}}
            \|\partial_t u^a\|_{L^\infty_{xy}}
            + \|\partial_t H^\varepsilon\|_{L^2_{xy}}\|\partial_x   u^a\|_{L^\infty_{xy}}
            +\sqrt{\varepsilon} \|\eta^B\|_{L^\infty_{xy}}\|\partial_t\partial_x U^\varepsilon\|_{L^2_{xy}}
            \Bigr)\|\nabla\partial_t\partial_x H^\varepsilon\|_{L^2_{xy}}
            \\
            &  + \|\nabla\eta^{I}\|_{L^\infty_{xy}}\|\partial_t\partial_x U^\varepsilon\|_{L^2_{xy}}\|\partial_t\partial_x H^\varepsilon\|_{L^2_{xy}}
            +\Big(
             \|\partial_x\eta^a\|_{L^\infty_{xy}}\|\partial_t U^\varepsilon\|_{L^2_{xy}}
            +\|\partial_t\partial_x\eta^{a}\|_{L^2_{xy}}\|U^\varepsilon\|_{L^\infty_{xy}}
             \\
             & + \|\partial_t\eta^a\|_{L^2_{xy}}^{\frac{1}{2}}\|\partial_t\partial_x\eta^a\|_{L^2_{xy}}^{\frac{1}{2}} \|\partial_x U^\varepsilon\|_{L^2_{xy}}^{\frac{1}{2}} \|\partial_y\partial_x U^\varepsilon\|_{L^2_{xy}}^{\frac{1}{2}}\Big)\|\nabla\partial_t\partial_x H^\varepsilon\|_{L^2_{xy}}
              \\
             \leq{}&  \frac{\varepsilon}{4}\|\nabla\partial_t\partial_x H^\varepsilon\|_{L^2_{xy}}^2    +C  \|\partial_t \partial_x(U^\varepsilon,H^\varepsilon)\|_{L^2_{xy}}^2   + C\varepsilon^{\frac{1}{2}}.
        \end{align*}
    For $\mathcal{S}_3 $, we have
        \begin{align*}
             \mathcal{S}_3
            \lesssim{} & \varepsilon^{\frac{1}{2}}\|  \partial_t  H^\varepsilon\|_{L^2_{xy}}^{\frac{1}{2}}\| \partial_x \partial_t H^\varepsilon\|_{L^2_{xy}}^{\frac{1}{2}}\|  \partial_x U^\varepsilon\|_{L^2_{xy}}^{\frac{1}{2}}\|  \partial_y\partial_x U^\varepsilon\|_{L^2_{xy}}^{\frac{1}{2}}\| \nabla\partial_t\partial_x H^\varepsilon\|_{L^2_{xy}}\\
            & +\varepsilon^{\frac{1}{2}}\|  \partial_x H^\varepsilon\|_{L^2_{xy}}^{\frac{1}{2}}\|  \partial_y\partial_x H^\varepsilon\|_{L^2_{xy}}^{\frac{1}{2}}\|  \partial_t  U^\varepsilon\|_{L^2_{xy}}^{\frac{1}{2}}\| \partial_x \partial_t U^\varepsilon\|_{L^2_{xy}}^{\frac{1}{2}}\| \nabla\partial_t\partial_x H^\varepsilon\|_{L^2_{xy}}\\
            &
            + \varepsilon^{\frac{1}{2}}\|  H^\varepsilon\|_{L^2_{xy}}^{\frac{1}{2}}\|  \partial_y   H^\varepsilon\|_{L^2_{xy}}^{\frac{1}{2}} \| \partial_t\partial_x U^\varepsilon\|_{L^2_{xy}}^{\frac{1}{2}}\|  \partial_t\partial_x^2 U^\varepsilon\|_{L^2_{xy}}^{\frac{1}{2}}\| \nabla\partial_t\partial_x H^\varepsilon\|_{L^2_{xy}}  \\
            \leq{} &  \frac{\mu}{8}\|\nabla\partial_t\partial_x U^\varepsilon\|_{L^2_{xy}}^2 + \frac{\varepsilon}{4}\|\nabla\partial_t\partial_x H^\varepsilon\|_{L^2_{xy}}^2
            + C\Big(\|\partial_t\partial_x ( U^\varepsilon,H^\varepsilon)\|_{L^2_{xy}}^2
            +\varepsilon^{\frac{1}{2}} \Big) ,
        \end{align*}
    which together with the estimate of $\mathcal{S}_1+ \mathcal{S}_2$, implies
    \begin{eqnarray}\label{eq_H_tx}
        \begin{aligned}
            &\frac{\mathrm{d}}{\mathrm{d}t} \|\partial_t \partial_xH^\varepsilon\|_{L^2_{xy}}^2 +  \varepsilon\|\nabla\partial_t\partial_x H^\varepsilon\|_{L^2_{xy}}^2
            \leq   \frac{\mu}{4}\|\nabla\partial_t\partial_x U^\varepsilon\|_{L^2_{xy}}^2 + C\Big(\|\partial_t\partial_x ( U^\varepsilon,H^\varepsilon)\|_{L^2_{xy}}^2
             +\varepsilon^{\frac{1}{2}}\Big)  .
        \end{aligned}
    \end{eqnarray}

    Multiplying $\partial_t\partial_x$ \eqref{eq_UHT_varepsilon}$_3$ by $\partial_t\partial_x \varTheta^\varepsilon$, and integrating by parts over $\mathbb{R}^2_{+}$, we get
    \begin{align*}
            & \frac{1}{2}\frac{\mathrm{d}}{\mathrm{d}t} \|\partial_t\partial_x \varTheta^\varepsilon\|_{L^2_{xy}}^2
            + \gamma \| \partial_t\partial_x \varTheta^\varepsilon\|_{L^2_{xy}}^2 +  \varepsilon\|\nabla\partial_t\partial_x \varTheta^\varepsilon\|_{L^2_{xy}}^2 \\
            = {}& - \langle \partial_t  \mathfrak{J},\partial_t \partial_x^2 \varTheta^\varepsilon \rangle
            + \langle \varTheta^\varepsilon \otimes  \partial_t\partial_xu^a
            +  \partial_x\varTheta^\varepsilon \otimes   \partial_tu^a
            +  \partial_t \varTheta^\varepsilon \otimes    \partial_xu^a
            ,
            \nabla\partial_t\partial_x \varTheta^\varepsilon \rangle
            \\
            &-\langle \partial_t\partial_x U^\varepsilon\cdot\nabla\tau^I ,\partial_t\partial_x \varTheta^\varepsilon \rangle
             +\langle
                \partial_x\tau^a \otimes \partial_t U^\varepsilon
            +   \partial_t \tau^a \otimes \partial_x U^\varepsilon
            + \partial_t\partial_x \tau^a \otimes  U^\varepsilon  ,\nabla\partial_t\partial_x \varTheta^\varepsilon \rangle\\
            &
            +\sqrt{\varepsilon}\langle  \tau^B\otimes  \partial_t\partial_x U^\varepsilon
             +\partial_t \varTheta^\varepsilon \otimes    \partial_x U^\varepsilon
            +  \partial_x \varTheta^\varepsilon \otimes     \partial_tU^\varepsilon
            +  \varTheta^\varepsilon \otimes    \partial_t\partial_x U^\varepsilon   , \nabla\partial_t\partial_x \varTheta^\varepsilon \rangle
              \\
           &+ \langle \mathcal{Q} (\nabla u^a, \partial_t\partial_x\varTheta^{\varepsilon})
              +  \mathcal{Q} (\nabla \partial_xu^a  , \partial_t\varTheta^{\varepsilon})
               +   \mathcal{Q} (\nabla \partial_tu^a , \partial_x\varTheta^{\varepsilon})
              +  \mathcal{Q} (\nabla \partial_t\partial_xu^a, \varTheta^{\varepsilon}),  \partial_t\partial_x \varTheta^\varepsilon  \rangle
                \\
              &+ \langle \mathcal{Q}(\nabla \partial_t\partial_xU^\varepsilon, \tau^a)
              + \mathcal{Q} (\nabla \partial_xU^\varepsilon, \partial_t\tau^a)
              +  \mathcal{Q} (\nabla \partial_tU^\varepsilon, \partial_x\tau^a  )
               +    \mathcal{Q} (\nabla U^\varepsilon, \partial_t\partial_x\tau^a  ),  \partial_t\partial_x \varTheta^\varepsilon  \rangle\\
               &+ \langle \mathcal{B} (H^{\varepsilon},\nabla \partial_t\partial_xu^a )
              +   \mathcal{B} (\partial_tH^{\varepsilon},\nabla \partial_xu^a )
               +   \mathcal{B} (\partial_xH^{\varepsilon},\nabla \partial_tu^a  )
               +\mathcal{B} (\partial_t\partial_xH^{\varepsilon},\nabla u^a ),  \partial_t\partial_x \varTheta^\varepsilon  \rangle \\
              &
               + \langle \mathcal{B} ( \eta^a,\nabla \partial_t\partial_xU^\varepsilon )
              +   \mathcal{B} (\partial_t\eta^a,\nabla \partial_xU^\varepsilon  )
               +    \mathcal{B} (\partial_x\eta^a,\nabla \partial_tU^\varepsilon )
               +    \mathcal{B} (\partial_t\partial_x\eta^a ,\nabla U^\varepsilon),  \partial_t\partial_x \varTheta^\varepsilon  \rangle \\
             &
               +  \sqrt{\varepsilon}\langle \mathcal{Q} (\nabla \partial_t\partial_xU^\varepsilon, \varTheta^{\varepsilon})
               +   \mathcal{Q} (\nabla \partial_xU^\varepsilon, \partial_t\varTheta^{\varepsilon})
               +   \mathcal{Q} (\nabla \partial_tU^\varepsilon, \partial_x\varTheta^{\varepsilon})
               +  \langle \mathcal{Q} (\nabla U^\varepsilon, \partial_t\partial_x\varTheta^{\varepsilon}),  \partial_t\partial_x \varTheta^\varepsilon  \rangle \\
               &+\sqrt{\varepsilon}\langle \mathcal{B} (H^{\varepsilon},\nabla \partial_t\partial_xU^\varepsilon  )
               +    \mathcal{B} (\partial_tH^{\varepsilon},\nabla \partial_xU^\varepsilon
               +    \mathcal{B} (\partial_xH^{\varepsilon},\nabla \partial_tU^\varepsilon )
               +  \langle \mathcal{B} (\partial_t\partial_xH^{\varepsilon},\nabla U^\varepsilon ),  \partial_t\partial_x \varTheta^\varepsilon  \rangle
                \\
               =:&{} \sum_{i=1}^{9}\mathcal{W}_i,
        \end{align*}
       where $\mathcal{W}_i$ represents the whole $i$-th line on the right hand side of the first equality sign in  the above equality.

    After conducting some steps similar to the estimate of $\mathcal{S}_1 +\mathcal{S}_2$ and $\mathcal{S}_3$, we obtain
        \begin{align*}
             \sum_{i=1}^{3}\mathcal{W}_i\leq{} &  \frac{\varepsilon}{16} \|\nabla\partial_t\partial_x \varTheta^\varepsilon\|_{L^2_{xy}}^2
             +\frac{\mu}{32}\|\nabla\partial_t\partial_x U^\varepsilon\|_{L^2_{xy}}^2
            + C\Big(\|\partial_t\partial_x  U^\varepsilon\|_{L^2_{xy}}^2
            +\|\partial_t\partial_x  \varTheta^\varepsilon\|_{L^2_{xy}}^2
            +\varepsilon^{\frac{1}{2}}
              \Big).
           \end{align*}

          For $\mathcal{W}_4,\mathcal{W}_5,\mathcal{W}_6,\mathcal{W}_7$, using Lemmas \ref{lemma_inequality}, \ref{lemma_inner_outer_layer_est}, \ref{lemma_UHT_infty_L2}  and \ref{lemma_UHT_nabla_t}, we have
           \begin{align*}
            &\mathcal{W}_4 + \mathcal{W}_6\\  \lesssim {}&
            \|\nabla (u^I,w)\|_{L^\infty_{xy}}\Bigl( \|\partial_t\partial_x\varTheta^\varepsilon\|_{L^2_{xy}}^2
            +\|\partial_t\partial_x H^\varepsilon\|_{L^2_{xy}}^2\Bigr)
            + \varepsilon\|\nabla u^B\|_{L^2_{xy}} \|\partial_t\partial_x\varTheta^\varepsilon\|_{L^2_{xy}}
            \|\nabla\partial_t\partial_x\varTheta^\varepsilon\|_{L^2_{xy}} \\
            & + \varepsilon\|\nabla u^B\|_{L^2_{xy}} \|\partial_t\partial_x\varTheta^\varepsilon\|_{L^2_{xy}}^{\frac{1}{2}}
            \|\nabla\partial_t\partial_x\varTheta^\varepsilon\|_{L^2_{xy}}^{\frac{1}{2}} \|\partial_t\partial_xH^\varepsilon\|_{L^2_{xy}}^{\frac{1}{2}}
            \|\nabla\partial_t\partial_xH^\varepsilon\|_{L^2_{xy}}^{\frac{1}{2}}  \\
            &+\|\nabla  \partial_x (u^I,w)\|_{L^2_{xy}}^{\frac{1}{2}}\|\nabla \partial_y\partial_x (u^I,w)\|_{L^2_{xy}}^{\frac{1}{2}}
            \Bigl(\|\partial_t\varTheta^\varepsilon\|_{L^2_{xy}}^{\frac{1}{2}}\|\partial_t\partial_x\varTheta^\varepsilon\|_{L^2_{xy}}^{\frac{1}{2}}
            +\|\partial_tH^\varepsilon\|_{L^2_{xy}}^{\frac{1}{2}}\|\partial_t\partial_xH^\varepsilon\|_{L^2_{xy}}^{\frac{1}{2}}\Bigr)\\
            &\times \|\partial_t\partial_x \varTheta^\varepsilon\|_{L^2_{xy}}
            +\varepsilon\|\nabla\partial_x u^B\|_{L^2_{xy}}^{\frac{1}{2}}\|\nabla\partial_x^2 u^B\|_{L^2_{xy}}^{\frac{1}{2}}
            \|\partial_t (\varTheta^\varepsilon,H^\varepsilon)\|_{L^2_{xy}}
            \|\partial_t\partial_x\varTheta^\varepsilon\|_{L^2_{xy}}^{\frac{1}{2}}
            \|\partial_y\partial_t\partial_x\varTheta^\varepsilon\|_{L^2_{xy}}^{\frac{1}{2}}\\
            &+ \|\nabla\partial_t u^a\|_{L^2_{xy}}^{\frac{1}{2}}\|\nabla\partial_t\partial_x u^a\|_{L^2_{xy}}^{\frac{1}{2}}
            \Bigl(\|\partial_x\varTheta^{\varepsilon}\|_{L^2_{xy}}^{\frac{1}{2}}\|\partial_y\partial_x\varTheta^{\varepsilon}\|_{L^2_{xy}}^{\frac{1}{2}}
            +\|\partial_x H^\varepsilon\|_{L^2_{xy}}^{\frac{1}{2}}\|\partial_y\partial_x H^{\varepsilon}\|_{L^2_{xy}}^{\frac{1}{2}} \Bigr)
            \|\partial_t\partial_x \varTheta^\varepsilon\|_{L^2_{xy}}\\
            &+\|\nabla\partial_t\partial_x u^a\|_{L^2_{xy}}\Bigl(\|(H^\varepsilon,\varTheta^\varepsilon)\|_{H^1_{xy}}
            + \|(H^\varepsilon,\varTheta^{\varepsilon})\|_{L^2_{xy}}^{\frac{1}{2}}
            \|\partial_y\partial_x(H^\varepsilon,\varTheta^{\varepsilon})\|_{L^2_{xy}}^{\frac{1}{2}}\Bigr)
            \|\partial_t\partial_x \varTheta^\varepsilon\|_{L^2_{xy}}\\
           \leq{} &  \frac{\varepsilon}{16}\Bigl(\|\nabla\partial_t\partial_x H^\varepsilon\|_{L^2_{xy}}^2
            +\|\nabla\partial_t\partial_x \varTheta^\varepsilon\|_{L^2_{xy}}^2 \Bigr)
             + C\Big(\|\partial_t\partial_x  (U^\varepsilon,H^\varepsilon,\varTheta^\varepsilon)\|_{L^2_{xy}}^2
             + \varepsilon^{\frac{1}{2}}\Big),
          \end{align*}
          and
          \begin{align*}
            &\mathcal{W}_5 + \mathcal{W}_7\\
             \lesssim {}&
             \|\nabla\partial_t\partial_x U^\varepsilon\|_{L^2_{xy}}\|(\eta^a,\tau^a)\|_{L^\infty_{xy}}\|\partial_t\partial_x\varTheta^\varepsilon\|_{L^2_{xy}}
             +\|\nabla\partial_t U^\varepsilon\|_{L^2_{xy}}\|\partial_x(\eta^a,\tau^a)\|_{L^\infty_{xy}}\|\partial_t\partial_x\varTheta^\varepsilon\|_{L^2_{xy}} \\
             & + \|\nabla\partial_x U^\varepsilon\|_{L^2_{xy}}\|\partial_t(\eta^a,\tau^a)\|_{L^2_{xy}}^{\frac{1}{2}}
             \|\partial_x\partial_t(\eta^a,\tau^a)\|_{L^2_{xy}}^{\frac{1}{2}}
             \|\partial_t\partial_x\varTheta^\varepsilon\|_{L^2_{xy}}^{\frac{1}{2}}\|\partial_y\partial_t\partial_x\varTheta^\varepsilon\|_{L^2_{xy}}^{\frac{1}{2}}
              \\
             &+ \|\nabla U^\varepsilon\|_{H^1_{xy}}\|\partial_t\partial_x(\eta^a,\tau^a)\|_{L^2_{xy}}
             \|\partial_t\partial_x\varTheta^\varepsilon\|_{L^2_{xy}}^{\frac{1}{2}}\|\nabla\partial_t\partial_x\varTheta^\varepsilon\|_{L^2_{xy}}^{\frac{1}{2}}\\
             \leq{}&\frac{\mu}{32}\|\nabla\partial_t\partial_x U^\varepsilon\|_{L^2_{xy}}^2+ \frac{\varepsilon}{16}\|\nabla\partial_t\partial_x \varTheta^\varepsilon\|_{L^2_{xy}}^2
             +C\Big(\| \nabla \partial_t  U^\varepsilon\|_{L^2_{xy}}^2  + \|\partial_t\partial_x  \varTheta^\varepsilon\|_{L^2_{xy}}^2 + \varepsilon^{\frac{1}{2}}\Big).
          \end{align*}
          For $\mathcal{W}_8, \mathcal{W}_9$, we have
          \begin{align*}
            &\mathcal{W}_8 + \mathcal{W}_9\\
             \lesssim {}& \|\nabla\partial_t\partial_x U^\varepsilon\|_{L^2_{xy}}
             \Bigl(\| \varTheta^\varepsilon\|_{L^2_{xy}}^{\frac{1}{2}}\| \nabla\varTheta^\varepsilon\|_{L^2_{xy}}^{\frac{1}{2}}
             +\| H^\varepsilon\|_{L^2_{xy}}^{\frac{1}{2}}\| \nabla H^\varepsilon\|_{L^2_{xy}}^{\frac{1}{2}}\Bigr)
             \| \partial_t\partial_x\varTheta^\varepsilon\|_{L^2_{xy}}^{\frac{1}{2}}\| \nabla\partial_t\partial_x\varTheta^\varepsilon\|_{L^2_{xy}}^{\frac{1}{2}}\\
             & +\|\nabla\partial_x U^\varepsilon\|_{L^2_{xy}}
             \Bigl(\| \partial_t\varTheta^\varepsilon\|_{L^2_{xy}}^{\frac{1}{2}}\| \partial_t\partial_x\varTheta^\varepsilon\|_{L^2_{xy}}^{\frac{1}{2}}
             +\| \partial_tH^\varepsilon\|_{L^2_{xy}}^{\frac{1}{2}}\| \partial_t \partial_x H^\varepsilon\|_{L^2_{xy}}^{\frac{1}{2}}\Bigr)
             \| \partial_t\partial_x\varTheta^\varepsilon\|_{L^2_{xy}}^{\frac{1}{2}}\| \partial_y\partial_t\partial_x\varTheta^\varepsilon\|_{L^2_{xy}}^{\frac{1}{2}}\\
             & +\|\nabla\partial_t U^\varepsilon\|_{L^2_{xy}}
            \Bigl(\| \partial_x\varTheta^\varepsilon\|_{L^2_{xy}}^{\frac{1}{2}}\| \nabla\partial_x\varTheta^\varepsilon\|_{L^2_{xy}}^{\frac{1}{2}}
            +\| \partial_xH^\varepsilon\|_{L^2_{xy}}^{\frac{1}{2}}\| \nabla \partial_x H^\varepsilon\|_{L^2_{xy}}^{\frac{1}{2}}\Bigr)
            \| \partial_t\partial_x\varTheta^\varepsilon\|_{L^2_{xy}}^{\frac{1}{2}}\| \nabla\partial_t\partial_x\varTheta^\varepsilon\|_{L^2_{xy}}^{\frac{1}{2}}\\
            &+ \|\nabla U^\varepsilon\|_{L^2_{xy}}\Bigl( \|\partial_t\partial_x \varTheta^{\varepsilon}\|_{L^2_{xy}}\|\nabla\partial_t\partial_x \varTheta^{\varepsilon}\|_{L^2_{xy}}
             + \|\partial_t\partial_x H^{\varepsilon}\|_{L^2_{xy}}\|\nabla\partial_t\partial_x H^{\varepsilon}\|_{L^2_{xy}}\Bigr)  \\
            \leq{}&\frac{\mu}{32}\|\nabla\partial_t\partial_x U^\varepsilon\|_{L^2_{xy}}^2+ \frac{\varepsilon}{16}\|\nabla\partial_t\partial_x (H^\varepsilon,\varTheta^\varepsilon)\|_{L^2_{xy}}^2
            + C\Big(  \|\partial_t\partial_x  (U^\varepsilon,H^\varepsilon,\varTheta^\varepsilon)\|_{L^2_{xy}}^2
            +\|\nabla\partial_t U^\varepsilon\|_{L^2_{xy}}^2
            + \varepsilon^{\frac{1}{2}}\Big).
          \end{align*}
          Therefore, we have
    \begin{eqnarray}\label{eq_T_tx}
        \begin{aligned}
            &\frac{\mathrm{d}}{\mathrm{d}t} \|\partial_t \partial_x \varTheta^\varepsilon\|_{L^2_{xy}}^2
            + \gamma \| \partial_t\partial_x \varTheta^\varepsilon\|_{L^2_{xy}}^2 +  \varepsilon\|\nabla\partial_t\partial_x \varTheta^\varepsilon\|_{L^2_{xy}}^2   \\
            \leq {}&  \frac{\mu}{4}\|\nabla\partial_t\partial_x U^\varepsilon\|_{L^2_{xy}}^2
            +  \frac{\varepsilon}{2}\|\nabla\partial_t\partial_x H^\varepsilon\|_{L^2_{xy}}^2
            + C\Big(  \|\partial_t\partial_x  (U^\varepsilon,H^\varepsilon,\varTheta^\varepsilon)\|_{L^2_{xy}}^2
            +\|\nabla\partial_t U^\varepsilon\|_{L^2_{xy}}^2
            + \varepsilon^{\frac{1}{2}}\Big).
        \end{aligned}
    \end{eqnarray}
    Summing \eqref{eq_U_tx}, \eqref{eq_H_tx} and  \eqref{eq_T_tx} up, and using Lemma \ref{lemma_UHT_nabla_t} and Gronwall's inequality, we get
    \begin{eqnarray*}
        \begin{aligned}
            &  \|\partial_t\partial_x  (U^\varepsilon,H^\varepsilon,\varTheta^\varepsilon)\|_{L^\infty_TL^2_{xy}}^2
            +  \mu\|\nabla\partial_t\partial_x U^\varepsilon\|_{L^2_TL^2_{xy}}^2
               +  \varepsilon\|\nabla\partial_t\partial_x (H^\varepsilon,\varTheta^\varepsilon)\|_{L^2_TL^2_{xy}}^2
            \leq{}   \tilde{C}_5 T\varepsilon^{\frac{1}{2}}\exp(\tilde{C}_5 T).
        \end{aligned}
    \end{eqnarray*}
    The proof of Lemma \ref{lemma_UHT_x_t} is complete.
\end{proof}
\begin{remark}\label{remark_lemma_UHT_x_t}
    For readers' convenience, we remark  that the following bounds:
    \begin{align*}
        &     \|(u^I,w,\eta^I,\tau^I)\|_{L^\infty_TH^3_{xy}}
        +\|\partial_t(u^I,w)\|_{L^\infty_TH^2_{xy}}
        +\|\partial_t(\eta^I,\tau^I)\|_{L^\infty_TH^1_{xy}}
        +  \|(u^B,\eta^B,\tau^B)\|_{L^\infty_TH^2_xH^1_z} \leq C,\\
        & \| u^B \|_{L^\infty_TH^3_xL^2_z} +\| u^B \|_{L^\infty_TL^2_xH^2_z}
        +\| \partial_t u^B \|_{L^\infty_TH^1_xH^1_z}
        +\| \partial_t u^B \|_{L^\infty_TH^2_xL^2_z}
        +\| \partial_t (\eta^B,\tau^B) \|_{L^\infty_TH^1_xL^2_z} \leq C,
    \end{align*} \eqref{eq_source_term} and \eqref{eq_source_term_t} are used in the proof of Lemma \ref{lemma_UHT_x_t}.
\end{remark}

\begin{lemma}\label{lemma_UHT_x_H_2}
    Suppose that $(U^\varepsilon,H^\varepsilon,\varTheta^\varepsilon)$ is the solution of the problem \eqref{eq_UHT_varepsilon}-\eqref{eq_UHT_int_bo} on $[0,T]$ with $T\in (0,\min\{T_\star,T_\varepsilon\}]$. Under the assumptions of Theorem \ref{theorem_limit}, there exists a generic positive constant C such that
    \begin{eqnarray}\label{eq_UHT_x_infty_H_2}
        \| \partial_x {U^\varepsilon} \|_{L^\infty_TH^2_{xy}}^2
        \leq C \varepsilon^{\frac{1}{2}},~~
          \varepsilon^{\frac{3}{2}}  \|\partial_x(  {H^\varepsilon}, {\varTheta^\varepsilon})\|_{L^\infty_TH^2_{xy}}^2
        \leq C.
     \end{eqnarray}
\end{lemma}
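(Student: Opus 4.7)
The plan is to reduce \eqref{eq_UHT_x_infty_H_2} to standard Stokes and Neumann-elliptic estimates applied to the $\partial_x$-differentiated equations in \eqref{eq_UHT_varepsilon}, and then plug in the bounds already proved in Lemmas \ref{lemma_UHT_infty_L2}--\ref{lemma_UHT_x_t}. Since the bound for $\partial_xU^\varepsilon$ does not involve negative powers of $\varepsilon$, while those for $\partial_x H^\varepsilon$ and $\partial_x\varTheta^\varepsilon$ do, the two parts are handled by genuinely different (but parallel) arguments; in both cases, no new differential inequality is needed, so Gronwall plays no role.

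First I would treat $\partial_xU^\varepsilon$. Differentiating $\eqref{eq_UHT_varepsilon}_1$ in $x$ gives a Stokes system for $(\partial_xU^\varepsilon,\partial_xP^\varepsilon)$ with the homogeneous Dirichlet condition $\partial_xU^\varepsilon|_{y=0}=0$ inherited from \eqref{eq_UHT_int_bo}. The classical $H^2$-Stokes estimate, applied exactly as in \eqref{eq_nabla2_U}, yields
\begin{equation*}
\|\partial_xU^\varepsilon\|_{H^2_{xy}}^2\lesssim \|\mathrm{div}\,\partial_x\varTheta^\varepsilon\|_{L^2_{xy}}^2+\|\partial_t\partial_xU^\varepsilon\|_{L^2_{xy}}^2+\|\partial_x\mathfrak{F}\|_{L^2_{xy}}^2+\|\partial_xU^\varepsilon\|_{H^1_{xy}}^2+ \text{(nonlinear pieces)}.
\end{equation*}
By Lemma \ref{lemma_UHT_x_t}, $\|\partial_t\partial_xU^\varepsilon\|_{L^\infty_TL^2_{xy}}^2\lesssim\varepsilon^{1/2}$; by Lemma \ref{lemma_HT_nabla_x}, $\|\nabla\partial_x\varTheta^\varepsilon\|_{L^\infty_TL^2_{xy}}^2\lesssim\varepsilon^{1/2}$; by Lemma \ref{lemma_source_term_1}, $\|\partial_x\mathfrak{F}\|_{L^\infty_TL^2_{xy}}^2\lesssim\varepsilon^2$. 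The nonlinear pieces of the form $\partial_x(u^a\cdot\nabla U^\varepsilon)$, $\partial_x(U^\varepsilon\cdot\nabla u^a)$ and $\sqrt{\varepsilon}\,\partial_x(U^\varepsilon\cdot\nabla U^\varepsilon)$ are controlled in $L^2_{xy}$ by combining $\|u^a\|_{W^{1,\infty}_{xy}}\lesssim 1$ with $\|U^\varepsilon\|_{L^\infty_TH^2_{xy}}^2\lesssim\varepsilon^{3/2}$ from Lemma \ref{lemma_HT_nabla_x} and Ladyzhenskaya's inequality \eqref{ieq_Lady}; they are therefore at worst $O(\varepsilon^{3/2})$, i.e., of the same order as the target. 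Assembling these pieces gives $\|\partial_xU^\varepsilon\|_{L^\infty_TH^2_{xy}}^2\lesssim\varepsilon^{1/2}$.

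Next I would handle $\partial_xH^\varepsilon$. Rewriting $\partial_x\eqref{eq_UHT_varepsilon}_2$ as the elliptic identity
\begin{equation*}
\varepsilon\Delta(\partial_xH^\varepsilon)=\partial_t\partial_xH^\varepsilon+\partial_x\!\bigl(u^a\!\cdot\!\nabla H^\varepsilon+U^\varepsilon\!\cdot\!\nabla\eta^a+\sqrt\varepsilon\,U^\varepsilon\!\cdot\!\nabla H^\varepsilon\bigr)-\partial_x\mathfrak{G},
\end{equation*}
subject to the Neumann condition $\partial_y(\partial_xH^\varepsilon)|_{y=0}=0$, I would take the $L^2_{xy}$ norm. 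The leading driver $\partial_t\partial_xH^\varepsilon$ is of size $\varepsilon^{1/4}$ in $L^\infty_TL^2_{xy}$ by Lemma \ref{lemma_UHT_x_t}; each nonlinear term is at worst $O(\varepsilon^{1/4})$ thanks to Lemmas \ref{lemma_UHT_nabla_t}--\ref{lemma_UHT_x_t} (in particular, $\|\nabla\partial_xH^\varepsilon\|_{L^\infty_TL^2_{xy}}\lesssim\varepsilon^{1/4}$ and $\|U^\varepsilon\|_{L^\infty_{xy}}\lesssim\varepsilon^{3/4}$); and $\|\partial_x\mathfrak{G}\|_{L^\infty_TL^2_{xy}}\lesssim\varepsilon$. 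Dividing by $\varepsilon$ and invoking Neumann-elliptic regularity on $\mathbb{R}^2_+$, $\|\partial_xH^\varepsilon\|_{H^2_{xy}}\lesssim\|\Delta(\partial_xH^\varepsilon)\|_{L^2_{xy}}+\|\partial_xH^\varepsilon\|_{H^1_{xy}}$, yields $\|\partial_xH^\varepsilon\|_{L^\infty_TH^2_{xy}}\lesssim\varepsilon^{-3/4}$, which is exactly $\varepsilon^{3/2}\|\partial_xH^\varepsilon\|_{L^\infty_TH^2_{xy}}^2\lesssim 1$. The argument for $\partial_x\varTheta^\varepsilon$ is entirely parallel: the additional reaction term $\gamma\partial_x\varTheta^\varepsilon$ and the stress-coupling terms $\partial_x\mathcal{Q}(\nabla u^a,\varTheta^\varepsilon)$, $\partial_x\mathcal{Q}(\nabla U^\varepsilon,\tau^a)$, $\partial_x\mathcal{B}(\eta^a,\nabla U^\varepsilon)$, $\partial_x\mathcal{B}(H^\varepsilon,\nabla u^a)$ plus the small $\sqrt{\varepsilon}$-multiplied quadratic terms are absorbed by the same combination of Lemmas \ref{lemma_HT_nabla_x} and \ref{lemma_UHT_x_t}.

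The main obstacle is bookkeeping rather than any analytic novelty: every nonlinear term produced by the $\partial_x$-derivative must be matched against one of the $\varepsilon$-scalings already in hand, and one has to verify that the anisotropic inequalities \eqref{ieq_Lady}, \eqref{eq_fgh}, \eqref{eq_u_infty}, when combined with $\|U^\varepsilon\|_{L^\infty_TH^2_{xy}}\lesssim\varepsilon^{3/4}$ and $\|\nabla\partial_x(H^\varepsilon,\varTheta^\varepsilon)\|_{L^\infty_TL^2_{xy}}\lesssim\varepsilon^{1/4}$, never produce a term of the form $\|\partial_xU^\varepsilon\|_{H^2_{xy}}^2$ or $\varepsilon^2\|\partial_x(H^\varepsilon,\varTheta^\varepsilon)\|_{H^2_{xy}}^2$ on the right-hand side that cannot be absorbed into the left-hand side. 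This is why the previous lemmas are arranged to first deliver the $\partial_t\partial_x$-bounds (Lemma \ref{lemma_UHT_x_t}) and the $\nabla\partial_x$-bounds (Lemma \ref{lemma_HT_nabla_x}), so that the present $H^2$-in-$x$ estimate is only a one-step elliptic consequence of them.
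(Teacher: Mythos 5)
Your proposal is correct and matches the paper's proof in all essentials: the $\partial_xU^\varepsilon$ bound is obtained by applying the $H^2$-Stokes estimate to the $\partial_x$-differentiated momentum equation and plugging in the already-available bounds from Lemmas \ref{lemma_source_term_1}, \ref{lemma_HT_nabla_x} and \ref{lemma_UHT_x_t}, while the $\partial_xH^\varepsilon$ and $\partial_x\varTheta^\varepsilon$ bounds come from the Neumann-elliptic estimate for $\varepsilon\Delta(\partial_x\cdot)$ in the same way, with the resulting $\varepsilon^{-3/2}$ blow-up exactly what is needed. No new differential inequality or Gronwall step is used in the paper either, and your numerology ($\varepsilon^{1/4}$ for the driving terms, division by $\varepsilon$, $\varepsilon^{-3/4}$ for the final $H^2$-bound) agrees with the paper's statement $\varepsilon^2\|\partial_x(H^\varepsilon,\varTheta^\varepsilon)\|_{L^\infty_TH^2_{xy}}^2\lesssim\varepsilon^{1/2}$.
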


\begin{proof}
   Using $\eqref{eq_UHT_varepsilon}_1$, the Stokes estimate, and Lemmas \ref{lemma_source_term_1},   \ref{lemma_HT_nabla_x} and   \ref{lemma_UHT_x_t}, we have
    \begin{align*}%\label{eq_U_x_H2}
         \|\partial_x U^{\varepsilon}\|_{L^\infty_TH_{xy}^2}^2\lesssim {}& \|\partial_t\partial_x U^\varepsilon\|_{L^\infty_TL_{xy}^2}^2
         + \Big(\|u^a\|_{L^\infty_TL^\infty_{xy}}^2+  \|\nabla u^a\|_{L^\infty_TL^4_{xy}}^2 + \| \nabla\partial_x u^a\|_{L^\infty_TL^2_{xy}}^2 +1\Big)\|  U^\varepsilon\|_{L^\infty_TH_{xy}^2}^2
         \notag\\
        &+ \varepsilon\|  U^\varepsilon\|_{L^\infty_TH_{xy}^2}^4
        + \| \nabla\partial_x \varTheta^\varepsilon\|_{L^\infty_TL_{xy}^2}^2 + \|\partial_x \mathfrak{F} \|_{L^\infty_TL_{xy}^2}^2\\
        \leq{}&  C\varepsilon^{\frac{1}{2}}.\notag
    \end{align*}
From  $\eqref{eq_UHT_varepsilon}_2$ and the standard elliptic estimate, we have
    \begin{align*}%\label{eq_H_x_H2}
        \varepsilon^2\|\partial_x H^{\varepsilon}\|_{L^\infty_TH_{xy}^2}^2\lesssim {}& \|\partial_t\partial_x H^\varepsilon\|_{L^\infty_TL_{xy}^2}^2
        + \|\partial_xu^a\|_{L^\infty_TL^\infty_{xy}}^2\|\nabla  H^\varepsilon\|_{L^\infty_TL_{xy}^2}^2
        + \|u^a\|_{L^\infty_TL^\infty_{xy}}^2\|\nabla \partial_x H^\varepsilon\|_{L^\infty_TL_{xy}^2}^2
        \\
        &+ \|  U^\varepsilon\|_{L^\infty_TH_{xy}^2}^2\Bigl(\|\nabla\eta^a\|_{L^\infty_TL^4_{xy}}^2+ \|\nabla\partial_x\eta^a\|_{L^\infty_TL^2_{xy}}^2  \Bigr)
        +  \varepsilon  \|  U^\varepsilon\|_{L^\infty_TH_{xy}^2}^2 \|  H^\varepsilon\|_{L^\infty_TH_{xy}^2}^2\\
        & +\|\partial_x \mathfrak{G} \|_{L^\infty_TL_{xy}^2}^2 + \|H^\varepsilon\|_{L^\infty_TL_{xy}^2}^2   \\
        \leq{} & C\varepsilon^{\frac{1}{2}}.
    \end{align*}
Similarly, from $\eqref{eq_UHT_varepsilon}_3$, we have
\begin{align*}%\label{eq_T_x_H2}
    \varepsilon^2\|\partial_x \varTheta^\varepsilon\|_{L^\infty_TH_{xy}^2}^2\lesssim {}& \|\partial_t\partial_x \varTheta^\varepsilon\|_{L^\infty_TL_{xy}^2}^2
    + \|\partial_xu^a\|_{L^\infty_TL^\infty_{xy}}^2\|\nabla  \varTheta^\varepsilon\|_{L^\infty_TL_{xy}^2}^2
    + \|u^a\|_{L^\infty_TL^\infty_{xy}}^2\|\nabla \partial_x \varTheta^\varepsilon\|_{L^\infty_TL_{xy}^2}^2
    \notag\\
    &+ \|  U^\varepsilon\|_{L^\infty_TH_{xy}^2}^2\Bigl( \|\nabla\tau^a\|_{L^\infty_TL^4_{xy}}^2+ \|\nabla\partial_x\tau^a\|_{L^\infty_TL^2_{xy}}^2
    +\|  (\eta^a,\tau^a)\|_{L^\infty_TL^\infty_{xy}}^2 \notag\\
    & +\| \partial_x(\eta^a,\tau^a)\|_{L^\infty_TL^4_{xy}}^2 \Bigr)
    + \Bigl(\|\nabla\partial_x u^a\|_{L^\infty_TL^4_{xy}}^2 + \|\nabla  u^a\|_{L^\infty_TL^\infty_{xy}}^2 + 1 \Bigr)
    \\
    &\times \| (H^\varepsilon, \varTheta^\varepsilon)\|_{L^\infty_TH_{xy}^1}^2
     +  \varepsilon  \|  U^\varepsilon\|_{L^\infty_TH_{xy}^2}^2 \| (H^\varepsilon, \varTheta^\varepsilon)\|_{L^\infty_TH_{xy}^2}^2 +\|\partial_x \mathfrak{J} \|_{L^\infty_TL_{xy}^2}^2 \notag\\
    \leq{} & C\varepsilon^{\frac{1}{2}}.\notag
\end{align*}
The proof of Lemma \ref{lemma_UHT_x_H_2} is complete.
\end{proof}

\begin{remark}\label{remark_lemma_UHT_x_H_2}
    For readers' convenience, we remark that the following bounds:
    \begin{align*}
        &   \| u^B \|_{L^\infty_TH^3_xL^2_z} +\| u^B \|_{L^\infty_TH^2_xH^1_z}  +\| u^B \|_{L^\infty_TH^1_xH^2_z}
        +  \|( \eta^B,\tau^B)\|_{L^\infty_TH^2_{xz}}
        \leq C,\\
        &   \|(u^I,w)\|_{L^\infty_TH^3_{xy}} + \|(eta^I,\tau^I )\|_{L^\infty_TH^2_{xy}}
         \leq C,
    \end{align*} \eqref{eq_source_term} and \eqref{eq_source_term_t} are used in the proof of Lemma \ref{lemma_UHT_x_H_2}.
\end{remark}

\subsection{Proof of Theorem \ref{theorem_limit}}

To begin with, using Proposition \ref{local_wellposedness_0}, Lemmas
\ref{lemma_eta_tau_b_1}, \ref{local_wellposedness_outer_2},
 \ref{lemma_eta_tau_b_2}-\ref{lemma_eta_tau_b_3}
  and Corollary \ref{corollary_error},
we obtain that the problem \eqref{OB_main}-\eqref{OB_main_boundary} admits a unique solution $(u^\varepsilon,\eta^\varepsilon,\tau^\varepsilon) \in C([0,T_{\star}];H^2_{xy})$ with $\varepsilon^{\frac{1}{4}}\partial_x(u^\varepsilon,\eta^\varepsilon,\tau^\varepsilon) \in C([0,T_{\star}];H^2_{xy}).$

 Using \eqref{eq_u_infty},  Lemmas \ref{lemma_UHT_infty_L2}, \ref{lemma_UHT_nabla_t} and  \ref{lemma_HT_nabla_x}, we have
\begin{eqnarray}\label{eq_U_infty}
    \begin{aligned}
        \|U^\varepsilon\|_{L^\infty_TL^\infty_{xy}} \lesssim{}& \|\partial_xU^\varepsilon\|_{L^\infty_TL_{xy}^2}^{\frac{1}{2}}\|\partial_yU^\varepsilon\|_{L^\infty_TL_{xy}^2}^{\frac{1}{2}} + \|U^\varepsilon\|_{L^\infty_TL_{xy}^2}^{\frac{1}{2}}\|\partial_x\partial_yU^\varepsilon\|_{L^\infty_TL_{xy}^2}^{\frac{1}{2}}\\
        \lesssim{}&\varepsilon^{\frac{3}{4}}  + \varepsilon^{\frac{1}{2}}\times \varepsilon^{\frac{3}{4}\times\frac{1}{2}}\leq C \varepsilon^{\frac{3}{4}},
    \end{aligned}
\end{eqnarray}
    \begin{eqnarray}\label{eq_H_infty}
        \begin{aligned}
        \|H^\varepsilon \|_{L^\infty_TL^\infty_{xy}}\lesssim {}& \|\partial_xH^\varepsilon\|_{L^\infty_TL_{xy}^2}^{\frac{1}{2}}\|\partial_yH^\varepsilon\|_{L^\infty_TL_{xy}^2}^{\frac{1}{2}} + \|H^\varepsilon\|_{L^\infty_TL_{xy}^2}^{\frac{1}{2}}\|\partial_x\partial_yH^\varepsilon\|_{L^\infty_TL_{xy}^2}^{\frac{1}{2}}\\
        \lesssim{} & \varepsilon^{\frac{3}{4}} +\varepsilon^\frac{1}{2}\times\varepsilon^{\frac{1}{4}\times\frac{1}{2}} \leq C\varepsilon^{\frac{5}{8}},
    \end{aligned}
    \end{eqnarray}
    and
    \begin{eqnarray} \label{eq_T_infty}
        \begin{aligned}
        \|\varTheta^\varepsilon \|_{L^\infty_TL^\infty_{xy}}\lesssim {} & \|\partial_x\varTheta^\varepsilon\|_{L^\infty_TL_{xy}^2}^{\frac{1}{2}}\|\partial_y\varTheta^\varepsilon\|_{L^\infty_TL_{xy}^2}^{\frac{1}{2}} + \|\varTheta^\varepsilon\|_{L^\infty_TL_{xy}^2}^{\frac{1}{2}}\|\partial_x\partial_y \varTheta^\varepsilon\|_{L^\infty_TL_{xy}^2}^{\frac{1}{2}}\\
        \lesssim {}&  \varepsilon^{\frac{3}{4}} +\varepsilon^\frac{1}{2}\times\varepsilon^{\frac{1}{4}\times\frac{1}{2}} \leq C\varepsilon^{\frac{5}{8}}.
    \end{aligned}
\end{eqnarray}

Therefore, from the definition of $ U^\varepsilon$, \eqref{eq_U_infty} and Sobolev   inequalities, we get
 \begin{align*}
    &\|u(x,y,t) - u^{I,0}(x,y,t) \|_{L^\infty_TL^\infty_{xy}} \\
      \lesssim{} & \varepsilon^{\frac{1}{2}}\|U^\varepsilon\|_{L^\infty_TL^\infty_{xy}}  + \varepsilon \Bigl(\|u^{I,2}\|_{L^\infty_TH^2_{xy}} + \|u_1^{B,2}\|_{L^\infty_TH^2_{xz}}\Bigr)
       + \varepsilon^{\frac{3}{2}}\Bigl(\|u^{I,3}\|_{L^\infty_TH^2_{xy}}  + \|u^{B,3}\|_{L^\infty_TH^2_{xz}} \Bigr)\\
      &+ \varepsilon^2\|u^{B,4}\|_{L^\infty_TH^2_{xz}}+ \varepsilon^{\frac{5}{2}}\|u_2^{B,5}\|_{L^\infty_TH^2_{xz}} + \varepsilon^2\|w\|_{L^\infty_TH^2_{xy}}
      \leq   C\varepsilon,
    \end{align*}
    which yields \eqref{eq_u_convergence_rate}. Similarly, using the definition of $ H^\varepsilon$ and \eqref{eq_H_infty}, we obtain
    \begin{align*}
            &\|\eta(x,y,t)   - \eta^{I,0}(x,y,t)   \|_{L^\infty_TL^\infty_{xy}} \\
            \lesssim {} &  \varepsilon^{\frac{1}{2}}\|H^\varepsilon\|_{L^\infty_TL^\infty_{xy}}
            +\varepsilon^{\frac{1}{2}}\|\eta^{B,1}\|_{L^\infty_TH^2_{xz}}
            + \varepsilon \Bigl(\|\eta^{I,2}\|_{L^\infty_TH^2_{xy}} + \|\eta^{B,2}\|_{L^\infty_TH^2_{xz}}\Bigr) + \varepsilon^{\frac{3}{2}} \|\eta^{B,3}\|_{L^\infty_TH^2_{xz}}
            \leq  C\varepsilon^{\frac{1}{2}},
        \end{align*}
        which leads to \eqref{eq_eta_convergence_rate}.

        Next, by the definition of $ \varTheta^\varepsilon$ and  \eqref{eq_T_infty}, we get
        \begin{align*}
            &\|\tau(x,y,t) - \tau^{I,0}(x,y,t) \|_{L^\infty_TL^\infty_{xy}}  \\
            \lesssim {} & \varepsilon^{\frac{1}{2}}\|\varTheta^\varepsilon\|_{L^\infty_TL^\infty_{xy}}
            +\varepsilon^{\frac{1}{2}}\|\tau^{B,1}\|_{L^\infty_TH^2_{xz}}
            + \varepsilon \Bigl(\|\tau^{I,2}\|_{L^\infty_TH^2_{xy}} + \|\tau^{B,2}\|_{L^\infty_TH^2_{xz}}\Bigr) + \varepsilon^{\frac{3}{2}} \|\tau^{B,3}\|_{L^\infty_TH^2_{xz}}
            \leq  C\varepsilon^{\frac{1}{2}},
\end{align*}
which leads to \eqref{eq_tau_convergence_rate}.

Using \eqref{eq_u_infty} and Lemmas  \ref{lemma_HT_nabla_x} and \ref{lemma_UHT_x_H_2}, we have
\begin{eqnarray}\label{eq_U_y_infty}
    \begin{aligned}
        \|\partial_y U^\varepsilon \|_{L^\infty_TL^\infty_{xy}}
        \lesssim {}& \|\partial_x\partial_y U^\varepsilon\|_{L^\infty_TL_{xy}^2}^{\frac{1}{2}}\|\partial_y^2 U^\varepsilon\|_{L^\infty_TL_{xy}^2}^{\frac{1}{2}} + \|\partial_y U^\varepsilon\|_{L^\infty_TL_{xy}^2}^{\frac{1}{2}}\|\partial_x\partial_y^2 U^\varepsilon\|_{L^\infty_TL_{xy}^2}^{\frac{1}{2}}\\
        \lesssim {}&  \|  U^\varepsilon\|_{L^\infty_TH_{xy}^2}  + \|\nabla U^\varepsilon\|_{L^\infty_TL_{xy}^2}^{\frac{1}{2}}\|\partial_x  U^\varepsilon\|_{L^\infty_TH_{xy}^2}^{\frac{1}{2}}\\
        \lesssim {}& \varepsilon^{\frac{3}{4}} + \varepsilon^{\frac{3}{4}\times\frac{1}{2}}\times\varepsilon^{\frac{1}{4}\times\frac{1}{2}}
         \leq C\varepsilon^{  \frac{1}{2}}.
    \end{aligned}
    \end{eqnarray}
    Similarly, we have
\begin{eqnarray}\label{eq_H_y_infty}
    \begin{aligned}
        \|\partial_y H^\varepsilon \|_{L^\infty_TL^\infty_{xy}}
        \lesssim {}& \|\partial_x\partial_y H^\varepsilon\|_{L^\infty_TL_{xy}^2}^{\frac{1}{2}}\|\partial_y^2 H^\varepsilon\|_{L^\infty_TL_{xy}^2}^{\frac{1}{2}} + \|\partial_y H^\varepsilon\|_{L^\infty_TL_{xy}^2}^{\frac{1}{2}}\|\partial_x\partial_y^2 H^\varepsilon\|_{L^\infty_TL_{xy}^2}^{\frac{1}{2}}\\
        \lesssim {}& \varepsilon^{\frac{1}{4}\times\frac{1}{2}}\times\varepsilon^{-\frac{1}{4}\times\frac{1}{2}} + \varepsilon^{\frac{3}{4}\times\frac{1}{2}}\times\varepsilon^{-\frac{3}{4}\times\frac{1}{2}} \leq C,
    \end{aligned}
    \end{eqnarray}
    and
    \begin{eqnarray}\label{eq_T_y_infty}
        \begin{aligned}
            \|\partial_y \varTheta^\varepsilon \|_{L^\infty_TL^\infty_{xy}}
            \lesssim{} & \|\partial_x\partial_y \varTheta^\varepsilon\|_{L^\infty_TL_{xy}^2}^{\frac{1}{2}}\|\partial_y^2 \varTheta^\varepsilon\|_{L^\infty_TL_{xy}^2}^{\frac{1}{2}} + \|\partial_y \varTheta^\varepsilon\|_{L^\infty_TL_{xy}^2}^{\frac{1}{2}}\|\partial_x\partial_y^2 \varTheta^\varepsilon\|_{L^\infty_TL_{xy}^2}^{\frac{1}{2}}\\
            \lesssim {}& \varepsilon^{\frac{1}{4}\times\frac{1}{2}}\times\varepsilon^{-\frac{1}{4}\times\frac{1}{2}} + \varepsilon^{\frac{3}{4}\times\frac{1}{2}}\times\varepsilon^{-\frac{3}{4}\times\frac{1}{2}} \leq C.
        \end{aligned}
        \end{eqnarray}
        Hence, from the definition of $ U^\varepsilon$, \eqref{eq_U_y_infty} and Sobolev   inequalities, we get
        \begin{align*}
           &\|\partial_yu(x,y,t) -\partial_y u^{I,0}(x,y,t) \|_{L^\infty_TL^\infty_{xy}} \\
             \lesssim {}& \varepsilon^{\frac{1}{2}}\|\partial_y U^\varepsilon\|_{L^\infty_TL^\infty_{xy}}
             +\varepsilon^{\frac{1}{2}} \|\partial_zu_1^{B,2}\|_{  L^\infty_TH^1_{x}H^1_{z}}+ \varepsilon\|\partial_y u^{I,2}\|_{  L^\infty_TH^2_{xy}} + \varepsilon \|\partial_zu^{B,3}\|_{  L^\infty_TH^1_{x}H^1_{z}}\\
             &+\varepsilon^{\frac{3}{2}}\|\partial_y u^{I,3}\|_{L^\infty_T H^2_{xy}} +  \varepsilon^\frac{3}{2}\|\partial_zu^{B,4}\|_{  L^\infty_TH^1_{x}H^1_{z}}
             +\varepsilon^2\|\partial_z u_2^{B,5}\|_{  L^\infty_TH^1_{x}H^1_{z}}  + \varepsilon^2\|\partial_y w\|_{L^\infty_TH^2_{xy}}\\
              \leq{}&  C\varepsilon^{\frac{1}{2}},
           \end{align*}
           which yields \eqref{eq_u_y_convergence_rate}. Similarly, using the definition of $ H^\varepsilon$ and \eqref{eq_H_infty}, we obtain
           \begin{align*}
                   &\|\partial_y\eta(x,y,t)   - \partial_y\eta^{I,0}(x,y,t) -  \partial_z\eta^{B,1}(x,\frac{y}{\sqrt{\varepsilon}},t) \|_{L^\infty_TL^\infty_{xy}} \\
                   \lesssim{} & \varepsilon^{\frac{1}{2}}\|\partial_yH^\varepsilon\|_{L^\infty_TL^\infty_{xy}}
                   + \varepsilon^{\frac{1}{2}} \|\partial_z\eta^{B,2}\|_{L^\infty_TH^1_{x}H^1_{z}}
                   +\varepsilon\|\partial_y \eta^{I,2}\|_{L^\infty_TH^2_{xy}} + \varepsilon  \|\partial_z\eta^{B,3}\|_{  L^\infty_TH^1_{x}H^1_{z}}
                   \leq{}  C\varepsilon^{\frac{1}{2}},
               \end{align*}
               which leads to \eqref{eq_eta_y_convergence_rate}.

                Next, by the definition of $ \varTheta^\varepsilon$ and  \eqref{eq_T_infty}, we get
               \begin{align*}
                &\|\partial_y\tau(x,y,t) - \partial_y\tau^{I,0}(x,y,t)-  \partial_z \tau^{B,1}(x,\frac{y}{\sqrt{\varepsilon}},t)\|_{L^\infty_TL^\infty_{xy}}  \\
                \lesssim {}& \varepsilon^{\frac{1}{2}}\|\partial_y\varTheta^\varepsilon\|_{L^\infty_TL^\infty_{xy}}
                + \varepsilon^{\frac{1}{2}} \|\partial_z\tau^{B,2}\|_{ L^\infty_TH^1_{x}H^1_{z}}
                +\varepsilon\|\partial_y \tau^{I,2}\|_{L^\infty_TH^2_{xy}}+ \varepsilon  \|\partial_z\tau^{B,3}\|_{  L^\infty_TH^1_{x}H^1_{z}}
                \leq  C\varepsilon^{\frac{1}{2}},
            \end{align*}
            which leads to \eqref{eq_tau_y_convergence_rate}. The proof of Theorem \ref{theorem_limit} is complete.

\section*{Appendix}

    \appendix

    \section{Derivation of boundary layer systems}\label{section_derivation_eqs}
    In this section, we give a formal derivation of the inner and outer layer profiles via the asymptotic matched expansion method (see for instance Chapter 4 of \cite{Holmes_2013} or \cite{Hou_Zhao_Wang_2016}). \\[2mm]
    {\bfseries Step 1. Initial and boundary conditions.}\\[3mm]
     Substituting \eqref{eq_approx} into the initial conditions \eqref{OB_main_initial}, we have
    \begin{equation}\label{eq_initial}
    \begin{split}
    (u^{I,0},\eta^{I,0},\tau^{I,0})(x,y,0) &=( u_0,\eta_0,\tau_0)(x,y),~~~ (u^{B,0},\eta^{B,0},\tau^{B,0})(x,y,0) = 0,\\[2mm]
    (u^{I,j},\eta^{I,j},\tau^{I,j})(x,y,0)& = (u^{B,j},\eta^{B,j},\tau^{B,j})(x,y,0) = 0,~~~~\text{     for all }j\geq 1.
    \end{split}
    \end{equation}
 Next,  substituting \eqref{eq_approx} into \eqref{OB_main_boundary},   we obtain
    \begin{equation*}
        \begin{split}
        &\sum_{j=0}^{\infty}\varepsilon^{j/2}\Big(u^{I,j}(x,0,t)+u^{B,j}(x,0,t)\Big) = 0,\\
        &\varepsilon^{-1/2}\partial_z\eta^{B,0}(x,0,t)+ \sum_{j=0}^{\infty}\varepsilon^{j/2}\Big(\partial_y\eta^{I,j}(x,0,t)+\partial_z\eta^{B,j+1}(x,0,t)\Big) = 0,\\
        &\varepsilon^{-1/2}\partial_z\tau^{B,0}(x,0,t)+ \sum_{j=0}^{\infty}\varepsilon^{j/2}\Big(\partial_y\tau^{I,j}(x,0,t)+\partial_z\tau^{B,j+1}(x,0,t)\Big) = 0.
        \end{split}
    \end{equation*}
    To fulfill the above conditions, it is required that
    \begin{eqnarray}\label{eq_boundary}
        \begin{aligned}
            &  u^{I,j}(x,0,t)+u^{B,j}(x,0,t)  = 0,\, \partial_z\eta^{B,0}(x,0,t) = 0,\, \partial_z\tau^{B,0}(x,0,t) = 0,\\[2mm]
             \partial_y&\eta^{I,j}(x,0,t)+\partial_z\eta^{B,j+1}(x,0,t)  = 0,\,\partial_y\tau^{I,j}(x,0,t)+\partial_z\tau^{B,j+1}(x,0,t)  = 0,
             \end{aligned}
    \end{eqnarray}
    for all $j\geq 0.$ \\[3mm]
    \noindent{\bfseries Step 2. Equations of $(u^{I,0},\tilde{p}^{I,0},\eta^{I,0},\tau^{I,0})$ and $(u^{B,0},\tilde{p}^{B,0},\eta^{B,0},\tau^{B,0})$.}\\[3mm]
    To begin with, the zeroth order outer layer profiles $(u^{I,0},\tilde{p}^{I,0},\eta^{I,0},\tau^{I,0})$ satisfies the limit problem \eqref{OB_limit_I}-\eqref{OB_limit_I_initial_boundary1}.
    In fact, away from the boundary, we set the  approximate  solutions in the form that
 \begin{eqnarray}\label{eq_approx_outer}
     \begin{aligned}
         u(x,y,t) ={} &\sum_{j=0}^{\infty}\varepsilon^{\frac{1}{2}j}u^{I,j}(x,y,t),~\tilde{p}(x,y,t) = \sum_{j=0}^{\infty}\varepsilon^{\frac{1}{2}j}\tilde{p}^{I,j}(x,y,t),\\
        \eta(x,y,t) ={} &\sum_{j=0}^{\infty}\varepsilon^{\frac{1}{2}j}\eta^{I,j}(x,y,t),~\tau(x,y,t) = \sum_{j=0}^{\infty}\varepsilon^{\frac{1}{2}j}\tau^{I,j}(x,y,t).
     \end{aligned}
 \end{eqnarray}
 Substituting \eqref{eq_approx_outer} into   system \eqref{OB_main}, and matching the $j$-th order terms, we find that $(u^{I,j},\tilde{p}^{I,j},\eta^{I,j},\tau^{I,j})$ satisfies the following
 system:
   \begin{equation}\label{eq_outer_layer_j}
    \begin{cases}
    \partial_tu^{I,j} +  \sum\limits_{\ell=0}^{j}u^{I,\ell}\cdot\nabla u^{I,j-\ell} +\nabla \tilde{p}^{I,j} -\mu \Delta u^{I,j}
    -\mathrm{div} \tau^{I,j} = 0,\\[2mm]
    \partial_t\eta^{I,j}+  \sum\limits_{\ell=0}^{j} u^{I,\ell}\cdot\nabla\eta^{I,j-\ell}  = \Delta \eta ^{I,j-2},\\[2mm]
    \partial_t\tau^{I,j}+ \sum\limits_{\ell=0}^{j} u^{I,\ell}\cdot \nabla \tau^{I,j-\ell}  - \sum\limits_{\ell=0}^{j}\mathcal{Q}(\nabla u^{I,\ell} ,\tau^{I,j-\ell})
    - \sum\limits_{\ell=0}^{j}\mathcal{B}(\eta^{I,\ell},\nabla u^{I,j-\ell})   + \gamma\tau^{I,j} = \Delta \tau ^{I,j-2},\\[2mm]
    \mathrm{div} u^{I,j} = 0,
    \end{cases}
\end{equation}for $j\geq 0,$ where $\eta^{I,-1} =\eta^{I,-2} = 0$ and $\tau^{I,-1} =\tau^{I,-2} = 0$.

Letting $j = 0$ in \eqref{eq_outer_layer_j}, and combining with \eqref{eq_initial}, \eqref{eq_boundary}, we find that
$(u^{I,0},\tilde{p}^{I,0},\eta^{I,0},\tau^{I,0})$ satisfies the limit problem \eqref{OB_limit_I}-\eqref{OB_limit_I_initial_boundary1}.
\begin{lemma}\label{lemma_eq_u_eta_tau_B_0}
    The zeroth order inner profile $(u^{B,0},\tilde{p}^{B,0},\eta^{B,0},\tau^{B,0})$ vanishes identically, i.e.,
    \begin{equation}\label{eq_u_eta_tau_B_0}
        \begin{split}
            &u^{B,0} = 0,\,   \eta^{B,0} = 0,\, \tau^{B,0} = 0,\, \tilde{p}^{B,0}=0.
        \end{split}
    \end{equation}
 \end{lemma}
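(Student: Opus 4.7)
The plan is to substitute the full inner-outer expansion \eqref{eq_approx} into the system \eqref{OB_main}, Taylor-expand every outer profile around $y=0$ via $y=\sqrt{\varepsilon}z$, collect the coefficients of each power of $\varepsilon^{1/2}$, and use the boundary conditions \eqref{eq_boundary} together with the requirement that inner profiles decay as $z\to+\infty$ to deduce the profiles order by order.

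\textbf{Step 1: the velocity $u^{B,0}$.} I would start with the divergence-free condition $\partial_x u_1+\partial_y u_2=0$. In the inner coordinates $\partial_y$ becomes $\varepsilon^{-1/2}\partial_z$, so the lowest-order contribution, at order $\varepsilon^{-1/2}$, is $\partial_z u_2^{B,0}=0$, which together with $u_2^{B,0}\to0$ as $z\to+\infty$ gives $u_2^{B,0}\equiv0$. Next I would look at the singular viscous term $-\mu\Delta u=-\mu\partial_x^2 u-\mu\varepsilon^{-1}\partial_z^2 u^{B,\cdot}$ in the momentum equation: the $\varepsilon^{-1}$ order of the first component reads $-\mu\partial_z^2 u_1^{B,0}=0$, and combined with the far-field decay this forces $u_1^{B,0}\equiv0$. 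Hence $u^{B,0}=0$.

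\textbf{Step 2: the scalar $\eta^{B,0}$ and the tensor $\tau^{B,0}$.} With $u^{B,0}=0$ already known and $u^{I,0}|_{y=0}=0$ (from \eqref{OB_limit_I_initial_boundary1}), the singular convective contributions $\varepsilon^{-1/2}u_2\partial_z\eta$ and $\varepsilon^{-1/2}u_2\partial_z\tau$ vanish identically at the lowest order. Matching the $\varepsilon^0$ terms of \eqref{OB_main}$_2$ yields a linear homogeneous parabolic equation for $\eta^{B,0}$ of the form $\partial_t\eta^{B,0}-\partial_z^2\eta^{B,0}+a(x,z,t)\partial_z\eta^{B,0}=0$, where $a$ is linear in $z$ coming from the Taylor expansion of $u_2^{I,0}$. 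The boundary condition \eqref{eq_boundary} gives $\partial_z\eta^{B,0}(x,0,t)=0$, the initial condition \eqref{eq_initial} gives $\eta^{B,0}|_{t=0}=0$, and we have decay at $z\to+\infty$. Standard uniqueness for this Neumann problem on the half-line (in $z$) then forces $\eta^{B,0}\equiv0$. An entirely analogous argument, applied component by component to the matrix equation \eqref{OB_main}$_3$ (the quadratic terms $\mathcal{Q}(\nabla u,\tau)$ and $\mathcal{B}(\eta,\nabla u)$ all involve either $u^{B,0}$ or $u^{I,0}|_{y=0}$, hence vanish at this order), yields $\tau^{B,0}\equiv0$.

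\textbf{Step 3: the pressure $\tilde{p}^{B,0}$.} The only remaining information comes from the second component of the momentum equation. At order $\varepsilon^{-1/2}$ it reads $\partial_z\tilde{p}^{B,0}=\partial_z\tau_{22}^{B,0}$, and integrating in $z$ from $z$ to $+\infty$ using the decay assumption together with Step 2 gives $\tilde{p}^{B,0}=\tau_{22}^{B,0}=0$.

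The main obstacle I expect is bookkeeping: carefully separating the powers of $\varepsilon^{1/2}$ after Taylor-expanding the outer profiles $u^{I,0}(x,\sqrt{\varepsilon}z,t)$, $\eta^{I,0}$, $\tau^{I,0}$ around $y=0$, and confirming that no forcing term is produced at the zeroth order in the equations for $\eta^{B,0}$ and $\tau^{B,0}$ (so that uniqueness with zero data indeed applies). Once the matching is done cleanly, each of Steps 1--3 reduces to an elementary half-line ODE or parabolic uniqueness argument.
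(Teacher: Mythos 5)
Your overall strategy (substitute the expansion, Taylor-expand the outer profiles around $y=0$, match powers of $\varepsilon^{1/2}$, and invoke uniqueness for homogeneous parabolic Neumann problems on the half-line with zero data) is indeed the one the paper uses, and Steps~1 and~3, as well as the treatment of $\eta^{B,0}$, are fine in substance. However, there is a genuine gap in your Step~2 for $\tau^{B,0}$.

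You assert that the quadratic terms $\mathcal{Q}(\nabla u,\tau)$ and $\mathcal{B}(\eta,\nabla u)$ "all involve either $u^{B,0}$ or $u^{I,0}|_{y=0}$, hence vanish at this order." This is not true. Because $\partial_y = \varepsilon^{-1/2}\partial_z$ inside the layer, the $\varepsilon^{1/2}$-order inner velocity $u^{B,1}$ produces $\partial_z u_1^{B,1}$ at order $\varepsilon^0$, and this term multiplies $\tau^{I,0}(x,0,t)$, $k\eta^{I,0}(x,0,t)$, and $\tau^{B,0}$ in the equations for $\tau_{12}^{B,0}$ and $\tau_{11}^{B,0}$. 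At the point where you want to close the argument, $u_1^{B,1}$ is still unknown, so the $\tau^{B,0}$-equation is \emph{not} a closed homogeneous problem, and uniqueness cannot be invoked as stated. The paper's resolution is the coupling obtained from the $\varepsilon^{-1/2}$-order momentum balance ($F^{-1}=0$), namely $\mu\partial_z u_1^{B,1}+\tau_{12}^{B,0}=0$: substituting $\partial_z u_1^{B,1}=-\tfrac{1}{\mu}\tau_{12}^{B,0}$ turns the unknown velocity-gradient term into a zeroth-order linear coefficient, after which the system is solved component by component in the order $\tau_{22}^{B,0}\to\tilde{p}^{B,0}\to\tau_{12}^{B,0}\to u_1^{B,1}\to\tau_{11}^{B,0}$. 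Your proof is missing this coupling and the componentwise ordering, which is the actual crux of the lemma. A small additional sign that the Taylor bookkeeping was not carried through: you describe the advective coefficient in the $\eta^{B,0}$-equation as a nontrivial function $a$ "linear in $z$", but $\mathrm{div}\,u^{I,0}=0$ together with $u_1^{I,0}(x,0,t)=0$ forces $\partial_y u_2^{I,0}(x,0,t)=0$, so this coefficient actually vanishes and the equation reduces to the pure heat equation, as in \eqref{eq_eta_B_0}.
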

 \begin{proof}
    Near the boundary, plugging \eqref{eq_approx} into \eqref{OB_main}$_1$,
 subtracting $\eqref{eq_outer_layer_j}_1$ from the resulting equation, and then
 using $y = \sqrt{\varepsilon}z$
    and Taylor's expansion:
    \begin{eqnarray*}
        f(x,y,t) = f(x,\sqrt{\varepsilon}z,t) = \sum_{l=0}^{\infty}\frac{1}{\ell!}(\sqrt{\varepsilon}z)^\ell\partial_y^\ell f(x,0,t),
    \end{eqnarray*}
    for the outer layer profiles $(u^{I,j},\tilde{p}^{I,j},\eta^{I,j},\tau^{I,j})$, we obtain
    \begin{equation*}
        \sum_{j=-2}^\infty \varepsilon^{j/2} F^j(x,z,t) = 0,
    \end{equation*}
    where
    \begin{equation*}
        \begin{cases}
            F^{-2} =  -\mu \partial_z^2 u^{B,0},\\[2mm]
            F^{-1}=   (u_2^{I,0}(x,0,t) + u_2^{B,0})\partial_z u^{B,0} + (0,\partial_z\tilde{p}^{B,0})^\top -\mu \partial_z^2u^{B,1} - (\partial_z \tau_{12}^{B,0},\partial_z \tau_{22}^{B,0})^\top,\\[2mm]
            F^{0}~~=   \partial_t u^{B,0} + u^{B,0}\cdot \nabla u^{I,0}(x,0,t) + (u_1^{I,0}(x,0,t) + u^{B,0}_1)\partial_xu^{B,0} +  (u_2^{I,0}(x,0,t) + u^{B,0}_2)\partial_zu^{B,1}\\
            ~~~~~~~~+(u_2^{I,1}(x,0,t) + u^{B,1}_2)\partial_zu^{B,0} + (\partial_x\tilde{p}^{B,0},\partial_z\tilde{p}^{B,1})^\top - \mu\partial_x^2 u^{B,0} -\mu\partial_z^2u^{B,2}\\
            ~~~~~~~~-(\partial_x\tau_{11}^{B,0}+\partial_z\tau_{12}^{B,1},\partial_x\tau_{21}^{B,0}+\partial_z\tau_{22}^{B,1})^\top + z\partial_yu^{I,0}_2(x,0,t)\partial_zu^{B,0},\\
            \cdots.
        \end{cases}
    \end{equation*}
    Moreover, divergence-free of $u$ yields that
    \begin{equation}\label{eq_divfree_B}
        \partial_x u_1^{B,j} + \partial_z u_2^{B,j+1} = 0,
    \end{equation} for all $j\geq 0$.

     $F^{-2} = 0$ and \eqref{eq_divfree_B} yield
    \begin{eqnarray}\label{eq_u_B_00}
         u^{B,0} = 0,~u_2^{B,1} = 0.
    \end{eqnarray}
      Thus, by the boundary condition \eqref{eq_boundary}, we deduce
      $$u^{I,0}(x,0,t) = 0,u_2^{I,1}(x,0,t) = 0.$$
    By virtue of $F^{-1} = 0,$ we have
    \begin{equation}\label{eq_u_B_1}
        \begin{cases}
            \mu\partial_zu_1^{B,1} + \tau_{12}^{B,0} = 0,\\[2mm]
            \tilde{p}^{B,0} -  \mu\partial_zu_2^{B,1} -\tau_{22}^{B,0} =0.
        \end{cases}
    \end{equation}
    A similar process for $\eta$ yields
    \begin{equation*}
        \sum_{j=-1}^\infty \varepsilon^{j/2} G^j(x,z,t) = 0,
    \end{equation*}
    where
    \begin{equation*}
        \begin{cases}
            G^{-1} =  (u_2^{I,0}(x,0,t) + u^{B,0}_2)\partial_z\eta^{B,0},\\[2mm]
            G^{0}~~ =  \partial_t\eta^{B,0} + u^{B,0}\cdot\nabla\eta^{I,0}(x,0,t) + (u_1^{I,0}(x,0,t) + u^{B,0}_1)\partial_x\eta^{B,0} +(u_2^{I,0}(x,0,t) + u^{B,0}_2)\partial_z\eta^{B,1}\\[2mm]
            ~~~~~~~~~  +(u_2^{I,1}(x,0,t) + u^{B,1}_2)\partial_z\eta^{B,0} -\partial_z^2\eta^{B,0} + z\partial_yu^{I,0}_2(x,0,t)\partial_z\eta^{B,0},\\
            \cdots.
        \end{cases}
    \end{equation*}
 From $G^{0} = 0$, \eqref{eq_u_B_00}, $ \mathrm{div} u^{I,0} = 0, u^{I,0}(x,0,t) = 0,$ the initial condition \eqref{eq_initial} and the boundary condition \eqref{eq_boundary}, we obtain
 \begin{equation}\label{eq_eta_B_0}
    \begin{cases}
        \partial_t \eta^{B,0}-\partial_z^2\eta^{B,0} = 0,\\[2mm]
        \eta^{B,0}(x,z,0) = 0,\partial_z\eta^{B,0}(x,0,t) = 0,
    \end{cases}
 \end{equation}
 which, by the uniqueness of  heat equations, yields
 \begin{equation}
 \label{etab0}
 \eta^{B,0} = 0.
 \end{equation}
 By a similar process for $\tau$, we get
    \begin{equation*}
        \sum_{j=-1}^\infty \varepsilon^{j/2} K^j(x,z,t) = 0,
    \end{equation*}
    where
    \begin{eqnarray*}
        K^{j} =
        \left(\begin{matrix}
             K^{j}_{11} & K^{j}_{12}\\
             K^{j}_{12} & K^{j}_{22}
        \end{matrix}
        \right),
    \end{eqnarray*}
      are $2\times 2$ symmetric matrix for $j=-1,0,1,2,\cdot\cdot\cdot$.

      Noticing that
    \begin{align*}
           K^{-1}_{11}=& (u_2^{I,0}(x,0,t) + u_2^{B,0})\partial_z \tau^{B,0}_{11} - 2(\tau_{12}^{I,0}(x,0,t) + \tau_{12}^{B,0}) \partial_{z}u_1^{B,0},\\[2mm]
           K^{-1}_{12}=&  (u_2^{I,0}(x,0,t) + u_2^{B,0})\partial_z \tau^{B,0}_{12} -(\tau_{22}^{I,0}(x,0,t)
           + \tau_{22}^{B,0})\partial_zu_1^{B,0} \\
           &-(\tau_{12}^{I,0}(x,0,t) + \tau_{12}^{B,0})\partial_zu_2^{B,0}
           -k(\eta^{I,0}(x,0,t) + \eta^{B,0})\partial_zu_1^{B,0},\\[2mm]
           K^{-1}_{22}=& (u_2^{I,0}(x,0,t) + u_2^{B,0})\partial_z \tau^{B,0}_{22} -2(\tau_{22}^{I,0}(x,0,t) + \tau_{22}^{B,0}) \partial_{z}u_2^{B,0}\\
           & -2k(\eta^{I,0}(x,0,t) + \eta^{B,0})\partial_zu_2^{B,0}.
    \end{align*}
    and recalling (\ref{eq_u_eta_tau_B_0}) that $ u^{B,0} = 0,  \eta^{B,0} = 0,$ we find $K^{-1} = 0$.

     For $K^{0}$ we have
        \begin{align*}
            K^0_{11} = ~& \partial_t\tau_{11}^{B,0} + \lambda\tau_{11}^{B,0}-\partial_z^2\tau_{11}^{B,0}+ u^{B,0}\cdot\nabla\tau_{11}^{I,0}(x,0,t) +  [u_1^{I,0}(x,0,t) + u^{B,0}_1]\partial_x\tau_{11}^{B,0} \\
        & +[ u_2^{I,0}(x,0,t) + u^{B,0}_2] \partial_z\tau_{11}^{B,1}+[ u_2^{I,1}(x,0,t) + u^{B,1}_2]\partial_z\tau_{11}^{B,0}  + z\partial_yu^{I,0}_2(x,0,t)\partial_z\tau_{11}^{B,0}\\
        &  -2  [\tau_{11}^{I,0}(x,0,t) + \tau_{11}^{B,0}]\partial_xu_1^{B,0} -2  \tau_{11}^{B,0}\partial_x u_1^{I,0}(x,0,t)  -2 \tau_{12}^{B,0}\partial_y  u_1^{I,0}(x,0,t) \\
        &- 2  [\tau_{12}^{I,0}(x,0,t) + \tau_{12}^{B,0}] \partial_zu_1^{B,1} -2  [\tau_{12}^{I,1}(x,0,t) + \tau_{12}^{B,1}]\partial_zu_1^{B,0}-2z\partial_y\tau_{12}^{I,0}(x,0,t)\partial_z u_1^{B,0} \\
        &- 2k [\eta^{I,0}(x,0,t) + \eta^{B,0}] \partial_x u_1^{B,0}-2k\eta^{B,0}\partial_x u_1^{I,0}(x,0,t),\\[2mm]
            K^0_{12} = ~& \partial_t\tau_{12}^{B,0} + \lambda\tau_{12}^{B,0}-\partial_z^2\tau_{12}^{B,0}+ u^{B,0}\cdot\nabla\tau_{12}^{I,0}(x,0,t) +  [u_1^{I,0}(x,0,t) + u^{B,0}_1 ]\partial_x\tau_{12}^{B,0} \\
        & +[ u_2^{I,0}(x,0,t) + u^{B,0}_2]\partial_z\tau_{12}^{B,1}+ [ u_2^{I,1}(x,0,t) + u^{B,1}_2 ]\partial_z\tau_{12}^{B,0}  + z\partial_yu^{I,0}_2(x,0,t)\partial_z\tau_{12}^{B,0}\\
        &  -  [\tau_{11}^{I,0}(x,0,t) + \tau_{11}^{B,0}]\partial_xu_2^{B,0} - \tau_{11}^{B,0}\partial_x u_2^{I,0}(x,0,t)- \tau_{22}^{B,0}\partial_yu_1^{I,0}(x,0,t) \\
        &- [\tau_{22}^{I,0}(x,0,t) + \tau_{22}^{B,0}]\partial_zu_1^{B,1} - [ \tau_{22}^{I,1}(x,0,t) + \tau_{22}^{B,1}]\partial_zu_1^{B,0}-z\partial_y\tau_{22}^{I,0}(x,0,t)\partial_z u_1^{B,0}\\
        &  - k[ \eta^{I,0}(x,0,t) + \eta^{B,0}]\partial_x u_2^{B,0} -k\eta^{B,0}[ \partial_x u_2^{I,0}(x,0,t) + \partial_y u_1^{I,0}(x,0,t)]\\
        &-k[ \eta^{I,0}(x,0,t) + \eta^{B,0}]\partial_z u_1^{B,1} -k[ \eta^{I,1}(x,0,t) + \eta^{B,1}]\partial_z u_1^{B,0} -kz\partial_y\eta^{I,0}(x,0,t)\partial_z u_1^{B,0},
        \end{align*}
    and
        \begin{align*}
            K^0_{22} = ~& \partial_t\tau_{22}^{B,0} + \lambda\tau_{22}^{B,0}-\partial_z^2\tau_{22}^{B,0}+ u^{B,0}\cdot\nabla\tau_{22}^{I,0}(x,0,t) +  [u_1^{I,0}(x,0,t) + u^{B,0}_1]\partial_x\tau_{22}^{B,0} \\
        & +[ u_2^{I,0}(x,0,t) + u^{B,0}_2] \partial_z\tau_{22}^{B,1}+[ u_2^{I,1}(x,0,t) + u^{B,1}_2]\partial_z\tau_{22}^{B,0}  + z\partial_yu^{I,0}_2(x,0,t)\partial_z\tau_{22}^{B,0}\\
        &  -2  [\tau_{12}^{I,0}(x,0,t) + \tau_{12}^{B,0}]\partial_xu_2^{B,0} -2  \tau_{12}^{B,0}\partial_x u_2^{I,0}(x,0,t)  -2 \tau_{22}^{B,0}\partial_y  u_2^{I,0}(x,0,t) \\
        &- 2  [\tau_{22}^{I,0}(x,0,t) + \tau_{22}^{B,0}] \partial_zu_2^{B,1} -2  [\tau_{22}^{I,1}(x,0,t) + \tau_{22}^{B,1}]\partial_zu_2^{B,0}-2z\partial_y\tau_{22}^{I,0}(x,0,t)\partial_z u_2^{B,0} \\
        &-2k\eta^{B,0}\partial_y u_2^{I,0}(x,0,t)- 2k [\eta^{I,0}(x,0,t) + \eta^{B,0}] \partial_z u_2^{B,1}- 2k [\eta^{I,1}(x,0,t) + \eta^{B,1}] \partial_z u_2^{B,0}\\
        &-2kz\partial_y\eta^{I,0}(x,0,t)\partial_z u_2^{B,0}.
        \end{align*}
        Since the expressions of $K^{j},j\geq 1$ are pretty lengthy, we omit them here for brevity.

     Using $u^{I,0}(x,0,t) = 0,\, u_2^{I,1}(x,0,t) = 0,\, u^{B,0} = 0,\, u_2^{B,1} = 0,\, \eta^{B,0} = 0$ from (\ref{OB_limit_I_initial_boundary1}), (\ref{eq_u_eta_tau_B_0}), and Corollary \ref{corollary_u}, and by virtue of $\mathrm{div} u^{I,0} = 0$, the initial-boundary conditions \eqref{eq_initial}, \eqref{eq_boundary}, we find that $\tau_{22}^{B,0}$ satisfies the following problem:
    \begin{equation}\label{eq_tau_22_B_0}
        \begin{cases}
            \partial_t \tau_{22}^{B,0}+ \gamma\tau_{22}^{B,0} -\partial_z^2\tau_{22}^{B,0} = 0,\\[2mm]
            \tau_{22}^{B,0}(x,z,0) = 0,~\partial_z\tau_{22}^{B,0}(x,0,t) = 0,
        \end{cases}
     \end{equation}
     which, by the uniqueness of solutions, yields
     \begin{eqnarray}\label{eq_tau_B_0_22}
        \tau_{22}^{B,0} = 0,~\text{ and } \tilde{p}^{B,0} = 0,
     \end{eqnarray}
      where $\eqref{eq_u_B_1}_2$ is used.

      Since $\tau_{12}^{B,0} = \tau_{21}^{B,0},$ we obtain
    \begin{equation*}
        \begin{cases}
            \partial_t \tau_{12}^{B,0} -\partial_yu_{1}^{I,0}(x,0,t)\tau_{22}^{B,0} - \partial_zu_1^{B,1}(\tau_{22}^{I,0}(x,0,t) + \tau_{22}^{B,0})\\[1mm]
            ~~-k\eta^{I,0}(x,0,t)\partial_zu_1^{B,1} + \gamma\tau_{12}^{B,0} - \partial_z^2\tau_{12}^{B,0} = 0,\\[2mm]
            \tau_{12}^{B,0}(x,z,0) = 0,~\partial_z\tau_{12}^{B,0}(x,0,t) = 0.
        \end{cases}
    \end{equation*}
    From \eqref{eq_u_B_1}$_1$, we have $\partial_zu^{B,1}_1 = -\frac{1}{\mu} \tau_{12}^{B,0}.$ Thus, $\tau_{12}^{B,0}$ satisfies the following linear equation:
    \begin{equation}\label{eq_tau_12_B_0}
        \begin{cases}
            \partial_t \tau_{12}^{B,0}  +\frac{1}{\mu}(\tau_{22}^{I,0}(x,0,t) + k\eta^{I,0}(x,0,t) + \gamma\mu)\tau_{12}^{B,0}  - \partial_z^2\tau_{12}^{B,0} = 0,\\[2mm]
            \tau_{12}^{B,0}(x,z,0) = 0,~\partial_z\tau_{12}^{B,0}(x,0,t) = 0,
        \end{cases}
    \end{equation}
    which yields
    \begin{eqnarray}\label{eq_tau_u_B_0}
       \tau_{21}^{B,0} = \tau_{12}^{B,0} = 0, \text{ and }   u^{B,1}_1 = 0.
    \end{eqnarray}

Next, using the above results and $K^0_{11} = 0,$ we find that $\tau_{11}^{B,0}$ satisfies
\begin{equation} \label{eq_tau_11_B_0}
    \begin{cases}
        \partial_t \tau_{11}^{B,0}+ \gamma\tau_{11}^{B,0} -\partial_z^2\tau_{11}^{B,0} = 0,\\[2mm]
        \tau_{11}^{B,0}(x,z,0) = 0,\partial_z\tau_{11}^{B,0}(x,0,t) = 0,
    \end{cases}
 \end{equation}
 which yields
 \begin{eqnarray}\label{eq_tau_p_B_0}
     \tau_{11}^{B,0} = 0.
 \end{eqnarray}
  In view of \eqref{eq_u_B_00}, (\ref{etab0}), \eqref{eq_tau_B_0_22}, \eqref{eq_tau_u_B_0}, and \eqref{eq_tau_p_B_0}, the proof of Lemma \ref{lemma_eq_u_eta_tau_B_0} is complete.
\end{proof}

\medskip

\noindent{\bfseries Step 3. Equations of $(u^{I,1},\tilde{p}^{I,1},\eta^{I,1},\tau^{I,1})$ and $(u^{B,1},\tilde{p}^{B,1},\eta^{B,1},\tau^{B,1})$.}

\medskip

As a consequence of Lemma \ref{lemma_eq_u_eta_tau_B_0}, we get the following corollary.
\begin{corollary}\label{corollary_u}
    From \eqref{eq_divfree_B}, (\ref{eq_u_B_00}) and \eqref{eq_tau_u_B_0}, we have
    \begin{eqnarray}\label{eq_u_B_2_2}
        u^{B,1} = 0,~u_2^{B,2} =0,
    \end{eqnarray}
    which combining with \eqref{eq_boundary} yields the following boundary conditions:
    \begin{eqnarray}\label{eq_first_outer_boundary_cond}
        u^{I,1}(x,0,t) = 0,~u^{I,2}_2(x,0,t) = 0.
   \end{eqnarray}
\end{corollary}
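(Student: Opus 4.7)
The plan is to assemble the conclusion directly from the three inputs cited in the statement, with the only nontrivial step being a short decay argument for $u_2^{B,2}$. First I would observe that $u^{B,1}=0$ is immediate: its first component $u_1^{B,1}=0$ is already part of \eqref{eq_tau_u_B_0} (obtained along the way while analyzing $\tau_{12}^{B,0}$ via $\mu\partial_z u_1^{B,1}+\tau_{12}^{B,0}=0$), while its second component $u_2^{B,1}=0$ is part of \eqref{eq_u_B_00} (which came from the $F^{-2}=0$ matching together with \eqref{eq_divfree_B} at $j=0$). So the first assertion of \eqref{eq_u_B_2_2} is just a bookkeeping statement.

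Next, for $u_2^{B,2}$ I would invoke the divergence-free relation \eqref{eq_divfree_B} at $j=1$, namely $\partial_x u_1^{B,1}+\partial_z u_2^{B,2}=0$. Since $u_1^{B,1}=0$ was just recorded, this reduces to $\partial_z u_2^{B,2}=0$, i.e. $u_2^{B,2}$ is independent of $z$. Combining this with the far-field decay hypothesis on the inner profiles (stated in Section~\ref{subsection_asym}, namely $u^{B,j}(x,z,t)\to 0$ fast enough as $z\to +\infty$) forces $u_2^{B,2}\equiv 0$. This decay-at-infinity input is the only ingredient beyond pure algebra, and it is the step I would flag as the main (though very mild) obstacle, since without it one could only conclude $u_2^{B,2}$ is a function of $(x,t)$.

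Finally, I would read off the outer boundary data from the matched boundary condition \eqref{eq_boundary}. At order $j=1$, $u^{I,1}(x,0,t)+u^{B,1}(x,0,t)=0$ together with $u^{B,1}=0$ gives $u^{I,1}(x,0,t)=0$. Taking the second component at order $j=2$, $u_2^{I,2}(x,0,t)+u_2^{B,2}(x,0,t)=0$ together with $u_2^{B,2}=0$ gives $u_2^{I,2}(x,0,t)=0$. This concludes \eqref{eq_first_outer_boundary_cond}. The whole proof is short and can be written as three one-line derivations in sequence, essentially a corollary assembly step that prepares the boundary data needed before attacking the first-order outer profile and the first-order inner profiles in the next subsection.
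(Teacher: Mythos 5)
Your proposal is correct and matches the paper's (implicit) reasoning exactly: the paper states the corollary without a proof, treating it as a direct bookkeeping consequence, and your three-step assembly — reading off $u^{B,1}=0$ from \eqref{eq_u_B_00} and \eqref{eq_tau_u_B_0}, then using the $j=1$ divergence-free relation \eqref{eq_divfree_B} together with the far-field decay of inner profiles to conclude $u_2^{B,2}=0$, and finally substituting into \eqref{eq_boundary} at orders $j=1$ and $j=2$ — is precisely the intended argument. Your flag that the decay-at-infinity hypothesis is the one genuine input (without which $u_2^{B,2}$ could be an arbitrary function of $(x,t)$) is also the right thing to notice.
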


\begin{lemma}\label{lemma_u_I_1}
    The first order outer profiles vanish identically, i.e.,
    \begin{eqnarray}
        (u^{I,1},\tilde{p}^{I,1},\eta^{I,1},\tau^{I,1}) = 0.
    \end{eqnarray}
\end{lemma}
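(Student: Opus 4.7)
My plan is to recognize that the system governing $(u^{I,1},\tilde{p}^{I,1},\eta^{I,1},\tau^{I,1})$ is exactly the $j=1$ case of \eqref{eq_outer_layer_j}, which is a \emph{linear} initial-boundary value problem with coefficients depending only on the already-constructed zeroth order profile $(u^{I,0},\eta^{I,0},\tau^{I,0})$. Specifically, since $\eta^{I,-1}=\tau^{I,-1}=0$, system \eqref{eq_outer_layer_j} with $j=1$ reads
\begin{equation*}
\begin{cases}
\partial_t u^{I,1}+u^{I,0}\!\cdot\!\nabla u^{I,1}+u^{I,1}\!\cdot\!\nabla u^{I,0}+\nabla\tilde{p}^{I,1}-\mu\Delta u^{I,1}-\mathrm{div}\,\tau^{I,1}=0,\\[1mm]
\partial_t\eta^{I,1}+u^{I,0}\!\cdot\!\nabla\eta^{I,1}+u^{I,1}\!\cdot\!\nabla\eta^{I,0}=0,\\[1mm]
\partial_t\tau^{I,1}+u^{I,0}\!\cdot\!\nabla\tau^{I,1}+u^{I,1}\!\cdot\!\nabla\tau^{I,0}-\mathcal{Q}(\nabla u^{I,0},\tau^{I,1})-\mathcal{Q}(\nabla u^{I,1},\tau^{I,0})\\[1mm]
\qquad-\mathcal{B}(\eta^{I,0},\nabla u^{I,1})-\mathcal{B}(\eta^{I,1},\nabla u^{I,0})+\gamma\tau^{I,1}=0,\\[1mm]
\mathrm{div}\,u^{I,1}=0,
\end{cases}
\end{equation*}
equipped with zero initial data from \eqref{eq_initial} and the homogeneous boundary condition $u^{I,1}(x,0,t)=0$ from Corollary \ref{corollary_u} (equation \eqref{eq_first_outer_boundary_cond}). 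Hence it suffices to invoke uniqueness for this linear problem, which I will establish via a standard energy estimate.

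I would multiply the momentum equation by $u^{I,1}$, the number density equation by $\eta^{I,1}$, the stress equation (entrywise) by $\tau^{I,1}$, and integrate over $\mathbb{R}^2_+$. Using $\mathrm{div}\,u^{I,0}=\mathrm{div}\,u^{I,1}=0$ together with the no-slip condition $u^{I,1}|_{y=0}=0$, the transport terms $\langle u^{I,0}\!\cdot\!\nabla u^{I,1},u^{I,1}\rangle$, $\langle u^{I,0}\!\cdot\!\nabla\eta^{I,1},\eta^{I,1}\rangle$ and $\langle u^{I,0}\!\cdot\!\nabla\tau^{I,1},\tau^{I,1}\rangle$ vanish, as does the pressure contribution $\langle\nabla\tilde{p}^{I,1},u^{I,1}\rangle$. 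Integration by parts and Cauchy--Schwarz then yield
\begin{equation*}
\frac{1}{2}\frac{d}{dt}\bigl(\|u^{I,1}\|_{L^2_{xy}}^2+\|\eta^{I,1}\|_{L^2_{xy}}^2+\|\tau^{I,1}\|_{L^2_{xy}}^2\bigr)+\mu\|\nabla u^{I,1}\|_{L^2_{xy}}^2+\gamma\|\tau^{I,1}\|_{L^2_{xy}}^2 \leq C\bigl(\|u^{I,1}\|_{L^2_{xy}}^2+\|\eta^{I,1}\|_{L^2_{xy}}^2+\|\tau^{I,1}\|_{L^2_{xy}}^2\bigr),
\end{equation*}
where the constant $C$ depends only on $\|(u^{I,0},\eta^{I,0},\tau^{I,0})\|_{L^\infty_TW^{1,\infty}_{xy}}$, which is finite by Proposition \ref{local_wellposedness_0}. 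Gronwall's inequality combined with the vanishing initial data from \eqref{eq_initial} then forces $u^{I,1}\equiv 0,\eta^{I,1}\equiv 0,\tau^{I,1}\equiv 0$ on $[0,T_0]$. The pressure $\tilde{p}^{I,1}$ then solves a homogeneous Neumann-type problem $\nabla\tilde{p}^{I,1}=0$ on a connected domain, hence is constant in space, and may be normalized to zero.

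The main (mild) technical point is verifying that all boundary terms produced by integration by parts actually vanish: the viscous term contributes $\mu\int_{\mathbb{R}}\partial_y u^{I,1}\cdot u^{I,1}\,dx$ at $y=0$, which is zero since $u^{I,1}|_{y=0}=0$; and the stress divergence term $\langle\mathrm{div}\,\tau^{I,1},u^{I,1}\rangle$ rewrites as $-\langle\tau^{I,1},\nabla u^{I,1}\rangle$ using the same boundary condition. The coupling term $\langle\tau^{I,1},\nabla u^{I,1}\rangle$ appearing in the momentum estimate is balanced by the $-\langle\mathcal{B}(\eta^{I,0},\nabla u^{I,1}),\tau^{I,1}\rangle$ and $-\langle\mathcal{Q}(\nabla u^{I,1},\tau^{I,0}),\tau^{I,1}\rangle$ contributions in the stress estimate via Cauchy--Schwarz with a small constant absorbed into the viscosity $\mu\|\nabla u^{I,1}\|_{L^2_{xy}}^2$; no sign cancellation is required since the right-hand side is ultimately bounded by a multiple of the total $L^2$ energy. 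Since neither high regularity nor careful structural cancellation is invoked, the argument is straightforward once the system has been written out, and I would indicate in the paper that the proof is a standard application of the linear-problem uniqueness result (essentially Proposition \ref{prop_outer_prototype} with vanishing source terms and data).
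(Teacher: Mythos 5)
Your proposal is correct and follows essentially the same route as the paper: both identify the $j=1$ case of \eqref{eq_outer_layer_j} as a linear homogeneous system in $(u^{I,1},\tilde{p}^{I,1},\eta^{I,1},\tau^{I,1})$ with zero initial data (from \eqref{eq_initial}) and zero boundary data (from \eqref{eq_first_outer_boundary_cond}), and conclude that the solution vanishes by uniqueness for the linearized problem. The paper simply states this uniqueness without detail (citing the regularity from Proposition \ref{local_wellposedness_0}), whereas you supply the underlying $L^2$ energy estimate and Gronwall argument, which is exactly what the paper implicitly relies on.
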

\begin{proof}
  Letting $j = 1$ in \eqref{eq_outer_layer_j}, we find that $(u^{I,1},\tilde{p}^{I,1},\eta^{I,1},\tau^{I,1})$ satisfies the following
homogeneous system:
\begin{equation}\label{eq_u_eta_tau_I1}
    \begin{cases}
    \partial_tu^{I,1} +u^{I,0}\cdot\nabla u^{I,1} + u^{I,1}\cdot\nabla u^{I,0} +\nabla \tilde{p}^{I,1} -\mu \Delta u^{I,1}
    -\mathrm{div} \tau^{I,1} = 0,\\[2mm]
    \partial_t\eta^{I,1}+u^{I,0}\cdot\nabla \eta^{I,1}+ u^{I,1}\cdot\nabla \eta^{I,0}  =0,\\[2mm]
    \partial_t\tau^{I,1}+u^{I,0}\cdot \nabla \tau^{I,1} + u^{I,1}\cdot \nabla \tau^{I,0}- \mathcal{Q}(\nabla u^{I,0} ,\tau^{I,1})
    - \mathcal{Q}(\nabla u^{I,1} ,\tau^{I,0})\\
    ~~~ -\mathcal{B}(\eta^{I,0} ,\nabla u^{I,1}) - \mathcal{B}( \eta^{I,1} ,\nabla u^{I,0}) + \gamma\tau^{I,1} =0,\\[2mm]
    \mathrm{div} u^{I,1} = 0.\\
    \end{cases}
\end{equation}
By \eqref{eq_initial} and  \eqref{eq_first_outer_boundary_cond}, we find that
\eqref{eq_u_eta_tau_I1} is equipped with the homogeneous initial-boundary condition:
\begin{eqnarray}
    (u^{I,1}, \eta^{I,1},\tau^{I,1})(x,y,0) = 0,~u^{I,1}(x,0,t) = 0,
\end{eqnarray}
 which implies that $(u^{I,1},\tilde{p}^{I,1},\eta^{I,1},\tau^{I,1})$ vanishes identically. Here the regularities in Proposition \ref{local_wellposedness_0} is used.
\end{proof}
\begin{lemma}\label{lemma_u_eta_tau_B_111}
     The profiles $\eta^{B,1},\tau^{B,1}_{22},\tau^{B,1}_{12}$, and $\tau^{B,1}_{11}$ satisfy \eqref{eq_eta_B1}, \eqref{eq_tau_B1_22}, \eqref{eq_tau_B1_12}, and \eqref{eq_tau_B1_11}, respectively.
    In addition, $\tilde{p}^{B,1} = \tau^{B,1}_{22}$.
\end{lemma}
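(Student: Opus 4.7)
\medskip

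\noindent\textbf{Proof proposal.} The plan is to continue the asymptotic matched expansion initiated in Lemma \ref{lemma_eq_u_eta_tau_B_0} and Corollary \ref{corollary_u} to the next order in $\sqrt{\varepsilon}$. At this point I may freely use that $u^{B,0},u^{B,1},u^{I,1},\eta^{B,0},\tau^{B,0},\tilde{p}^{B,0}$ vanish, that $u_2^{B,2}=0$, and that $u^{I,0}(x,0,t)=0$, $u_2^{I,1}(x,0,t)=0$. From $\mathrm{div}\,u^{I,0}=0$ together with $u_1^{I,0}(x,0,t)=0$ one also has $\partial_x u_1^{I,0}(x,0,t)=0=\partial_y u_2^{I,0}(x,0,t)$, which will cause several would-be source terms to drop out.

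For $\eta^{B,1}$, I would extract $G^{1}=0$ from the $\eta$-expansion. Thanks to the vanishing profiles listed above, every nonlinear and lower-order transport contribution disappears, leaving $\partial_t\eta^{B,1}-\partial_z^2\eta^{B,1}=0$. The Neumann condition $\partial_z\eta^{B,1}(x,0,t)=-\partial_y\eta^{I,0}(x,0,t)$ is the $j=0$ case of \eqref{eq_boundary}, and the zero initial datum comes from \eqref{eq_initial}. The same mechanism applied to $K^{1}_{22}=0$ yields \eqref{eq_tau_B1_22} for $\tau_{22}^{B,1}$: all stretching and coupling terms either involve vanished zeroth-order profiles or $\partial_y u_2^{I,0}(x,0,t)=0$, so only $\partial_t\tau_{22}^{B,1}+\gamma\tau_{22}^{B,1}-\partial_z^2\tau_{22}^{B,1}=0$ survives.

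The more delicate pieces are $\tau_{12}^{B,1}$ and $\tau_{11}^{B,1}$, because at this order $\partial_z u_1^{B,2}$ reappears through the stretching terms $(\tau_{22}^{I,0}+k\eta^{I,0})\partial_z u_1^{B,2}$ and $\tau_{12}^{I,0}(x,0,t)\partial_z u_1^{B,2}$. I would first look at the first component of $F^{0}=0$: with the vanishing lower-order data, it collapses to $\mu\partial_z^2 u_1^{B,2}+\partial_z\tau_{12}^{B,1}=0$. Integrating once in $z$ on $(z,\infty)$ and using the far-field decay of $u_1^{B,2}$ and $\tau_{12}^{B,1}$ kills the constant of integration, giving the algebraic relation $\mu\partial_z u_1^{B,2}=-\tau_{12}^{B,1}$. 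Substituting this into $K^{1}_{12}=0$ converts the $(\tau_{22}^{I,0}+k\eta^{I,0})\partial_z u_1^{B,2}$ term into the linear damping coefficient $\tfrac{1}{\mu}(\tau_{22}^{I,0}(x,0,t)+k\eta^{I,0}(x,0,t))$ acting on $\tau_{12}^{B,1}$, while $\partial_y u_1^{I,0}(x,0,t)\,\tau_{22}^{B,1}$ and $k\eta^{B,1}\,\partial_y u_1^{I,0}(x,0,t)$ survive as right-hand sides, producing exactly \eqref{eq_tau_B1_12}. Using the same substitution in $K^{1}_{11}=0$ turns $-2\tau_{12}^{I,0}(x,0,t)\partial_z u_1^{B,2}$ into $-\tfrac{2}{\mu}\tau_{12}^{I,0}(x,0,t)\tau_{12}^{B,1}$ and leaves $2\partial_y u_1^{I,0}(x,0,t)\,\tau_{12}^{B,1}$ as a source, yielding \eqref{eq_tau_B1_11}. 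The Neumann boundary data and zero initial data for both equations are read off from \eqref{eq_boundary} and \eqref{eq_initial}.

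Finally, to identify $\tilde{p}^{B,1}$, I would read off the second component of $F^{0}=0$. After the simplifications, this reduces to $\partial_z\tilde{p}^{B,1}=\partial_z\tau_{22}^{B,1}$, since $\partial_z u_2^{B,2}=-\partial_x u_1^{B,1}=0$ removes the viscous contribution and all convective pieces vanish. Integrating from $z$ to $+\infty$ and using the assumed decay $\tilde{p}^{B,1},\tau_{22}^{B,1}\to 0$ gives $\tilde{p}^{B,1}=\tau_{22}^{B,1}$. The main obstacle throughout is purely a bookkeeping one: carefully carrying all Taylor-expansion remainders in $y=\sqrt{\varepsilon}z$, confirming that every term produced by $\mathcal{Q}$ and $\mathcal{B}$ involving a vanished factor actually drops, and applying the key identity $\mu\partial_z u_1^{B,2}=-\tau_{12}^{B,1}$ at precisely the right moment so that the damping coefficient in \eqref{eq_tau_B1_12} emerges in the stated form rather than as a nonlocal expression.
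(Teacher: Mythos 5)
Your proposal matches the paper's proof in all essential respects: you read off the first component of $F^0=0$ to get $\mu\,\partial_z u_1^{B,2}+\tau_{12}^{B,1}=0$ and its second component to get $\tilde{p}^{B,1}=\tau_{22}^{B,1}$, use $G^1=0$ and $K^1=0$ together with the vanishing of the zeroth-order inner profiles and $u^{B,1}$, $u_2^{B,2}$, $u^{I,1}$, and close the $\tau_{12}^{B,1}$ and $\tau_{11}^{B,1}$ equations by substituting $\partial_z u_1^{B,2}=-\tfrac{1}{\mu}\tau_{12}^{B,1}$, which is exactly how the paper proceeds (it packages the first step as display \eqref{eq_u_B2} and then states the $\tau^{B,1}$ system in \eqref{eq_tau_B_1} before the substitution). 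The one point you make more explicitly than the paper — that $u^{I,0}(x,0,t)=0$ and $\mathrm{div}\,u^{I,0}=0$ force $\partial_x u_1^{I,0}(x,0,t)=\partial_y u_2^{I,0}(x,0,t)=0$, killing the would-be damping and source terms in the $\eta^{B,1}$ and $\tau_{22}^{B,1}$ equations — is indeed needed and is used tacitly in the paper, so your spelling it out is a small improvement in exposition rather than a different route.
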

\begin{proof}
From $F^{0} = 0$,
 Corollary \ref{corollary_u}, Lemmas \ref{lemma_eq_u_eta_tau_B_0} and \ref{lemma_u_I_1}, we get
\begin{equation}\label{eq_u_B2}
   \begin{cases}
       \mu\partial_z u_1^{B,2} + \tau_{12}^{B,1} = 0,\\[2mm]
       \tilde{p}^{B,1} -\tau_{22}^{B,1} = 0,
   \end{cases}
   \implies
   \begin{cases}
        u_1^{B,2} =\frac{1}{\mu} \int_z^{+\infty}\tau_{12}^{B,1}\,dz,\\[2mm]
       \tilde{p}^{B,1} =\tau_{22}^{B,1}.
   \end{cases}
\end{equation}
Combined with $G^{1} = 0,$ \eqref{eq_boundary},
Corollary \ref{corollary_u}, Lemmas \ref{lemma_eq_u_eta_tau_B_0}  and \ref{lemma_u_I_1},\, $\eta^{B,1}$ satisfies \eqref{eq_eta_B1}.

Next, we are in the position to derive the equation of $\tau^{B,1}$. By virtue of $K^{1} = 0$, Corollary \ref{corollary_u}, Lemmas \ref{lemma_eq_u_eta_tau_B_0}  and \ref{lemma_u_I_1}, we get
\begin{equation}\label{eq_tau_B_1}
    \begin{cases}
       \partial_t \tau^{B,1}_{11}+ \gamma\tau^{B,1}_{11} - \partial_z^2\tau^{B,1}_{11} -2\partial_yu_1^{I,0}(x,0,t)\tau_{12}^{B,1} - 2\tau_{12}^{I,0}(x,0,t)\partial_zu_1^{B,2}  = 0,\\[2mm]
       \partial_t \tau^{B,1}_{12}+ \gamma\tau^{B,1}_{12} - \partial_z^2\tau^{B,1}_{12} - \partial_yu_1^{I,0}(x,0,t)\tau_{22}^{B,1} -  \tau_{22}^{I,0}(x,0,t)\partial_zu_1^{B,2}  \\
       ~~~-k\eta^{B,1}\partial_y u_1^{I,0}(x,0,t)- k\eta^{I,0}(x,0,t)\partial_zu_1^{B,2} = 0,\\[2mm]
       \partial_t \tau^{B,1}_{22} + \gamma\tau^{B,1}_{22} - \partial_z^2\tau^{B,1}_{22} = 0.
    \end{cases}
\end{equation}
This, combined with \eqref{eq_boundary} and \eqref{eq_tau_B_1}$_3$, yields that $\tau_{22}^{B,1}$ satisfies \eqref{eq_tau_B1_22}.

By \eqref{eq_boundary}, \eqref{eq_u_B2} and \eqref{eq_tau_B_1}, we have that $\tau_{12}^{B,1}$ and $\tau_{11}^{B,1}$ satisfy \eqref{eq_tau_B1_12} and \eqref{eq_tau_B1_11}, respectively.
\end{proof}
 \noindent{\bfseries Step 4. Equations of $(u^{I,2},\tilde{p}^{I,2},\eta^{I,2},\tau^{I,2})$ and $(u^{B,2},\tilde{p}^{B,2},\eta^{B,2},\tau^{B,2})$.} \\[3mm]
 To begin with, letting $j = 2$ in \eqref{eq_outer_layer_j}, and using \eqref{eq_u_B_2_2}, \eqref{eq_u_B2} and Lemma \ref{lemma_u_I_1}, we find that the second order outer profile $(u^{I,2},\tilde{p}^{I,2},\eta^{I,2},\tau^{I,2})$  solves \eqref{OB_outer_second}.
   \begin{lemma}\label{lemma_u_eta_tau_B_2222}
        The second order inner layer profiles $u_1^{B,2}$ and $u_2^{B,2}$ satisfy that
        \begin{eqnarray*}
            u_1^{B,2} =\frac{1}{\mu} \int_z^{+\infty}\tau_{12}^{B,1}\mathrm{d}z,~ u_2^{B,2} = 0.
        \end{eqnarray*}
        In addition, $\eta^{B,2},\tau^{B,2}_{22},\tau^{B,2}_{12}$ and $\tau^{B,2}_{11}$ satisfy the systems \eqref{eq_eta_B2}, \eqref{eq_tau_B2_22}, \eqref{eq_tau_B2_12} and \eqref{eq_tau_B2_11}.
        Furthermore,  $\tilde{p}^{B,2}$  is determined by \eqref{eq_u_B3}.
    \end{lemma}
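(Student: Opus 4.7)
The plan is to continue the matched asymptotic expansion one order beyond what was already done in Lemmas \ref{lemma_eq_u_eta_tau_B_0} and \ref{lemma_u_eta_tau_B_111}. More precisely, the identities for $(u_1^{B,2},u_2^{B,2})$ come out of the already-derived expansions; the equations for $\eta^{B,2}$ and the three components of $\tau^{B,2}$ follow from setting the next-order coefficients $G^2$ and $K^2$ equal to zero in the inner expansion of $\eqref{OB_main}_2$ and $\eqref{OB_main}_3$; and the formula for $\tilde p^{B,2}$ comes from the vertical component of $F^1=0$ in the inner expansion of $\eqref{OB_main}_1$.

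First, the expression $u_1^{B,2}=\tfrac{1}{\mu}\int_z^{+\infty}\tau_{12}^{B,1}\,\mathrm{d}z$ is the unique decaying antiderivative of the identity $\mu\partial_z u_1^{B,2}+\tau_{12}^{B,1}=0$ already recorded in $\eqref{eq_u_B2}_1$. The identity $u_2^{B,2}=0$ follows from specializing the inner divergence-free relation \eqref{eq_divfree_B} at $j=1$: since $u_1^{B,1}=0$ by Corollary \ref{corollary_u}, one has $\partial_z u_2^{B,2}=-\partial_x u_1^{B,1}=0$, and the decay assumption $u^{B,j}\to 0$ as $z\to+\infty$ forces $u_2^{B,2}\equiv 0$. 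Next, to obtain the equations for $\eta^{B,2}$ and $\tau^{B,2}_{ij}$, I would explicitly extract $G^2$ and each entry of $K^2$. The mechanical step is to Taylor-expand every outer profile $f^{I,j}(x,\sqrt\varepsilon z,t)=\sum_\ell \tfrac{1}{\ell!}(\sqrt\varepsilon z)^\ell\partial_y^\ell f^{I,j}(x,0,t)$ in all products with inner profiles, then gather the terms of order $\varepsilon$. Making systematic use of the vanishing identities $u^{B,0}=u^{B,1}=\eta^{B,0}=\tau^{B,0}=0$, $u^{I,0}|_{y=0}=0$, $u_2^{I,1}|_{y=0}=0$, $u^{I,1}\equiv 0$, together with the known relations \eqref{eq_u_B2}, the surviving contributions on the right-hand sides are precisely the source terms $\mathfrak{h}^{B,2}_{22}, \mathfrak{h}^{B,2}_{12}, \mathfrak{h}^{B,2}_{11}$ and the right-hand side of \eqref{eq_eta_B2}. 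The zero initial conditions are inherited from \eqref{eq_initial}, and the homogeneous Neumann conditions at $z=0$ follow by setting $j=1$ in \eqref{eq_boundary} and invoking $\partial_y\eta^{I,1}=\partial_y\tau^{I,1}=0$ from Lemma \ref{lemma_u_I_1}.

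For the pressure, the vertical component of $F^1=0$, after inserting $u^{B,0}=u^{B,1}=u_2^{B,2}=\tau^{B,0}=0$ and the already-derived $\mu\partial_z u_1^{B,2}=-\tau_{12}^{B,1}$, reduces to a first-order $z$-derivative identity of the schematic form $\partial_z\tilde p^{B,2}=\partial_z(\tau_{22}^{B,2}-\mu\partial_x u_1^{B,2})-\partial_x\tau_{12}^{B,1}$. Integrating from $z$ to $+\infty$ against the decay assumption $\tilde p^{B,2}\to 0$ then produces exactly \eqref{eq_u_B3_p_B2}.

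The main obstacle is not analytical but combinatorial: each bilinear term $u\cdot\nabla f$, $\mathcal Q(\nabla u,\tau)$ and $\mathcal B(\eta,\nabla u)$ in $\eqref{OB_main}_{1\text{--}3}$ produces several order-$\varepsilon$ contributions via Taylor expansion, and one must correctly pair each of them with the appropriate inner profile and verify that the algebra closes into the exact expressions stated for $\mathfrak{h}^{B,2}_{\bullet}$. The bookkeeping must also confirm the symmetry $\tau^{B,2}_{12}=\tau^{B,2}_{21}$ (which is automatic from the symmetry of $K^2$ once all source contributions are assembled) and the linear coupling $\tfrac{1}{\mu}[\tau_{22}^{I,0}+k\eta^{I,0}](x,0,t)\,\tau^{B,2}_{12}$ on the left-hand side of \eqref{eq_tau_B2_12}, which arises by substituting $\partial_z u_1^{B,2}=-\tfrac{1}{\mu}\tau_{12}^{B,1}$ into the $(1,2)$-entry of $K^2$. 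I would carry out this calculation systematically in Appendix \ref{section_derivation_eqs}, component by component, and defer the wellposedness of each resulting Neumann initial-boundary problem to Lemma \ref{lemma_eta_tau_b_2}.
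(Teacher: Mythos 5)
Your plan matches the paper's proof: the formula for $u^{B,2}$ comes from $\eqref{eq_u_B2}_1$ together with the $j=1$ divergence-free relation \eqref{eq_divfree_B} and Corollary \ref{corollary_u}; the equations for $\eta^{B,2}$ and $\tau^{B,2}_{ij}$ are read off from $G^2=0$ and $K^2=0$ with the vanishing of the zeroth-order inner profiles and of $(u^{I,1},\eta^{I,1},\tau^{I,1})$, and with the Neumann conditions from \eqref{eq_boundary} at $j=1$; and the pressure is recovered by integrating the vertical component of $F^1=0$. One small correction: after substituting $\partial_z u_2^{B,3}=-\partial_x u_1^{B,2}$ into $\partial_z\tilde p^{B,2}-\mu\partial_z^2u_2^{B,3}-\partial_x\tau_{12}^{B,1}-\partial_z\tau_{22}^{B,2}=0$, the identity is $\partial_z\tilde p^{B,2}=\partial_z\bigl(\tau_{22}^{B,2}-\mu\partial_x u_1^{B,2}\bigr)+\partial_x\tau_{12}^{B,1}$ (plus sign on the last term, not minus); integrating from $z$ to $+\infty$ then gives precisely \eqref{eq_u_B3_p_B2} with the correct sign on $-\int_z^{+\infty}\partial_x\tau_{12}^{B,1}\,\mathrm{d}z$.
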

    \begin{proof}
        The expression of $u^{B,2}$ is given by \eqref{eq_u_B_2_2} and \eqref{eq_u_B2}.
    Next, combined with $F^{1} = 0,$ \eqref{eq_boundary}, Corollary \ref{corollary_u}, Lemmas \ref{lemma_eq_u_eta_tau_B_0}  and \ref{lemma_u_I_1}, it holds that
    \begin{equation}\label{eq_u_B3_1}
        \begin{cases}
            \partial_x \tilde{p}^{B,1} -\mu \partial_z^2 u_1^{B,3} -\partial_x\tau_{11}^{B,1} -\partial_z\tau_{12}^{B,2}=0,\\[2mm]
            \partial_z\tilde{p}^{B,2} - \mu \partial_z^2 u_2^{B,3} -\partial_x\tau_{12}^{B,1} -\partial_z\tau_{22}^{B,2} = 0.
        \end{cases}
    \end{equation}
    From \eqref{eq_divfree_B} and \eqref{eq_u_B2}, we have
    \begin{equation}\label{eq_u_B3_2}
            \tilde{p}^{B,1}  = \tau_{22}^{B,1},\,
          ~\partial_zu_2^{B,3} = -\partial_x u^{B,2}_1,\,
          \partial_zu_1^{B,2} =- \frac{1}{\mu}\tau_{12}^{B,1},
    \end{equation}
    which together with \eqref{eq_u_B3_1} yields that
    \begin{equation}\label{eq_u_B3}
        \begin{cases}
            \partial_z u_1^{B,3} = - \frac{1}{\mu} \tau_{12}^{B,2} + \frac{1}{\mu} \int_{z}^{+\infty}(\partial_x\tau_{11}^{B,1} - \partial_x\tau_{22}^{B,1}) \mathrm{d}z,\\[3mm]
            \tilde{p}^{B,2} = -\mu \partial_x u_1^{B,2} -\int_z^{+\infty}\partial_x\tau_{12}^{B,1}(x,z,t)\mathrm{d}z + \tau_{22}^{B,2}.
        \end{cases}
    \end{equation}

   By virtue of \eqref{eq_boundary}, Corollary \ref{corollary_u}, Lemmas \ref{lemma_eq_u_eta_tau_B_0}  and \ref{lemma_u_I_1}, the condition $G^{2} = 0$ yields that $\eta^{B,2}$ satisfies \eqref{eq_eta_B2}.

    From $K^{2}_{22} = 0$, Corollary \ref{corollary_u}, Lemmas \ref{lemma_eq_u_eta_tau_B_0}  and \ref{lemma_u_I_1}, we have
        \begin{align*}
              & \partial_t \tau^{B,2}_{22} + \gamma \tau^{B,2}_{22} -\partial_z^2\tau^{B,2}_{22}   + u^{B,2}_1\partial_x\tau^{I,0}_{22}(x,0,t)-2 \tau^{I,0}_{22}(x,0,t)\partial_z u_2^{B,3}
                -2k\eta^{I,0}(x,0,t)\partial_zu_2^{B,3} \\
               &+\frac{1}{2}z^2\partial_y^2 u_2^{I,0}(x,0,t)\partial_z\tau^{B,1}_{22} + z\partial_y u_1^{I,0}(x,0,t)\partial_x\tau_{22}^{B,1} -2z\partial_y^2u_2^{I,0}(x,0,t)\tau^{B,1}_{22}\\
               & - 2kz\partial_y^2u_2^{I,0}(x,0,t)\eta^{B,1}= 0.
    \end{align*}
        This, combined with  \eqref{eq_boundary}, \eqref{eq_divfree_B} and \eqref{eq_u_B3_2}, deduces that $\tau^{B,2}_{22}$ fulfills \eqref{eq_tau_B2_22}.

        From $K^{2}_{12} = 0$, Corollary \ref{corollary_u}, Lemmas \ref{lemma_eq_u_eta_tau_B_0} and \ref{lemma_u_I_1}, we have
            \begin{align*}
                &\partial_t \tau^{B,2}_{12} + \gamma \tau^{B,2}_{12} -\partial_z^2\tau^{B,2}_{12} -\partial_y u^{I,0}_1(x,0,t)\tau^{B,2}_{22} -\tau_{22}^{I,0}(x,0,t)\partial_z u_1^{B,3}
              -k\eta^{I,0}(x,0,t)\partial_z u_1^{B,3} \\
              &  + u^{B,2}_1\partial_x\tau^{I,0}_{12}(x,0,t)  -  \tau_{22}^{B,1} \partial_zu_1^{B,2}  -k\eta^{B,2}\partial_yu_1^{I,0}(x,0,t)
               -k  \eta^{B,1} \partial_z u_1^{B,2}\\
              & +\frac{1}{2}z^2\partial_y^2 u_2^{I,0}(x,0,t)\partial_z\tau^{B,1}_{12} + z\partial_y u_1^{I,0}(x,0,t)\partial_x\tau_{12}^{B,1}- z\partial_y^2u_1^{I,0}(x,0,t)\tau^{B,1}_{22}\\
              & - z\partial_y\tau^{I,0}_{22}(x,0,t)\partial_zu^{B,2}_1-kz\partial_y\eta^{I,0}(x,0,t)\partial_zu^{B,2}_1 - kz\partial_y^2u^{I,0}_1(x,0,t)\eta^{B,1}= 0,
            \end{align*}
        which, combined with \eqref{eq_boundary} and \eqref{eq_divfree_B}, deduces that $\tau^{B,2}_{22}$ satisfies \eqref{eq_tau_B2_12}.

        From $K^{2}_{11} = 0$, Corollary \ref{corollary_u}, Lemmas \ref{lemma_eq_u_eta_tau_B_0}  and \ref{lemma_u_I_1}, we have
        \begin{align*}
            &\partial_t \tau^{B,2}_{11} + \gamma \tau^{B,2}_{11} -\partial_z^2\tau^{B,2}_{11} -2 \partial_yu_1^{I,0}(x,0,t)\tau^{B,2}_{12} -2 \tau^{I,0}_{12}(x,0,t)\partial_z u_1^{B,3}
             -2\tau^{I,0}_{11}(x,0,t)\partial_xu^{B,2}_1 \\
            &  + u^{B,2}_1\partial_x\tau^{I,0}_{11}(x,0,t)-2  \tau_{12}^{B,1} \partial_zu^{B,2}_1 -2 k\eta^{I,0}(x,0,t)\partial_x u^{B,2}_1 +\frac{1}{2}z^2\partial_y^2 u_2^{I,0}(x,0,t)\partial_z\tau^{B,1}_{11}\\
            &  +z\partial_yu_1^{I,0}(x,0,t)\partial_x\tau_{11}^{B,1}-2z\partial_x\partial_yu^{I,0}_1(x,0,t)\tau^{B,1}_{11} -2z\partial_y^2u^{I,0}_1(x,0,t)\tau^{B,1}_{12}
            \\
            & -2z \partial_y\tau^{I,0}_{12}(x,0,t)\partial_zu^{B,2}_1 -2kz\partial_x\partial_yu^{I,0}_1(x,0,t)\eta^{B,1}= 0,
            \end{align*}
   which, combined with  \eqref{eq_boundary}, \eqref{eq_divfree_B}, and \eqref{eq_u_B3}, deduces that $\tau^{B,2}_{11}$ solves \eqref{eq_tau_B2_11}.
\end{proof}

\medskip

\noindent{\bfseries Step 5. Equations of $(u^{I,3},\tilde{p}^{I,3},\eta^{I,3},\tau^{I,3})$ and $(u^{B,3},\tilde{p}^{B,3},\eta^{B,3},\tau^{B,3})$.}  \\[3mm]
To begin with, letting $j = 3$ in \eqref{eq_outer_layer_j}, and using \eqref{eq_u_B2}, \eqref{eq_u_B3}, and Lemma \ref{lemma_u_I_1}, we find that the third order outer profile $(u^{I,3},\tilde{p}^{I,3},\eta^{I,3},\tau^{I,3})$  solves \eqref{OB_outer_third}.
\begin{lemma}\label{lemma_u_eta_tau_B_3333}
    The third inner layer profiles $u_1^{B,3}$ and $u_2^{B,3}$ satisfy
    \begin{eqnarray}\label{eq_u_B_3333}
            \begin{cases}
            u_1^{B,3} =  \frac{1}{\mu}\int_z^{+\infty}\tau_{12}^{B,2}(x,z,t) \mathrm{d}z -\frac{1}{\mu}\int_{z}^{+\infty}\int_{z}^{+\infty}(\partial_x\tau_{11}^{B,1} - \partial_x\tau_{22}^{B,1})(x,\xi,t)\mathrm{d}\xi\mathrm{d}z,\\[3mm]
            u_2^{B,3} = \int_z^{+\infty} \partial_x u_1^{B,2}\mathrm{d}z.
        \end{cases}
         \end{eqnarray}
    In addition, $\eta^{B,3},\tau^{B,3}_{22},\tau^{B,3}_{12}$, and $\tau^{B,3}_{11}$ satisfy the systems \eqref{eq_eta_B3}, \eqref{eq_tau_B3_22}, \eqref{eq_tau_B3_12}, and \eqref{eq_tau_B3_11}, respectively.
    Furthermore, $u^{B,4},u_2^{B,5}$, and $\tilde{p}^{B,3}$ are given by \eqref{eq_u_B4_1}, and \eqref{eq_u_B4}.
\end{lemma}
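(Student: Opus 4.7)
The plan is to follow exactly the matched asymptotics scheme used in Steps 2--4 of the appendix, collecting the next order of the inner expansion. In the notation of Step 2, the momentum equation produces a series $\sum_{j\geq -2}\varepsilon^{j/2}F^j(x,z,t)=0$, the $\eta$-equation a series with coefficients $G^j$, and the $\tau$-equation a series with matrix coefficients $K^j$. The zeroth, first and second order inner profiles have already been pinned down by $F^{-2},F^{-1},F^0,F^1$, $G^{-1},G^0,G^1,G^2$ and $K^{-1},K^0,K^1,K^2$ combined with the divergence-free identity \eqref{eq_divfree_B} and the boundary conditions \eqref{eq_boundary}. The task is therefore to read off, with all lower-order profiles treated as known (and with the outer profiles $(u^{I,j},\tilde p^{I,j},\eta^{I,j},\tau^{I,j})$ Taylor-expanded at $y=0$ as in Step 2), the identities $F^2=0$, $G^3=0$ and $K^3=0$, and to combine each with the relevant component of the boundary condition \eqref{eq_boundary} with $j=2$.

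First I would derive \eqref{eq_u_B_3333}. The first component of $F^2=0$, together with $\tilde p^{B,1}=\tau_{22}^{B,1}$ (already proved), yields a second-order ODE in $z$ for $u_1^{B,3}$ whose primitive, under the matching condition $u_1^{B,3}\to 0$ as $z\to\infty$, gives the $u_1^{B,3}$-formula. The second component of $F^2=0$ then determines $\tilde p^{B,2}$, giving \eqref{eq_u_B3_p_B2}; meanwhile the divergence-free relation $\partial_x u_1^{B,2}+\partial_z u_2^{B,3}=0$ from \eqref{eq_divfree_B} with matching $u_2^{B,3}\to 0$ at infinity gives the $u_2^{B,3}$-formula. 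Similarly, reading off the relations coming from $F^3=0$ together with $\partial_x u_1^{B,3}+\partial_z u_2^{B,4}=0$ and $\partial_x u_1^{B,4}+\partial_z u_2^{B,5}=0$ produces $u_1^{B,4}$, $u_2^{B,4}$, $u_2^{B,5}$ and $\tilde p^{B,3}$ in the form \eqref{eq_u_B4_2}.

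Next I would turn to $\eta^{B,3}$. The identity $G^3=0$, after substituting the now-known values $u^{B,0}=u^{B,1}=0$, $\eta^{B,0}=0$, the formulas for $u_1^{B,2}$, $u_2^{B,3}$, and the Taylor expansion
\[
u_2^{I,j}(x,y,t)=u_2^{I,j}(x,0,t)+y\partial_y u_2^{I,j}(x,0,t)+\tfrac{1}{2}y^2\partial_y^2 u_2^{I,j}(x,0,t)+\cdots
\]
at the appropriate order, becomes the parabolic equation in \eqref{eq_eta_B3} with source $\mathfrak{h}_\eta^{B,3}$. The boundary condition comes from \eqref{eq_boundary} with $j=2$, namely $\partial_z\eta^{B,3}(x,0,t)=-\partial_y\eta^{I,2}(x,0,t)$, and the zero initial condition from \eqref{eq_initial}. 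The analogous argument applied to the three components of the symmetric matrix relation $K^3=0$ yields \eqref{eq_tau_B3_22}, \eqref{eq_tau_B3_12}, \eqref{eq_tau_B3_11}: the scalar coefficient multiplying $\tau_{12}^{B,3}$ in \eqref{eq_tau_B3_12} arises from the terms $\tau_{22}^{I,0}(x,0,t)\partial_z u_1^{B,4}$ and $k\eta^{I,0}(x,0,t)\partial_z u_1^{B,4}$ after using $\mu\partial_z u_1^{B,4}+\tau_{12}^{B,3}=\text{(known lower-order)}$ obtained from $F^2=0$, and the coefficient $\tfrac{2}{\mu}\tau_{12}^{I,0}(x,0,t)$ in front of $\tau_{11}^{B,3}$ in \eqref{eq_tau_B3_11} arises the same way.

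The main obstacle is purely bookkeeping: the right-hand sides $\mathfrak{h}_\eta^{B,3}$, $\mathfrak{h}^{B,3}_{22}$, $\mathfrak{h}^{B,3}_{12}$ and $\mathfrak{h}^{B,3}_{11}$ collect a large number of terms—namely all products in $G^3$ and $K^3$ that involve either lower-order inner profiles $(u^{B,\leq 2},\eta^{B,\leq 2},\tau^{B,\leq 2})$, the outer data $(u^{I,\leq 3},\eta^{I,\leq 2},\tau^{I,\leq 2})$ restricted to $y=0$ and its $y$-derivatives up to order $3$, or the $z$-weighted corrections $\tfrac{1}{k!}(\sqrt\varepsilon z)^k$ coming from Taylor expansion. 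One must be careful to eliminate $\partial_z u_1^{B,2}$ in favour of $-\tfrac{1}{\mu}\tau_{12}^{B,1}$ (and likewise $\partial_z u_1^{B,3}$ via the $u_1^{B,3}$-formula), which is what produces the zero-order coefficients in \eqref{eq_tau_B3_12} and \eqref{eq_tau_B3_11} rather than leaving those terms on the right-hand side. Once this accounting is done, everything else reduces to Lemma \ref{local_wellposedness_outer_3} and Proposition \ref{prop_boundary_prototype}, which give the wellposedness stated in Lemma \ref{lemma_eta_tau_b_3} for the resulting linear parabolic problems, completing the derivation.
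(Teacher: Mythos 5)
Your overall plan is the same as the paper's: in Appendix~A, Step~5, the proof reads off $F^2=0$, $G^3=0$, and $K^3=0$, combines them with the divergence-free identities \eqref{eq_divfree_B}, the boundary data \eqref{eq_boundary} at the appropriate order, the Taylor expansion of the outer profiles at $y=0$, and the substitutions that eliminate $\partial_z u_1^{B,k}$ in favour of $\tau_{12}^{B,k-1}$ so that the proper zero-order coefficients migrate to the left-hand sides of the $\tau^{B,3}_{12}$ and $\tau^{B,3}_{11}$ equations. You have, however, an off-by-one slip in the $F^j$ hierarchy. Because $-\mu\Delta u$ contributes $-\mu\varepsilon^{-1}\partial_z^2 u^{B,j}$ in the boundary layer, the term $\partial_z^2 u^{B,j}$ sits in $F^{j-2}$, not $F^{j-1}$: the second-order ODE for $u_1^{B,3}$ — the one you correctly identify as involving $\partial_x\tilde p^{B,1}=\partial_x\tau_{22}^{B,1}$, namely \eqref{eq_u_B3_1} — comes from $F^1=0$, not from $F^2=0$; and the relations yielding $u_1^{B,4}$, $u_2^{B,4}$, $u_2^{B,5}$, $\tilde p^{B,3}$ come from $F^2=0$, not $F^3=0$. (In the paper the $u_1^{B,3}$, $u_2^{B,3}$ formulas are already obtained in the proof of Lemma~\ref{lemma_u_eta_tau_B_2222} via $F^1=0$ and \eqref{eq_u_B3}; Lemma~\ref{lemma_u_eta_tau_B_3333} only integrates and refers back.) Your $G^3$ and $K^3$ indices are correct, because there the diffusion is $\varepsilon\Delta$, giving $\varepsilon\cdot\varepsilon^{-1}\partial_z^2$, so $\partial_z^2$ and $\partial_t$ of the $j$-th boundary profile both appear in the $j$-th coefficient. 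Once the $F$-index is shifted down by one throughout, the rest of your sketch — including the identification of the zero-order coefficients in \eqref{eq_tau_B3_12} and \eqref{eq_tau_B3_11} via $\mu\partial_z u_1^{B,4}+\tau_{12}^{B,3}=\text{(known)}$ from $F^2=0$ — matches the paper's argument.
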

 \begin{proof}
The equation of $u^{B,3}$ can be deduced by \eqref{eq_divfree_B} and \eqref{eq_u_B3}. Then from $F^{2} = 0,$ \eqref{eq_boundary}, Corollary \ref{corollary_u}, Lemmas \ref{lemma_eq_u_eta_tau_B_0}  and \ref{lemma_u_I_1}, we obtain
\begin{equation}\label{eq_u_B4_1}
    \begin{cases}
        \partial_t u_1^{B,2} + \partial_x \tilde{p}^{B,2} - \mu\partial_x^2 u_1^{B,2}-\mu \partial_z^2 u_1^{B,4} -\partial_x\tau_{11}^{B,2} -\partial_z\tau_{12}^{B,3}=0,\\[2mm]
        \partial_z\tilde{p}^{B,3} - \mu \partial_z^2 u_2^{B,4} -\partial_x\tau_{12}^{B,2} -\partial_z\tau_{22}^{B,3} = 0.
    \end{cases}
\end{equation}
Using \eqref{eq_divfree_B}, \eqref{eq_u_B2} and \eqref{eq_u_B4_1}, we get
\begin{eqnarray}\label{eq_u_B4}
    \begin{cases}
          u_1^{B,4} =  \frac{1}{\mu}\int_z^{+\infty}\tau_{12}^{B,3}\mathrm{d}z  - \frac{1}{\mu}\int_z^{+\infty} \int_\xi^{+\infty}\left(\mu\partial_x^2u_1^{B,2} + \partial_x\tau_{11}^{B,2}  -\partial_t u_1^{B,2} -\partial_x\tilde{p}^{B,2} \right)(x,\zeta,t)\mathrm{d}\zeta\mathrm{d}\xi,\\[3mm]
           u_2^{B,4} = \int_{z}^{+\infty} \partial_x u_1^{B,3}\mathrm{d}z, ~~u_2^{B,5} = \int_{z}^{+\infty} \partial_x u_1^{B,4}\mathrm{d}z,\\[2mm]
           \tilde{p}^{B,3} = \tau_{22}^{B,3} - \mu \partial_x u_1^{B,3} - \int_z^{+\infty} \partial_x \tau_{12}^{B,2}\mathrm{d} z.
    \end{cases}
\end{eqnarray}

In view of \eqref{eq_boundary}, Corollary \ref{corollary_u}, Lemmas \ref{lemma_eq_u_eta_tau_B_0}  and \ref{lemma_u_I_1}, the condition $G^{3} = 0$  yields that $\eta^{B,3}$ satisfies \eqref{eq_eta_B3}.

Similar to \eqref{eq_tau_B2_22}, from $K^{3}_{22} = 0$, Corollary \ref{corollary_u}, Lemmas \ref{lemma_eq_u_eta_tau_B_0}  and \ref{lemma_u_I_1}, we have
    \begin{align*}
        &\partial_t \tau^{B,3}_{22} + \gamma \tau^{B,3}_{22} -\partial_z^2\tau^{B,3}_{22} - \partial_x^2 \tau_{22}^{B,1}
         + u_1^{B,2} \partial_x\tau_{22}^{B,1}+ u_2^{B,3} \partial_z\tau_{22}^{B,1}+ u_{1}^{I,2}(x,0,t) \partial_x\tau_{22}^{B,1} \\
        & + u_{2}^{I,3}(x,0,t) \partial_z\tau_{22}^{B,1}+ u_1^{B,3}\partial_x\tau_{22}^{I,0}(x,0,t)+ u_2^{B,3}\partial_y\tau_{22}^{I,0}(x,0,t) - 2[ \tau_{12}^{I,0}(x,0,t)\partial_x u_2^{B,3}
         \\
        &+\tau_{22}^{B,1}\partial_y u_2^{I,2}(x,0,t) + \tau_{22}^{B,1}\partial_z u_2^{B,3} + \tau_{22}^{I,0}(x,0,t)\partial_z u_2^{B,4} ] -2k [\eta^{B,1}\partial_y u_2^{I,2}(x,0,t) \\
        & +\eta^{I,0}(x,0,t)\partial_z u_2^{B,4}+\eta^{B,1}\partial_z u_{2}^{B,3}]- z^2\partial_y^2\partial_x u_2^{I,0}(x,0,t)\tau_{12}^{B,1} - z^2\partial_y^3 u_2^{I,0}(x,0,t)\tau_{22}^{B,1} \\
        &-k z^2\partial_y^3 u_2^{I,0}(x,0,t)\eta^{B,1}
        + z\partial_y\partial_x \tau_{22}^{I,0}(x,0,t) u_1^{B,2}+ z\partial_y u_1^{I,0}(x,0,t)  \partial_x \tau^{B,2}_{22} \\
         &+ z\partial_y u_2^{I,2}(x,0,t)\partial_z \tau^{B,1}_{22} - 2[z\partial_y^2 u_2^{I,0}(x,0,t)\tau_{22}^{B,2} + z\partial_y\tau_{22}^{I,0}(x,0,t)\partial_z u_2^{B,3}]\\
        &-2k [z\partial_y^2 u_2^{I,0}(x,0,t) \eta^{B,2} + z\partial_y\eta^{I,0}(x,0,t)\partial_z u_2^{B,3} ]
          + \frac{1}{6}z^3\partial_y^3 u_2^{I,0}(x,0,t)\partial_z\tau_{22}^{B,1} \\
          &+ \frac{1}{2}z^2\partial_y^2 u_1^{I,0}(x,0,t)\partial_x\tau_{22}^{B,1}
        +\frac{1}{2}z^2\partial_y^2u_2^{I,0}(x,0,t)\partial_z\tau_{22}^{B,2}= 0.
    \end{align*}
This, combined with \eqref{eq_boundary}, \eqref{eq_divfree_B}, and \eqref{eq_u_B4}, yields that $\tau_{22}^{B,3}$  satisfies \eqref{eq_tau_B3_22} where the source term is given by
    \begin{align} \label{eq_tau_B3_22_h}
        \mathfrak{h}^{B,3}_{22} =&~ \partial_x^2 \tau_{22}^{B,1}  -  2[ k \eta^{I,0}(x,0,t) +  \tau_{22}^{I,0}(x,0,t)]\partial_x u_1^{B,3}
        -u_1^{B,2} \partial_x\tau_{22}^{B,1}- u_2^{B,3} \partial_z\tau_{22}^{B,1} \notag\\
        &  -u_{1}^{I,2}(x,0,t) \partial_x\tau_{22}^{B,1} - u_{2}^{I,3}(x,0,t) \partial_z\tau_{22}^{B,1}- u_1^{B,3}\partial_x\tau_{22}^{I,0}(x,0,t)- u_2^{B,3}\partial_y\tau_{22}^{I,0}(x,0,t)
        \notag\\
        &+ 2[ \tau_{12}^{I,0}(x,0,t)\partial_x u_2^{B,3}+\tau_{22}^{B,1}\partial_y u_2^{I,2}(x,0,t) + \tau_{22}^{B,1}\partial_z u_2^{B,3}  ] +2k [\eta^{B,1}\partial_y u_2^{I,2}(x,0,t)  \notag\\
        &+\eta^{B,1}\partial_z u_{2}^{B,3}]  + z^2\partial_y^2\partial_x u_2^{I,0}(x,0,t)\tau_{12}^{B,1} + z^2\partial_y^3 u_2^{I,0}(x,0,t)\tau_{22}^{B,1}  +k z^2\partial_y^3 u_2^{I,0}(x,0,t)\eta^{B,1} \\
         & + 2[z\partial_y^2 u_2^{I,0}(x,0,t)\tau_{22}^{B,2} + z\partial_y\tau_{22}^{I,0}(x,0,t)\partial_z u_2^{B,3}] +2k [z\partial_y^2 u_2^{I,0}(x,0,t) \eta^{B,2}  \notag\\
        &+ z\partial_y\eta^{I,0}(x,0,t)\partial_z u_2^{B,3} ] -z\partial_y\partial_x \tau_{22}^{I,0}(x,0,t) u_1^{B,2}- z\partial_y u_1^{I,0}(x,0,t) \partial_x \tau^{B,2}_{22} \notag\\
        & - z\partial_y u_2^{I,2}(x,0,t)\partial_z \tau^{B,1}_{22}
          - \frac{1}{6}z^3\partial_y^3 u_2^{I,0}(x,0,t)\partial_z\tau_{22}^{B,1}
          - \frac{1}{2}z^2{\partial_y^2} u_1^{I,0}(x,0,t)\partial_x\tau_{22}^{B,1}
          \notag \\
          &
        -\frac{1}{2}z^2\partial_y^2u_2^{I,0}(x,0,t)\partial_z\tau_{22}^{B,2}. \notag
    \end{align}
   Similar to \eqref{eq_tau_B2_12}, from $K^{3}_{12} = 0$, Corollary \ref{corollary_u}, Lemmas \ref{lemma_eq_u_eta_tau_B_0} and \ref{lemma_u_I_1}, we have
    \begin{align*}
        &\partial_t \tau^{B,3}_{12} + \gamma \tau^{B,3}_{12} -\partial_z^2\tau^{B,3}_{12} - \partial_x^2 \tau_{12}^{B,1}  + u_1^{B,3}\partial_x\tau_{12}^{I,0}(x,0,t)
        + u_2^{B,3}\partial_y\tau_{12}^{I,0}(x,0,t)
        + u_{1}^{I,2}(x,0,t) \partial_x\tau_{12}^{B,1} \\
        &+ u_1^{B,2} \partial_x\tau_{12}^{B,1} + u_{2}^{I,3}(x,0,t) \partial_z\tau_{12}^{B,1}+ u_2^{B,3} \partial_z\tau_{12}^{B,1} - \tau_{11}^{I,0}(x,0,t)\partial_x u_2^{B,3}
        -\tau_{22}^{B,1}\partial_y u_1^{I,2}(x,0,t)\\
        & -\tau_{22}^{B,3}\partial_y u_1^{I,0}(x,0,t)- \tau_{22}^{I,0}(x,0,t)\partial_z u_1^{B,4} -  \tau_{22}^{B,1}\partial_z u_1^{B,3} -\tau_{22}^{I,2}(x,0,t)\partial_z u_1^{B,2} -\tau_{22}^{B,2} \partial_z u_1^{B,2}
         \\
        &-k\eta^{I,0}(x,0,t)\partial_x u_2^{B,3}-k\eta^{B,1}\partial_yu_1^{I,2}(x,0,t) -k\eta^{I,0}(x,0,t)\partial_z u_1^{B,4} - k\eta^{B,3}\partial_y u^{I,0}_1(x,0,t)
        \\
        &- k\eta^{B,1}\partial_z u_1^{B,3}
        - k\eta^{B,2}\partial_z u_1^{B,2}-k\eta^{I,2}(x,0,t)\partial_z u_1^{B,2}  +  z\partial_y\partial_x \tau_{12}^{I,0}(x,0,t) u_1^{B,2} + z\partial_y u_1^{I,0}(x,0,t) \partial_x \tau^{B,2}_{12}
        \\
        & +z\partial_y u_2^{I,2}(x,0,t)\partial_z \tau^{B,1}_{12}-z\partial_y^2u_1^{I,0}(x,0,t)\tau_{22}^{B,2} - z\partial_y\tau_{22}^{I,0}(x,0,t)\partial_z u_1^{B,3} -kz\partial_y^{2}u_1^{I,0}(x,0,t)\eta^{B,2} \\
        &- kz\partial_y\eta^{I,0}(x,0,t)\partial_z u_1^{B,3}
        + \frac{1}{6}z^3\partial_y^3 u_2^{I,0}(x,0,t)\partial_z\tau_{12}^{B,1}
         + \frac{1}{2}z^2{ \partial_y^2} u_1^{I,0}(x,0,t)\partial_x\tau_{12}^{B,1}
      \\
      &+\frac{1}{2}z^2\partial_y^2u_2^{I,0}(x,0,t)\partial_z\tau_{12}^{B,2} - \frac{1}{2}z^2\partial_y^2\partial_x u^{I,0}_2(x,0,t)\tau_{11}^{B,1} - \frac{1}{2}z^2\partial_y^3  u^{I,0}_1(x,0,t)\tau_{22}^{B,1} \\
        &- \frac{1}{2}z^2\partial_y^2  \tau_{22}^{I,0}(x,0,t)\partial_z u_1^{B,2}- \frac{k}{2}z^2\partial_y^2\partial_x u^{I,0}_2(x,0,t)\eta^{B,1} - \frac{k}{2}z^2\partial_y^3  u^{I,0}_1(x,0,t)\eta^{B,1}\\
        & - \frac{k}{2}z^2\partial_y^2  \eta^{I,0}(x,0,t)\partial_z u_1^{B,2} = 0,
    \end{align*}
which, combined with \eqref{eq_boundary}, \eqref{eq_divfree_B}, and \eqref{eq_u_B4}, yields that $\tau_{12}^{B,3}$ solves \eqref{eq_tau_B3_12} where the source term $\mathfrak{h}^{B,3}_{12}$ is given by
    \begin{align}\label{eq_tau_B3_12_h}
        \mathfrak{h}^{B,3}_{12} =~&  \partial_x^2 \tau_{12}^{B,1}  - u_1^{B,3}\partial_x\tau_{12}^{I,0}(x,0,t)
        - u_2^{B,3}\partial_y\tau_{12}^{I,0}(x,0,t)
        - u_{1}^{I,2}(x,0,t) \partial_x\tau_{12}^{B,1} - u_1^{B,2} \partial_x\tau_{12}^{B,1}-u_2^{B,3} \partial_z\tau_{12}^{B,1} \notag\\
        &- u_{2}^{I,3}(x,0,t) \partial_z\tau_{12}^{B,1}+ \tau_{11}^{I,0}(x,0,t)\partial_x u_2^{B,3}
        +\tau_{22}^{B,1}\partial_y u_1^{I,{ 2}}(x,0,t) +\tau_{22}^{B,3}\partial_y u_1^{I,0}(x,0,t) \notag\\
        & +  \tau_{22}^{B,1}\partial_z u_1^{B,3}+\tau_{22}^{I,2}(x,0,t)\partial_z u_1^{B,2} +\tau_{22}^{B,2} \partial_z u_1^{B,2}
        +k\eta^{I,0}(x,0,t)\partial_x u_2^{B,3} +k\eta^{B,1}\partial_z u_1^{B,3} \notag\\
        &+ k\eta^{B,2}\partial_z u_1^{B,2} +k\eta^{B,1}\partial_yu_1^{I,2}(x,0,t) + k\eta^{B,3}\partial_y u^{I,0}_1(x,0,t)
        +k\eta^{I,2}(x,0,t)\partial_z u_1^{B,2}  \notag\\
        &-  z\partial_y\partial_x \tau_{12}^{I,0}(x,0,t) u_1^{B,2}
        - z\partial_y u_1^{I,0}(x,0,t) \partial_x \tau^{B,2}_{12}-z\partial_y u_2^{I,2}(x,0,t)\partial_z \tau^{B,1}_{12} \notag\\
        &+z\partial_y^2u_1^{I,0}(x,0,t)(\tau_{22}^{B,2} + k\eta^{B,2}) + [z\partial_y\tau_{22}^{I,0}(x,0,t)
          + kz\partial_y\eta^{I,0}(x,0,t)]\partial_z u_1^{B,3} \\
        &- \frac{1}{6}z^3\partial_y^3 u_2^{I,0}(x,0,t)\partial_z\tau_{ 1 2}^{B,1}
         - \frac{1}{2}z^2\partial_y^2 u_1^{I,0}(x,0,t)\partial_x\tau_{12}^{B,1}
      -\frac{1}{2}z^2\partial_y^2u_2^{I,0}(x,0,t)\partial_z\tau_{12}^{B,2}\notag\\
      & + \frac{1}{2}z^2\partial_y^2\partial_x u^{I,0}_2(x,0,t)\tau_{11}^{B,1} + \frac{1}{2}z^2\partial_y^3  u^{I,0}_1(x,0,t)\tau_{22}^{B,1} + \frac{1}{2}z^2\partial_y^2  \tau_{22}^{I,0}(x,0,t)\partial_z u_1^{B,2} \notag\\
        &+ \frac{k}{2}z^2\partial_y^2\partial_x u^{I,0}_2(x,0,t)\eta^{B,1} + \frac{k}{2}z^2\partial_y^3  u^{I,0}_1(x,0,t)\eta^{B,1} + \frac{k}{2}z^2\partial_y^2  \eta^{I,0}(x,0,t)\partial_z u_1^{B,2}  \notag\\
        &+\frac{1}{\mu}[\tau_{22}^{I,0}(x,0,t) + k\eta^{I,0}(x,0,t)]\int_z^{+\infty}\left(\mu\partial_x^2u_1^{B,2} + \partial_x\tau_{11}^{B,2} -\partial_t u_1^{B,2} -\partial_x\tilde{p}^{B,2} \right)\mathrm{d}z. \notag
    \end{align}
Similar to \eqref{eq_tau_B2_11}, from $K^{3}_{11} = 0$, Corollary \ref{corollary_u}, Lemmas \ref{lemma_eq_u_eta_tau_B_0}  and \ref{lemma_u_I_1}, we have
    \begin{align*}
        &\partial_t \tau^{B,3}_{11} + \gamma \tau^{B,3}_{11} -\partial_z^2\tau^{B,3}_{11} - \partial_x^2 \tau_{11}^{B,1}  + u_1^{B,3}\partial_x\tau_{11}^{I,0}(x,0,t)
        + u_2^{B,3}\partial_y\tau_{11}^{I,0}(x,0,t)
        + u_{1}^{I,2}(x,0,t) \partial_x\tau_{11}^{B,1} \\
        &+ u_1^{B,2} \partial_x\tau_{11}^{B,1}+ (u_{2}^{I,3}(x,0,t)  + u_2^{B,3}) \partial_z\tau_{11}^{B,1} - 2\tau_{11}^{I,0}(x,0,t)\partial_x u_1^{B,3}  -2\tau_{11}^{B,1}\partial_x u_1^{B,2}
        -2\tau_{11}^{B,1}\partial_x u_1^{I,2}(x,0,t)\\
        &-2\tau_{12}^{B,1}\partial_y u_{1}^{I,2}(x,0,t)-2\tau_{12}^{B,3}\partial_y u_{1}^{I,0}(x,0,t) - 2\tau_{12}^{I,0}(x,0,t)\partial_z u_1^{B,4} - 2\tau_{12}^{B,1}\partial_z u_1^{B,3}
        -2\tau_{12}^{I,2}(x,0,t)\partial_z u_1^{B,2}\\
        &-2\tau_{12}^{B,2}\partial_z u_1^{B,2} -2k \eta^{I,0}(x,0,t) \partial_x u_1^{B,3}  -2k \eta^{B,1}\partial_x u_1^{B,2} - 2k \partial_x u_1^{I,2}(x,0,t)\eta^{B,1}
        + z\partial_y\partial_x \tau_{11}^{I,0}(x,0,t) u_1^{B,2} \\
        &+ z\partial_y u_1^{I,0}(x,0,t) \partial_x \tau^{B,2}_{11}
        +z\partial_y u_2^{I,2}(x,0,t)\partial_z \tau^{B,1}_{11} -2z\partial_y\tau_{11}^{I,0}(x,0,t)\partial_x u_1^{B,2}
        - 2z\partial_y\partial_x u_1^{I,0}(x,0,t) \tau_{11}^{B,2}\\
        & - 2z\partial_y^2 u_1^{I,0}(x,0,t)\tau_{12}^{B,2} - 2z \partial_y \tau_{12}^{I,0}(x,0,t)\partial_z u_1^{B,3}
        -2kz\partial_y \eta^{I,0}(x,0,t) \partial_x u^{B,2}_1-2kz\partial_y \partial_x u_1^{I,0}(x,0,t)\eta^{B,2}\\
        &  -z^2\partial_y^2 \partial_x u_1^{I,0}(x,0,t)\tau_{11}^{B,1} - z^2\partial_y^3 u_1^{I,0}(x,0,t)\tau_{12}^{B,1}
        -z^2\partial_y^2\tau_{12}^{I,0}(x,0,t)\partial_z u_1^{B,2}-kz^2\partial_y^2 \partial_x u_1^{I,0}(x,0,t)\eta^{B,1}\\
        & + \frac{1}{6}z^3\partial_y^3 u_2^{I,0}(x,0,t)\partial_z\tau_{11}^{B,1}
        + \frac{1}{2}z^2\partial_y^2 u_1^{I,0}(x,0,t)\partial_x\tau_{11}^{B,1}
         +\frac{1}{2}z^2\partial_y^2u_2^{I,0}(x,0,t)\partial_z\tau_{11}^{B,2} = 0,
    \end{align*}
which, combined with  \eqref{eq_boundary}, \eqref{eq_divfree_B}, and \eqref{eq_u_B4}, yields that $\tau_{11}^{B,3}$ satisfies \eqref{eq_tau_B3_11},
where the source term $\mathfrak{h}^{B,3}_{11}$  is given by
    \begin{align}\label{eq_tau_B3_11_h}
        \mathfrak{h}^{B,3}_{11} ={}&  \partial_x^2 \tau_{11}^{B,1}  -u_1^{B,3}\partial_x\tau_{11}^{I,0}(x,0,t)
        - u_2^{B,3}\partial_y\tau_{11}^{I,0}(x,0,t)
        - u_{1}^{I,2}(x,0,t) \partial_x\tau_{11}^{B,1} - u_1^{B,2} \partial_x\tau_{11}^{B,1} \notag\\
        &- u_{2}^{I,3}(x,0,t) \partial_z\tau_{11}^{B,1}- u_2^{B,3} \partial_z\tau_{11}^{B,1} + 2\tau_{11}^{I,0}(x,0,t)\partial_x u_1^{B,3}  +2\tau_{11}^{B,1}\partial_x u_1^{B,2}
        \notag\\
        &+2\tau_{11}^{B,1}\partial_x u_1^{I,2}(x,0,t)+2\tau_{12}^{B,1}\partial_y u_{1}^{I,2}(x,0,t)-2\tau_{12}^{B,3}\partial_y u_{1}^{I,0}(x,0,t)
        \notag\\
        &+ 2\tau_{12}^{B,1}\partial_z u_1^{B,3} +2 \tau_{12}^{I,2}(x,0,t) \partial_z u_1^{B,2} + \tau_{12}^{B,2} \partial_z u_1^{B,2} +2k \eta^{I,0}(x,0,t) \partial_x u_1^{B,3}
        \notag \\
        & +2k \eta^{B,1}\partial_x u_1^{B,2} + 2k \partial_x u_1^{I,2}(x,0,t)\eta^{B,1}- z\partial_y\partial_x \tau_{11}^{I,0}(x,0,t) u_1^{B,2}
        \notag\\
        &- z\partial_y u_1^{I,0}(x,0,t) \partial_x \tau^{B,2}_{11}-z\partial_y u_2^{I,2}(x,0,t)\partial_z \tau^{B,1}_{11} +2z\partial_y\tau_{11}^{I,0}(x,0,t)\partial_x u_1^{B,2}
        \notag\\
        &+2z\partial_y\partial_x u_1^{I,0}(x,0,t) \tau_{11}^{B,2}  + 2z\partial_y^2 u_1^{I,0}(x,0,t)\tau_{12}^{B,2}+ 2z \partial_y \tau_{12}^{I,0}(x,0,t)\partial_z u_1^{B,3}
       \\
        & +2kz\partial_y \eta^{I,0}(x,0,t) \partial_x u^{B,2}_1+ 2kz\partial_y \partial_x u_1^{I,0}(x,0,t)\eta^{B,2}  +z^2\partial_y^2 \partial_x u_1^{I,0}(x,0,t)\tau_{11}^{B,1}
        \notag\\
        & + z^2\partial_y^3 u_1^{I,0}(x,0,t)\tau_{12}^{B,1} +z^2\partial_y^2\tau_{12}^{I,0}(x,0,t)\partial_z u_1^{B,2} +kz^2\partial_y^2 \partial_x u_1^{I,0}(x,0,t)\eta^{B,1} \notag\\
        &- \frac{1}{6}z^3\partial_y^3 u_2^{I,0}(x,0,t)\partial_z\tau_{11}^{B,1} - \frac{1}{2}z^2\partial_y^{2} u_1^{I,0}(x,0,t)\partial_x\tau_{11}^{B,1}
         -\frac{1}{2}z^2\partial_y^2u_2^{I,0}(x,0,t)\partial_z\tau_{11}^{B,2}  \notag\\
        & + \frac{2}{\mu} \tau_{12}^{I,0}(x,0,t) \int_z^{+\infty}\left(\mu\partial_x^2u_1^{B,2} + \partial_x\tau_{11}^{B,2} -\partial_t u_1^{B,2} -\partial_x\tilde{p}^{B,2} \right)\mathrm{d}z.\notag
    \end{align}
\end{proof}

\section{Expressions of some source terms}\label{section_source_terms}
In this section, we present the complete expressions of some source terms in (\ref{eq_UHT_varepsilon}), i.e.,
\begin{eqnarray*}
    \mathfrak{F}_{11} =\left(\begin{matrix}
        \mathfrak{F}_{11, 1}  \\
        \mathfrak{F}_{11, 2}
    \end{matrix}\right),
    ~~
    \mathfrak{F}_{12} =\left(\begin{matrix}
        \mathfrak{F}_{12, 1}  \\
        \mathfrak{F}_{12, 2}
    \end{matrix}\right),
    ~~
     \mathfrak{G}_{11},\,  \mathfrak{G}_{12},\,
    ~~
    \mathfrak{J}_{11}
    =\left(\begin{matrix}
        \mathfrak{J}_{11,11} & \mathfrak{J}_{11,12}\\
        \mathfrak{J}_{11,12} & \mathfrak{J}_{11,22}
    \end{matrix}\right),
    ~~
     \mathfrak{J}_{12}
    =\left(\begin{matrix}
        \mathfrak{J}_{12,11} & \mathfrak{J}_{12,12}\\
        \mathfrak{J}_{12,12} & \mathfrak{J}_{12,22}
    \end{matrix}\right).
\end{eqnarray*}
To begin with, for $\mathfrak{F}_{11}$ and $\mathfrak{F}_{12}$, we have
      \begin{align*}
        \mathfrak{F}_{11,1}=&  -\sqrt{\varepsilon}\Big(  u_1^{I,0}(x,y,t) -   u_1^{I,0}(x,0,t)  \Big) \partial_xu_1^{B,2}  - \Big(  u_2^{I,0}(x,y,t) -   u_2^{I,0}(x,0,t)-  y\partial_y  u_2^{I,0}(x,0,t)\Big) \\
        & \times   \Big( \partial_zu_1^{B,2} + \sqrt{\varepsilon}  \partial_zu_1^{B,3}\Big)
          -\sqrt{\varepsilon}u_1^{B,2}\bigl(\partial_x u^{I,0}_1(x,y,t) - \partial_x u^{I,0}_1(x,0,t) \bigr), \\[2mm]
        \mathfrak{F}_{11,2}=& - \sqrt{\varepsilon} \Big(  u_2^{I,0}(x,y,t) -   u_2^{I,0}(x,0,t)  \Big) \partial_zu_2^{B,3}
         -\sqrt{\varepsilon}u_1^{B,2}\bigl(\partial_x u^{I,0}_2(x,y,t) - \partial_x u^{I,0}_2(x,0,t) \bigr),  \\[2mm]
        \mathfrak{F}_{12,1}=& - \varepsilon \Big(\partial_t u_1^{B,3} +  \sqrt{\varepsilon}\partial_t u_1^{B,4}   +  \sqrt{\varepsilon}(u_1^{I,2} + \sqrt{\varepsilon}u_1^{I,3} )   \partial_xu_1^{B,2} +  (  u_2^{I,2} + \sqrt{\varepsilon}u_2^{I,3}   )( \partial_zu_1^{B,2} + \sqrt{\varepsilon}  \partial_zu_1^{B,3})
        \\
        & +    u^{I}_1 (\partial_xu_1^{B,3} + \sqrt{\varepsilon}\partial_x u_1^{B,4}) +   u_2^I\partial_z u_1^{B,4}  + (u_1^{B,3} + \sqrt{\varepsilon} u_1^{B,4})\partial_x u^{I,0}_1 + (u_2^{B,3} + \sqrt{\varepsilon}  u_2^{B,4} + \varepsilon u_2^{B,5})   \\
        &    \times \partial_y u^{I,0}_1 + \sqrt{\varepsilon} u^B_1\partial_x (u_1^{I,2} + \sqrt{\varepsilon} u_1^{I,3}) +  \sqrt{\varepsilon} u^B_2\partial_y (u_1^{I,2}+ \sqrt{\varepsilon} u_1^{I,3}) + \sqrt{\varepsilon} u_1^B\partial_x u_1^{B} + u_2^{B} \partial_z u_1^B + \varepsilon^{ \frac{3}{2}}u_1^{B}   \\
        &    \times \partial_x w_1  + \varepsilon^{ \frac{3}{2}}u_2^{B}\partial_y w_1  +  \varepsilon^{ \frac{3}{2}}w_1\partial_x u^{B}_1  + \varepsilon w_2 \partial_z u_1^B  + \partial_x \tilde{p}^{B,3} - \mu\partial_x^2 (u_1^{B,3} + \sqrt{\varepsilon} u_1^{B,4}) - \partial_x \tau_{11}^{B,3}\Big), \\[2mm]
         \mathfrak{F}_{12,2}=& - \varepsilon \Big(\partial_t u_2^{B,3} +  \sqrt{\varepsilon}\partial_t u_2^{B,4} + \varepsilon \partial_t u_2^{B,5}  +   u_1^{I}\partial_x u^{B,3}_2 +\sqrt{\varepsilon}(u_2^{I,2} + \sqrt{\varepsilon}u_2^{I,3})\partial_z u_2^{B,3} + \sqrt{\varepsilon} u_1^I\partial_x(u_2^{B,4}
         \\
         & +     \sqrt{\varepsilon} u_2^{B,5}) + u_2^{I}\partial_z(u_2^{B,4} + \sqrt{\varepsilon} u_2^{B,5}) + (u_1^{B,3} + \sqrt{\varepsilon} u_1^{B,4})\partial_x u^{I,0}_2 + (u_2^{B,3} + \sqrt{\varepsilon}  u_2^{B,4} + \varepsilon u_2^{B,5})\partial_y u^{I,0}_2    \\
         & +   \sqrt{\varepsilon} u^B_1\partial_x (u_2^{I,2}  + \sqrt{\varepsilon} u_2^{I,3}) +  \sqrt{\varepsilon} u^B_2\partial_y (u_2^{I,2}+ \sqrt{\varepsilon} u_2^{I,3}) + \sqrt{\varepsilon} u_1^B\partial_x u_2^{B} + u_2^{B} \partial_z u_2^B + \varepsilon^{ \frac{3}{2}}u_1^{B}\partial_x w_2    \\
         & +    \varepsilon^{ \frac{3}{2}}u_2^{B}\partial_y w_2    +\varepsilon^{ \frac{3}{2}}w_1\partial_x u^{B}_2  + \varepsilon w_2 \partial_z u_2^B   - \mu\partial_x^2 (u_2^{B,3} + \sqrt{\varepsilon} u_2^{B,4} + \varepsilon u_2^{B,5})
         -\mu\partial_z^2 u^{B,5}_2  - \partial_x \tau_{12}^{B,3}\Big).
           \end{align*}
        Next, for brevity, we introduce the following notation:
        \begin{align*}
            &\mathfrak{M}(u_1^{I,0},u_2^{I,0},u^{I,2}_1,u^{I,2}_2,u^{I,3}_2,u_1^{B,2},u_1^{B,3},u_2^{B,3},\varrho^{I,0},\varrho^{B,1},\varrho^{B,2},\varrho^{B,3}) \\
            := &-\varepsilon\Bigl( u_1^{I,2}(x,y,t) -   u_1^{I,2}(x,0,t) \Bigr)\partial_x\varrho^{B,1} - \varepsilon \Bigl( u_1^{I,0}(x,y,t) -u_1^{I,0}(x,0,t)\Bigr)\partial_x\varrho^{B,3}\\
            & -\varepsilon \Bigl( u_2^{I,3}(x,y,t) -   u_2^{I,3}(x,0,t)\Bigr)\partial_z\varrho^{B,1} - \varepsilon \Bigl( u_2^{I,2}(x,y,t)  -u_2^{I,2}(x,0,t)\Bigr)\partial_z\varrho^{B,2}\\
            &  -\varepsilon u_1^{B,3}\Bigl(\partial_x\varrho^{I,0}(x,y,t) -\partial_x\varrho^{I,0}(x,0,t)\Bigr)   -\varepsilon u_2^{B,3}\Bigl(\partial_y\varrho^{I,0}(x,y,t) -\partial_y\varrho^{I,0}(x,0,t)\Bigr) \\
            &- \Bigl( u_1^{I,0}(x,y,t) -   u_1^{I,0}(x,0,t) -y\partial_y  u_1^{I,0}(x,0,t)  -\frac{1}{2}y^2\partial_y^2  u_1^{I,0}(x,0,t) \Bigr)\partial_x\varrho^{B,1}
            \\
            &-\Bigl( u_2^{I,0}(x,y,t)     -   u_2^{I,0}(x,0,t) -y\partial_y  u_2^{I,0}(x,0,t)  -\frac{1}{2}y^2\partial_y^2  u_2^{I,0}(x,0,t)\Bigr)\partial_z\varrho^{B,2}\\
            & -\sqrt{\varepsilon}\Bigl( u_1^{I,0}(x,y,t)  -   u_1^{I,0}(x,0,t) -y\partial_y  u_1^{I,0}(x,0,t)  \Bigr)\partial_x\varrho^{B,2}
              \\
              & -\sqrt{\varepsilon}\Bigl( u_2^{I,2}(x,y,t)  -     u_2^{I,2}(x,0,t)-y\partial_y  u_2^{I,2}(x,0,t) \Bigr)\partial_z\varrho^{B,1}\\
             &-\sqrt{\varepsilon}\Bigl( u_2^{I,0}(x,y,t)    -     u_2^{I,0}(x,0,t)-y\partial_y  u_2^{I,0}(x,0,t)    \Bigr)\partial_z\varrho^{B,3}\\
             & -\sqrt{\varepsilon} \Bigl(\partial_x\varrho^{I,0}(x,y,t) -      \partial_x\varrho^{I,0}(x,0,t) - y\partial_y \partial_x\varrho^{I,0}(x,0,t)\Bigr) u_1^{B,2} \\
             &-\frac{1}{\sqrt{\varepsilon}}\Bigl( u_2^{I,0}(x,y,t)   -   u_2^{I,0}(x,0,t) -y\partial_y  u_2^{I,0}(x,0,t)  -\frac{1}{2}y^2\partial_y^2  u_2^{I,0}(x,0,t) \\
             &-\frac{1}{6} y^3\partial_y^3 u_2^{I,0}(x,0,t)\Bigr)\partial_z\varrho^{B,1},
        \end{align*}
        and
        \begin{align*}
            &\mathfrak{P}(u^{I,2}_1,u^{I,3}_1,u^{I,2}_2,u^{I,3}_2,u_1^{B,2},u_1^{B,3},u^{B,4}_1,u_2^{B,3},u^{B,4}_2,u^{B,5}_2,w_1,w_2,\varrho^{I,2},\varrho^{I},\varrho^{B,1},\varrho^{B,2},\varrho^{B,3},\varrho^B) \\
            =:{}&\varepsilon^{\frac{3}{2}}   \partial_x^2\Bigl(\varrho^{B,2} + \sqrt{\varepsilon}\varrho^{B,3}\Bigr)
        -\varepsilon^{\frac{3}{2}} u_1^{I,2} \partial_x\Bigl( \varrho^{B,2}  + \sqrt{\varepsilon} \varrho^{B,3} \Bigr) -\varepsilon^{\frac{3}{2}}   u_1^{I,3}\partial_x \varrho^{B}
        -\varepsilon^{\frac{3}{2}}  u_2^{I,2}\partial_z \varrho^{B,3}\\
        &
        -\varepsilon^{\frac{3}{2}} u_2^{I,3}\partial_z\Bigl(  \varrho^{B,2}   + \sqrt{\varepsilon} \varrho^{B,3}\Bigr)
        -\varepsilon^{\frac{3}{2}} \Bigl(  u_1^{B,2} + \sqrt{\varepsilon} u_1^{B,3}\Bigr) \partial_x \varrho^{I,2}
        -\varepsilon^{\frac{3}{2}}  u_1^{B,4}  \partial_x \varrho^I
        -\varepsilon^2 u_2^{B,3}\\
        & \times\partial_y \varrho^{I,2}
        -\varepsilon^{\frac{3}{2}}\Bigl( u_2^{B,4}   +\sqrt{\varepsilon} u_2^{B,5} \Bigr)\partial_y \varrho^I
        -\varepsilon^{\frac{3}{2}} u_1^{B,2}\partial_x\Bigl(\varrho^{B,2} + \sqrt{\varepsilon} \varrho^{B,3}\Bigr)
        -\varepsilon^{\frac{3}{2}} u_1^{B,3} \partial_x\varrho^{B}\\
        & - {\varepsilon}^2  u_1^{B,4}
        \partial_x\varrho^{B}  -\varepsilon^{\frac{3}{2}} u_2^{B,3}
        \partial_z\Bigl(\varrho^{B,2}+ \sqrt{\varepsilon} \varrho^{B,3}\Bigr)
        -\varepsilon^{\frac{3}{2}}\Bigl( u_2^{B,4}  + \sqrt{\varepsilon} u_2^{B,5}    \Bigr)\partial_z\varrho^{B}
     - \varepsilon^2 w_1\partial_x\varrho^B \\
        &   - \varepsilon^{\frac{3}{2}}w_2\partial_z \varrho^B.
        \end{align*}
Then,   replacing ``$\varrho$'' with ``$\eta$'' in the expression of $\mathfrak{M}$ and $\mathfrak{P}$, we get the expression of $\mathfrak{G}_{11}$ and $\mathfrak{G}_{12}$, i.e.,
    \begin{align*}
        \mathfrak{G}_{11 }={}& \mathfrak{M}(u_1^{I,0},u_2^{I,0},u^{I,2}_1,u^{I,2}_2,u^{I,3}_2,u_1^{B,2},u_1^{B,3},u_2^{B,3},\eta^{I,0},\eta^{B,1},\eta^{B,2},\eta^{B,3}),\\[2mm]
        \mathfrak{G}_{12}= {}& \mathfrak{P}(u^{I,2}_1,u^{I,3}_1,u^{I,2}_2,u^{I,3}_2,u_1^{B,2},u_1^{B,3},u^{B,4}_1,u_2^{B,3},u^{B,4}_2,u^{B,5}_2,w_1,w_2,\eta^{I,2},\eta^{B,1},\eta^{B,2},\eta^{B,3},\eta^B).
    \end{align*}
For the components of $\mathfrak{J}_{11}$, we have
    \begin{align*}
        \mathfrak{J}_{11,11}={}&\mathfrak{M}(u_1^{I,0},u_2^{I,0},u^{I,2}_1,u^{I,2}_2,u^{I,3}_2,u_1^{B,2},u_1^{B,3},u_2^{B,3},\tau_{11}^{I,0},\tau_{11}^{B,1},\tau_{11}^{B,2},\tau_{11}^{B,3}) \\
        &+ 2 \varepsilon \Bigl(  \tau_{11}^{I,0}(x,y,t) - \tau_{11}^{I,0}(x,0,t)   \Bigr)\partial_x u_1^{B,3}
        +   2 \varepsilon \Bigl( \tau_{12}^{I,0}(x,y,t) -\tau_{12}^{I,0}(x,0,t)\Bigr) \partial_z u_1^{B,4}  \\
         &
         +2 \varepsilon \Bigl(\tau_{12}^{I,2}(x,y,t) - \tau_{12}^{I,2}(x,0,t)\Bigr)\partial_z u_1^{B,2}
         + 2\varepsilon \Bigl(\partial_x u_1^{I,2}(x,y,t) - \partial_x u_1^{I,2}(x,0,t)\Bigr) \tau_{11}^{B,1}\\
        &
        +2\varepsilon \Bigl(\partial_x u_1^{I,0}(x,y,t) - \partial_x u_1^{I,0}(x,0,t)\Bigr) \tau_{11}^{B,3}
        +2\varepsilon \Bigl(\partial_y u_1^{I,2}(x,y,t) - \partial_y u_1^{I,2}(x,0,t)\Bigr) \tau_{12}^{B,1}\\
         &
        +2\varepsilon \Bigl(\partial_y u_1^{I,0}(x,y,t) - \partial_y u_1^{I,0}(x,0,t)\Bigr) \tau_{12}^{B,3}
        + 2k\varepsilon \Bigl( \eta^{I,0}(x,y,t) - \eta^{I,0}(x,0,t) \Bigr)\partial_x u_1^{B,3}
         \\
        &+2k\varepsilon \Bigl(\partial_x u_1^{I,2}(x,y,t) - \partial_x u_1^{I,2}(x,0,t)\Bigr)\eta^{B,1}
        +2k\varepsilon \Bigl(\partial_x u_1^{I,0}(x,y,t) - \partial_x u_1^{I,0}(x,0,t)\Bigr)  \eta^{B,3}
         \\
        &+  2 \Bigl( \tau_{12}^{I,0}(x,y,t) - \tau_{12}^{I,0}(x,0,t) - y\partial_y \tau_{12}^{I,0}(x,0,t) - \frac{1}{2}y^2\partial_y^2 \tau_{12}^{I,0}(x,0,t)\Bigr)\partial_z u_1^{B,2} \\
        &+ 2\Bigl(\partial_x u_1^{I,0}(x,y,t) -  \partial_x u_1^{I,0}(x,0,t) -y\partial_y\partial_x  u_1^{I,0}(x,0,t)  -\frac{1}{2}y^2\partial_y^2\partial_x  u_1^{I,0}(x,0,t) \Bigr) \tau_{11}^{B,1}
        \\
        &+ 2\Bigl(\partial_y u_1^{I,0}(x,y,t) -  \partial_y u_1^{I,0}(x,0,t) -y\partial_y^2  u_1^{I,0}(x,0,t)  -\frac{1}{2}y^2\partial_y^3   u_1^{I,0}(x,0,t) \Bigr) \tau_{12}^{B,1}
        \\
        &+ 2k\Bigl(\partial_x u_1^{I,0}(x,y,t) -  \partial_x u_1^{I,0}(x,0,t) -y\partial_y\partial_x  u_1^{I,0}(x,0,t)  -\frac{1}{2}y^2\partial_y^2\partial_x  u_1^{I,0}(x,0,t) \Bigr) \eta^{B,1}
        \\
         & + 2\sqrt{\varepsilon}\Bigl( \tau_{11}^{I,0}(x,y,t) - \tau_{11}^{I,0}(x,0,t) - y\partial_y \tau_{11}^{I,0}(x,0,t) \Bigr)\partial_x u_1^{B,2} \\
         &+  2\sqrt{\varepsilon} \Bigl( \tau_{12}^{I,0}(x,y,t) - \tau_{12}^{I,0}(x,0,t) - y\partial_y \tau_{12}^{I,0}(x,0,t) \Bigr)\partial_z u_1^{B,3} \\
        &+2\sqrt{\varepsilon}\Bigl(\partial_x u_1^{I,0}(x,y,t) -  \partial_x u_1^{I,0}(x,0,t) -y\partial_y\partial_x  u_1^{I,0}(x,0,t)    \Bigr) \tau_{11}^{B,2} \\
        &+2k\sqrt{\varepsilon}\Bigl(\partial_x u_1^{I,0}(x,y,t) -  \partial_x u_1^{I,0}(x,0,t) -y\partial_y\partial_x  u_1^{I,0}(x,0,t)    \Bigr) \eta^{B,2}  \\
        &+2\sqrt{\varepsilon}\Bigl(\partial_y u_1^{I,0}(x,y,t) -  \partial_y u_1^{I,0}(x,0,t) -y\partial_y^2  u_1^{I,0}(x,0,t)    \Bigr) \tau_{12}^{B,2} \\
        &+ 2k\sqrt{\varepsilon}\Bigl(\eta^{I,0}(x,y,t) - \eta^{I,0}(x,0,t) -y\partial_y \eta^{I,0}(x,0,t)\Bigr) \partial_x u_1^{B,2}, \\[2mm]
         \mathfrak{J}_{11,12}= {}&\mathfrak{M}(u_1^{I,0},u_2^{I,0},u^{I,2}_1,u^{I,2}_2,u^{I,3}_2,u_1^{B,2},u_1^{B,3},u_2^{B,3},\tau_{12}^{I,0},\tau_{12}^{B,1},\tau_{12}^{B,2},\tau_{12}^{B,3}) \\
         & +  \varepsilon \Bigl( \tau_{11}^{I,0}(x,y,t) - \tau_{11}^{I,0}(x,0,t)  \Bigr)\partial_x u_2^{B,3}
         +  \varepsilon \Bigl( \tau_{22}^{I,2}(x,y,t) - \tau_{22}^{I,2}(x,0,t)  \Bigr)\partial_z u_1^{B,2}\\
         &+   \varepsilon \Bigl( \tau_{22}^{I,0}(x,y,t) - \tau_{22}^{I,0}(x,0,t)  \Bigr)\partial_z u_1^{B,4}
         + \varepsilon\Bigl( \partial_x u_2^{I,2}(x,y,t) - \partial_x u_2^{I,2}(x,0,t)\Bigr)  \tau_{11}^{B,1}\\
         &+ \varepsilon\Bigl( \partial_x u_2^{I,0}(x,y,t) - \partial_x u_2^{I,0}(x,0,t)\Bigr)  \tau_{11}^{B,3}
          + \varepsilon\Bigl( \partial_y u_1^{I,2}(x,y,t) - \partial_y u_1^{I,2}(x,0,t)\Bigr)  \tau_{22}^{B,1}\\
         &+ \varepsilon\Bigl( \partial_y u_1^{I,0}(x,y,t) - \partial_y u_1^{I,0}(x,0,t)\Bigr) \tau_{22}^{B,3}
         + k\varepsilon \Bigl(\eta^{I,0}(x,y,t) - \eta^{I,0}(x,0,t)\Bigr)\partial_x u_2^{B,3}\\
        &+ k\varepsilon \Bigl(\eta^{I,2}(x,y,t) - \eta^{I,2}(x,0,t)\Bigr)\partial_z u_1^{B,2}
           +  k\varepsilon \Bigl(\eta^{I,0}(x,y,t) - \eta^{I,0}(x,0,t)\Bigr)\partial_z u_1^{B,4} \\
         & + k \varepsilon\Bigl( \partial_x u_2^{I,2}(x,y,t) - \partial_x u_2^{I,2}(x,0,t)\Bigr)  \eta^{B,1}
         + k  \varepsilon\Bigl( \partial_x u_2^{I,0}(x,y,t) - \partial_x u_2^{I,0}(x,0,t)\Bigr)\eta^{B,3} \\
         & + k\varepsilon\Bigl( \partial_y u_1^{I,2}(x,y,t) - \partial_y u_1^{I,2}(x,0,t)\Bigr)\eta^{B,1}
         +k\varepsilon\Bigl( \partial_y u_1^{I,0}(x,y,t) - \partial_y u_1^{I,0}(x,0,t)\Bigr) \eta^{B,3} \\
         &+   \Bigl( \tau_{22}^{I,0}(x,y,t) - \tau_{22}^{I,0}(x,0,t) - y\partial_y \tau_{22}^{I,0}(x,0,t) - \frac{1}{2}y^2\partial_y^2 \tau_{22}^{I,0}(x,0,t)\Bigr)\partial_z u_1^{B,2} \\
        &+  k \Bigl( \eta^{I,0}(x,y,t) - \eta^{I,0}(x,0,t) - y\partial_y \eta^{I,0}(x,0,t) - \frac{1}{2}y^2\partial_y^2 \eta^{I,0}(x,0,t)\Bigr)\partial_z u_1^{B,2} \\
        &+ \Bigl( \partial_x u_2^{I,0}(x,y,t) - \partial_x u_2^{I,0}(x,0,t) - y\partial_y \partial_x u_2^{I,0}(x,0,t) - \frac{1}{2}y^2\partial_y^2 \partial_x u_2^{I,0}(x,0,t)\Bigr)\tau_{11}^{B,1} \\
        &+ k\Bigl( \partial_x u_2^{I,0}(x,y,t) - \partial_x u_2^{I,0}(x,0,t) - y\partial_y \partial_x u_2^{I,0}(x,0,t) - \frac{1}{2}y^2\partial_y^2 \partial_x u_2^{I,0}(x,0,t)\Bigr)\eta^{B,1} \\
        &+ \Bigl( \partial_y u_1^{I,0}(x,y,t) - \partial_y u_1^{I,0}(x,0,t) - y\partial_y^2   u_1^{I,0}(x,0,t) - \frac{1}{2}y^2\partial_y^3   u_1^{I,0}(x,0,t)\Bigr)\tau_{22}^{B,1} \\
        &+ k\Bigl( \partial_y u_1^{I,0}(x,y,t) - \partial_y u_1^{I,0}(x,0,t) - y\partial_y^2   u_1^{I,0}(x,0,t) - \frac{1}{2}y^2\partial_y^3   u_1^{I,0}(x,0,t)\Bigr)\eta^{B,1} \\
       &+  \sqrt{\varepsilon}\Bigl( \tau_{22}^{I,0}(x,y,t) - \tau_{22}^{I,0}(x,0,t) - y\partial_y \tau_{22}^{I,0}(x,0,t) \Bigr)\partial_z u_1^{B,3} \\
        &+ \sqrt{\varepsilon}\Bigl( \partial_x u_2^{I,0}(x,y,t) - \partial_x u_2^{I,0}(x,0,t) - y\partial_y \partial_x u_2^{I,0}(x,0,t)  \Bigr) \tau_{11}^{B,2}   \\
         &+ \sqrt{\varepsilon}\Bigl( \partial_y u_1^{I,0}(x,y,t) - \partial_y u_1^{I,0}(x,0,t) - y\partial_y^2 u_1^{I,0}(x,0,t)  \Bigr) \tau_{22}^{B,2}   \\
        &+  k\sqrt{\varepsilon} \Bigl( \eta^{I,0}(x,y,t) - \eta^{I,0}(x,0,t) - y\partial_y \eta^{I,0}(x,0,t)  \Bigr)\partial_z u_1^{B,3} \\
        &+ k\sqrt{\varepsilon}\Bigl( \partial_x u_2^{I,0}(x,y,t) - \partial_x u_2^{I,0}(x,0,t) - y\partial_y \partial_x u_2^{I,0}(x,0,t)  \Bigr) \eta^{B,2}  \\
        &+ k\sqrt{\varepsilon}\Bigl( \partial_y u_1^{I,0}(x,y,t) - \partial_y u_1^{I,0}(x,0,t) - y\partial_y^2 u_1^{I,0}(x,0,t)  \Bigr)   \eta^{B,2}, \\[2mm]
         \mathfrak{J}_{11,22}= {} &\mathfrak{M}(u_1^{I,0},u_2^{I,0},u^{I,2}_1,u^{I,2}_2,u^{I,3}_2,u_1^{B,2},u_1^{B,3},u_2^{B,3},\tau_{ 22}^{I,0},\tau_{22}^{B,1},\tau_{22}^{B,2},\tau_{22}^{B,3}) \\
          &+ 2 \varepsilon \Bigl(\tau_{12}^{I,0}(x,y,t) - \tau_{12}^{I,0}(x,0,t)\Bigr)\partial_x u_2^{B,3}
         +2 \varepsilon \Bigl(\tau_{22}^{I,0}(x,y,t) - \tau_{22}^{I,0}(x,0,t)\Bigr)\partial_z u_2^{B,4} \\
        &+2\varepsilon \Bigl(\partial_x u_2^{I,2}(x,y,t) - \partial_x u_2^{I,2}(x,0,t)\Bigr)  \tau_{12}^{B,1}
        +2\varepsilon \Bigl(\partial_x u_2^{I,0}(x,y,t) - \partial_x u_2^{I,0}(x,0,t)\Bigr) \tau_{12}^{B,3}
          \\
        &+2\varepsilon \Bigl(\partial_y u_2^{I,2}(x,y,t)  - \partial_y u_2^{I,2}(x,0,t)\Bigr) \Bigl(\tau_{22}^{B,1} +k \eta^{B,1}\Bigr)
        +2\varepsilon \Bigl(\partial_y u_2^{I,0}(x,y,t) \\
        & -\partial_y u_2^{I,0}(x,0,t) \Bigr)\Bigl(\tau_{22}^{B,3}+k\eta^{B,3}\Bigr)
         + 2k\varepsilon\Bigl(\eta^{I,0}(x,y,t)- \eta^{I,0}(x,0,t)\Bigr)\partial_z u_2^{B,4} \\
        &+ 2\Bigl(\partial_x u_2^{I,0}(x,y,t) -  \partial_x u_2^{I,0}(x,0,t) -y\partial_y\partial_x  u_2^{I,0}(x,0,t)  -\frac{1}{2}y^2\partial_y^2\partial_x  u_2^{I,0}(x,0,t) \Bigr) \tau_{12}^{B,1}
        \\
        &+ 2\Bigl(\partial_y u_2^{I,0}(x,y,t) -  \partial_y u_2^{I,0}(x,0,t) -y\partial_y^2  u_2^{I,0}(x,0,t)  -\frac{1}{2}y^2\partial_y^3   u_2^{I,0}(x,0,t) \Bigr) \tau_{22}^{B,1}
        \\
        &+ 2k\Bigl(\partial_y u_2^{I,0}(x,y,t) -  \partial_y u_2^{I,0}(x,0,t) -y\partial_y^2  u_2^{I,0}(x,0,t)  -\frac{1}{2}y^2\partial_y^3   u_2^{I,0}(x,0,t) \Bigr) \eta^{B,1}
        \\
         & + 2\sqrt{\varepsilon}\Bigl( \tau_{22}^{I,0}(x,y,t) - \tau_{22}^{I,0}(x,0,t) - y\partial_y \tau_{22}^{I,0}(x,0,t) \Bigr)\partial_z u_2^{B,3} \\
         &+2\sqrt{\varepsilon}\Bigl(\partial_x u_2^{I,0}(x,y,t) -  \partial_x u_2^{I,0}(x,0,t) -y\partial_y\partial_x  u_2^{I,0}(x,0,t)    \Bigr)  \tau_{12}^{B,2}  \\
         &+2\sqrt{\varepsilon}\Bigl(\partial_y u_2^{I,0}(x,y,t) -  \partial_y u_2^{I,0}(x,0,t) -y\partial_y^2  u_2^{I,0}(x,0,t)    \Bigr)  \tau_{22}^{B,2}   \\
         &+2k\sqrt{\varepsilon}\Bigl(\partial_y u_2^{I,0}(x,y,t) -  \partial_y u_2^{I,0}(x,0,t) -y\partial_y^2  u_2^{I,0}(x,0,t)    \Bigr)   \eta^{B,2}  \\
         &+ 2k\sqrt{\varepsilon}\Bigl(\eta^{I,0}(x,y,t) - \eta^{I,0}(x,0,t) -y\partial_y \eta^{I,0}(x,0,t)\Bigr) \partial_z u_2^{B,3}.
         \end{align*}
  Finally, for the components of $\mathfrak{J}_{12}$, we have
    \begin{align*}
        \mathfrak{J}_{12,11}={} &  \mathfrak{P}(u^{I,2}_1,u^{I,3}_1,u^{I,2}_2,u^{I,3}_2,u_1^{B,2},u_1^{B,3},u^{B,4}_1,u_2^{B,3},u^{B,4}_2,u^{B,5}_2,w_1,w_2,\tau_{11}^{I,2},\tau_{11}^{I},\tau_{11}^{B,1},\tau_{11}^{B,2},\tau_{11}^{B,3},\tau_{11}^B) \\
         &
        + 2\varepsilon^{\frac{3}{2}} \Bigl(\tau_{11}^{I,0} \partial_x  u_1^{B,4}
        +   \tau_{11}^{I,2}\partial_x u_1^B   \Bigr)
        + 2  \varepsilon^{\frac{3}{2}}\tau_{12}^{I,2}\partial_z\Bigl( u_1^{B,3} +  \sqrt{\varepsilon}  u_1^{B,4} \Bigr)
        + 2\varepsilon^{\frac{3}{2}}   \tau_{11}^{B,1} \partial_x u_1^{I,3} \\
        &
        + 2\varepsilon^{\frac{3}{2}} \tau_{12}^{B,1}\partial_y u_1^{I,3} +2\varepsilon^{\frac{3}{2}}\Bigl(\tau_{11}^{B,2} + \sqrt{\varepsilon}\tau_{11}^{B,3}\Bigr)\Bigl(\partial_x u_1^{I,2} +\sqrt{\varepsilon} \partial_x u_1^{I,3}\Bigr)
        + 2\varepsilon^{\frac{3}{2}}\Bigl(\tau_{12}^{B,2} + \sqrt{\varepsilon}\tau_{12}^{B,3}\Bigr)\\
        & \times \partial_y\Bigl( u_1^{I,2} +\sqrt{\varepsilon}   u_1^{I,3}\Bigr)
        +  2\varepsilon^{\frac{3}{2}}\tau_{11}^{B,1}\partial_x\Bigl(  u_1^{B,3}
        + \sqrt{\varepsilon}    u_1^{B,4} \Bigr)
        + 2\varepsilon^{\frac{3}{2}}\Bigl( \tau_{11}^{B,2}+ \sqrt{\varepsilon} \tau_{11}^{B,3}\Bigr)  \partial_x u_1^{B}\\
        &
        + 2 \varepsilon^{\frac{3}{2}} \tau_{12}^{B,1}   \partial_z u_1^{B,4} + 2\varepsilon^{\frac{3}{2}} \tau_{12}^{B,2}\partial_z\Big(u_1^{B,3} +  \sqrt{\varepsilon}  u_1^{B,4}\Big)
         +2\varepsilon^{\frac{3}{2}}\tau_{12}^{B,3}\partial_z u_1^{B}
        + 2\varepsilon^2  \tau_{11}^B\partial_x w_1  \\
        &    + 2\varepsilon^2 \tau_{12}^B \partial_y w_1
        +2k\varepsilon^{\frac{3}{2}}\eta^{I,0} \partial_x u_1^{B,4}
        +2k\varepsilon^{\frac{3}{2}}\eta^{I,2}\partial_x u_1^B
         + 2k\varepsilon^{\frac{3}{2}}\eta^{B,1} \partial_x u_1^{I,3}+ 2k\varepsilon^{\frac{3}{2}}\Bigl(\eta^{B,2}\\
        &  + \sqrt{\varepsilon } \eta^{B,3}\Bigr)\partial_x\Bigl(  u_1^{I,2} + \sqrt{\varepsilon}  u_1^{I,3}\Bigr)
        + 2k\varepsilon^{\frac{3}{2}} \eta^{B,1}\partial_x\Bigl(  u_1^{B,3}
        + \sqrt{\varepsilon}     u_1^{B,4} \Bigr)
        + 2k\varepsilon^{\frac{3}{2}}\Bigl(\eta^{B,2}+ \sqrt{\varepsilon} \eta^{B,3}\Bigr)\\
        &
         \times \partial_x u_1^B +  2k\varepsilon^2\eta^B\partial_x w_1, \\[2mm]
        \mathfrak{J}_{12,12}= {}&   \mathfrak{P}(u^{I,2}_1,u^{I,3}_1,u^{I,2}_2,u^{I,3}_2,u_1^{B,2},u_1^{B,3},u^{B,4}_1,u_2^{B,3},u^{B,4}_2,u^{B,5}_2,w_1,w_2,\tau_{12}^{I,2},\tau_{12}^{I},\tau_{12}^{B,1},\tau_{12}^{B,2},\tau_{12}^{B,3},\tau_{12}^B) \\
        &
          +   \varepsilon^{\frac{3}{2}} \tau_{11}^{I,0}\partial_x\Bigl(  u_2^{B,4} + \sqrt{\varepsilon} u_2^{B,5} \Bigr)
        +\varepsilon^\frac{3}{2}\tau_{11}^{I,2}\partial_x u_2^B
        +  \varepsilon^{\frac{3}{2}}    \tau_{11}^{B,1}\partial_x u_2^{I,3}
        +  \varepsilon^{\frac{3}{2}} \Bigl( \tau_{11}^{B,2}  + \sqrt{\varepsilon} \tau_{11}^{B,3}\Bigr)  \\
        & \times \partial_x \Bigl(u_2^{I,2}+  \sqrt{\varepsilon}  u_2^{I,3}\Bigr)
        +   \varepsilon^{\frac{3}{2}}    \tau_{11}^{B} \partial_x\Bigl(  u_2^{B,3} +  \sqrt{\varepsilon}  u_2^{B,4}  + \varepsilon u_2^{B,5} \Bigr)
        +   \varepsilon^{\frac{3}{2}}\tau_{22}^{I,2} \partial_z \Bigl( u_1^{B,3}\\
        &
         + \sqrt{\varepsilon} u_1^{B,4}  \Bigr)
        +\varepsilon^{\frac{3}{2}}    \tau_{22}^{B,1}\partial_y u_1^{I,3}
        + \varepsilon^{\frac{3}{2}}    \Big(\tau_{22}^{B,2}+ \sqrt{\varepsilon}\tau_{22}^{B,3}\Big)\partial_y\Bigl( u_1^{I,2} +\sqrt{\varepsilon}  u_1^{I,3} \Bigr)
        + \varepsilon^{\frac{3}{2}}\tau_{22}^B\partial_z u_1^{B,4}  \\
         &
         +  \varepsilon^{\frac{3}{2}}    \tau_{22}^{B,3}\partial_z\Bigl( u_1^{B,2} + \sqrt{\varepsilon}   u_1^{B,3}  \Bigr)
          +  \varepsilon^{\frac{3}{2}}    \tau_{22}^{B,2} \partial_z u_1^{B,3}
         +\varepsilon^2\Bigl(\tau_{11}^B \partial_x w_2 +\tau_{22}^B \partial_y w_1 \Bigr)
        +k\varepsilon^{\frac{3}{2}}\eta^{I} \\
        &    \times\partial_x \Bigl(u_2^{B,4} + \sqrt{\varepsilon} u_2^{B,5}\Bigr)     + k\varepsilon^{\frac{3}{2}}\eta^{I,2}\partial_z u_1^{B,3}
        + k\varepsilon^2 \eta^{I,2}\Bigl( \partial_x u_2^{B,3}  +  \partial_z u_1^{B,4} \Bigr)
        +k\varepsilon^{\frac{3}{2}}\eta^{B}\Bigl(\partial_x u_2^{I,3}\\
        &
         + \partial_y u_1^{I,3}\Bigr)
        +k\varepsilon^{\frac{3}{2}}\Big(\eta^{B,2} +\sqrt{\varepsilon}\eta^{B,3}\Big)\Bigl(\partial_x u_2^{I,2} + \partial_y u_1^{I,2}\Bigr)
         +\varepsilon^{\frac{3}{2}}k\eta^{B } \partial_x \Bigl(u_2^{B,3} + \sqrt{\varepsilon}  u_2^{B,4}  \\
        & +    \varepsilon u_2^{B,5}  \Bigr)
        +k\varepsilon^{\frac{3}{2}}\eta^{B,2}\partial_z u_1^{B,3} +\varepsilon^{\frac{3}{2}}k\eta^{B,3}\Bigl(\partial_z u_1^{B,2} + \sqrt{\varepsilon}\partial_z u_1^{B,3}\Bigr)
          + k\varepsilon^{\frac{3}{2}}  \eta^B \partial_z u_1^{B,4} \\
        &  +\varepsilon^2k\eta^B \Bigl(  \partial_x w_2 +  \partial_y w_1 \Bigr) ,
    \end{align*}
and
    \begin{align*}
        \mathfrak{J}_{12,22}={}&  \mathfrak{P}(u^{I,2}_1,u^{I,3}_1,u^{I,2}_2,u^{I,3}_2,u_1^{B,2},u_1^{B,3},u^{B,4}_1,u_2^{B,3},u^{B,4}_2,u^{B,5}_2,w_1,w_2,\tau_{22}^{I,2},\tau_{22}^{I},\tau_{22}^{B,1},\tau_{22}^{B,2},\tau_{22}^{B,3},\tau_{22}^B) \\
        &
          +  2\varepsilon^{\frac{3}{2}}\tau_{12}^{I}\partial_x \Bigl(u_2^{B,4} + \sqrt{\varepsilon} u_2^{B,5}\Bigr)
         + 2\varepsilon^{2} \tau_{12}^{I,2}\partial_x u^{B,3}_2
         + 2\varepsilon^{\frac{3}{2}}\tau_{12}^{B}\partial_x u_2^{I,3}
         + 2 \varepsilon^{\frac{3}{2}} \Bigl(\tau_{12}^{B,2} + \sqrt{\varepsilon}\tau_{12}^{B,3}\Bigr) \\
         &\times \partial_x u_2^{I,2}
         + 2 \varepsilon^{\frac{3}{2}}\tau_{12}^{B} \partial_x\Bigl( u^{B,3}_2  + \sqrt{\varepsilon} u^{B,4}_2 +  \varepsilon u_2^{B,5} \Bigr)
            + 2\varepsilon^{\frac{3}{2}}\tau_{22}^{I,2} \Bigl(\partial_z u_2^{B,3} + \sqrt{\varepsilon}\partial_z u_2^{B,4}\Bigr) \\
         & + 2 \varepsilon^{\frac{3}{2}} \tau_{22}^{I} \partial_z u_2^{B,5}
           + 2\varepsilon^{\frac{3}{2}}\tau_{22}^{B}\partial_y u_2^{I,3}
        + 2 \varepsilon^{\frac{3}{2}}\Bigl(\tau_{22}^{B,2}
         + \sqrt{\varepsilon}\tau_{22}^{B,3}\Bigr)\partial_y u_2^{I,2}
         + 2 \varepsilon^{\frac{3}{2}} \Bigl(\tau_{22}^{B,2} + \sqrt{\varepsilon}\tau_{22}^{B,3}\Bigr)\partial_z u_2^{B,3} \\
        &
         + 2 \varepsilon^{\frac{3}{2}}\tau_{22}^{B} \partial_z( u_2^{B,4} + \sqrt{\varepsilon}  u_2^{B,5}  \Bigr)
        + 2\varepsilon^2\Bigl(\tau_{12}^{B}\partial_x w_2 +\tau_{22}^{B}\partial_y w_2 \Bigr)
          +2k\varepsilon^{\frac{3}{2}}\eta^{I,0}\partial_z u_2^{B,5} \\
        &
          {
          + 2k\varepsilon^{\frac{3}{2}}\eta^{I,2}\partial_z\Bigl( u^{B,3}_2  + \sqrt{\varepsilon}  u^{B,4}_2 +\varepsilon u_2^{B,5}\Bigr)}
           + 2k\varepsilon^{\frac{3}{2}}\Bigl(\eta^{B,2}+ \sqrt{\varepsilon}\eta^{B,3}\Bigr)\partial_y u_2^{I,2}\\
        &
        + 2k\varepsilon^{\frac{3}{2}}\eta^B\partial_y u_2^{I,3}
        + 2k\varepsilon^{\frac{3}{2}}\eta^{B}\partial_z\Bigl( u_2^{B,4} + \sqrt{\varepsilon} u_2^{B,5}\Bigr)
        +2k \varepsilon^{\frac{3}{2}}\Bigl(\eta^{B,2}
         + \sqrt{\varepsilon}\eta^{B,3}\Bigr)\partial_z   u_2^{B,3}
         \\
       & + 2k\varepsilon^2  \eta^{B}\partial_y w_2.
       \end{align*}

     \section*{Acknowledgements}
The authors would like to thank Professor Zhifei Zhang for some inspiring discussions about the boundary layer for the Oldroyd-B model. H. Wen would like to thank Professors Matthias Hieber, Yong Lu, and Ruizhao Zi for some early discussions about the Oldroyd-B model.
 This work was supported by the National Natural Science Foundation of China (12071152), by the Guangdong Basic and Applied Basic Research Foundation (2022A1515012112, 2020B1515310015), and by the Guangdong Provincial Key Laboratory of Human Digital Twin (2022B1212010004).

        \end{document}